\newtheorem{thm}{Theorem}[section]
\newtheorem{defi}[thm]{Definition}
\newtheorem{prop}[thm]{Proposition}
\newtheorem{lmm}[thm]{Lemma}
\newtheorem{cor}[thm]{Corollary}
\newtheorem{thml}{Theorem}
\theoremstyle{definition}
\newtheorem{rmk}[thm]{Remark}{\rm}
\theoremstyle{definition}
{\rm}
\newcommand{\F}{\mathbb{F}}
\newcommand{\Z}{\mathbb{Z}}
\newcommand{\N}{\mathbb{N}}
\newcommand{\gen}[1]{\langle #1 \rangle}
\newcommand{\fusion}{\mathcal{F}}
\newcommand{\linking}{\mathcal{L}}
\newcommand{\transp}{\mathcal{T}}
\newcommand{\g}{\mathcal{G}}
\newcommand{\ploc}{(S, \fusion, \linking)}
\newcommand{\pp}{\mathcal{P}}
\newcommand{\hh}{\mathcal{H}}
\newcommand{\mm}{\mathcal{M}}
\newcommand{\hatmm}{\widehat{\mathcal{M}}}
\newcommand{\conj}[2]{{#1}^{#2}}
\newcommand{\dihed}{D_{2^{\infty}}}
\newcommand{\quat}{Q_{2^{\infty}}}
\newcommand{\padic}{\Z^\wedge_p}
\newcommand{\adic}{\Z^\wedge_2}
\newcommand{\ptor}{\Z/p^{\infty}}
\newcommand{\functor}{(\underline{\phantom{B}})^{\bullet}}
\begin{document}

\title{Unstable Adams operations acting on $p$-local compact groups and fixed points}
\author{A. Gonz\'{a}lez}

\begin{abstract}

We prove in this paper that every $p$-local compact group is approximated by transporter systems over finite $p$-groups. To do so, we use unstable Adams operations acting on a given $p$-local compact group and study the structure of resulting fixed points.

\end{abstract}

\maketitle

The theory of $p$-local compact groups was introduced by C. Broto, R. Levi and B. Oliver in \cite{BLO3} as the natural generalization of $p$-local finite groups to include some infinite structures, such as compact Lie groups or $p$-compact groups, in an attempt to give categorical models for a larger class of $p$-completed classifying spaces.

Nevertheless, when passing from a finite setting to an infinite one, some of the techniques used in the former case are not available any more. As a result, some of the more important results in \cite{BLO2} were not extended to $p$-local compact groups, and, roughly speaking, that $p$-local compact groups are not yet as well understood as $p$-local finite groups. It is then the aim of this paper to shed some light on the new theory introduced in \cite{BLO3}.

The underlying idea of this paper can be traced back to work of E. M. Friedlander and G. Mislin, \cite{F1}, \cite{F2}, \cite{FM1} and \cite{FM2}, where the authors use unstable Adams operations (or Frobenius maps in the algebraic setting) to approximate classifying spaces of compact Lie groups by classifying spaces of finite groups. More recently, C. Broto and J. M. M{\o}ller  studied a similar construction for connected $p$-compact groups in \cite{BM}.

Here, by an approximation of a compact Lie group $G$ by finite groups we mean the existence of a locally finite group $G^{\circ}$, together with a mod $p$ homotopy equivalence $BG^{\circ} \to BG$ (that is, this map induces a mod $p$ homology isomorphism). Since $G^{\circ}$ is locally finite, it can be described as a colimit of finite groups, and this allows then to extend known properties of finite groups to compact Lie groups. Of course, this argument is unnecessary in the classical setting of compact Lie groups, since other techniques are at hand.

As we have mentioned, the works of Friedlander and Mislin depended on Frobenius maps and their analogues in topological $K$-theory, unstable Adams operations. For $p$-local compact groups, unstable Adams operations were constructed for all $p$-local compact groups in the doctoral thesis \cite{Junod}, although in this work we will use the more refined version of unstable Adams operations from \cite{JLL}.

One can then study the action of unstable Adams operations on $p$-local compact groups from a categorical point of view, focusing on the definition of a $p$-local compact group as a triple $\ploc$, which is in fact the approach that we have adopted in this paper, and which will lead to a rather explicit description of the fixed points of a $p$-local compact group under the action of an unstable Adams operation (of high enough degree).

The following result, which is the main theorem in this paper, will be restated and proved as Theorems \ref{transpi} and \ref{hocolim1}.

\begin{thml}\label{ThA}

Let $\g = \ploc$ be a $p$-local compact group, and let $\Psi$ be an unstable Adams operation on $\g$. Then, $\Psi$ defines a family of finite transporter systems $\{\linking_i\}_{i \in \N}$, together with faithful functors $\Theta_i: \linking_i \to \linking_{i+1}$ for all $i$, such that there is a mod $p$ homotopy equivalence
$$
B\g \simeq_p \operatorname{hocolim} |\linking_i|.
$$

\end{thml}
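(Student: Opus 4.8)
The plan is to construct the finite transporter systems $\linking_i$ directly from the fixed-point data of the iterates of $\Psi$, and then to identify the homotopy colimit of their realizations with $B\g$ by a Mayer--Vietoris / homology argument on the associated homotopy colimit. An unstable Adams operation $\Psi$ in the sense of \cite{JLL} acts on $\g = \ploc$ compatibly on the discrete $p$-toral group $S$ and on the categories $\fusion$ and $\linking$; iterating it, $\Psi^i$ fixes an increasing sequence of \emph{finite} subgroups $S_i \leq S$ (the fixed points of $\Psi^i$ restricted to $S$, which are finite because $S$ is locally finite and $\Psi$ has ``degree'' $\neq 1$ mod the relevant torsion), with $\bigcup_i S_i = S$. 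First I would check that, for $i$ large enough, each $S_i$ is a \emph{Sylow} object for a finite transporter system: one sets $\linking_i$ to have objects the $\fusion$-conjugates (within $S_i$) of subgroups that are $\fusion$-centric and $\Psi^i$-invariant in a suitable sense, and morphism sets obtained by taking $\Psi^i$-fixed points of the morphism sets of $\linking$ between the corresponding subgroups of $S$. The inclusions $S_i \hookrightarrow S_{i+1}$, together with the fact that $\Psi^{i}$-fixed morphisms are $\Psi^{i+1}$-fixed, give the faithful functors $\Theta_i \colon \linking_i \to \linking_{i+1}$.

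The first real task is to prove that each $\linking_i$ (for $i \gg 0$) is genuinely a transporter system over the finite $p$-group $S_i$ in the axiomatic sense: one must verify the Sylow axioms, the existence of the structural epimorphisms to the fusion category, and crucially that $\linking_i$ has ``enough'' objects --- i.e. every $\fusion$-centric $\Psi^i$-invariant subgroup of $S_i$ is an object, and conversely these exhaust the $\fusion^c$-conjugacy classes meeting $S_i$ once $i$ is large. This is where I expect the use of the detailed properties of unstable Adams operations from \cite{JLL}: that $\Psi$ acts as the identity on a ``large'' finite piece, that it is injective on objects up to conjugacy, and that its action is compatible with the $\fusion$-action on morphism sets. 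The bounded-ness of the number of $\fusion$-conjugacy classes of $\fusion$-centric subgroups (a standard consequence of the theory of $p$-local compact groups, stated in \cite{BLO3}) is what makes the stabilization in $i$ possible. This verification is Theorem \ref{transpi} in the numbering promised above, and it is the technical heart of the construction.

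Next I would show that the natural maps $|\linking_i| \to B\g := |\linking|^\wedge_p$ (induced by the inclusion of $\Psi^i$-fixed data into $\linking$) are compatible with the $\Theta_i$ and assemble to a map $\operatorname{hocolim}_i |\linking_i| \to |\linking|$, and that after $p$-completion this is an equivalence. The clean way is to identify $\linking$ itself (or an equivalent small model of it) with the honest categorical colimit $\varinjlim_i \linking_i$: every object of $\linking$ lies in some $S_i$, and every morphism of $\linking$, being a morphism of a discrete category with finitely generated (in the relevant sense) automorphism data, is fixed by $\Psi^i$ for $i$ large, so $\varinjlim_i \linking_i = \linking$ on the nose. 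Since geometric realization commutes with filtered colimits and $|{-}|$ of a filtered colimit of categories is the homotopy colimit of the realizations, one gets $\operatorname{hocolim}_i |\linking_i| \simeq |\linking|$ \emph{before} $p$-completion; applying $(-)^\wedge_p$ and using that it commutes with this homotopy colimit (a mod $p$ homology colimit argument, since $H_\ast(-;\F_p)$ commutes with filtered colimits) yields $\operatorname{hocolim}_i |\linking_i| \simeq_p B\g$. This is Theorem \ref{hocolim1}.

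The main obstacle, then, is not the colimit/homology bookkeeping --- that is essentially formal once the pieces are in place --- but rather \emph{Step 2}: showing that for all sufficiently large $i$ the fixed-point category $\linking_i$ satisfies \emph{all} the transporter system axioms over the finite $p$-group $S_i$, in particular that $S_i$ is Sylow in the appropriate sense and that $\linking_i$ has enough centric objects. One has to rule out pathologies where fixing by $\Psi^i$ destroys too many morphisms or where a $\fusion$-centric subgroup fails to have a $\Psi^i$-invariant conjugate; controlling this uniformly in $i$ requires the precise normalization and compatibility statements about unstable Adams operations established in \cite{Junod} and \cite{JLL}, combined with the finiteness of the poset of $\fusion$-conjugacy classes of $\fusion$-centric subgroups from \cite{BLO3}. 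Once that is secured, faithfulness of $\Theta_i$ and the mod $p$ equivalence follow as above.
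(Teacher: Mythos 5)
Your overall strategy (fixed points of iterated Adams operations, verification of the transporter-system axioms over the finite fixed subgroups, and a filtered-colimit identification with $|\linking|$ followed by $p$-completion) is the same as the paper's, but as written the construction has a genuine gap at its foundation: the definition of $\linking_i$ is not well defined. You take objects to be certain subgroups $H \leq S_i$ and morphisms to be ``$\Psi^i$-fixed points of the morphism sets of $\linking$ between the corresponding subgroups of $S$,'' but a subgroup $H \leq S_i$ does not determine a subgroup of $S$ of which it is the fixed-point set: many $\Psi^i$-invariant $P \leq S$ satisfy $P \cap S_i = H$ (this is precisely the difficulty recorded in Remark \ref{trouble}). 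The paper's missing ingredient here is the functor $(\underline{\phantom{A}})^{\bullet}$ of \cite{BLO3}: one works inside $\linking^{\bullet}$, introduces the $S_i$-determined subgroups $P$ (those with $(P\cap S_i)^{\bullet}=P$), and takes as objects their $S_i$-roots $P\cap S_i$, closed under overgroups; the assignment $H \mapsto H^{\bullet}$ supplies the canonical correspondence your definition lacks. Relatedly, your stabilization argument appeals to ``the finiteness of the poset of $\fusion$-conjugacy classes of $\fusion$-centric subgroups,'' which is false for $p$-local compact groups (already in rank one, the finite dihedral subgroups of $\dihed$ give infinitely many conjugacy classes of centric subgroups); the correct finiteness statement is for $\fusion^{\bullet}$ (Proposition \ref{bulletprop}), and without it one cannot fix the finite lists of representative objects, morphisms and extensions needed to make the ``for $i$ large enough'' arguments work — in particular the verification of the extension axiom (II), which is the actual technical heart of Theorem \ref{transpi}, not the Sylow condition. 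The colimit step has the same defect: ``every object of $\linking$ lies in some $S_i$'' is false, since objects are in general infinite discrete $p$-toral groups; the correct statement is that every object of $\linking^{\bullet}$ is $S_i$-determined for $i$ large, so that $\linking^{\bullet}=\bigcup_i \linking_i^{\circ}$, and one then uses $|\linking^{\bullet}|\simeq|\linking|$ before $p$-completing.

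Two further points. First, your plan requires showing that every $\fusion$-centric $\Psi^i$-invariant subgroup of $S_i$ is an object of $\linking_i$ and that these exhaust the centric conjugacy classes; that is essentially the saturation question, which the paper deliberately leaves open in general (it is settled only in the rank-one and $U(n)$ examples) and which is not needed for Theorem \ref{ThA}: a transporter system only requires its object set to be closed under conjugacy and overgroups, so insisting on ``enough centric objects'' would leave your argument stuck at a point that is both unnecessary and, in general, unresolved. Second, with $\Psi_i=\Psi^i$ the fixed subgroups are not nested as $i$ increases (fixedness under $\Psi^i$ does not imply fixedness under $\Psi^{i+1}$), so the functors $\Theta_i$ and the sequential homotopy colimit do not exist as stated; the paper takes $\Psi_{i+1}=(\Psi_i)^{p}$, i.e. $p$-power iterates, exactly to obtain a nested family. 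These last two issues are repairable, but together with the well-definedness and finiteness problems above, the proposal as written does not yet amount to a proof.
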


Each transporter system $\linking_i$ is associated to an underlying fusion system $\fusion_i$, which is $Ob(\linking_i)$-generated and $Ob(\linking_i)$-saturated (see definition \ref{hgenhsat}). The notation comes from \cite{BCGLO1}. The first property (generation) means that morphisms in $\fusion_i$ are compositions of restrictions of morphisms among the objects in $Ob(\linking_i)$, and the second property (saturation) means that the objects in $Ob(\linking_i)$ satisfy the saturation axioms.

The saturation of the fusion systems $\fusion_i$ remains unsolved in the general case, but we study some examples in this paper where, independently of the operation $\Psi$, the triples $\g_i$ are always (eventually) $p$-local finite groups. To simplify the statements below, we will say that $\Psi$ induces an \textit{approximation of $\g$ by $p$-local finite groups} if the triples $\g_i$ in Theorem A are $p$-local finite groups. This definition will be made precise in section \S 3.

The following results correspond to Theorems \ref{rank1} and \ref{Un} respectively.

\begin{thml}

Let $\g$ be a $p$-local compact group of rank $1$, and let $\Psi$ be an unstable Adams operation acting on $\g$. Then, $\Psi$ induces an approximation of $\g$ by $p$-local finite groups.

\end{thml}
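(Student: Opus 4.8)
Write $\g=\ploc$. The plan is to feed the output of Theorem~\ref{ThA} into a structural analysis of the $\fusion$-centric-radical subgroups of $S$, exploiting that a rank~$1$ Sylow is ``almost abelian''. Let $T\cong\ptor$ be the maximal torus of $S$. Since $\g$ has rank~$1$, the conjugation action of $S$ on $T$ factors through a homomorphism $S/T\to\operatorname{Aut}(T)$ with finite image, and $\operatorname{Aut}(\ptor)$ contains no non-trivial finite $p$-subgroup when $p$ is odd and none of order greater than $2$ when $p=2$; hence this image is trivial for $p$ odd and of order at most $2$ for $p=2$. Thus $C_S(T)$ has index at most $2$ in $S$, and $T\le Z(S)$ for $p$ odd, and (after passing to fixed points) the same holds for the finite $p$-group $S_i$ underlying $\linking_i$ and its cyclic subgroup $T_i:=T\cap S_i$. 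I would first record the resulting dichotomy: a $\fusion$-centric subgroup $P\le S$ either contains $T$ — then $P$ is infinite, and there are only finitely many such $P$, parametrized by the subgroups of the finite $p$-group $S/T$ — or has $P\cap T$ finite and $P\not\le C_S(T)$, in which case $P$ is finite and $p=2$; and the same dichotomy holds for $\fusion_i$-centric subgroups of $S_i$ relative to $T_i$. Since $\fusion$ has only finitely many $\g$-conjugacy classes of $\fusion$-centric-radical subgroups, the finite ones among these have bounded order.

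With this dictionary, the argument runs as follows. By Theorem~\ref{ThA}, $\linking_i$ has underlying fusion system $\fusion_i$, which is $\hh_i$-generated and $\hh_i$-saturated, where $\hh_i:=Ob(\linking_i)$ is built from $Ob(\linking)=\fusion^c$ by the fixed-point construction; in particular $S_i\in\hh_i$ and $\hh_i$ contains $P\cap S_i$ for every $\Psi$-invariant $P\in\fusion^c$. The key claim is that $\hh_i\supseteq\fusion_i^{cr}$ once $i$ is large. Granting the description of $\fusion_i$ as the fixed-point fusion system of $\fusion$ under $\Psi^i$ (\S 3), a $\fusion_i$-centric-radical subgroup $Q\le S_i$ is, up to $\fusion_i$-conjugacy and for $i$ large, either (i) $Q=P\cap S_i$ for an infinite $\fusion$-centric-radical $P\supseteq T$, or (ii) a $\Psi$-invariant finite $\fusion$-centric-radical subgroup of $S$ contained in $S_i$. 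In case (i), $P\in\fusion^c$ is $\Psi^i$-invariant for $i$ large, since its image in $S/T$ is $\Psi^i$-fixed, so $Q\in\hh_i$; case (ii) lies in $\hh_i$ by construction. ``$i$ large'' enters twice: so that $S_i$ already contains representatives of the finitely many, boundedly large, finite $\fusion$-centric-radical subgroups; and so that $\Psi^i$ — which fixes each prescribed finite subgroup of $S$ pointwise once $i$ is large — actually fixes each of these, making them genuinely $\Psi$-invariant.

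Once $\hh_i\supseteq\fusion_i^{cr}$ is established, we conclude using \cite{BCGLO1}: an $\hh_i$-generated and $\hh_i$-saturated fusion system with $\fusion_i^{cr}\subseteq\hh_i$ is saturated, and the transporter system $\linking_i$ — arising as fixed points of the linking system $\linking$ — then gives rise to the centric linking system $\linking^c_{\fusion_i}$ of $\fusion_i$ with $|\linking_i|^\wedge_p\simeq|\linking^c_{\fusion_i}|^\wedge_p=:B\g_i$, since $\hh_i$ contains all $\fusion_i$-centric-radical subgroups. Hence $\g_i=(S_i,\fusion_i,\linking_i)$ is a $p$-local finite group for $i$ large, and combining with $B\g\simeq_p\operatorname{hocolim}|\linking_i|$ from Theorem~\ref{ThA}, $\Psi$ induces an approximation of $\g$ by $p$-local finite groups. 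The main obstacle lies in the comparison underlying (i)--(ii): for $p$ odd it is immediate, since $T\le Z(S)$ forces every $\fusion$-centric subgroup to contain $T$, making the finite case vacuous; the real work is $p=2$ with $S$ non-abelian — necessarily an infinite dihedral, quaternion or semidihedral group — where the transverse finite centric-radical subgroups (the analogues of the Klein four subgroups inside a finite dihedral group) must be shown to be detected correctly by $\fusion_i$, and it is here that one needs the classification of rank~$1$ discrete $p$-toral groups together with the boundedness coming from finiteness of the centric-radical conjugacy classes.
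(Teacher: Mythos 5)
Your overall shape (reduce to showing that the relevant $\fusion_i$-centric subgroups of $S_i$ are already objects of $\linking_i$, then invoke a saturation criterion) is the right one, but the step you label ``the key claim'' is asserted rather than proved, and it is precisely where all the work lies. Your dichotomy (i)--(ii) presupposes that every $\fusion_i$-centric-radical subgroup $Q\le S_i$ arises (up to conjugacy) from an $\fusion$-centric(-radical) subgroup of $S$; nothing in Theorem~\ref{ThA} or in the rank-$1$ structure theory gives this. The delicate direction is: if $R=(R_i)^{\bullet}$ is $S$-centric but \emph{not} $\fusion$-centric, why is its root $R_i$ not $\fusion_i$-centric? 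The morphisms of $\fusion$ witnessing non-centricity of $R$ need not be $\Psi_i$-invariant, so they need not descend to the strongly fixed points system $\fusion_i$ (which, note, is \emph{not} the naive fixed-point fusion system of $\fusion$ under $\Psi_i$ --- see Remark~\ref{trouble}); a priori $R_i$ could be $\fusion_i$-centric even though $R$ is not $\fusion$-centric, and then $Ob(\fusion_i^c)\setminus Ob(\linking_i)$ contains subgroups your argument never touches. The paper's proof is devoted exactly to closing this gap: it classifies the minimal strongly $\fusion$-closed subgroup $S_0\supseteq T$ (Lemma~\ref{isotype}: $T$, $\dihed$ or $\quat$, the last two only at $p=2$), uses this to produce Alperin-like decompositions whose factors carry $C_T$ into $T$ (Lemma~\ref{morphTid}, where the dihedral case requires a genuine argument on the Klein four subgroup), and then, using Lemmas~\ref{detectmorph} and~\ref{rootconjx} together with the surjectivity of $t\mapsto t^{-1}\Psi_i(t)$ on the infinitely $p$-divisible torus, inductively adjusts each factor by elements of $T$ to make it $\Psi_i$-invariant, thereby transporting non-centricity into $\fusion_i$. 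None of this (nor any substitute for it) appears in your proposal; even in your ``immediate'' odd-$p$ case one still needs this descent argument, since $T\le Z(S)$ only tells you centric subgroups contain $T$, not that $\fusion$-conjugations killing centricity can be chosen $\Psi_i$-invariantly.

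Two further points. First, the saturation criterion you quote from \cite{BCGLO1} is misstated: Theorem~A there (equivalently Proposition 3.6 of \cite{OV}, which is what the paper uses and what the definition of ``approximation'' is built on) requires that every $\fusion_i$-centric subgroup \emph{not} in $Ob(\linking_i)$ be $\fusion_i$-conjugate to some $K$ with $Out_{S_i}(K)\cap O_p(Out_{\fusion_i}(K))\neq\{1\}$, i.e.\ condition~(\ref{condition}); the hypothesis ``$\fusion_i^{cr}\subseteq Ob(\linking_i)$'' is not sufficient as stated, because deducing the displayed condition from ``centric but not radical'' would require passing to a fully normalized conjugate where $Out_{S_i}$ is Sylow --- and that uses saturation at that subgroup, which is exactly what is not yet known outside $Ob(\linking_i)$ (the paper handles the non-root case separately in Proposition~\ref{nosiroot}). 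Second, some structural side claims are inaccurate: in rank $1$ at $p=2$ a non-abelian $S$ need not be infinite dihedral, quaternion or semidihedral (e.g.\ $\dihed\times\Z/2$ has rank $1$); the paper's trichotomy applies to $S_0$, not to $S$. These inaccuracies are repairable, but the missing descent-to-$\Psi_i$-invariance argument is a genuine gap, not a detail.
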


\begin{thml}

Let $\g$ be the $p$-local compact group induced by the compact Lie group $U(n)$, and let $\Psi$ be an unstable Adams operation acting on $\g$. Then, $\Psi$ induces an approximation of $\g$ by $p$-local finite groups.

\end{thml}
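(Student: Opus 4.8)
The plan is to combine the general machinery of Theorem A with an explicit analysis of the $p$-local compact group $\g$ coming from $U(n)$, the point being to show that the fusion systems $\fusion_i$ produced by the unstable Adams operation $\Psi$ are eventually saturated, hence the triples $\g_i$ are genuine $p$-local finite groups. First I would recall the structure of $\g = (S, \fusion, \linking)$ for $G = U(n)$: the maximal discrete $p$-toral subgroup $S$ has identity component the discrete $p$-torus $T = (\Z/p^\infty)^n$ of rank $n$, and $S/T$ embeds in the symmetric group $\Sigma_n$ (the Weyl group of $U(n)$ being $\Sigma_n$, whose Sylow $p$-subgroup is the relevant component group). So $\g$ is a $p$-local compact group whose Weyl group is $\Sigma_n$ acting on $T$ by permutation of coordinates, and the fusion is entirely controlled by this permutation action together with fusion inside the finite $p$-group $S$.

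Next I would track what the construction in Theorem A does in this case. The operation $\Psi$ (of degree $\zeta$, say, a $p$-adic unit that is not a root of unity, chosen of high enough $p$-adic valuation) has fixed points on $T$ given by $T_i = \{t \in T : t^{\zeta^{?}} = t\}$ — concretely, the $m_i$-torsion subgroup $(\Z/p^{k_i})^n$ for a suitable increasing sequence $k_i \to \infty$ — and $S_i$ is the corresponding finite $p$-group, an extension of $(\Z/p^{k_i})^n$ by (a Sylow $p$-subgroup of) $\Sigma_n$. The fusion system $\fusion_i$ has morphisms the restrictions of $\fusion$-morphisms between subgroups of $S_i$; by the $Ob(\linking_i)$-generation statement in the paragraph after Theorem A, this is generated by the permutation action of $\Sigma_n$ on $(\Z/p^{k_i})^n$ together with fusion internal to $S_i$. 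The key claim is that for $i$ large this fusion system is saturated. Here I would use the model: the permutation action of $\Sigma_n$ on $(\Z/p^{k_i})^n$ is realized by an honest finite group, namely the wreath-product-type subgroup $G_i := (\Z/p^{k_i}) \wr \Sigma_n$ (or more precisely the finite group $(\Z/p^{k_i})^n \rtimes \Sigma_n$), and $S_i$ is a Sylow $p$-subgroup of $G_i$. One then checks that $\fusion_i = \fusion_{S_i}(G_i)$, the fusion system of the finite group $G_i$, which is automatically saturated. Establishing this identification — that the fixed-point fusion $\fusion_i$ coincides on the nose with the genuine finite-group fusion $\fusion_{S_i}(G_i)$, not merely agreeing on the subgroups in $Ob(\linking_i)$ — is the heart of the argument: one must verify that every $\fusion$-morphism between subgroups of $S_i$ is, after restriction, conjugation by an element of $G_i$, which amounts to understanding $\fusion$-conjugacy for $U(n)$ (via the Weyl group, see e.g. the description of fusion in compact Lie groups) and matching it with $G_i$-conjugacy.

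Having identified $\fusion_i$ with a saturated fusion system $\fusion_{S_i}(G_i)$, I would then invoke the fact that a transporter system whose underlying fusion system is saturated and which contains the centric subgroups determines a $p$-local finite group (after restricting/expanding to a centric linking system in the usual way): so the triple $\g_i$ is a $p$-local finite group, which is exactly the assertion that $\Psi$ induces an approximation of $\g$ by $p$-local finite groups, in the sense made precise in \S 3. Finally, the mod $p$ homotopy equivalence $B\g \simeq_p \operatorname{hocolim}|\linking_i|$ is immediate from Theorem A; one could additionally remark that it recovers, on classifying spaces, the Friedlander–Mislin style statement $BU(n)^\wedge_p \simeq \operatorname{hocolim} B G_i^\wedge_p$ with $G_i$ finite, consistent with the classical picture $BU(n)^\wedge_p \simeq (B\mathrm{GL}_n(\bar{\F}_\ell))^\wedge_p$ for $\ell \neq p$.

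The main obstacle I anticipate is precisely the on-the-nose identification $\fusion_i = \fusion_{S_i}(G_i)$: the fixed-point construction of Theorem A only guarantees $Ob(\linking_i)$-generation and $Ob(\linking_i)$-saturation, so a priori there could be "extra" or "missing" $\fusion$-conjugacies between small subgroups of $S_i$ that are not visible from the centric/radical ones and not captured by $G_i$. Overcoming this requires a genuinely hands-on description of $\fusion$-conjugacy in $\g$ for $U(n)$ — using that every morphism in $\fusion$ is induced by an element of $N_G(T)/T = \Sigma_n$ up to $T$-conjugacy (Alperin's fusion theorem for $p$-local compact groups, together with the explicit root datum of $U(n)$) — and then checking compatibility with the permutation action on the finite torus $(\Z/p^{k_i})^n$; the subtlety is at primes $p \leq n$ where $\Sigma_n$ has interesting $p$-local structure, and one must ensure that the passage to fixed points of $\Psi$ does not destroy saturation before $k_i$ is large enough to "see" all of the Weyl-group fusion.
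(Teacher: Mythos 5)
Your reduction to the finite groups $G_i = (\Z/p^{k_i})^n \rtimes \Sigma_n$ does not work: the identification $\fusion_i = \fusion_{S_i}(G_i)$, which you correctly single out as the heart of your argument, is in fact false whenever $p \leq n$. In $G_i$ the subgroup $(\Z/p^{k_i})^n$ is normal, so no morphism of $\fusion_{S_i}(G_i)$ can carry an element of the finite torus to an element outside it. But the fusion of $U(n)$ does exactly this as soon as $p \leq n$: for $n=p$, take $\tau = \mathrm{diag}(1,\zeta_p,\dots,\zeta_p^{p-1}) \in T$ and the $p$-cycle permutation matrix $\sigma \in S \setminus T$; the discrete Fourier transform matrix conjugates $\tau$ to $\sigma$ up to a scalar, and it normalizes the $\fusion$-centric subgroup $P = Z\cdot\langle \tau,\sigma\rangle$ (where $Z \cong \ptor$ is the subgroup of $p$-power scalars), giving an element of $Aut_{\fusion}(P)$ that exchanges the toral and non-toral generators. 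For $i$ large, $P^{\bullet}$ is $S_i$-determined, a lift of this automorphism in $\linking$ is $\Psi_i$-invariant, and its restriction to the $S_i$-root is by construction a morphism of $\fusion_i$ sending an element of $(\Z/p^{k_i})^n$ outside that subgroup; hence $\fusion_i \neq \fusion_{S_i}(G_i)$. The same example shows that your working assumption that $\fusion$-conjugacy in $U(n)$ is controlled by $N_{U(n)}(T)$ (Weyl group up to $T$-conjugacy) only holds for subgroups of $T$, not for arbitrary subgroups of $S$. The finite groups that genuinely model $\fusion_i$ are of the type $GL_n(\F_q)$ -- the Friedlander--Mislin picture you invoke at the end -- but proving that identification is considerably harder than the theorem itself, and it is precisely what the present approach is designed to avoid.

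The actual proof uses the permutation action of $\Sigma_n$ on $T$ only to establish a local statement: for every $P \leq S$ the centralizer $C_T(P)$ is a subtorus of $T$ (Lemma \ref{centralizerUn}, by producing compatible $p$-th roots on the orbits of $P$ in $\{1,\dots,n\}$). Since $t \mapsto t^{-1}\Psi_i(t)$ is a surjective endomorphism of any subtorus with kernel the $\Psi_i$-fixed elements, Lemma \ref{rootconjx} then shows that any two $S_i$-determined subgroups in the same $S$-conjugacy class have $S_i$-conjugate $S_i$-roots. Because $\fusion^{\bullet}$ has only finitely many $S$-conjugacy classes of non-$\fusion$-centric objects, it follows that for $i$ large an $S_i$-determined subgroup $R$ is $\fusion$-centric if and only if $R_i$ is $\fusion_i$-centric (using Corollary \ref{corcentral2} for one direction); thus no $\fusion_i$-centric $S_i$-root lies outside $Ob(\linking_i)$, the non-roots being handled by Proposition \ref{nosiroot}, and saturation of $\fusion_i$ follows from Proposition 3.6 of \cite{OV} together with Theorem \ref{transpi}. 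So the missing idea in your proposal is that one never needs to compute $\fusion_i$, only to control which subgroups of $S_i$ are $\fusion_i$-centric, and for $U(n)$ this is exactly what the subtorus property of $C_T(P)$ delivers.
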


As an immediate consequence of approximations of $p$-local compact groups by $p$-local finite groups, we prove in section \S 3 a Stable Elements Theorem for $p$-local compact groups (whenever such an approximation is available). Stable Elements Theorem has proved to be a rather powerful tool in the study of $p$-local finite groups, and one would of course like to have a general proof in the compact case. In this sense, our conjecture is that the constructions that we introduce in this paper yield approximations by $p$-local finite groups for all $p$-local compact groups.

One could also choose a different approach to the study of fixed points in this $p$-local setting. Indeed, given an unstable Adams operation $\Psi$ acting on $\g$, one could consider the homotopy fixed points of $B\g$ under the natural map induced by an unstable Adams operation acting on $\g$, namely the homotopy pull-back
$$
\xymatrix@C=15mm{
X \ar[r] \ar[d] & B\g \ar[d]^{\Delta} \\
B\g \ar[r]_{(id, |\Psi|)} & B\g \times B\g,\\
}
$$
and apply the topological tools provided in \cite{BLO2} and \cite{BLO4} to study the homotopy type of $X$. This point of view is in fact closer to the work of Broto and M{\o}ller \cite{BM} mentioned above, and will constitute the main subject in a sequel of this paper, where we will relate the constructions introduced in this paper and homotopy fixed points.

The paper is organized as follows. The first section contains the main definitions of discrete $p$-toral groups, (saturated) fusion systems, centric linking systems, transporter systems and $p$-local compact groups. This section also contains the definition of unstable Adams operations from \cite{JLL}. In the second section we study the effect of a whole family of unstable Adams operations acting on a fixed $p$-local compact group. This section is to be considered as a set of tools that we use in the following section. Indeed, the third section contains the construction of the triples $\g_i$ and the proof for Theorem A above. It also contains a little discussion about approximations of $p$-local compact groups by $p$-local finite groups, where we prove a Stable Elements Theorem (in this particular situation) for $p$-local compact groups. The last section is devoted to examples (Theorems B and C above).

The author would like to thank professors C. Broto, R. Levi and B. Oliver for many interesting conversations and discussions on the contents of this paper, as well as for their help and encouragement. Also many thanks to A. Libman for (at least one) thorough early review on this paper, as well as many useful hints. Finally, the author would like to thank the Universitat Aut{\`o}noma de Barcelona and the University of Aberdeen, where most of this work has been carried on. This work summarizes part of the doctoral thesis of the author, realized under the supervision of C. Broto and R. Levi, and as such the author would like to thank both of them for their continuous help.

The author was partially supported by FEDER/MEC grant BES-2005-11029.


\section{Background on $p$-local compact groups} 

In this section, we review the definition of a $p$-local compact group and state some results that we will use later on. Mostly, the contents in this section come from \cite{BLO3}. When this is the case, we will provide a reference where the reader can find a proof,  in order to simplify the exposition of this paper.


\subsection{Discrete $p$-toral groups and fusion systems}

\begin{defi}\label{defidiscreteptoral and fusion systems}

A \textbf{discrete $p$-torus} is a group $T$ isomorphic to a finite direct product of copies of $\ptor$.

A \textbf{discrete $p$-toral group} $P$ is an extension of a finite $p$-group $\pi$ by a discrete $p$-torus $T$. For such a group, we call $T \cong (\ptor)^r$ the \textbf{maximal torus} of $P$, and define the \textbf{rank} of $P$ as $r$.

\end{defi}

Discrete $p$-toral groups were characterized in \cite{BLO3} (Proposition 1.2) as those groups satisfying the descending chain condition and such that every finitely generated subgroup is a finite $p$-group.

In this paper we will deal with some infinite groups. For an infinite group $G$, we say that $G$ \textit{has Sylow $p$-subgroups} if $G$ contains a discrete $p$-toral group $S$ such that any finite $p$-subgroup of $G$ is $G$-conjugate to a subgroup of $S$.

For a group $G$ and subgroups $P,P' \leq G$, define $N_G(P,P') = \{g \in G \mbox{ } | \mbox{ } g \cdot P \cdot g^{-1} \leq P'\}$ and $Hom_G(P,P') = N_G(P,P')/C_G(P)$. Fusion systems over discrete $p$-toral groups are defined just as they were defined in the finite case.

\begin{defi}\label{defifusion}

A \textbf{fusion system} $\fusion$ over a discrete $p$-toral group $S$ is a category whose objects are the subgroups of $S$ and whose morphism sets $Hom_{\fusion}(P,P')$ satisfy the following conditions:
\begin{enumerate}[(i)]
\item $Hom_S(P,P') \subseteq Hom_{\fusion}(P,P') \subseteq Inj(P,P')$ for all $P,P' \leq S$.
\item Every morphism in $\fusion$ factors as an isomorphism in $\fusion$ followed by an inclusion.
\end{enumerate}

Given a fusion system $\fusion$ over a discrete $p$-toral group $S$, we will often refer to $T$ also as the \textbf{maximal torus} of $\fusion$, and the \textbf{rank} of $\fusion$ will then be the rank of the discrete $p$-toral group $S$. Two subgroups $P,P'$ are called \textbf{$\fusion$-conjugate} if $Iso_{\fusion}(P,P') \neq \emptyset$. For a subgroup $P \leq S$, we denote 
$$
\conj{P}{\fusion} = \{P' \leq S \mbox{ } | \mbox{ } P'  \mbox{ is } \fusion \mbox{-conjugate to } P\}.
$$

\end{defi}

For a discrete $p$-toral group $P$, the order of $P$ was defined in \cite{BLO3} as $|P| = (rk(P), |P/T_P|)$, where $T_P$ is the maximal torus of $P$. Thus, given two discrete $p$-toral groups $P$ and $Q$ we say that $|P| \leq |Q|$ if either $rk(P) < rk(Q)$, or $rk(P) = rk(Q)$ and $|P/T_P| \leq |Q/T_Q|$.

\begin{defi}\label{defisaturation}

Let $\fusion$ be a fusion system over a discrete $p$-toral group $S$. A subgroup $P \leq S$ is called \textbf{fully $\fusion$-normalized}, resp. \textbf{fully $\fusion$-centralized}, if $|N_S(P')| \leq |N_S(P)|$, resp. $|C_S(P')| \leq |C_S(P)|$, for all $P' \leq S$ which is $\fusion$-conjugate to $P$.

The fusion system $\fusion$ is called \textbf{saturated} if the following three conditions hold:
\begin{enumerate}[(I)]
\item For each $P \leq S$ which is fully $\fusion$-normalized, $P$ is fully $\fusion$-centralized, $Out_{\fusion}(P)$ is finite and $Out_S(P) \in Syl_p(Out_{\fusion}(P))$.
\item If $P \leq S$ and $f \in Hom_{\fusion}(P,S)$ is such that $P' = f(P)$ is fully $\fusion$-centralized, then there exists $\widetilde{f} \in Hom_{\fusion}(N_f,S)$ such that $f = \widetilde{f}_{|P}$, where
$$
N_f = \{g \in N_S(P) | f \circ c_g \circ f^{-1} \in Aut_S(P')\}.
$$
\item If $P_1 \leq P_2 \leq P_3 \leq \ldots$ is an increasing sequence of subgroups of $S$, with $P = \cup_{n=1}^{\infty} P_n$, and if $f \in Hom(P,S)$ is any homomorphism such that $f_{|P_n} \in Hom_{\fusion}(P_n,S)$ for all $n$, then $f \in Hom_{\fusion}(P,S)$.
\end{enumerate}

\end{defi}

Let $(S, \fusion)$ be a saturated fusion system over a discrete $p$-toral group. Note that, by definition, all the automorphism groups in a saturated fusion system are artinian and locally finite. The condition in axiom (I) of $Out_{\fusion}(P)$ being finite is in fact redundant, as was pointed out in Lemmas 2.3 and  2.5 \cite{BLO3}, where the authors show that the set $Rep_{\fusion}(P,Q) = Inn(Q) \setminus Hom_{\fusion}(P,Q)$ is finite for all $P, Q \in Ob(\fusion)$.

Given a discrete $p$-toral group $S$ and a subgroup $P \leq S$, we say that $P$ is \textit{centric in $S$}, or $S$-centric, if $C_S(P) = Z(P)$. We next define $\fusion$-centric and $\fusion$-radical subgroups.

\begin{defi}\label{deficentricrad}

Let $\fusion$ be a saturated fusion system over a discrete $p$-toral group. A subgroup $P \leq S$ is called \textbf{$\fusion$-centric} if all the elements of $\conj{P}{\fusion}$ are centric in $S$:
$$
C_S(P') = Z(P') \mbox{ for all } P' \in \conj{P}{\fusion}.
$$

A subgroup $P \leq S$ is called \textbf{$\fusion$-radical} if $Out_{\fusion}(P)$ contains no nontrivial normal $p$-subgroup:
$$
O_p(Out_{\fusion}(P)) = \{1\}.
$$

\end{defi}

Clearly, $\fusion$-centric subgroups are fully $\fusion$-centralized, and conversely, if $P$ is fully $\fusion$-centralized and centric in $S$, then it is $\fusion$-centric.

There is, of course, a big difference between working with finite $p$-groups and with discrete $p$-toral groups: the number of conjugacy classes of subgroups. Fortunately, in \cite{BLO3} the authors came out with a way of getting rid of infinitely many conjugacy classes while keeping the structure of a given fusion system.

This construction will be rather important in this paper, and we reproduce it here for the sake of a better reading. Let then $(S, \fusion)$ be a saturated fusion system over a discrete $p$-toral group, and let $T$ be the maximal torus of $\fusion$ and $W = Aut_{\fusion}(T)$.

\begin{defi}\label{defibullet}

Let $\fusion$ be a saturated fusion system over a discrete $p$-toral group $S$, and let $e$ denote the exponent of $S/T$,
$$
e = exp(S/T) = min\{p^k \mbox{ } | \mbox{ } x^{p^k} \in T \mbox{ for all } x \in S\}.
$$

\begin{enumerate}[(i)]

\item For each $P \leq T$, let
$$
I(P) = \{t \in T \mbox{ } | \mbox{ } \omega(t) = t \mbox{ for all } \omega \in W \mbox{ such that } \omega_{|P} = id_P\},
$$
and let $I(P)_0$ denote its maximal torus.
\item For each $P \leq S$, set $P^{[e]} = \{x^{p^e} \mbox{ } | \mbox{ } x \in P\} \leq T$, and let
$$
P^{\bullet} = P \cdot I(P^{[e]})_0 = \{xt | x \in P, t \in I(P^{[e]})_0\}.
$$
\item Set $\hh^{\bullet} = \{P^{\bullet} | P \in \fusion\}$ and let $\fusion^{\bullet}$ be the full subcategory of $\fusion$ with object set $Ob(\fusion^{\bullet}) = \hh^{\bullet}$.

\end{enumerate}

\end{defi}

The following is a summary of section \S 3 in \cite{BLO3}.

\begin{prop}\label{bulletprop}

Let $\fusion$ be a saturated fusion system over a discrete $p$-toral group $S$. Then,

\begin{enumerate}[(i)]

\item the set $\hh^{\bullet}$ contains finitely many $S$-conjugacy classes of subgroups of $S$; and

\item every morphism $(f:P \to Q)  \in Mor(\fusion)$ extends uniquely to a morphism $f^{\bullet}: P^{\bullet} \to Q^{\bullet}$.

\end{enumerate}

This makes $\functor: \fusion \to \fusion^{\bullet}$ into a functor. This functor is an idempotent functor ($(P^{\bullet})^{\bullet} = P^{\bullet}$), carries inclusions to inclusions ($P^{\bullet} \leq Q^{\bullet}$ whenever $P \leq Q$), and is left adjoint to the inclusion $\fusion^{\bullet} \subseteq \fusion$.

\end{prop}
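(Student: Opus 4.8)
The plan is to follow the structure of \S 3 in \cite{BLO3}, treating the two numbered assertions and the functoriality claim in turn, since all three rest on understanding the subgroups $I(P)_0$ for $P \leq T$ and the behaviour of the operation $P \mapsto P^{\bullet}$.

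First I would establish (i), the finiteness statement. The key observation is that for a subgroup $P \leq T$, the group $I(P)$ depends only on the conjugacy (in fact, $W$-orbit) data of $P$ up to finitely many possibilities: since $W = Aut_{\fusion}(T)$ is finite and $T \cong (\ptor)^r$ has finitely many ``$W$-types'' of subgroups after passing to identity components, one shows that $\{I(P)_0 : P \leq T\}$ is a finite set of subtori of $T$. This is the heart of the matter; the argument uses that each $I(P)_0$ is cut out as the identity component of a fixed-subgroup of a subgroup of $W$ acting on $T$, and there are only finitely many subgroups of the finite group $W$. Combined with the facts that $P^{[e]}$ lies in $T$, that $S/T$ is finite, and that $P^{\bullet}$ is determined by $P$ together with the subtorus $I(P^{[e]})_0$, a counting argument (a subgroup of $S$ containing a given subtorus and with bounded image in $S/T$ has finitely many conjugacy classes) yields that $\hh^{\bullet}$ meets only finitely many $S$-conjugacy classes.

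Next I would prove (ii), the unique extension of morphisms. Given $f : P \to Q$ in $\fusion$, I would first reduce to the case where $f$ is an isomorphism (using the factorization axiom (ii) of Definition \ref{defifusion}), then, after conjugating, to a statement about how $f$ interacts with the torus: one checks that $f$ carries $P^{[e]}$ into $Q^{[e]}$ and, using the naturality of the construction $P \mapsto I(P)$ under $W$-equivariant maps, that $f$ extends over $I(P^{[e]})_0$. Uniqueness follows because $P$ is of finite index in $P^{\bullet}$ (both have the same maximal torus containing $T$, and $P^{\bullet}/P$ sits inside $T$), so a homomorphism out of $P^{\bullet}$ is determined on a finite-index subgroup up to the constraint of being a group homomorphism into an injective target; more precisely divisibility of the torus forces the extension to be unique. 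Granting (ii), functoriality of $\functor$ is formal: $(gf)^{\bullet} = g^{\bullet} f^{\bullet}$ and $id^{\bullet} = id$ by uniqueness.

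Finally, the three remaining properties. Idempotence $(P^{\bullet})^{\bullet} = P^{\bullet}$ follows once one checks $I((P^{\bullet})^{[e]})_0 = I(P^{[e]})_0$, which in turn reduces to $(P^{\bullet})^{[e]}$ and $P^{[e]}$ having the same image under the relevant $W$-fixed-point operator. That inclusions go to inclusions ($P \leq Q \implies P^{\bullet} \leq Q^{\bullet}$) follows from monotonicity of $P \mapsto P^{[e]}$ and the fact that $I(-)$ reverses inclusions while $I(-)_0 \subseteq$ the ambient group, arranged so that the composite $P \mapsto I(P^{[e]})_0$ is monotone in the right direction; I would verify this carefully as it is the one place where the order of operations matters. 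The adjunction (left adjoint to $\fusion^{\bullet} \hookrightarrow \fusion$) then amounts to the natural bijection $Hom_{\fusion}(P^{\bullet}, R) \cong Hom_{\fusion}(P, R)$ for $R \in \hh^{\bullet}$, which is exactly the existence-and-uniqueness of extensions from (ii) together with idempotence. The main obstacle is the finiteness in part (i): controlling the set of subtori $I(P)_0$ via the finite group $W$ is the nontrivial input, and everything else is bookkeeping built on top of it.
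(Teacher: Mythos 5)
The paper itself offers no proof of this proposition: it is stated as a summary of \S 3 of [BLO3], so your sketch has to be measured against the argument there. Your identification of the heart of (i) is right (the subtori $I(P)_0$ are identity components of fixed-point sets of subgroups of the finite group $W$, hence finitely many), but the counting principle you then invoke is false as stated: ``a subgroup of $S$ containing a given subtorus and with bounded image in $S/T$ has finitely many conjugacy classes'' fails already for the trivial subtorus, since the subgroups of $T$ itself have trivial image in $S/T$ and form infinitely many conjugacy classes. What the argument actually needs is a bound on the order of $P/I(P^{[e]})_0$ for $P=P^{\bullet}$, and in particular on $(P\cap T)/I(P^{[e]})_0$; this holds because every $t\in P\cap T$ satisfies $t^{p^e}\in P^{[e]}\subseteq I(P^{[e]})$ and $I(P^{[e]})/I(P^{[e]})_0$ is finite (with finitely many possible values), after which one quotes the finiteness of conjugacy classes of subgroups of bounded order in a discrete $p$-toral group. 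Your sketch is missing exactly this step.

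The more serious gap is in (ii). Your uniqueness argument rests on $P$ having finite index in $P^{\bullet}$, which is simply not true: if, say, $W$ acts trivially on $T$ then $I(P^{[e]})=T$ and $P^{\bullet}=P\cdot T$, of infinite index over a finite $P$. Moreover ``divisibility of the torus forces the extension to be unique'' is not a valid principle -- divisibility gives existence, not uniqueness, of abstract extensions, and in any case the uniqueness asserted is among morphisms of $\fusion$, not abstract homomorphisms. The correct argument (as in [BLO3]) uses saturation in an essential way: restrictions of $\fusion$-morphisms to subgroups of $T$ are restrictions of elements of $W=Aut_{\fusion}(T)$ (Lemma 2.4 of [BLO3]); two extensions of $f$ then restrict on the torus of $P^{\bullet}$ to elements of $W$ which agree on $P^{[e]}$, so their ``difference'' fixes $P^{[e]}$ pointwise and hence fixes $I(P^{[e]})$ by the very definition of $I$, forcing agreement on $I(P^{[e]})_0$ and so on all of $P^{\bullet}=P\cdot I(P^{[e]})_0$. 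The same input is hidden in your existence step: invoking ``naturality of $I$ under $W$-equivariant maps'' presupposes that $f|_{P^{[e]}}$ is compatible with $W$, which is precisely this saturation lemma, and you also never address why the extended map is a morphism of $\fusion$ at all, which is the real content of Proposition 3.3 in [BLO3]. (Minor point: $I(-)$ preserves inclusions rather than reversing them -- $P\leq Q$ leaves fewer automorphisms required to act as the identity, hence $I(P)\leq I(Q)$ -- although your final monotonicity conclusion for $P\mapsto I(P^{[e]})_0$ is correct.)
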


Finally, we state Alperin's fusion theorem for saturated fusion systems over discrete $p$-toral groups.

\begin{thm}\label{Alperin}

(3.6 \cite{BLO3}). Let $\fusion$ be a saturated fusion system over a discrete $p$-toral group $S$. Then, for each $f \in Iso_{\fusion}(P,P')$ there exist sequences of subgroups of $S$
$$
\begin{array}{ccc}
P = P_0, P_1, \ldots, P_k = P' & \mbox{and} & Q_1, \ldots, Q_k,\\
\end{array}
$$
and elements $f_j \in Aut_{\fusion}(Q_j)$ such that
\begin{enumerate}[(i)]
\item for each $j$, $Q_j$ is fully normalized in $\fusion$, $\fusion$-centric and $\fusion$-radical;
\item also for each $j$, $P_{j-1}, P_j \leq Q_j$ and $f_j(P_{j-1}) = P_j$; and
\item $f = f_k \circ f_{k-1} \circ \ldots \circ f_1$.

\end{enumerate}

\end{thm}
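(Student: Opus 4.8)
The plan is to run the proof of Alperin's fusion theorem for saturated fusion systems over finite $p$-groups, with the bullet construction of Definition \ref{defibullet} taking over the role played there by the finiteness of $S$. Call $f \in Iso_{\fusion}(P,P')$ \emph{good} if it admits sequences of subgroups and automorphisms as in the statement; good morphisms are plainly closed under composition (concatenate the sequences) and under inversion (reverse them and invert each $f_j$), and a single $\psi \in Aut_{\fusion}(P)$ with $P$ fully $\fusion$-normalized, $\fusion$-centric and $\fusion$-radical is good (take $k=1$, $Q_1 = P$).

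\emph{Reduction to $\bullet$-closed subgroups.} Given $f \in Iso_{\fusion}(P,P')$, Proposition \ref{bulletprop}(ii) yields $f^{\bullet} \in Iso_{\fusion}(P^{\bullet},(P')^{\bullet})$. If $f^{\bullet}$ is good, witnessed by $P^{\bullet} = R_0, \dots, R_k = (P')^{\bullet}$, subgroups $Q_1, \dots, Q_k$ and $f_j \in Aut_{\fusion}(Q_j)$, then $f$ is good: set $P_j = (f_j\circ\cdots\circ f_1)(P)$; since $R_{j-1} = (f_{j-1}\circ\cdots\circ f_1)(P^{\bullet}) \supseteq P_{j-1}$ we get $P_{j-1} \leq R_{j-1} \leq Q_j$ and $P_j \leq R_j \leq Q_j$, while $f_j(P_{j-1}) = P_j$ and $f_k\circ\cdots\circ f_1$ restricts on $P$ to $f^{\bullet}|_P = f$, so $P_k = P'$; the groups $Q_j$ are unchanged and hence still satisfy (i). We may therefore assume $P = P^{\bullet}$ and $P' = (P')^{\bullet}$, and argue by downward induction on the order $|P|$, with $P$ ranging over $\hh^{\bullet}$ only. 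This is well founded: by Proposition \ref{bulletprop}(i) the order takes finitely many values on $\hh^{\bullet}$, it is $\leq |S|$, and $P \subsetneq Q$ forces $|P| < |Q|$ (if $rk(T_P) = rk(T_Q)$ then $T_P = T_Q$, and then $P/T_P \subsetneq Q/T_Q$). The top value $|S|$ is attained only by $S = S^{\bullet}$, which is fully $\fusion$-normalized, is $\fusion$-centric as $C_S(S) = Z(S)$, and is $\fusion$-radical since by axiom (I) $Out_S(S) = 1$ is Sylow in $Out_{\fusion}(S)$, forcing $Out_{\fusion}(S)$ to be a $p'$-group; so the base case holds.

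\emph{Inductive step.} Let $f \in Iso_{\fusion}(P,P')$ with $P = P^{\bullet}$ and $|P| < |S|$. Pick $R \in \conj{P}{\fusion}$ fully $\fusion$-normalized (automatically $\bullet$-closed, being $\fusion$-conjugate to $P$) and fixed isomorphisms $\alpha\colon P \to R$, $\beta\colon P' \to R$. Then $f = \beta^{-1}\circ(\beta f\alpha^{-1})\circ\alpha$ with $\beta f\alpha^{-1} \in Aut_{\fusion}(R)$, so by the closure properties it suffices to show (a) every isomorphism $\gamma\colon P \to R$ onto a fully normalized subgroup is good, and (b) $Aut_{\fusion}(R)$ consists of good morphisms. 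For (a): since $Out_{\fusion}(R)$ is finite (as $Rep_{\fusion}(R,R)$ is finite) and, by axiom (I), $Out_S(R)$ is Sylow in it, ordinary Sylow theory in $Out_{\fusion}(R)$ together with conjugacy in $Inn(R)$ lets us, after composing $\gamma$ on the left with an element of $Aut_{\fusion}(R)$, assume $\gamma\,Aut_S(P)\,\gamma^{-1} \leq Aut_S(R)$; then $N_{\gamma} = N_S(P)$, and axiom (II) extends $\gamma$ to $\bar\gamma\colon N_S(P) \to S$. Passing to $\bullet$-closures (Proposition \ref{bulletprop}(ii)) gives an isomorphism $N_S(P)^{\bullet} \to S$ between $\bullet$-closed subgroups of order $\geq |N_S(P)| > |P|$ (note $P \subsetneq N_S(P)$ as $P < S$), which is good by induction; hence $\gamma$ is good. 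For (b), if $R$ is fully $\fusion$-normalized, $\fusion$-centric and $\fusion$-radical we are in the base case, so assume otherwise. If $R$ is not $\fusion$-centric, then $R$ being fully centralized forces $C_S(R) \not\leq R$; for any $\psi \in Aut_{\fusion}(R)$ one has $C_S(R) \leq N_{\psi}$, so axiom (II) extends $\psi$ to an automorphism of $R\cdot C_S(R) \supsetneq R$. If $R$ is $\fusion$-centric but not $\fusion$-radical, then $O_p(Out_{\fusion}(R)) \neq 1$; being a normal $p$-subgroup of the finite group $Out_{\fusion}(R)$ it lies in $Out_S(R)$, its preimage $Q \leq N_S(R)$ strictly contains $R$ (as $R$ is centric), and its normality forces $Q \leq N_{\psi}$ for all $\psi \in Aut_{\fusion}(R)$, so axiom (II) extends $\psi$ to an automorphism of $Q$. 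In both cases, passing to $\bullet$-closures keeps us among $\bullet$-closed subgroups of strictly larger order, so the extension is good by induction, and restricting its decomposition back down to $R$ (as in the reduction step) shows $\psi$ is good. This closes the induction, and, restricting $f^{\bullet}$'s decomposition to $f$, proves the theorem.

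The step I expect to be the main obstacle is the termination of this induction. Over a finite $p$-group it is automatic, but a discrete $p$-toral group need not be nilpotent nor satisfy the ascending chain condition — e.g.\ $\dihed$, in which the normalizer chain of a reflection is strictly increasing and never reaches $S$ — so a naive induction on co-order, or along normalizer chains, does not terminate. What makes the argument work is that $\hh^{\bullet}$ carries only finitely many $S$-conjugacy classes (Proposition \ref{bulletprop}(i)), provided one re-applies the bullet functor after \emph{every} extension so as to stay inside $\hh^{\bullet}$; the ``runaway'' subgroups are exactly those whose bullet closure is already $S$, so re-bulleting lands one at the top of the induction in finitely many steps. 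The only other ingredient not literally present in the finite case — the Sylow-theoretic normalisation of $\gamma$ in step (a) — reduces to ordinary finite Sylow theory in $Out_{\fusion}(-)$ combined with conjugacy inside $Inn(-)$.
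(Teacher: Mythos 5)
This theorem is not proved in the paper at all --- it is quoted from Theorem 3.6 of \cite{BLO3} --- and your argument is essentially the proof given there: reduce to subgroups in $\hh^{\bullet}$ via Proposition \ref{bulletprop}, then run the finite Alperin--Puig induction downward on the order $|P|$, which terminates precisely because $\hh^{\bullet}$ meets only finitely many $S$-conjugacy classes. Your proof is correct as written; the two facts you use tacitly --- that every subgroup admits a fully $\fusion$-normalized $\fusion$-conjugate, and that $P \lneqq S$ implies $P \lneqq N_S(P)$ for discrete $p$-toral groups --- are standard results from the early sections of \cite{BLO3} and need only be cited.
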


It is also worth mentioning the alternative set of saturation axioms provided in \cite{KS}, since it will be useful in later sections. Let $\fusion$ be a fusion system over a finite $p$-group $S$, and consider the following conditions:
\begin{enumerate}[(I')]
\item $Out_S(S) \in Syl_p(Out_{\fusion}(S))$.
\item Let $f:P \to S$ be a morphism in $\fusion$ such that $P' = f(P)$ is fully $\fusion$-normalized. Then, $f$ extends to a morphism $\widetilde{f}: N_f \to S$ in $\fusion$, where
$$
N_f = \{g \in N_S(P)|f \circ c_g \circ f^{-1} \in Aut_S(P')\}.
$$
\end{enumerate}
The following result is a compendium of appendix A in \cite{KS}.

\begin{prop}\label{Stancuaxioms}

Let $\fusion$ be a fusion system over a finite $p$-group $S$. Then, $\fusion$ is saturated (in the sense of definition \ref{defisaturation}) if and only if it satisfies axioms (I') and (II') above.

\end{prop}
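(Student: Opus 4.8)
The statement asserts the equivalence of two sets of saturation axioms for fusion systems over a \emph{finite} $p$-group, so the whole discrete $p$-toral apparatus collapses: $T = 1$, the maximal torus is trivial, $S/T = S$, and every subgroup is already equal to its $\bullet$-version. In particular axiom (III) of Definition \ref{defisaturation} is vacuous (there are no strictly increasing infinite chains of subgroups of a finite group), all automorphism groups are automatically finite, and ``fully normalized'' now just means having a Sylow-sized normalizer in $S$ among the conjugates. So the content reduces to: (I) $+$ (II) $\iff$ (I') $+$ (II'). The plan is to prove the two implications separately, and in fact most of the work lies in the direction (I')+(II') $\Rightarrow$ (I)+(II); the reverse direction is essentially bookkeeping.

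For the direction (I)+(II) $\Rightarrow$ (I')+(II'): axiom (II') is literally a special case of axiom (II) — one takes $P' = f(P)$ fully $\fusion$-normalized, notes that a fully normalized subgroup of a saturated fusion system is fully centralized (this is part of axiom (I), or rather a standard consequence of it), and then applies (II) to get the extension to $N_f$. Axiom (I') is the special case $P = S$ of axiom (I): $S$ is trivially fully normalized, so $Out_S(S) \in Syl_p(Out_{\fusion}(S))$. Hence this direction is immediate from the definitions once one invokes the ``fully normalized $\Rightarrow$ fully centralized'' fact, which is itself a formal consequence of (I) together with a counting argument relating $|N_S(P)|$, $|C_S(P)|$ and $|\mathrm{Aut}_S(P)|$.

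For the harder direction (I')+(II') $\Rightarrow$ (I)+(II): the strategy, following the appendix of \cite{KS}, is to first bootstrap from the single group $S$ to \emph{all} fully normalized subgroups. I would proceed as follows. First, using only (I') and (II'), show that for every $P \leq S$ there is a morphism in $\fusion$ sending $P$ to a fully normalized subgroup $P'$ with the property that $\mathrm{Aut}_S(P')$ is a Sylow $p$-subgroup of $\mathrm{Aut}_{\fusion}(P')$ — this is done by a ``going up'' argument: repeatedly use (II') to lift automorphisms, pushing the image of $N_S(P)$ up inside the target normalizer, and iterate until one lands in a fully normalized conjugate where the Sylow condition on automorphisms holds. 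The key local lemma here is that if $P'$ is fully normalized then $N_S(P')$ maps onto a Sylow $p$-subgroup of $\mathrm{Aut}_{\fusion}(P')$; this is extracted from (I') applied not to $S$ but to the normalizer fusion system $N_{\fusion}(P')$, or alternatively proven directly by a transfer/Sylow argument inside $\mathrm{Aut}_{\fusion}(P')$. Once the Sylow statement (the ``$Out_S(P)\in Syl_p(Out_\fusion(P))$'' half of axiom (I)) is established for fully normalized $P$, the ``fully normalized $\Rightarrow$ fully centralized'' half follows by the same counting identity as above, and the full extension axiom (II) follows from (II') by a conjugation trick: given $f\colon P\to S$ with $f(P)$ fully centralized (not necessarily fully normalized), pick a fully normalized $\fusion$-conjugate $P^{*}$ and morphisms relating $P$, $f(P)$, $P^{*}$; use (II') to extend the relevant maps to the normalizers, and splice these extensions together to extend $f$ itself on $N_f$.

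The main obstacle, and the step that requires genuine care rather than formal manipulation, is the bootstrapping lemma: showing from (I')+(II') alone that every subgroup is $\fusion$-conjugate to a fully normalized one on which the automorphism-Sylow condition holds, and simultaneously controlling the normalizer fusion systems well enough that the induction on $|S : N_S(P)|$ (or on the order of $P$) closes. One has to be careful that applying (I')+(II') ``inside'' $N_{\fusion}(P')$ is legitimate — i.e.\ that $N_{\fusion}(P')$ is again a fusion system over the finite $p$-group $N_S(P')$ to which the hypotheses transfer — and this is exactly the kind of point where the finiteness of $S$ is used essentially (descending chain condition, existence of maximal-order elements in a conjugacy class). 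Since the paper only needs this proposition as a black box quoted from \cite{KS}, I would present the above as the outline and refer to \cite{KS} appendix A for the detailed verification of the bootstrapping step.
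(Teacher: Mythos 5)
Your proposal is correct and takes essentially the same approach as the paper: the paper gives no argument for Proposition \ref{Stancuaxioms} at all, presenting it simply as a compendium of appendix A of \cite{KS}, which is exactly where you defer the substantive direction. Your added material (the easy implication from saturation to (I') and (II'), the vacuity of axiom (III) over a finite $p$-group, and the sketch of the bootstrapping argument) is consistent with the cited source and introduces no gap relative to the paper's own treatment.
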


A more general version of this result for fusion systems over discrete $p$-toral groups was proved in \cite{Gonza}, but is of no use in the paper.


\subsection{Linking systems and transporter systems}

Linking systems are the third and last ingredient needed to form a $p$-local compact group.

\begin{defi}\label{defilinking}

Let $\fusion$ be a saturated fusion system over a discrete $p$-toral group $S$. A \textbf{centric linking system associated to} $\fusion$ is a category $\linking$ whose objects are the $\fusion$-centric subgroups of $S$, together with a functor
$$
\rho: \linking \longrightarrow \fusion^c
$$
and ``distinguished'' monomorphisms $\delta_P: P \to Aut_{\linking}(P)$ for each $\fusion$-centric subgroup $P \leq S$, which satisfy the following conditions.

\begin{enumerate}[(A)]
\item $\rho$ is the identity on objects and surjective on morphisms. More precisely, for each pair of objects $P, P' \in \linking$, $Z(P)$ acts freely on $Mor_{\linking}(P,P')$ by composition (upon identifying $Z(P)$ with $\delta_P(Z(P)) \leq Aut_{\linking}(P)$), and $\rho$ induces a bijection
$$
Mor_{\linking}(P,P')/Z(P) \stackrel{\cong} \longrightarrow Hom_{\fusion}(P,P').
$$
\item For each $\fusion$-centric subgroup $P \leq S$ and each $g \in P$, $\rho$ sends $\delta_P(g) \in Aut_{\linking}(P)$ to $c_g \in Aut_{\fusion}(P)$.
\item For each $\varphi \in Mor_{\linking}(P,P')$ and each $g \in P$, the following square commutes in $\linking$:
$$
\xymatrix{
P \ar[r]^{\varphi} \ar[d]_{\delta_P(g)} & P' \ar[d]^{\delta_{P'}(h)} \\
P \ar[r]_{\varphi} & P',
}
$$
where $h = \rho(\varphi)(g)$.

\end{enumerate}

A \textbf{$p$-local compact group} is a triple $\g = (S, \fusion, \linking)$, where $S$ is a discrete $p$-toral group, $\fusion$ is a saturated fusion system over $S$, and $\linking$ is a centric linking system associated to $\fusion$. The \textbf{classifying space} of $\g$ is the $p$-completed nerve
$$
B\g \stackrel{def} = |\linking|^{\wedge}_p.
$$
Given a $p$-local compact group $\g$, the subgroup $T \leq S$ will be called the \textbf{maximal torus} of $\g$, and the \textbf{rank} of $\g$ will then be the rank of the discrete $p$-toral group $S$.

\end{defi}

We will in general denote a $p$-local compact group just by $\g$, assuming that $S$ is its Sylow $p$-subgroup, $\fusion$ is the corresponding fusion system, and $\linking$ is the corresponding linking system.

As expected, the classifying space of a $p$-local compact group behaves ``nicely'', meaning that $B\g = |\linking|^{\wedge}_p$ is a $p$-complete space (in the sense of \cite{BK}) whose fundamental group is a finite $p$-group, as proved in Proposition 4.4 \cite{BLO3}.

Next we state some properties of linking systems. We start with an extended version of Lemma 4.3 \cite{BLO3}.

\begin{lmm}\label{3.2OV}

Let $\g$ be a $p$-local compact group. Then, the following holds.

\begin{enumerate}[(i)]

\item Fix morphisms $f \in Hom_{\fusion}(P,Q)$ and $f' \in Hom_{\fusion}(Q,R)$, where $P,Q,R \in \linking$. Then, for any pair of liftings $\varphi '\in \rho^{-1}_{Q,R}(f')$ and $\omega \in \rho^{-1}_{P,R}(f' \circ f)$, there is a unique lifting $\varphi \in \rho^{-1}_{P,Q}(f)$ such that $\varphi ' \circ \varphi = \omega$.

\item All morphisms in $\linking$ are monomorphisms in the categorical sense. That is, for all $P,Q,R \in \linking$ and all $\varphi_1, \varphi_2 \in Mor_{\linking}(P,Q)$, $\psi \in Mor_{\linking}(Q,R)$, if $\psi \circ \varphi_1 = \psi \circ \varphi_2$ then $\varphi_1 = \varphi_2$.

\item For every morphism $\varphi \in Mor_{\linking}(P,Q)$ and every $P_0, Q_0 \in \linking$ such that $P_0 \leq P$, $Q_0 \leq Q$ and $\rho(\varphi)(P_0) \leq Q_0$, there is a unique morphism $\varphi_0 \in Mor_{\linking}(P_0, Q_0)$ such that $\varphi \circ \iota_{P_0,P} = \iota_{Q_0,Q} \circ \varphi_0$. In particular, every morphism in $\linking$ is a composite of an isomorphism followed by an inclusion.

\item All morphisms in $\linking$ are epimorphisms in the categorical sense. In other words, for all $P,Q,R \in \linking$ and all $\varphi \in Mor_{\linking}(P,Q)$ and $\psi_1, \psi_2 \in Mor_{\linking}(Q,R)$, if $\psi_1 \circ \varphi = \psi_2 \circ \varphi$ then $\psi_1 = \psi_2$.

\end{enumerate}

\end{lmm}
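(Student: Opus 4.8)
The plan is to prove the four statements roughly in the order given, using the defining axioms (A), (B), (C) of a centric linking system together with the free action of the center on morphism sets. For part (i), I would start from the liftings $\varphi' \in \rho^{-1}_{Q,R}(f')$ and $\omega \in \rho^{-1}_{P,R}(f'\circ f)$; pick any lifting $\varphi_1 \in \rho^{-1}_{P,Q}(f)$, which exists since $\rho$ is surjective on morphisms by axiom (A). Then $\varphi' \circ \varphi_1$ and $\omega$ both lie in $\rho^{-1}_{P,R}(f'\circ f)$, so by axiom (A) they differ by a unique element of $Z(P)$, i.e. $\omega = (\varphi' \circ \varphi_1) \circ \delta_P(z)$ for a unique $z \in Z(P)$. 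Setting $\varphi = \varphi_1 \circ \delta_P(z)$ gives the desired lifting with $\varphi' \circ \varphi = \omega$, and uniqueness of $\varphi$ follows because two such solutions would differ by an element of $Z(P)$ which, after composing with the monomorphism $\varphi'$ (once part (ii) is available, or directly from the free action computation), must be trivial. I would take care to order parts (i) and (ii) so that the argument is not circular: in fact uniqueness in (i) can be extracted purely from the free $Z(P)$-action on $Mor_{\linking}(P,R)$ applied to $\varphi' \circ \varphi$, without invoking (ii).

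For part (ii), to show morphisms are categorical monomorphisms, suppose $\psi \circ \varphi_1 = \psi \circ \varphi_2$ with $\varphi_1, \varphi_2 \in Mor_{\linking}(P,Q)$ and $\psi \in Mor_{\linking}(Q,R)$. Applying $\rho$ gives $\rho(\psi)\rho(\varphi_1) = \rho(\psi)\rho(\varphi_2)$ in $\fusion$; since morphisms in $\fusion$ are injective group homomorphisms hence monomorphisms, $\rho(\varphi_1) = \rho(\varphi_2) =: f$. So $\varphi_2 = \varphi_1 \circ \delta_P(z)$ for a unique $z \in Z(P)$ by axiom (A). Then $\psi \circ \varphi_1 = \psi \circ \varphi_1 \circ \delta_P(z)$, and the free action of $Z(Q)$ — together with axiom (C), which lets us move $\delta_P(z)$ past $\varphi_1$ to $\delta_Q(\rho(\varphi_1)(z))$ — forces $\rho(\varphi_1)(z) = 1$ in $Z(Q)$, hence $z=1$ since $\rho(\varphi_1)$ is injective. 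Thus $\varphi_1 = \varphi_2$. This is the place where the $\fusion$-centric hypothesis on objects (so that $Z(P)=C_S(P)$ behaves well and the distinguished monos are defined) and axiom (C) genuinely enter; I expect this to be the technical heart.

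For part (iii), given $\varphi \in Mor_{\linking}(P,Q)$ with $\rho(\varphi)(P_0) \le Q_0$, I would invoke the corresponding factorization in $\fusion$: $\rho(\varphi)$ restricts to $\bar{f}_0 \in Hom_{\fusion}(P_0,Q_0)$ with $\iota_{Q_0,Q} \circ \bar f_0 = \rho(\varphi)|_{P_0} \circ \iota_{P_0,P}$ at the level of $\fusion$. Choosing any lift $\psi_0 \in \rho^{-1}_{P_0,Q_0}(\bar f_0)$ and comparing $\varphi \circ \iota_{P_0,P}$ with $\iota_{Q_0,Q} \circ \psi_0$ — both lying over the same morphism in $\fusion$ — they differ by a unique element of $Z(P_0)$, which can be absorbed to produce the unique $\varphi_0$; here I would use part (ii) (monomorphisms) to get uniqueness of $\varphi_0$ from uniqueness of its image. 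The ``in particular'' statement then follows by taking $P_0 = P$, $Q_0 = \rho(\varphi)(P)$, recovering the isomorphism-followed-by-inclusion factorization, mirroring axiom (ii) of a fusion system via axiom (A). Finally, part (iv): to see morphisms are epimorphisms, suppose $\psi_1 \circ \varphi = \psi_2 \circ \varphi$. Using part (iii), factor $\varphi$ as an isomorphism $\varphi_{iso}: P \xrightarrow{\cong} P'$ followed by an inclusion $\iota_{P',Q}$ where $P' = \rho(\varphi)(P)$; since isomorphisms are cancellable it suffices to treat the inclusion case, so reduce to showing $\iota_{P',Q}$ is epi, i.e. if $\psi_1, \psi_2 \in Mor_{\linking}(Q,R)$ agree after restriction to $P'$ then they are equal. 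Applying $\rho$ and using that the restriction maps in $\fusion$ are injective on $Hom_{\fusion}(Q,R)$ for $Q$ centric (because a morphism of $\fusion$ is determined by where enough of $Q$, in particular $P' \supseteq Z(Q)$-related data, goes — more precisely one uses that $\rho(\psi_1)$ and $\rho(\psi_2)$ agree on $P'$ and that this determines them, a standard fact for centric subgroups), we get $\rho(\psi_1) = \rho(\psi_2)$, whence $\psi_2 = \psi_1 \circ \delta_Q(w)$ for some $w \in Z(Q)$; restricting to $P'$ and using axiom (C) plus the free action shows $w$ acts trivially, so $w = 1$. The main obstacle I anticipate is getting the $\fusion$-level injectivity statements for centric subgroups right — specifically that a morphism out of a centric subgroup is determined by its restriction to a centric subgroup — and threading axiom (C) carefully through each cancellation argument without circularity between the four parts.
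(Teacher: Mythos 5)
The serious problem is in part (iv). Your reduction to showing that an inclusion $\iota_{P',Q}$ is an epimorphism is fine, but the step you lean on --- that two morphisms in $Hom_{\fusion}(Q,R)$ agreeing on an $\fusion$-centric subgroup $P'\leq Q$ must coincide, which you call ``a standard fact for centric subgroups'' --- is false. Take $S=Q=R=D_8=\gen{r,s}$, $\fusion=\fusion_S(S)$ and $P'=\gen{r^2,s}$; then $P'$ is self-centralizing and normal in $S$, hence $\fusion$-centric, while $id_Q$ and $c_s\in Aut_{\fusion}(Q)$ agree on $P'$ but differ on $r$. So restriction $Hom_{\fusion}(Q,R)\to Hom_{\fusion}(P',R)$ is not injective even for centric $P'$, and your deduction $\rho(\psi_1)=\rho(\psi_2)$ has no support. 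Note that in this example the two evident lifts $\delta_S(1)$ and $\delta_S(s)$ do become distinct after restriction to $P'$ in $\linking$, because there they differ by $\delta_{P'}(s)$ with $1\neq s\in Z(P')$: the epimorphism property is genuinely a linking-system phenomenon, saved by the discrepancy between the fibres $Z(Q)$ and $Z(P')$ of $\rho$, and cannot be obtained by first proving equality in $\fusion$. A correct self-contained argument stays in $\linking$: for $P'\lhd Q$ and $x\in Q$, restrict $\delta_Q(x)$ to $\beta_x\in Aut_{\linking}(P')$ via (iii), apply axiom (C) to $\psi_1,\psi_2$ to get $\delta_R(g_i(x))\circ\omega=\omega\circ\beta_x$ where $\omega=\psi_1\circ\iota_{P',Q}=\psi_2\circ\iota_{P',Q}$ and $g_i=\rho(\psi_i)$, deduce $\delta_R\bigl(g_2(x)^{-1}g_1(x)\bigr)\circ\omega=\omega$, use centricity of $g_1(P')$ to write $g_2(x)^{-1}g_1(x)=g_1(z)$ with $z\in Z(P')$, and kill $z$ by the free $Z(P')$-action; then finish, as you do, with the $Z(Q)$-discrepancy. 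Passing from $P'\lhd Q$ to arbitrary $P'\leq Q$ by iterated normalizers needs one further idea here, since for discrete $p$-toral groups the chain $P'\leq N_Q(P')\leq\cdots$ need not reach $Q$ in finitely many steps (e.g.\ a Klein four subgroup of $\dihed$); this is essentially why the paper does not argue directly at all, but observes that $\rho$ is source and target regular and quotes the proof of Lemma 3.2 of \cite{OV} wholesale.

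Part (ii) as written also has a slip, though an easily repaired one: after obtaining $\varphi_2=\varphi_1\circ\delta_P(z)$ with $z\in Z(P)$, you move $\delta_P(z)$ across $\varphi_1$ to $\delta_Q(w)$ with $w=\rho(\varphi_1)(z)$ and invoke ``the free action of $Z(Q)$''; but $w$ only centralizes $\rho(\varphi_1)(P)$, so it need not lie in $Z(Q)$, and cancelling $\varphi_1$ on the right to reach $\psi\circ\delta_Q(w)=\psi$ would use the epimorphism property you only prove later. The correct one-line argument is the very one you use for uniqueness in (i): $\psi\circ\varphi_1=(\psi\circ\varphi_1)\circ\delta_P(z)$ and the $Z(P)$-action on $Mor_{\linking}(P,R)$ is free by axiom (A), so $z=1$. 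With that repair, parts (i)--(iii) are correct (for the ``in particular'' in (iii) you should also record that a morphism of $\linking$ lifting an isomorphism of $\fusion$ is itself an isomorphism, an easy consequence of axiom (A)); the genuine missing content of the lemma is part (iv).
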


\begin{proof}

Since the functor $\rho: \linking \to \fusion^c$ is both source and target regular (in the sense of definition A.5 \cite{OV}) by axiom (A) of linking systems, the proof for Lemma 3.2 \cite{OV} applies in this case as well.

\end{proof}

Let $\g$ be a $p$-local compact group, and, for each $P \in \linking$ fix a lifting of $incl^S_P:P \to S$ in $\linking$, $\iota_{P,S} \in Mor_{\linking}(P,S)$. Then, by the above Lemma, we may complete this to a family of inclusions $\{\iota_{P,P'}\}$ in a unique way and such that $\iota_{P,S} =\iota_{P',S} \circ \iota_{P,P'}$ whenever it makes sense.

\begin{lmm}\label{1.11BLO2}

Fix such a family of inclusions $\{\iota_{P,P'}\}$ in $\linking$. Then, for each $P,P' \in \linking$, there are unique injections
$$
\delta_{P,P'}:N_S(P,P') \longrightarrow Mor_{\linking}(P,P')
$$
such that
\begin{enumerate}[(i)]
\item $\iota_{P',S} \circ \delta_{P,P'}(g) = \delta_S(g) \circ \iota_{P,S}$, for all $g \in N_S(P,P')$, and
\item $\delta_P$ is the restriction to $P$ of $\delta_{P,P}$.
\end{enumerate}

\end{lmm}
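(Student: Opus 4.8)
The plan is to build $\delta_{P,P'}$ by transporting the distinguished monomorphism $\delta_S$ along the chosen inclusions in $\linking$, and then to use the structural properties of the linking system (Lemma \ref{3.2OV}, especially the cancellation properties (ii) and (iv), and the unique factorization in (iii)) to see that this is forced, hence unique. Concretely, fix $g \in N_S(P,P')$. Then $c_g \in Hom_\fusion(P,P')$ since $g P g^{-1} \le P'$, and the element $\delta_S(g) \in Aut_\linking(S)$ satisfies $\rho(\delta_S(g)) = c_g \in Aut_\fusion(S)$ by axiom (B). I want to produce the unique morphism $\varphi \in Mor_\linking(P,P')$ with $\iota_{P',S}\circ\varphi = \delta_S(g)\circ\iota_{P,S}$; I will \emph{define} $\delta_{P,P'}(g)$ to be this $\varphi$.

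For existence, first note $\rho(\delta_S(g)\circ\iota_{P,S}) = c_g\circ incl^S_P = incl^{P'}_S\circ c_g|_P$ as a homomorphism $P \to S$, where I write $c_g|_P$ for the map $P\to P'$, $x\mapsto gxg^{-1}$; in particular $(\delta_S(g)\circ\iota_{P,S})$ lifts a morphism $P\to S$ whose image lies in $P'$. Since $P' \in \linking$, Lemma \ref{3.2OV}(iii) applied to $\delta_S(g)\circ\iota_{P,S} \in Mor_\linking(P,S)$ with $P_0 = P$, $Q_0 = P'$, $Q = S$ (note $\rho(\delta_S(g)\circ\iota_{P,S})(P) = gPg^{-1} \le P'$) gives a unique $\varphi \in Mor_\linking(P,P')$ with $\iota_{P',S}\circ\varphi = \delta_S(g)\circ\iota_{P,S}$, which is exactly property (i). This already pins down $\delta_{P,P'}(g) := \varphi$ uniquely, so uniqueness of the whole map $\delta_{P,P'}$ is automatic once we impose (i).

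It remains to check the bookkeeping: that $\delta_{P,P'}$ is injective, that it restricts to $\delta_P$ when $P' = P$ (property (ii)), and that the construction is independent of any auxiliary choices beyond the fixed inclusion family. Injectivity follows since $\delta_S$ is a monomorphism and $\iota_{P,S}$, $\iota_{P',S}$ are monomorphisms in $\linking$ by Lemma \ref{3.2OV}: if $\iota_{P',S}\circ\delta_{P,P'}(g) = \iota_{P',S}\circ\delta_{P,P'}(g')$ then $\delta_S(g)\circ\iota_{P,S} = \delta_S(g')\circ\iota_{P,S}$, whence $\delta_S(g) = \delta_S(g')$ by right-cancellation (Lemma \ref{3.2OV}(iv)) and so $g = g'$. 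For (ii), when $P' = P$ the morphism $\delta_P(g) \in Aut_\linking(P)$ satisfies $\iota_{P,S}\circ\delta_P(g) = \delta_S(g)\circ\iota_{P,S}$: indeed this is precisely axiom (C) of linking systems applied to the morphism $\iota_{P,S}$ and the element $g \in P$, since $\rho(\iota_{P,S})(g) = g$ (the inclusion sends $g$ to $g$), so the distinguished square forces $\iota_{P,S}\circ\delta_P(g) = \delta_S(\rho(\iota_{P,S})(g))\circ\iota_{P,S} = \delta_S(g)\circ\iota_{P,S}$. By the uniqueness clause of Lemma \ref{3.2OV}(iii) just used, $\delta_{P,P}(g) = \delta_P(g)$.

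The only genuinely delicate point is making sure axiom (C) is invoked correctly: one must check that $\iota_{P,S}$ really does satisfy $\rho(\iota_{P,S}) = incl^S_P$ and hence $\rho(\iota_{P,S})(g) = g$ for $g \in P$, which is built into the construction of the family $\{\iota_{P,P'}\}$ (each $\iota_{P,P'}$ was chosen to lift the inclusion $P \hookrightarrow P'$). Granting that, everything else is a formal consequence of Lemma \ref{3.2OV} and axioms (B), (C); I expect no further obstacle. (This is the discrete $p$-toral analogue of Lemma 1.11 in \cite{BLO2}, and the argument is the same once the cancellation and factorization properties of $\linking$ from Lemma \ref{3.2OV} are in hand.)
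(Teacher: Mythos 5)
Your argument is correct and is essentially the paper's: the paper simply invokes the proof of Proposition 1.11 of \cite{BLO2}, with Lemma \ref{3.2OV}(i) supplying the unique lifting, and that is exactly what you reconstruct --- defining $\delta_{P,P'}(g)$ as the unique morphism with $\iota_{P',S}\circ\delta_{P,P'}(g)=\delta_S(g)\circ\iota_{P,S}$, then getting injectivity by cancellation and property (ii) from axiom (C). Your use of Lemma \ref{3.2OV}(iii) (unique restriction) together with (ii)/(iv) in place of (i) is an immaterial variation, since (iii) is itself a consequence of (i).
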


\begin{proof}

The proof for Proposition 1.11 \cite{BLO2} applies here as well, using Lemma \ref{3.2OV} (i) above instead of Proposition 1.10 (a) \cite{BLO2}.

\end{proof}

We now introduce transporter systems. The notion that we present here was first used in \cite{OV} as a tool to study certain extensions of $p$-local finite groups. In this sense, most of the results in \cite{OV} can be extended to the compact case without restriction, as proved in \cite{Gonza}, but we will not make use of such results in this paper. More details can be found at \cite{Gonza}.

Let $G$ be an artinian locally finite group with Sylow $p$-subgroups, and fix $S \in Syl_p(G)$. We define $\transp_S(G)$ as the category whose object set is $Ob(\transp_S(G)) = \{P \leq S\}$, and such that
$$
Mor_{\transp_S(G)}(P,P') = N_G(P,P') = \{g \in G \mbox{ } | \mbox{ } gPg^{-1} \leq P'\}.
$$
For a subset $\hh \subseteq Ob(\transp_S(G))$, $\transp_{\hh}(G)$ denotes the full subcategory of $\transp_S(G)$ with object set $\hh$.

\begin{defi}\label{defitransporter}

Let $\fusion$ be a fusion system over a discrete $p$-toral group $S$. A \textbf{transporter system} associated to $\fusion$ is a category $\transp$ such that
\begin{enumerate}[(i)]
\item $Ob(\transp) \subseteq Ob(\fusion)$;
\item for all $P \in Ob(\transp)$, $Aut_{\transp}(P)$ is an artinian locally finite group;
\end{enumerate}
together with a couple of functors
$$
\transp_{Ob(\transp)}(S) \stackrel{\varepsilon} \longrightarrow \transp \stackrel{\rho} \longrightarrow \fusion,
$$
satisfying the following axioms:
\begin{itemize}
 \item[(A1)] $Ob(\transp)$ is closed under $\fusion$-conjugacy and overgroups. Also, $\varepsilon$ is the identity on objects and $\rho$ is inclusion on objects.

 \item[(A2)] For each $P \in Ob(\transp)$, let
$$
E(P) = Ker(Aut_{\transp}(P) \to Aut_{\fusion}(P)).
$$
Then, for each $P, P' \in Ob(\transp)$, $E(P)$ acts freely on $Mor_{\transp}(P, P')$ by right composition, and $\rho_{P, P'}$ is the orbit map for this action. Also, $E(P')$ acts freely on $Mor_{\transp}(P, P')$ by left composition.

 \item[(B)] For each $P, P' \in Ob(\transp)$, $\varepsilon_{P,P'}: N_S(P,P') \to Mor_{\transp}(P, P')$ is injective, and the composite $\rho_{P,P'} \circ \varepsilon_{P,P'}$ sends $g \in N_S(P, P')$ to $c_g \in Hom_{\fusion}(P,P')$.

 \item[(C)] For all $\varphi \in Mor_{\transp}(P,P')$ and all $g \in P$, the following diagram commutes in $\transp$:
$$
\xymatrix{
P \ar[r]^{\varphi} \ar[d]_{\varepsilon_{P,P}(g)} & P' \ar[d]^{\varepsilon_{P',P'}(\rho(\varphi)(g))} \\
P \ar[r]_{\varphi} & P'.
}
$$

 \item[(I)] $Aut_{\transp}(S)$ has Sylow $p$-subgroups, and $\varepsilon_{S,S}(S) \in Syl_p(Aut_{\transp}(S))$.

 \item[(II)] Let $\varphi \in Iso_{\transp}(P,P')$, and $P \lhd R \leq S$, $P' \lhd R' \leq S$ such that
$$
\varphi \circ \varepsilon_{P,P}(R) \circ \varphi^{-1} \leq \varepsilon_{P',P'}(R').
$$
Then, there is some $\widetilde{\varphi} \in Mor_{\transp}(R, R')$ such that $\widetilde{\varphi} \circ \varepsilon_{P,R}(1) = \varepsilon_{P', R'}(1) \circ \varphi$, that is, the following diagram is commutative in $\transp$:
$$
\xymatrix{
P \ar[r]^{\varphi} \ar[d]_{\varepsilon_{P,R}(1)} & P' \ar[d]^{\varepsilon_{P',R'}(1)} \\
R \ar[r]_{\widetilde{\varepsilon}} & R'.
}
$$

 \item[(III)] Let $P_1 \leq P_2 \leq \ldots$ be an increasing sequence of subgroups in $Ob(\transp)$, and $P = \bigcup_{n=1}^{\infty} P_n$. Suppose in addition that there exists $\psi_n \in Mor_{\transp}(P_n, S)$ such that
$$
\psi_n = \psi_{n+1} \circ \varepsilon_{P_n, P_{n+1}}(1)
$$
for all $n$. Then, there exists $\psi \in Mor_{\transp}(P, S)$ such that $\psi_n = \psi \circ \varepsilon_{P_n, P}(1)$ for all $n$.
\end{itemize}

Given a transporter system $\transp$, the \textbf{classifying space} of $\transp$ is the space $B\transp \stackrel{def} = |\transp|^{\wedge}_p$.

\end{defi}

Note that, in axiom (III), $P$ is an object in $Ob(\transp)$, since $Ob(\transp)$ is closed under $\fusion$-conjugacy and overgroups. As in \cite{OV}, the axioms are labelled to show their relation with the axioms for linking and fusion systems respectively. Note also that, whenever $S$ is a finite $p$-group, the above definition agrees with that in \cite{OV}.

\begin{prop}\label{3.5OV}

Let $\g = (S,\fusion, \linking)$ be a $p$-local compact group. Then, $\linking$ is a transporter system associated to $\fusion$.

\end{prop}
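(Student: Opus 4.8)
The plan is to exhibit the two functors required by Definition \ref{defitransporter} and then check the seven axioms, the genuine content being concentrated in axioms (I), (II) and (III).

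First the data. The objects of $\linking$ are the $\fusion$-centric subgroups of $S$; this class is closed under $\fusion$-conjugacy by definition, and under overgroups, since if $P$ is $\fusion$-centric and $P\leq Q\leq S$ then for any $\fusion$-conjugation $\psi\colon Q\to Q'$ one has $C_S(Q')\leq C_S(\psi(P))=Z(\psi(P))\leq\psi(P)\leq Q'$. Thus $Ob(\linking)\subseteq Ob(\fusion)$ is closed under $\fusion$-conjugacy and overgroups, which in particular makes the subgroups $R,R'$ of axiom (II) and the union $P$ of axiom (III) again objects of $\linking$. We take $\rho\colon\linking\to\fusion$ to be the structure functor of the linking system followed by the inclusion $\fusion^c\subseteq\fusion$; it is the identity (hence inclusion) on objects. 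Fixing a compatible family of inclusions $\{\iota_{P,P'}\}$ in $\linking$ as after Lemma \ref{3.2OV}, we set $\varepsilon_{P,P'}:=\delta_{P,P'}\colon N_S(P,P')\to Mor_{\linking}(P,P')$, the injections of Lemma \ref{1.11BLO2}; functoriality of $\varepsilon$ (that is $\delta_{P',P''}(h)\circ\delta_{P,P'}(g)=\delta_{P,P''}(hg)$ and $\delta_{P,P}(1)=id_P$) follows by composing with $\iota_{P'',S}$, which is a monomorphism by Lemma \ref{3.2OV}. Finally $Aut_{\linking}(P)$ is artinian and locally finite because by axiom (A) of linking systems it is an extension of $Aut_{\fusion}(P)$ — artinian and locally finite since $\fusion$ is saturated — by the discrete $p$-toral group $Z(P)$. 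This gives axiom (A1).

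For axiom (A2), writing $E(P)=\mathrm{Ker}(Aut_{\linking}(P)\to Aut_{\fusion}(P))$, axiom (A) identifies $E(P)$ with $\delta_P(Z(P))$ (the fibre of $\rho$ over $id_P$ is the $Z(P)$-orbit of $\delta_P(1)$), so the free right $Z(P)$-action with $\rho_{P,P'}$ as orbit map is precisely axiom (A), while freeness of the left $E(P')$-action is the epimorphism property of Lemma \ref{3.2OV}(iv) together with injectivity of $\delta_{P'}$. Axiom (B) is the injectivity of $\delta_{P,P'}$ in Lemma \ref{1.11BLO2} together with $\rho(\delta_{P,P'}(g))=c_g$, read off by applying $\rho$ to $\iota_{P',S}\circ\delta_{P,P'}(g)=\delta_S(g)\circ\iota_{P,S}$; and axiom (C) of transporter systems is verbatim axiom (C) of linking systems, since $\varepsilon_{P,P}$ restricts to $\delta_P$ on $P$. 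For axiom (I), applying saturation axiom (I) to $S$ (which is fully $\fusion$-normalized) gives $Out_S(S)=\{1\}\in Syl_p(Out_{\fusion}(S))$, so $Out_{\fusion}(S)$ is a finite $p'$-group; as $Aut_{\linking}(S)$ is an extension of $Out_{\fusion}(S)$ by the discrete $p$-toral group $\delta_S(S)\cong S$, every finite $p$-subgroup of $Aut_{\linking}(S)$ lies in $\delta_S(S)$, so $Aut_{\linking}(S)$ has Sylow $p$-subgroups and $\varepsilon_{S,S}(S)=\delta_S(S)$ is one. For axiom (III), given a chain $P_1\leq P_2\leq\cdots$ with union $P$ and compatible $\psi_n\in Mor_{\linking}(P_n,S)$, the maps $f_n=\rho(\psi_n)$ glue to $f\colon P\to S$, which lies in $Hom_{\fusion}(P,S)$ by saturation axiom (III); choosing any $\psi_0\in\rho^{-1}(f)$ and comparing $\psi_0\circ\iota_{P_n,P}$ with $\psi_n$ (both lift $f_n$, so they differ by $\delta_{P_n}(z_n)$ with $z_n\in Z(P_n)$), the compatibility of the $\psi_n$, the monomorphism property (Lemma \ref{3.2OV}(ii)) and axiom (C) force all $z_n$ to equal a single $z\in\bigcap_nZ(P_n)=Z(P)$, and then $\psi:=\psi_0\circ\delta_P(z)^{-1}$ satisfies $\psi_n=\psi\circ\varepsilon_{P_n,P}(1)$ for all $n$.

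The main obstacle is axiom (II), and it is here that the saturation of $\fusion$ genuinely enters. Given $\varphi\in Iso_{\linking}(P,P')$ with $P\lhd R\leq S$, $P'\lhd R'\leq S$ and $\varphi\circ\varepsilon_{P,P}(R)\circ\varphi^{-1}\leq\varepsilon_{P',P'}(R')$, set $f=\rho(\varphi)\in Iso_{\fusion}(P,P')$; applying $\rho$ to the hypothesis gives $f\circ Aut_R(P)\circ f^{-1}\leq Aut_{R'}(P')$, so $R\leq N_f$. Since $P'$ is $\fusion$-centric it is fully $\fusion$-centralized, so saturation axiom (II) furnishes an extension $\bar f$ of $f$ on $N_f$; restricting to $R$ and using $C_S(P')=Z(P')\leq R'$ one checks $\bar f(R)\leq R'$, so $f$ extends to an injection $g\colon R\to R'$. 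Lift $g$ to some $\tilde\varphi_0\in\rho^{-1}(g)$; by Lemma \ref{3.2OV}(iii) its restriction to $P$ lies in $Mor_{\linking}(P,P')$ and lifts $f$, so it equals $\varphi\circ\delta_P(z)$ for a unique $z\in Z(P)\leq P$. Using axiom (C) to slide $\delta_P(z)$ first past $\varphi$ (producing $\delta_{P'}(f(z))$) and then past $\iota_{P',R'}$, one gets $\tilde\varphi_0\circ\iota_{P,R}=\delta_{R'}(f(z))\circ\iota_{P',R'}\circ\varphi$, whence $\tilde\varphi:=\delta_{R'}(f(z))^{-1}\circ\tilde\varphi_0\in Mor_{\linking}(R,R')$ satisfies $\tilde\varphi\circ\varepsilon_{P,R}(1)=\varepsilon_{P',R'}(1)\circ\varphi$, which is axiom (II). Throughout, the only subtlety is careful bookkeeping with the distinguished inclusions $\iota_{P,P'}$ and repeated use of the monomorphism property of Lemma \ref{3.2OV}; alternatively, since by axiom (A) the functor $\rho$ is source- and target-regular, one may invoke the argument of \cite{OV} directly in the compact setting, as is done in \cite{Gonza}.
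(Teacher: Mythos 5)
Your proof is correct and follows essentially the same route as the paper: you build $\varepsilon$ from Lemma \ref{1.11BLO2}, take $\rho$ to be the structure functor of $\linking$, and verify the transporter-system axioms from axioms (A)--(C) of linking systems together with Lemma \ref{3.2OV} and saturation axioms (II) and (III), with the genuine content in axioms (II) and (III), exactly as in the paper's proof. The differences are cosmetic — your hands-on argument for axiom (I) instead of the paper's appeal to Lemma 8.1 of \cite{BLO3}, and your one-step correction of an arbitrary lift by a central element in axiom (III) versus the paper's inductive argument — and the only slip is a mis-citation there: to force $z_{n+1}=z_n$ you must cancel $\iota_{P_n,P_{n+1}}$ on the right, which uses the epimorphism property, Lemma \ref{3.2OV}(iv), and injectivity of $\delta_{P_{n+1}}$, not just part (ii) and axiom (C) (and only the inclusion $\bigcap_n Z(P_n)\subseteq Z(P)$ is needed, the equality you assert need not hold).
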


\begin{proof}

The usual projection functor $\rho: \linking \to \fusion$ in the definition of a linking system plays also the role of the projection functor in the definition of transporter system. Also, in Lemma \ref{1.11BLO2} we have defined a functor $\varepsilon: \transp_{Ob(\linking)}(S) \to \linking$. It remains to check that $\linking$ satisfies the axioms in definition \ref{defitransporter}.

(A1) This follows from axiom (A) on $\linking$.

(A2) By axiom (A) on $\linking$, we know that, for all $P, P' \in \linking$, $E(P) = Z(P)$ acts freely on $Mor_{\linking}(P,P')$ and that $\rho_{P,P'}$ is the orbit map of this action. Thus, we have to check that $E(P') = Z(P')$ acts freely on $Mor_{\linking}(P,P')$. Suppose $\varphi \in Mor_{\linking}(P,P')$ and $x \in E(P')$ are such that $\varepsilon_{P'}(x) \circ \varphi = \varphi$. Then, $x$ centralizes $\rho(\varphi)(P)$, so $x = \rho(\varphi)(y)$ for some $y \in Z(P)$, since $P$ is $\fusion$-centric. Hence, $\varphi = \delta_{P'}(x) \circ \varphi = \varphi \circ \delta_P(y)$ by axiom (C) for linking systems, and thus by axiom (A) we deduce that $y=1$, $x=1$ and the action is free.

(B) By construction of the functor $\varepsilon$, we know that $\varepsilon_{P,P'}:N_S(P,P') \longrightarrow Mor_{\linking}(P,P')$ is injective for all $P,P' \in \linking$. Thus, we have to check that the composite $\rho_{P,P'} \circ \varepsilon_{P,P'}$ sends $g \in N_S(P,P')$ to $c_g \in Hom_S(P,P')$. Note that the following holds for any $P,P' \in \linking$ and any $x \in N_S(P,P')$:
$$
\iota_{P'} \circ \varepsilon_{P,P'}(x) = \varepsilon_{P',S}(1) \circ \varepsilon_{P,P'}(x) = \varepsilon_{S}(x) \circ \varepsilon_{P,S}(1) = \delta_S(x) \circ \iota_P
$$
and hence so does the following on $\fusion$:
$$
incl^S_{P'} \circ \rho_{P,P'}(\varepsilon_{P,P'}(x)) = \rho_{P,S}(\iota_{P'}\circ \varepsilon_{P,P'}(x)) = \rho_{P,S}(\delta_S(x) \circ \iota_P) = c_x.
$$

(C) This follows from axiom (C) for linking systems.

(I) The group $Aut_{\linking}(S)$ has Sylow subgroups by Lemma 8.1 \cite{BLO3}, since $\delta(S)$ is normal in it and has finite index prime to $p$. This also proves  that $\delta(S)$ is a Sylow $p$-subgroup.

(II) Let $\varphi \in Iso_{\linking}(P,P')$, $P \lhd R$, $P' \lhd R'$ be such that $\varphi \circ \varepsilon_{P,P}(R) \circ \varphi^{-1} \leq \varepsilon_{P',P'}(R')$. We want to see that there exists $\widetilde{\varphi} \in Mor_{\linking}(R,R')$ such that $\widetilde{\varphi} \circ \varepsilon_{P,R}(1) = \varepsilon_{P',R'}(1) \circ \varphi$. Since $P'$ is $\fusion$-centric, it is fully $\fusion$-centralized. Then, we may apply axiom (II) for fusion systems to the morphism $f = \rho(\varphi)$, that is, $f$ extends to some $\widetilde{f} \in Hom_{\fusion}(N_f,S)$, where
$$
N_f = \{ g \in N_S(P) | f c_g f^{-1} \in Aut_S(P')\},
$$
and clearly $R \leq N_f$. Hence, $\widetilde{f}$ restricts to a morphism in $Hom_{\fusion}(R,S)$. Furthermore, $\widetilde{f}(R) \leq R'$ since $f$ conjugates $Aut_R(P)$ into $Aut_{R'}(P')$.

Now, $(\iota_{P',R'} \circ \varphi) \in Mor_{\linking}(P,R')$ is a lifting in $\linking$ for $incl^{R'}_{P'} \circ f \in Hom_{\fusion}(P,R')$, and we can fix a lifting $\psi \in Mor_{\linking}(R,R')$ for $\widetilde{f}$. Thus, by Lemma \ref{3.2OV} (i) there exists a unique $\widetilde{\iota} \in Mor_{\linking}(P,R)$, a lifting of $incl^R_P$, such that $\iota_{P',R'} \circ \varphi = \psi \circ \widetilde{\iota}$. Since $\rho(\widetilde{\iota}) = incl^R_P = \rho(\iota_{P,R})$, by axiom (A) it follows that there exists some $z \in Z(P)$ such that $\widetilde{\iota} = \iota_{P,R} \circ \delta_P(z) = \delta_R(\rho(\iota_{P,R})(z))\circ \iota_{P,R}$, where the second equality holds by axiom (C). Hence $\iota_{P',R'} \circ \varphi = (\psi \circ \delta_R(\rho(\iota_{P,R})(z))) \circ \iota_{P,R}$.

(III) Let $P_1 \leq P_2 \leq \ldots$ be an increasing sequence of objects in $\linking$, $P = \cup P_n$, and $\varphi_n \in Mor_{\linking}(P_n,S)$ satisfying $\varphi_n = \varphi_{n+1} \circ \iota_{P_n,P_{n+1}}$ for all $n$. We want to see that there exists some $\varphi \in Mor_{\linking}(P,S)$ such that $\varphi_n = \varphi \circ \iota_{P_n,P}$ for all $n$.

Set $f_n = \rho(\varphi_n)$ for all $n$. Then, by hypothesis, $f_n = f_{n+1} \circ incl^{P_{n+1}}_{P_n}$ for all $n$. Now, it is clear that $\{f_n\}$ forms a nonempty inverse system, and there exists $f \in Hom_{\fusion}(P,S)$ such that $f_n = f_{|P_n}$ for all $n$ (the existence follows from Proposition 1.1.4 in \cite{RZ}, and the fact that $f$ is a morphism in $\fusion$ follows from axiom (III) for fusion systems).

Consider now the following commutative diagram (in $\fusion$):
$$
\xymatrix{
 & P \ar[d]^{f} \\
P_1 \ar[ru]^{incl} \ar[r]_{f_1} & S\\
}
$$
The same arguments used to prove that axiom (II) for transporter systems holds on $\linking$ above apply now to show that there exists a unique $\varphi \in Mor_{\linking}(P,S)$ such that $\varphi_1 = \varphi \circ \iota_{P_1,P}$. Combining this equality with $\varphi_1 = \varphi_2 \circ \iota_{P_1,P_2}$ and Lemma \ref{3.2OV} (iv) (morphisms in $\linking$ are epimorphisms in the categorical sense), it follows that $\varphi_2 = \varphi \circ \iota_{P_2,P}$. Proceeding by induction it now follows that $\varphi$ satisfies the desired condition.

\end{proof}

Finally we state Proposition 3.6 from \cite{OV}, deeply related to Theorem A in \cite{BCGLO1}. These two results are only suspected to hold in the compact case, but yet no proof has been published. Before stating the result, we introduce some notation. For a finite group $G$, the subgroup $O_p(G) \leq G$ is the maximal normal $p$-subgroup of $G$.

\begin{defi}\label{hgenhsat}

Let $\fusion$ be a fusion system over a finite $p$-group $S$, and let $\hh \subseteq Ob(\fusion)$ be a subset of objects. Then, we say that $\fusion$ is \textbf{$\hh$-generated} if every morphism in $\fusion$ is a composite of restrictions of morphisms in $\fusion$ between subgroups in $\hh$, and we say that $\fusion$ is \textbf{$\hh$-saturated} if the saturation axioms hold for all subgroups in the set $\hh$.

\end{defi}

\begin{prop}

(3.6 \cite{OV}). Let $\fusion$ be a fusion system over a finite $p$-group $S$ (not necessarily saturated), and let $\transp$ be a transporter system associated to $\fusion$. Then, $\fusion$ is $Ob(\transp)$-saturated. If $\fusion$ is also $Ob(\transp)$-generated, and if $Ob(\transp) \supseteq Ob(\fusion^c)$, then $\fusion$ is saturated. More generally, $\fusion$ is saturated if it is $Ob(\transp)$-generated, and every $\fusion$-centric subgroup $P \leq S$ not in $Ob(\transp)$ is $\fusion$-conjugate to some $P'$ such that
$$
Out_S(P') \cap O_p(Out_{\fusion}(P')) \neq \{1\}.
$$

\end{prop}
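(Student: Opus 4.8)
The statement bundles three assertions of increasing generality, and the plan is to establish them in that order.

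\textbf{Step 1: $\fusion$ is $Ob(\transp)$-saturated.} Since $S$ is finite, axiom (III) of Definition \ref{defisaturation} is vacuous, so for each $P \in Ob(\transp)$ it suffices to verify axioms (I) and (II). Everything hinges on the sub-lemma \textbf{(*)}: \emph{if $P \in Ob(\transp)$ is fully $\fusion$-normalized, then $\varepsilon_{P,P}(N_S(P)) \in Syl_p(Aut_\transp(P))$}. I would prove (*) by induction on the index $[S:P]$. The base case $P = S$ is exactly axiom (I) of transporter systems, $\varepsilon_{S,S}(S) \in Syl_p(Aut_\transp(S))$. For the inductive step, $N_S(P) \gneq P$ belongs to $Ob(\transp)$ (objects are closed under overgroups) and has strictly smaller index; after replacing $P$ by a suitable $\fusion$-conjugate one may arrange that $N_S(P)$ is also fully $\fusion$-normalized, so the inductive hypothesis applies to $N_S(P)$, and axiom (II) of transporter systems is used to lift the normalizer tower above $P$ all the way up to $S$ and to transport the Sylow property back down to $Aut_\transp(P)$; here one uses that morphisms in $\transp$ are categorical monomorphisms (\cite{OV}), so that the extensions supplied by axiom (II) are unique and assemble into homomorphisms. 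This is Proposition 3.4 of \cite{OV}, whose argument I would follow.

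\textbf{Step 2: deducing axioms (I) and (II) of Definition \ref{defisaturation} from (*).} Granting (*), these become formal. As $E(P) = \ker(\rho \colon Aut_\transp(P) \to Aut_\fusion(P))$ is normal, a Sylow $p$-subgroup of $Aut_\transp(P)$ meets $E(P)$ in a Sylow $p$-subgroup of $E(P)$ and maps onto a Sylow $p$-subgroup of $Aut_\fusion(P)$; applying this to $\varepsilon_{P,P}(N_S(P))$, together with $\rho\,\varepsilon_{P,P}(N_S(P)) = Aut_S(P)$ and $\varepsilon_{P,P}(N_S(P)) \cap E(P) = \varepsilon_{P,P}(C_S(P))$, gives $Aut_S(P) \in Syl_p(Aut_\fusion(P))$, i.e.\ $Out_S(P) \in Syl_p(Out_\fusion(P))$ (which is finite since $Aut(P)$ is), as well as $\varepsilon_{P,P}(C_S(P)) \in Syl_p(E(P))$; comparing $|C_S(P)|$ with the corresponding quantity at a fully normalized $\fusion$-conjugate of $P$ shows $P$ is fully $\fusion$-centralized, which completes axiom (I). For axiom (II), given $f \colon P \to S$ with $P' = f(P)$ fully $\fusion$-centralized, the same count shows $\varepsilon_{P',P'}(C_S(P')) \in Syl_p(E(P'))$ and hence that $\varepsilon_{P',P'}(N_S(P'))$ is a Sylow $p$-subgroup of $\rho^{-1}(Aut_S(P'))$; lifting $f$ to $\hat f \in Iso_\transp(P,P')$, the $p$-subgroup $\hat f\,\varepsilon_{P,P}(N_f)\,\hat f^{-1}$ sits inside $\rho^{-1}(Aut_S(P'))$, so after correcting $\hat f$ by an element lying over some $c_x$ ($x \in N_S(P')$) one gets $\varphi \in Iso_\transp(P,P')$ with $\rho(\varphi) = f$ and $\varphi\,\varepsilon_{P,P}(N_f)\,\varphi^{-1} \le \varepsilon_{P',P'}(N_S(P'))$; axiom (II) of transporter systems then yields $\widetilde\varphi \in Mor_\transp(N_f, N_S(P'))$ filling in the square, and $\rho(\widetilde\varphi) \in Hom_\fusion(N_f, S)$ extends $f$.

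\textbf{Step 3: from $Ob(\transp)$-saturation to saturation.} Now assume in addition that $\fusion$ is $Ob(\transp)$-generated and that every $\fusion$-centric $P \notin Ob(\transp)$ is $\fusion$-conjugate to some $P'$ with $Out_S(P') \cap O_p(Out_\fusion(P')) \neq \{1\}$ (the hypothesis $Ob(\transp) \supseteq Ob(\fusion^c)$ being the special case in which no such $P$ occurs). By Proposition \ref{Stancuaxioms} it suffices to verify axioms (I') and (II') of \cite{KS} for $\fusion$; (I') is immediate from Step 1 since $S \in Ob(\transp)$. For (II') I would proceed by downward induction on $|P|$. Subgroups in $Ob(\transp)$ — in particular all $\fusion$-centric ones in $Ob(\transp)$ — are handled by Step 1. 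A subgroup $P \notin Ob(\transp)$ is either not $\fusion$-centric, in which case some $\fusion$-conjugate $P''$ has $C_S(P'') \nleq P''$, or it is $\fusion$-centric, in which case by hypothesis some $\fusion$-conjugate $P'$ has $Out_S(P') \cap O_p(Out_\fusion(P')) \neq \{1\}$; in both cases the fusion-theoretic data at that conjugate is already controlled by strictly larger subgroups (namely $P''C_S(P'')$, respectively the overgroups of $P'$ inside $N_S(P') \gneq P'$), which the inductive hypothesis (or Step 1) covers. Using $Ob(\transp)$-generation to factor morphisms out of $P$ through such overgroups, one transports the extension property (II') down to $P$. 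This last step is exactly the bookkeeping behind Alperin's fusion theorem and the analogous arguments in \cite{BCGLO1}.

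\textbf{Main obstacle.} The genuinely hard point is sub-lemma (*) of Step 1: deriving $\varepsilon_{P,P}(N_S(P)) \in Syl_p(Aut_\transp(P))$ for fully normalized $P$ from axioms (I) and (II) of transporter systems alone. Because $\fusion$ is not assumed saturated, one cannot start from "$Out_S(P) \in Syl_p(Out_\fusion(P))$" or invoke Alperin's fusion theorem; the Sylow property must instead be carried up the normalizer tower to $S$, where axiom (I) applies, and back down, and it is the organization of that induction — together with the uniqueness of the extensions it uses — that carries the real content. Everything downstream of (*) is standard Sylow theory and familiar \cite{BCGLO1}-style bookkeeping.
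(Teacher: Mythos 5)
The paper offers no proof of this proposition: it is quoted directly from \cite{OV} (Proposition 3.6), and its proof there is exactly the route you outline — your sub-lemma (*) is Proposition 3.4 of \cite{OV}, the deduction of axioms (I) and (II) for objects of $\transp$ is the same Sylow-theoretic argument, and the passage from $Ob(\transp)$-saturation plus $Ob(\transp)$-generation to full saturation is an appeal to Theorem A of \cite{BCGLO1}. Your plan is therefore correct and consistent with the cited source, with two caveats: in Step 1 one cannot simply replace $P$ by a conjugate so that $N_S(P)$ is also fully normalized (the induction in \cite{OV} is organized more carefully, precisely because $\fusion$ is not assumed saturated), and Step 3 understates the final reduction, which is the content of the substantial Theorem A of \cite{BCGLO1} rather than routine Alperin-style bookkeeping.
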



\subsection{Unstable Adams operations on $p$-local compact groups} 

To conclude this section, we introduce unstable Adams operations for $p$-local compact groups and their main properties. Basically, we summarize the work from \cite{JLL} in order to give the proper definition of such operations and the main properties that we will use in later sections.

Let $(S, \fusion)$ be a saturated fusion system over a discrete $p$-toral group, and let $\theta: S \to S$ be a fusion preserving automorphism (that is, for each $f \in Mor(\fusion)$, the composition $\theta \circ f \circ \theta^{-1} \in Mor(\fusion)$). The automorphism $\theta$ naturally induces a functor on $\fusion$, which we denote by $\theta_{\ast}$, by setting $\theta_{\ast}(P) = \theta(P)$ on objects and $\theta_{\ast}(f) = \theta \circ f \circ \theta^{-1}$ on morphisms.

\begin{defi}\label{defiuao}

Let $\g = \ploc$ be a $p$-local compact group and let $\zeta$ be a $p$-adic unit. An \textbf{unstable Adams operation} on $\g$ of degree $\zeta$ is a pair $(\psi, \Psi)$, where $\psi$ is a fusion preserving automorphism of $S$, $\Psi$ is an automorphism of $\linking$, and such the following is satisfied:

\begin{enumerate}[(i)]

\item $\psi$ restricts to the $\zeta$ power map on $T$ and induces the identity on $S/T$;

\item for any $P \in Ob(\linking)$, $\Psi(P) = \psi(P)$;

\item $\rho \circ \Psi = \psi_{\ast} \circ \rho$, where $\rho: \linking \to \fusion$ is the projection functor; and

\item for each $P, Q \in Ob(\linking)$ and all $g \in N_S(P,Q)$, $\Psi(\delta_{P,Q}(g)) = \delta_{\psi(P), \psi(Q)}(\psi(g))$.

\end{enumerate}

In particular, $\Psi$ is an isotypical automorphism of $\linking$ in the sense of \cite{BLO3}.

\end{defi}

For a $p$-local compact group $\g$, let $\operatorname{Ad}(\g)$ be the group of unstable Adams operations on $\g$, with group operation the composition and the indentity functor as its unit. Also, for a positive integer $m$, let $\Gamma_m(p) \leq (\padic)^{\times}$ denote the subgroup of all $p$-adic units $\zeta$ of the form $1 + p^m \padic$.

Next, we state the existence of unstable Adams operations for all $p$-local compact groups. The following result corresponds to the second part of Theorem 4.1 \cite{JLL}.

\begin{thm}\label{existencepsi}

Let $\g$ be a $p$-local compact group. Then, for any sufficiently large positive integer $m$ there exists a group homomorphism
\begin{equation}\label{alpham}
\alpha: \Gamma_m(p) \longrightarrow \operatorname{Ad}(\g)
\end{equation}
such that, for each $\zeta \in \Gamma_m(p)$, $\alpha(\zeta) = (\psi, \Psi)$ has degree $\zeta$.

\end{thm}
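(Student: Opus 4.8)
\emph{Proof proposal.} The plan is to build, for all sufficiently large $m$, a \emph{coherent} family of pairs $(\psi_{\zeta},\Psi_{\zeta})$ indexed by $\zeta\in\Gamma_m(p)$: first the fusion preserving automorphism $\psi_{\zeta}$ of $S$, then a compatible lift $\Psi_{\zeta}$ to $\linking$, and finally the verification that $\zeta\mapsto(\psi_{\zeta},\Psi_{\zeta})$ is a homomorphism into $\operatorname{Ad}(\g)$. Everything rests on the finiteness packaged into a saturated fusion system over a discrete $p$-toral group: $W=Aut_{\fusion}(T)$ is a finite subgroup of $GL_r(\padic)$ with $r=rk(S)$, and by Proposition \ref{bulletprop} the bullet category $\fusion^{\bullet}$ --- and with it the fully normalized $\fusion$-centric $\fusion$-radical subgroups occurring in Alperin's fusion theorem (Theorem \ref{Alperin}) --- meets only finitely many $S$-conjugacy classes, so any bound needed uniformly over those subgroups is a bound over a finite set. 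For the map on $S$: the $\zeta$-power map on $T\cong(\ptor)^r$ is the scalar $\zeta\cdot\mathrm{Id}\in GL_r(\padic)$, hence central in $Aut(T)$ and in particular commuting with $W$ and with every conjugation automorphism of $T$ coming from $S$. Fixing a finite set of coset representatives for $S/T$, the group $S$ is described by those conjugation actions together with the finitely many torsion elements of $T$ entering its relations; if $m$ exceeds the orders of all of these, then $\zeta\equiv 1$ modulo each such order, so ``$\zeta$-power on $T$, identity on the chosen representatives'' respects every relation and defines an automorphism $\psi_{\zeta}$ of $S$ (with inverse the assignment for $\zeta^{-1}\in\Gamma_m(p)$) that restricts to the $\zeta$-power map on $T$ and induces the identity on $S/T$. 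Directly from Definition \ref{defibullet} one sees that $\psi_{\zeta}$ commutes with the bullet construction, $\psi_{\zeta}(P^{\bullet})=\psi_{\zeta}(P)^{\bullet}$, since $\psi_{\zeta}$ is the identity on $S/T$ and every subgroup of $T$ --- being a $\padic$-submodule --- is stable under the $\zeta$-power map; so verifying that $\psi_{\zeta}$ is fusion preserving reduces, via Theorem \ref{Alperin}, to checking that $\psi_{\zeta}$ carries $Aut_{\fusion}(Q)$ onto $Aut_{\fusion}(\psi_{\zeta}(Q))$ for the finitely many fully normalized centric radical $Q\in Ob(\fusion^{\bullet})$. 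One arranges this --- if necessary adjusting the choices, coherently in $\zeta$, which alters $\psi_{\zeta}$ only by $p$-power-torsion data --- by exploiting that for $m$ large $\psi_{\zeta}$ is close enough to the identity (trivial on $Q/(Q\cap T)$, a power map on $Q\cap T$) that conjugation by it perturbs $Aut_{\fusion}(Q)$ only inside a $p$-subgroup of $Aut(Q)$ of exponent bounded uniformly in $Q$, which is then absorbed using that $Out_{\fusion}(Q)$ is finite (saturation axiom (I)).

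The fusion preserving $\psi_{\zeta}$ induces an automorphism $(\psi_{\zeta})_{\ast}$ of $\fusion$, and we must lift it to an isotypical automorphism $\Psi_{\zeta}$ of $\linking$ agreeing with $\psi_{\zeta}$ on objects, with $\rho\circ\Psi_{\zeta}=(\psi_{\zeta})_{\ast}\circ\rho$, and satisfying $\Psi_{\zeta}(\delta_{P,Q}(g))=\delta_{\psi_{\zeta}(P),\psi_{\zeta}(Q)}(\psi_{\zeta}(g))$, that is, all of conditions (ii)--(iv) of Definition \ref{defiuao}. This is the standard obstruction problem for centric linking systems: the centric linking systems associated to $\fusion$ form a torsor under a higher limit of the center functor $\centerf\colon\orbitc\to\mathrm{Ab}$, $P\mapsto Z(P)$, and the automorphisms of $\linking$ lying over $\mathrm{id}_{\fusion}$ form a group assembled from such higher limits, exactly as in the existence-and-uniqueness theory for linking systems over discrete $p$-toral groups in \cite{BLO3}. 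Since for $m$ large $(\psi_{\zeta})_{\ast}$ is the identity on $\fusion^{\bullet}$ up to a controlled $p$-power discrepancy, it acts trivially on the relevant classifying class and therefore lifts; one then slides the lift within its torsor of choices so that it matches $\psi_{\zeta}$ on $Ob(\linking)$ and fulfils (ii)--(iv), using Lemmas \ref{3.2OV} and \ref{1.11BLO2} to control its behaviour on morphisms and on the distinguished monomorphisms $\delta_{P,Q}$.

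Finally, coherence: since $\zeta\mapsto\psi_{\zeta}$ is multiplicative by construction, it remains to choose the lifts so that $\Psi_{\zeta}\circ\Psi_{\zeta'}=\Psi_{\zeta\zeta'}$; then $\alpha(\zeta)=(\psi_{\zeta},\Psi_{\zeta})$ is the desired homomorphism $\Gamma_m(p)\to\operatorname{Ad}(\g)$. The obstruction to multiplicativity of a naive choice of lifts is a $2$-cocycle on $\Gamma_m(p)$ valued in $\varprojlim^0\centerf$; after the bullet reduction this target is torsion of bounded exponent, so the cocycle becomes a coboundary once one restricts from the $\Gamma_{m_0}(p)$ obtained above to a finite-index subgroup $\Gamma_m(p)$, where a compatible family of lifts exists --- this last shrinking being exactly what ``sufficiently large $m$'' buys. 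I expect the two genuine obstacles to be: (a) the \emph{uniform} fusion-preservation of $\psi_{\zeta}$ in the first step --- genuinely controlling, simultaneously over the finitely many fully normalized centric radical subgroups, how far conjugation by $\psi_{\zeta}$ perturbs their $\fusion$-automorphism groups, and confirming that one value of $m$ handles all of them at once --- which is precisely the point where the global combinatorics of $\fusion$ (Alperin's theorem and the bullet construction) must be brought to bear; and (b) pinning down the exact higher limits of $\centerf$ that govern the lifts and their composition, together with the verification that these $\varprojlim$-groups are torsion of bounded exponent after the bullet reduction, the requisite convergence and finiteness being a nontrivial input from \cite{BLO3}.
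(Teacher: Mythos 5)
Note first that the paper does not prove Theorem \ref{existencepsi} at all: it is imported verbatim as (the second part of) Theorem 4.1 of \cite{JLL}, so there is no internal proof to compare against, and the construction in \cite{JLL} is in any case quite different in spirit from what you propose --- it builds $\Psi$ directly by hand on the finite subcategory $\linking^{\bullet}$ (using the factorization of morphisms as isomorphisms followed by inclusions and the structure of the automorphism groups as extensions of finite groups by discrete $p$-toral groups), and multiplicativity in $\zeta$ falls out of that explicit construction rather than being imposed afterwards by a cocycle correction.

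Judged as a self-contained argument, your proposal has genuine gaps exactly at the two points you flag, and one of the reductions you lean on is false. (a) For fusion-preservation, the claim that conjugation by $\psi_{\zeta}$ perturbs $Aut_{\fusion}(Q)$ ``only inside a $p$-subgroup of bounded exponent'' which is then ``absorbed'' by finiteness of $Out_{\fusion}(Q)$ is not an argument; note that $\psi_{\zeta}$ does not even fix the objects of $\fusion^{\bullet}$ (writing $g=x_it\in P$ with $t\in T$ one gets $\psi_{\zeta}(g)=g\,t^{\zeta-1}$, and $t^{\zeta-1}$ generally lies outside $P$ when the order of $t$ exceeds $p^m$; already for the infinite dihedral Sylow there are subgroups $P=P^{\bullet}$ moved by $\psi_{\zeta}$), so $(\psi_{\zeta})_{\ast}$ cannot be treated as a small perturbation of the identity functor on $\fusion^{\bullet}$. (b) For the lifting and coherence step, the obstruction theory you invoke requires control of higher limits of $\centerf$ over the infinite orbit category $\orbitc$, which is not available off the shelf in \cite{BLO3}; worse, your multiplicativity argument rests on $\varprojlim^0\centerf$ being torsion of bounded exponent after the bullet reduction, and this is false in general: already for $\g$ the $p$-local compact group of a torus one has $\varprojlim^0\centerf\cong(\ptor)^r$, divisible and of unbounded exponent, so passing from $\Gamma_{m_0}(p)$ to a finite-index subgroup $\Gamma_m(p)$ cannot kill the alleged $2$-cocycle by any exponent argument. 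As it stands, then, the proposal is a plausible plan whose decisive steps are unproven (and one of whose key claims fails), and the full strength of the construction in \cite{JLL} is genuinely needed here.
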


There is an important property of unstable Adams operations which we will use repeatedly in the forthcoming sections. This was stated as Corollary 4.2 in \cite{JLL}.

\begin{prop}\label{finitesetinv}

Let $\g$ be a $p$-local compact group, and let $\pp \subseteq Ob(\linking)$ and $\mm \subseteq Mor(\linking)$ be finite subsets. Then, for any sufficiently large positive integer $m$, and for each $\zeta \in \Gamma_m(p)$, the group homomorphism $\alpha$ from (\ref{alpham}) satisfies $\alpha(\zeta)(P) = P$ and $\alpha(\zeta)(\varphi) = \varphi$ for all $P \in \pp$ and all $\varphi \in \mm$.

\end{prop}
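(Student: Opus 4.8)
The plan is to reduce the statement to the corresponding assertion for the discrete $p$-toral group $S$ alone, for which I would appeal to the construction of $\alpha$ in \cite{JLL} (Theorem \ref{existencepsi}). Writing $\alpha(\zeta) = (\psi_\zeta, \Psi_\zeta)$, the key input I would use is that this construction produces $\alpha$ \emph{continuously} for the $p$-adic topology on $\Gamma_m(p)$; in particular, for each $x \in S$ there is an integer $m_x$ with $\psi_\zeta(x) = x$ for all $\zeta \in \Gamma_{m_x}(p)$, and the stabiliser in $\Gamma_m(p)$ of any morphism of $\linking$ is open. Since (for $m$ sufficiently large, which we may assume) $\Gamma_m(p)$ is a procyclic pro-$p$ group whose open subgroups are precisely the $\Gamma_j(p)$ with $j \ge m$, and since $\pp$ and $\mm$ are finite, it suffices to treat one $P \in \pp$ and one $\varphi \in \mm$ at a time: the resulting open subgroups of $\Gamma_m(p)$ have an intersection that still contains some $\Gamma_{m'}(p)$, and one replaces $m$ by $m'$.

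For an object $P \le S$ I would argue as follows. Since $\zeta$ is a $p$-adic unit, the $\zeta$-power map is an automorphism of $T$ fixing every subgroup of $T$; hence $\psi_\zeta(P \cap T) = P \cap T$, and as $\psi_\zeta(T) = T$ this gives $\psi_\zeta(P) \cap T = P \cap T$. Since $\psi_\zeta$ induces the identity on $S/T$, the subgroups $\psi_\zeta(P)$ and $P$ also have the same image in $S/T$. Writing $P = \langle P \cap T, x_1, \dots, x_n \rangle$ with $x_i \in P$ and $n < \infty$ (possible because $P/(P \cap T)$ embeds in the finite group $S/T$), we get $\psi_\zeta(P) \le P$ once $\psi_\zeta(x_i) \in P$ for all $i$, and then $\psi_\zeta(P) = P$ by comparing the finite and equal indices $[\psi_\zeta(P) : P \cap T] = [P : P \cap T]$. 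By the input above, $\psi_\zeta(x_i) = x_i$ for all $\zeta$ in some $\Gamma_{m'}(p)$, so $\psi_\zeta(P) = P$ for such $\zeta$; the same reasoning shows that, for $m'$ large, $\psi_\zeta$ fixes any prescribed finite set of elements of $S$ and normalises any prescribed finite set of subgroups of $S$.

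For a morphism $\varphi \in Mor_\linking(P,Q)$ I would first factor $\varphi = \iota_{R,Q} \circ \varphi_0$ with $R = \rho(\varphi)(P) \le Q$ and $\varphi_0 \in Iso_\linking(P,R)$, using Lemma \ref{3.2OV}(iii), and recall that the chosen inclusion is $\iota_{R,Q} = \delta_{R,Q}(1)$ (Lemmas \ref{3.2OV} and \ref{1.11BLO2}). Choose $m'$ large enough that for $\zeta \in \Gamma_{m'}(p)$ the automorphism $\psi_\zeta$ normalises each of $P$, $R$, $Q$. Then Definition \ref{defiuao}(iv) gives $\Psi_\zeta(\iota_{R,Q}) = \delta_{\psi_\zeta(R),\psi_\zeta(Q)}(\psi_\zeta(1)) = \delta_{R,Q}(1) = \iota_{R,Q}$, so it remains to see that $\Psi_\zeta(\varphi_0) = \varphi_0$ for $\zeta$ in a further $\Gamma_{m''}(p)$. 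For $\zeta \in \Gamma_{m'}(p)$ one has $\Psi_\zeta(\varphi_0) \in Iso_\linking(P,R)$, so $\mu(\zeta) := \varphi_0^{-1} \circ \Psi_\zeta(\varphi_0) \in Aut_\linking(P)$ is defined, and since $\alpha$ is a homomorphism and each $\Psi_\zeta$ is a functor one checks the cocycle identity $\mu(\zeta_1 \zeta_2) = \mu(\zeta_1) \cdot \Psi_{\zeta_1}(\mu(\zeta_2))$ together with $\mu(1) = id$. Hence $\{\zeta \in \Gamma_{m'}(p) \mid \mu(\zeta) = id\}$ is a subgroup of $\Gamma_{m'}(p)$, which is open because $\mu$ is continuous (by continuity of $\alpha$) and $Aut_\linking(P)$ is discrete; it therefore contains some $\Gamma_{m''}(p)$. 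For $\zeta \in \Gamma_{m''}(p)$ we get $\Psi_\zeta(\varphi_0) = \varphi_0$ and hence $\Psi_\zeta(\varphi) = \iota_{R,Q} \circ \varphi_0 = \varphi$. Taking the largest such $m''$ over the finite sets $\pp$ and $\mm$ would complete the proof.

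The step I expect to be the genuine obstacle is the "input" above: extracting from the construction of $\alpha$ in \cite{JLL} that $\psi_\zeta$ fixes a prescribed element of $S$ once $\zeta$ lies deep enough in the filtration, and that $\alpha$ is continuous in the appropriate sense. This cannot be obtained from Definition \ref{defiuao} alone — an abstract operation could act on $S/T$-coset representatives by a fixed nontrivial fusion-preserving automorphism independent of $\zeta$ — so it is really a feature of the specific construction. Everything else is formal: the factorisation of linking-system morphisms, Definition \ref{defiuao}(iv), the cocycle manipulation (which is meaningful because $Aut_\linking(P)$ is discrete; indeed its elements even have finite order, their images in the artinian locally finite group $Aut_\fusion(P)$ being of finite order and the kernel $\delta_P(Z(P))$ being torsion), and the elementary fact that the open subgroups of the procyclic pro-$p$ group $\Gamma_m(p)$ are cofinal among the $\Gamma_j(p)$.
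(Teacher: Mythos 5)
The paper offers no proof of Proposition \ref{finitesetinv} at all: it is quoted directly as Corollary 4.2 of \cite{JLL}, so the only ``proof'' available is the explicit construction of the homomorphism $\alpha$ carried out there. Your instinct that the statement cannot be extracted from Definition \ref{defiuao} and Theorem \ref{existencepsi} alone, and must be a feature of that specific construction, is exactly right --- but this is also where your proposal has a genuine gap. The ``key input'' you assume (that $\psi_\zeta$ eventually fixes any prescribed element of $S$, and that the stabilizer in $\Gamma_m(p)$ of any morphism of $\linking$ is open) is never established, and it is not weaker than what you are trying to prove: since for large $m$ the open subgroups of $\Gamma_m(p)$ are precisely the $\Gamma_j(p)$ with $j \geq m$ (true for $m\ge 1$ when $p$ is odd, $m \ge 2$ when $p=2$), ``the stabilizer of $\varphi$ is open'' is literally Proposition \ref{finitesetinv} for $\pp = \emptyset$ and $\mm = \{\varphi\}$, and the finite case then follows at once by taking the maximum of finitely many indices. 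Consequently the factorization $\varphi = \iota_{R,Q}\circ\varphi_0$, the cocycle identity for $\mu(\zeta)$, and the discreteness of $Aut_{\linking}(P)$ do not reduce the problem to anything more primitive; all of the content has been displaced into the unproved ``continuity'' of $\alpha$, i.e.\ into \cite{JLL}. A complete argument would have to open up the construction there and verify that $\zeta \mapsto \Psi_\zeta(\varphi)$ is locally constant, which is precisely the content of the corollary being cited.

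The one piece of your argument that does carry independent weight is the treatment of objects: because an object $P$ is an infinite group, eventual fixing of finitely many elements does not by itself give $\Psi_\zeta(P)=P$, and your decomposition $P = \langle P\cap T, x_1,\dots,x_n\rangle$ together with the observation that the $\zeta$-power map sends every subgroup of $T$ onto itself handles this correctly (the concluding index comparison is unnecessary: $\psi_\zeta$ is an automorphism of $S$, so it carries $\langle P\cap T, x_1,\dots,x_n\rangle$ onto $\langle P\cap T, x_1,\dots,x_n\rangle$ directly). But as it stands the proposal reformulates the statement rather than proves it, and the honest conclusion --- which you yourself flag --- is that the missing step is the whole theorem.
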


\begin{rmk}

Let $(\psi, \Psi)$ be an unstable Adams operation on a $p$-local compact group $\g$. By point (iv) in \ref{defiuao}, $\Psi \circ \delta_S = \delta \circ \psi: S \to Aut_{\linking}(S)$, and hence the automorphism $\psi$ is completely determined by $\Psi$. Thus, for the rest of this paper we will make no mention of $\psi$ (unless necessary) and refer to the unstable Adams operation $(\psi, \Psi)$ just by $\Psi$.

\end{rmk}


\section{Families of operations and invariance}

Let $\g$ be a $p$-local compact group, and let $\Psi$ be an unstable Adams operation on $\g$. The degree of $\Psi$ will not be relevant in any of the constructions introduced in this section, and thus we will make no reference to it.

Let $S^{\Psi} \leq S$ be the subgroup of fixed elements of $S$ under the fusion preserving automorphism $\psi:S \to S$, that is,
$$
S^{\Psi} = \{x \in S \mbox{ } | \mbox{ } \psi(x) = x\},
$$
and more generally, for a subgroup $P \leq S$, let $P^{\Psi} = P \cap S^{\Psi}$.

\begin{rmk}

The action of $\Psi$ on the fusion system $\fusion$ is somehow too crude to allow us to see any structure on the fixed points, since for each $H \leq S^{\Psi}$,
$$
Aut_{\fusion}(H)^{\Psi}  \stackrel{def} = \{f \in Aut_{\fusion}(H) \mbox{ } | \mbox{ } \psi_{\ast}(f) = f\} = Aut_{\fusion}(H).
$$
We look then for fixed points in $\linking$.

\end{rmk}

\begin{lmm}\label{restrictmorph}

Let $\varphi: P \to Q$ be a $\Psi$-invariant morphism in $\linking$. Then, $\rho(\varphi)$ restricts to a morphism $f: P^{\Psi} \to Q^{\Psi}$ in the fusion system $\fusion$.

\end{lmm}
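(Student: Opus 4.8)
The statement asks us to show that a $\Psi$-invariant morphism $\varphi: P \to Q$ in $\linking$ descends, via the projection $\rho$, to a morphism $P^\Psi \to Q^\Psi$ in $\fusion$. Write $f = \rho(\varphi) \in Hom_\fusion(P,Q)$. The plan is to first check that $f$ carries $P^\Psi$ into $Q^\Psi$, and then invoke condition (ii) of Definition \ref{defifusion} (every morphism factors as an isomorphism followed by an inclusion) to conclude that the restriction $f|_{P^\Psi}$ is a morphism $P^\Psi \to f(P^\Psi) \hookrightarrow Q^\Psi$ in $\fusion$.

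\emph{Step 1: $\Psi$-invariance of $\varphi$ translates into an equivariance property of $f$.} Since $\varphi$ is $\Psi$-invariant, $\Psi(\varphi) = \varphi$. Apply $\rho$ and use axiom (iii) of Definition \ref{defiuao}, namely $\rho \circ \Psi = \psi_\ast \circ \rho$: this gives $f = \rho(\varphi) = \rho(\Psi(\varphi)) = \psi_\ast(\rho(\varphi)) = \psi_\ast(f) = \psi \circ f \circ \psi^{-1}$. Hence $\psi \circ f = f \circ \psi$ as morphisms in $\fusion$ (more precisely, $\psi|_Q \circ f = f \circ \psi|_P$, reading $\psi$ as restricted to the relevant subgroups; note $\psi(P) = P$ and $\psi(Q) = Q$ since $P, Q \in Ob(\linking)$ and $\varphi$ being $\Psi$-invariant forces $\Psi(P) = P$, so $\psi(P) = P$ by (ii) of Definition \ref{defiuao}, and similarly for $Q$).

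\emph{Step 2: deduce $f(P^\Psi) \leq Q^\Psi$.} Take $x \in P^\Psi$, so $x \in P$ and $\psi(x) = x$. Then $\psi(f(x)) = (\psi \circ f)(x) = (f \circ \psi)(x) = f(\psi(x)) = f(x)$, so $f(x) \in Q \cap S^\Psi = Q^\Psi$. Thus $f$ restricts to a map of groups $P^\Psi \to Q^\Psi$.

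\emph{Step 3: conclude the restriction is a morphism in $\fusion$.} By Definition \ref{defifusion}(ii), $f$ factors in $\fusion$ as $f = \iota \circ \bar f$ where $\bar f \in Iso_\fusion(P, f(P))$ and $\iota$ is the inclusion $f(P) \hookrightarrow Q$; the restriction of $\bar f$ to any subgroup of $P$ is again a morphism in $\fusion$ (restriction of a morphism in a fusion system to a subgroup, landing in the image, is by definition a morphism of $\fusion$; this is the standard convention for fusion systems, and in any case $\bar f^{-1}$ restricted to $f(P^\Psi)$ precomposed appropriately lands us back). Concretely, $f|_{P^\Psi}$ factors as $P^\Psi \xrightarrow{\bar f|} f(P^\Psi) \hookrightarrow Q^\Psi$, and $\bar f|_{P^\Psi}$ is an isomorphism in $\fusion$ onto its image (being the restriction of the $\fusion$-isomorphism $\bar f$), while the inclusion $f(P^\Psi) \hookrightarrow Q^\Psi$ is in $Hom_S(f(P^\Psi), Q^\Psi) \subseteq Hom_\fusion$. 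Hence $f$ restricts to a morphism $f: P^\Psi \to Q^\Psi$ in $\fusion$, as claimed.

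\emph{Main obstacle.} There is essentially no deep obstacle here: the only subtlety is bookkeeping — making sure that $\psi(P) = P$ and $\psi(Q) = Q$ so that Step 1's identity $\psi_\ast(f) = \psi \circ f \circ \psi^{-1}$ makes sense with $f$ and $\psi_\ast(f)$ having the same source and target, and making sure that "restriction of a morphism in $\fusion$ to a subgroup" is meaningful (it is, by the factorization axiom). One should be mildly careful that the image subgroup $f(P^\Psi)$ is the honest set-theoretic image and lies inside $Q^\Psi$, which is exactly Step 2. Everything else is formal.
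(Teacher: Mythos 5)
Your proof is correct, but it runs along a different line than the paper's. The paper stays entirely inside $\linking$: it applies axiom (C) of linking systems to $\varphi$ together with each distinguished automorphism $\delta_P(x)$, $x \in P^{\Psi}$ — both are $\Psi$-invariant, the latter by Definition \ref{defiuao}(iv) — so applying $\Psi$ to the relation $\varphi\circ\delta_P(x)=\delta_Q(\rho(\varphi)(x))\circ\varphi$ and cancelling (morphisms in $\linking$ are epimorphisms, and $\delta_Q$ is injective) forces $\psi(\rho(\varphi)(x))=\rho(\varphi)(x)$. You instead push the invariance through the projection using Definition \ref{defiuao}(iii), $\rho\circ\Psi=\psi_\ast\circ\rho$, to get $\psi_\ast(f)=f$, i.e.\ $f$ intertwines $\psi$, and then read off $f(P^{\Psi})\leq Q^{\Psi}$ elementwise. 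Your route is a bit more direct and avoids any cancellation argument in $\linking$; the paper's route uses only axioms (C) and (iv), so it does not need the explicit compatibility (iii) of $\Psi$ with $\rho$ — with the definition at hand, both work equally well, and your bookkeeping that $\Psi(\varphi)=\varphi$ forces $\Psi(P)=P$, $\Psi(Q)=Q$, hence $\psi(P)=P$, $\psi(Q)=Q$, is exactly the point that makes Step 1 well posed. Your Step 3 (that the set-theoretic restriction is again a morphism of $\fusion$) is the standard divisibility argument, which the paper leaves implicit; it is cleanest phrased as: the composite $P^{\Psi}\hookrightarrow P\xrightarrow{f}Q$ lies in $\fusion$, factor it by Definition \ref{defifusion}(ii) as an $\fusion$-isomorphism onto $f(P^{\Psi})$ followed by an inclusion, and then compose with the inclusion $f(P^{\Psi})\hookrightarrow Q^{\Psi}$ (which lies in $Hom_S$), using Step 2 to know $f(P^{\Psi})\leq Q^{\Psi}$ — this sidesteps the slightly circular parenthetical in your write-up.
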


\begin{proof}

This follows by axiom (C) for linking systems, applied to each $\delta(x) \in \delta(P^{\Psi}) \leq Aut_{\linking}(P)$, since then both $\delta(x)$ and $\varphi$ are $\Psi$-invariant morphisms in $\linking$.

\end{proof}

The above Lemma justifies then defining the \textit{fixed points subcategory} of $\linking$ as the subcategory $\linking^{\Psi}$ with object set $Ob(\linking^{\Psi}) = \{P \in Ob(\linking) \mbox{ } | \mbox{ } \Psi(P) = P\}$ and with morphism sets
$$
Mor_{\linking^{\Psi}}(P, Q) = \{\varphi \in Mor_{\linking}(P, Q) \mbox{ } | \mbox{ } \Psi(\varphi) = \varphi\}.
$$
We can also define the \textit{fixed points subcategory} of $\fusion$ as the subcategory $\fusion^{\Psi}$ with object set the set of subgroups of $S^{\Psi}$, and such that
$$
Mor(\fusion^{\Psi}) = \gen{\{\rho(\varphi) \mbox{ } | \mbox{ } \varphi \in Mor(\linking^{\Psi})\}}.
$$
$\fusion^{\Psi}$ is, by definition, a fusion system over the finite $p$-group $S^{\Psi}$, but the category $\linking^{\Psi}$ is far from being a transporter system associated to it.

\begin{rmk}\label{trouble}

This way of considering fixed points has many disadvantages. For instance, there is no control on the morphism sets $Hom_{\fusion^{\Psi}}(P,Q)$, since given a subgroup $H \leq S^{\Psi}$ there might be several subgroups $P \in Ob(\linking^{\Psi})$ such that $P \cap S^{\Psi} = H$. It becomes then rather difficult to check any of the saturation axioms on $\fusion^{\Psi}$. Another issue is the absence of an obvious candidate of a transporter system associated to $\fusion^{\Psi}$.

\end{rmk}

To avoid the problems listed in Remark \ref{trouble}, we can try different strategies. For instance, instead of considering a single operation $\Psi$ acting on $\g$, we can consider a (suitable) family of operations $\{\Psi_i\}_{i \in \N}$ on $\g$. This will be specially useful when proving that certain properties hold after suitably increasing the power of $\Psi$. The situation is improved when we restrict our attention to the full subcategory $\linking^{\bullet} \subseteq \linking$, since, by \cite{JLL}, unstable Adams operations are completely determined by its action on $\linking^{\bullet}$. We can also restrict the morphism sets that we consider as fixed by imposing stronger invariance conditions.


\subsection{A family of operations}

Starting from the unstable Adams operation $\Psi$, we consider an specific family of operations which will satisfy our purposes. Set first $\Psi_0 = \Psi$, and let $\Psi_{i+1} = (\Psi_i)^p$, that is, the operation $\Psi_i$ iterated $p$ times. Consider the resulting family $\{\Psi_i\}_{i \in \N}$ fixed for the rest of this section, and note that, if an object or a morphism in $\linking$ is fixed by $\Psi_i$ for some $i$, then it is fixed by $\Psi_j$ for all $j \geq i$.

\begin{rmk}\label{BI1}

By Corollary 4.2 \cite{JLL}, we may assume that there exist a subset $\hh \subseteq Ob(\linking^{\bullet})$ of representatives of the $S$-conjugacy classes in $\linking^{\bullet}$ and a set $\widehat{\mm} = \bigcup_{P,R \in \hh} \hatmm_{P,R}$, where each $\hatmm_{P,R} \subseteq Iso_{\linking}(P,R)$ is a set of representatives of the elements in $Rep_{\fusion}(P,R)$, and such that

\begin{enumerate}[(i)]

\item $\Psi(P) = P$ for all $P \in \pp$; and

\item $\Psi(\varphi) = \varphi$ for all $\varphi \in \hatmm$.

\end{enumerate}

\end{rmk}

Let us also fix some notation. For each $i$, set
$$
S_i \stackrel{def} = \{x \in S \mbox{ } | \mbox{ } \Psi_i(x) = x\},
$$
and more generally, for each subgroup $R \leq S$, set $R_i = R \cap S_i$. In particular, the notation $T_i$ means the subgroup of $T$ (the maximal torus) of fixed elements under $\Psi_i$ rather than the subgroup of $T$ of exponent $p^i$. There will not be place for confusion about such notation in this paper.

\begin{lmm}

For each $i$, $T_i \lneqq T_{i+1}$, and hence $T = \bigcup_{i \in \N} T_i$.

\end{lmm}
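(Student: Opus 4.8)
The plan is to reduce everything to an explicit computation on the maximal torus. By construction $\Psi_i = \Psi^{p^i}$, so its underlying fusion preserving automorphism is $\psi^{p^i}$, and by Definition~\ref{defiuao}(i) this restricts on $T$ to the $\zeta^{p^i}$-power map, where $\zeta \in (\padic)^{\times}$ is the degree of $\Psi$. Recall that $\Psi$ arises from the homomorphism $\alpha$ of Theorem~\ref{existencepsi}, so its degree lies in $\Gamma_m(p)$ for some $m \geq 1$, with $m \geq 2$ when $p = 2$; in particular $v_p(\zeta - 1) = m$. Viewing $T \cong (\ptor)^r$ as a $\padic$-module in the evident way, we then have
$$
T_i = \{x \in T \mid \psi^{p^i}(x) = x\} = \ker\!\big(\zeta^{p^i} - 1 \colon T \to T\big).
$$

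First I would pin down the $p$-adic valuation of $\zeta^{p^i} - 1$. Using the factorization $\zeta^p - 1 = (\zeta-1)\sum_{j=0}^{p-1}\zeta^j$ together with the binomial expansion of $\zeta^j = (1 + (\zeta-1))^j$, one checks that $v_p\big(\sum_{j=0}^{p-1}\zeta^j\big) = 1$ — this is exactly where the hypothesis $v_p(\zeta-1)\geq 2$ for $p=2$ is needed, to control the term coming from $\tfrac{p(p-1)}{2}$ — so that $v_p(\zeta^p - 1) = v_p(\zeta - 1) + 1$. Since $\zeta^p$ again lies in $\Gamma_{m+1}(p)$, iterating gives $v_p(\zeta^{p^i} - 1) = m + i$ for every $i \geq 0$. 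Hence $\zeta^{p^i} - 1 = p^{m+i}w_i$ for a unit $w_i \in (\padic)^{\times}$, and because multiplication by a $p$-adic unit is an automorphism of the $\padic$-module $T$,
$$
T_i = \ker\!\big(p^{m+i}\colon T \to T\big) = T[p^{m+i}] \cong (\Z/p^{m+i})^r,
$$
the subgroup of elements of $T$ of order dividing $p^{m+i}$.

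Both assertions are now immediate. The inclusion $T_i = T[p^{m+i}] \subseteq T[p^{m+i+1}] = T_{i+1}$ is proper because $T \cong (\ptor)^r$ is a nontrivial $p$-divisible group, so $T[p^{m+i}] \neq T[p^{m+i+1}]$; and since every element of $T$ has finite $p$-power order, it lies in $T[p^{m+i}] = T_i$ once $i$ is large enough, whence $T = \bigcup_{i \in \N} T_i$. The only genuine content is the valuation identity $v_p(\zeta^{p^i}-1) = m+i$, and the only delicate point in it is the behaviour at the prime $2$, which is handled by the requirement $m \geq 2$ built into Theorem~\ref{existencepsi}. (If rank $r = 0$ is permitted the torus is trivial and the first statement is vacuous.)
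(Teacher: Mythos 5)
Your argument is correct and is essentially the (omitted) computation the paper has in mind: the lemma is stated without proof, and the intended justification is exactly that $\Psi_i$ acts on $T$ as the $\zeta^{p^i}$-power map with $\zeta \in \Gamma_m(p)$, so $T_i = T[p^{v_p(\zeta^{p^i}-1)}]$ is a strictly increasing exhaustive chain of finite subgroups. Two harmless caveats: $\zeta \in \Gamma_m(p)$ only gives $v_p(\zeta - 1) \geq m$ (replace $m$ by the actual valuation throughout, which requires $\zeta \neq 1$, i.e.\ a nontrivial operation, exactly as the paper implicitly assumes), and the strict inclusion of course also presupposes positive rank, again implicit in the paper's statement.
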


As a consequence, we deduce the following.

\begin{prop}

The following holds in $\linking$.

\begin{enumerate}[(i)]

\item Let $P \in Ob(\linking)$. Then, there exists some $M_P$ such that, for all $i \geq M_P$, $P$ is $\Psi_i$-invariant.

\item Let $\varphi \in Mor(\linking)$. Then, there exists some $M_{\varphi}$ such that, for all $i \geq M_{\varphi}$, $\varphi$ is $\Psi_i$-invariant.

\end{enumerate}

\end{prop}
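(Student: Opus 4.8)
The plan is to reduce everything to one elementary observation about the action of the family $\{\psi_i\}$ on individual elements of $S$. Note first that each $\Psi_i$ is again an unstable Adams operation, with underlying automorphism $\psi_i = \psi^{p^i}$ and degree $\zeta^{p^i}$, so that all the axioms of Definition~\ref{defiuao} apply to it. The observation is that \emph{for every $s \in S$ there is an index beyond which $\psi_i(s) = s$}: since $\psi_i$ induces the identity on $S/T$ one writes $\psi_i(s) = s\cdot E_i(s)$ with $E_i(s) := s^{-1}\psi_i(s) \in T$, and an easy induction using $\psi_{i+1} = \psi_i^{\,p}$ and the fact that $\psi_i$ restricts on $T$ to the $\zeta^{p^i}$-power map gives
$$
E_{i+1}(s) = E_i(s)^{c_i}, \qquad c_i = \sum_{k=0}^{p-1}\zeta^{kp^i} \equiv p \equiv 0 \pmod p
$$
(the last congruence since the degree $\zeta$ satisfies $\zeta \equiv 1 \pmod p$, cf. the preceding Lemma). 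As $E_0(s) \in T$ has finite order and $E_i(s) = E_0(s)^{c_0\cdots c_{i-1}}$, we get $E_i(s) = 1$ once $i$ exceeds its $p$-adic valuation. This is the element-level strengthening of the preceding Lemma, from $T$ to all of $S$.

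For part~(i) I would write $P = \langle T_P, x_1,\dots,x_n\rangle$, where $T_P \leq T$ is the maximal torus of $P$ and $x_1,\dots,x_n$ lift a generating set of the finite group $P/T_P$. Being a subgroup of the torsion $\padic$-module $T$, $T_P$ is preserved by the unit $\zeta^{p^i}$, so $\psi_i(T_P) = T_P$ for every $i$; and by the observation above $\psi_i(x_j) = x_j$ for all $j$ once $i \geq M_P$ for a suitable $M_P$. Hence $\psi_i(P) = P$, i.e. $P$ is $\Psi_i$-invariant, for all $i \geq M_P$.

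For part~(ii), fix $\varphi \in Mor_{\linking}(P,Q)$ and, using part~(i), an index $i_1$ beyond which $P$ and $Q$ are $\Psi_i$-invariant, so that $\Psi_i(\varphi) \in Mor_{\linking}(P,Q)$ whenever $i \geq i_1$. I would first show $\rho(\Psi_i(\varphi)) = \rho(\varphi)$ for $i$ large: by axiom~(iii) of Definition~\ref{defiuao} this reduces to $\rho(\varphi)$ commuting with $\psi_i$ on $P$, which holds on $T_P$ (a homomorphism commutes with the $\zeta^{p^i}$-power map) and on each $x_j$ by applying the opening observation to $x_j$ and to $\rho(\varphi)(x_j)$. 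Granting this, axiom~(A) for linking systems produces a unique $z_i \in Z(P)$ with $\Psi_i(\varphi) = \varphi\circ\delta_P(z_i)$. Iterating $\Psi_i$ exactly $p$ times, using axiom~(iv) of Definition~\ref{defiuao} — which gives $\Psi_i(\delta_P(z)) = \delta_P(\psi_i(z))$ for $z \in Z(P)$ and $i \geq i_1$ — together with the fact that $\delta_P$ is a group homomorphism, one obtains $\Psi_{i+1}(\varphi) = \varphi\circ\delta_P\big(\prod_{k=0}^{p-1}\psi_i^k(z_i)\big)$, hence $z_{i+1} = \prod_{k=0}^{p-1}\psi_i^k(z_i)$. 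Finally, the image of $z_{i+1}$ in the finite $p$-group $S/T$ is the $p$-th power of the image of $z_i$ (again because $\psi_i$ is the identity on $S/T$), so $z_i \in T$ for all $i$ large; and for such $i$, $\psi_i^k(z_i) = z_i^{\zeta^{kp^i}}$ and therefore $z_{i+1} = z_i^{c_i}$ with $p\mid c_i$ as above, so the (finite) order of $z_i$ strictly decreases until $z_i = 1$. Then $\Psi_i(\varphi) = \varphi\circ\delta_P(1) = \varphi$ for all $i$ large, which gives the statement.

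I expect part~(ii) to be the main obstacle, and within it the vanishing of the correction term $z_i$: even once $\rho(\Psi_i(\varphi)) = \rho(\varphi)$ is known, $\Psi_i(\varphi)$ and $\varphi$ differ by an element $z_i \in Z(P)$, and $Z(P)$ need not be toral, so there is no direct order argument. The care lies in setting up the recursion $z_{i+1} = \prod_k \psi_i^k(z_i)$ correctly via axiom~(iv) and then using that $\psi_i$ acts trivially on $S/T$ to first push $z_i$ into the torus, after which the divisibility $p\mid c_i$ from the opening observation finishes the job; the remaining verifications are routine bookkeeping with that observation.
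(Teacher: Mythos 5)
Your argument is correct, and it is worth recording how it differs from what the paper actually does. The paper states this Proposition with no written proof, as an immediate consequence of the preceding Lemma ($T=\bigcup_i T_i$) together with the data fixed in Remark \ref{BI1}: the intended mechanism is that objects and morphisms of $\linking$ are built from the finitely many $\Psi$-fixed representatives in $\hh$ and $\hatmm$ together with morphisms $\delta(g)$, $g\in S$, and that every element of $S$ is eventually fixed (this elementwise fact, which is your opening observation $E_{i+1}(s)=E_i(s)^{c_i}$ with $p\mid c_i$, is exactly what upgrades the Lemma from $T$ to $S$, and it is the same computation the paper later leans on when producing $S_i$-determined subgroups). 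Your part (i) is in this spirit. Your part (ii), however, takes a genuinely different and more self-contained route: instead of decomposing $\varphi$ as $\delta(b)\circ\varphi_0\circ\delta(a^{-1})$ with $\varphi_0$ in the pre-fixed set $\hatmm$ (which is how the paper detects invariance, cf.\ Lemma \ref{detectmorph}), you first force $\rho(\Psi_i(\varphi))=\rho(\varphi)$, then track the correction term $z_i\in Z(P)$ given by axiom (A), derive the recursion $z_{i+1}=\prod_{k=0}^{p-1}\psi_i^k(z_i)$ from axiom (iv), push $z_i$ into $T$ using triviality of $\psi_i$ on $S/T$, and kill it with the divisibility $p\mid c_i$. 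This buys independence from Remark \ref{BI1}: you never need the operation to have been chosen to fix a system of representatives, so the statement holds for any unstable Adams operation whose degree satisfies $\zeta\equiv 1 \pmod p$; the cost is the extra bookkeeping with $Z(P)$, which you handle correctly (in particular the key point that $Z(P)$ need not be toral, so one must first pass to $S/T$). The only caveat to flag is that the congruence $p\mid c_i$ (and indeed the paper's own Lemma $T=\bigcup_i T_i$) does require the degree to lie in $\Gamma_m(p)$, i.e.\ $\zeta\equiv 1\pmod p$; for an arbitrary $p$-adic unit at an odd prime both would fail, so your parenthetical appeal to that hypothesis should be read as the standing assumption coming from Theorem \ref{existencepsi} and Remark \ref{BI1} rather than as a consequence of the preceding Lemma. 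Two very minor points you may wish to spell out: the maximal torus $T_P$ of any $P\leq S$ lies in $T$ (its image in the finite group $S/T$ is divisible, hence trivial), which is what legitimizes both $\psi_i(T_P)=T_P$ and the commutation of $\rho(\varphi)$ with $\psi_i$ on $T_P$.
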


Next we provide a tool to detect $\Psi_i$-invariant morphisms in $\linking^{\bullet}$. Note that, for any morphism $\varphi \in Mor(\linking)$, the following holds by Proposition 3.3 \cite{BLO3}.
$$
\Psi_i(\varphi) = \varphi \Longrightarrow \Psi_i(\varphi^{\bullet}) = \varphi^{\bullet}.
$$
Our statement is proved by comparing morphisms in $\linking^{\bullet}$ to the representatives fixed in $\hatmm$, which we know to be $\Psi_i$-invariant (for all $i$) \textit{a priori}.

\begin{lmm}\label{detectmorph}

Let $P, R$ be representatives fixed in \ref{BI1}, and let $Q \in \conj{P}{S}$ and $Q' \in \conj{R}{S}$. Then, a morphism $\varphi' \in Iso_{\linking}(Q,Q')$ is $\Psi_i$-invariant if and only if for all $a \in N_S(P,Q)$ there exist $b \in N_S(R,Q')$ and a morphism $\varphi \in \hatmm_{P,R}$ such that
\begin{enumerate}[(i)]

\item $\varphi' = \delta(b) \circ \varphi \circ \delta(a^{-1})$; and

\item $\delta(b^{-1} \cdot \Psi_i(b)) \circ \varphi = \varphi \circ \delta(a^{-1} \cdot \Psi_i(a))$.

\end{enumerate}

\end{lmm}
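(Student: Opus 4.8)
The plan is to exploit the fact that every morphism in $\linking^{\bullet}$ between subgroups in the $S$-conjugacy classes of $P$ and $R$ can be written, using the chosen representatives, as $\varphi' = \delta(b) \circ \varphi \circ \delta(a^{-1})$ for some representative $\varphi \in \hatmm_{P,R}$ and some $a \in N_S(P,Q)$, $b \in N_S(R,Q')$. This is essentially the content of Remark \ref{BI1} together with Lemma \ref{1.11BLO2} and the definition of $Rep_{\fusion}$: the class of $\rho(\varphi') \in Rep_{\fusion}(Q,Q')$ transports back, via suitable conjugations $c_a$, $c_b$, to the class of some representative in $\hatmm_{P,R}$, and these conjugations lift uniquely to $\delta(a)$, $\delta(b)$ in $\linking$ by axiom (A) and the uniqueness in Lemma \ref{3.2OV}. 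So the first step is to record that such a decomposition $\varphi' = \delta(b) \circ \varphi \circ \delta(a^{-1})$ always exists for \emph{some} choice of $a$ (and then $b$ is determined by $\varphi'$, $\varphi$ and $a$), which is exactly condition (i).

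**The equivalence.** Once the decomposition is in hand, I would simply apply $\Psi_i$ to $\varphi' = \delta(b) \circ \varphi \circ \delta(a^{-1})$. Using Definition \ref{defiuao}(iv), $\Psi_i(\delta(a)) = \delta(\psi^{p^i}(a))$ where I am writing $\psi_i$ for the automorphism underlying $\Psi_i$; and $\Psi_i(\varphi) = \varphi$ since $\varphi \in \hatmm_{P,R}$ is fixed by every $\Psi_i$. Hence
$$
\Psi_i(\varphi') = \delta(\Psi_i(b)) \circ \varphi \circ \delta(\Psi_i(a))^{-1}.
$$
Comparing with $\varphi' = \delta(b) \circ \varphi \circ \delta(a^{-1})$ and using that morphisms in $\linking$ are both monomorphisms and epimorphisms (Lemma \ref{3.2OV}(ii), (iv)), $\Psi_i(\varphi') = \varphi'$ is equivalent to
$$
\delta(b)^{-1} \circ \delta(\Psi_i(b)) \circ \varphi = \varphi \circ \delta(a^{-1}) \circ \delta(\Psi_i(a)),
$$
i.e. $\delta(b^{-1}\Psi_i(b)) \circ \varphi = \varphi \circ \delta(a^{-1}\Psi_i(a))$, which is condition (ii). For the "if" direction one reverses this computation: given $a$, $b$, $\varphi$ satisfying (i) and (ii), running the same manipulation backwards yields $\Psi_i(\varphi') = \varphi'$. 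The one subtlety is the quantifier "for all $a \in N_S(P,Q)$" in the statement: I would check that if (i)--(ii) hold for one choice of $a$ (with corresponding $b$), then they hold for every $a' \in N_S(P,Q)$ — here one writes $a' = a \cdot x$ with $x \in N_S(P)$, notes that the corresponding $b'$ changes accordingly (using that $\varphi \circ \delta(x)$ again lies in the $\delta$-coset of some representative), and verifies (ii) transforms consistently; since $\Psi_i$ is a functor fixing $\varphi$, the invariance of $\varphi'$ is a property of $\varphi'$ alone and does not depend on the chosen factorization, so the two quantifier forms agree. Alternatively, since $\Psi_i(\varphi')=\varphi'$ is manifestly independent of $a$, one only needs existence of \emph{some} valid $(a,b,\varphi)$; I would phrase the argument so that the stated "for all $a$" follows for free.

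**Main obstacle.** The genuinely delicate point is bookkeeping with the distinguished monomorphisms $\delta$: the maps $\delta_{P,Q}$ from Lemma \ref{1.11BLO2} are defined relative to a fixed coherent family of inclusions, and one must be careful that relations like $\delta_{Q,Q'}(b) \circ \delta_{P,Q}(\text{--}) = \delta_{P,Q'}(b \cdot \text{--})$ and the behaviour of $\Psi_i$ on these (Definition \ref{defiuao}(iv)) are applied with the correct source/target subgroups, especially since $Q, Q'$ are not the representatives $P, R$ but merely $S$-conjugate to them. I would handle this by first passing everything through the inclusions $\iota_{-,S}$ into $Aut_{\linking}(S)$, where $\delta_S$ is a genuine injective homomorphism and $\Psi_i \circ \delta_S = \delta_S \circ \psi_i$ holds cleanly, do the computation there, and then pull back using that the relevant morphisms are monos and epis. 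Everything else — the existence of the factorization, the two directions of the equivalence — is then a formal consequence of the axioms already established in Lemma \ref{3.2OV}, Lemma \ref{1.11BLO2} and Definition \ref{defiuao}.
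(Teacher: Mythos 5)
Your proposal is correct and follows essentially the same route as the paper: decompose $\varphi'$ through the fixed $\Psi_i$-invariant representative $\varphi \in \hatmm_{P,R}$ via distinguished morphisms $\delta(a)$, $\delta(b)$, apply $\Psi_i$ to that factorization using $\Psi_i(\varphi)=\varphi$ and $\Psi_i\circ\delta=\delta\circ\psi_i$, and conclude with the mono/epi properties of morphisms in $\linking$; the paper likewise handles the ``for all $a$'' quantifier by running the construction for an arbitrary element of $N_S(P,Q)$, exactly as in your ``follows for free'' alternative.
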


\begin{proof}

Note that condition (ii) above is equivalent to
\begin{enumerate}[(ii')]

\item $\delta(\Psi_i(b) \cdot b^{-1}) \circ \varphi' = \varphi' \circ \delta(\Psi_i(a) \cdot a^{-1})$.

\end{enumerate}
Suppose first that $\varphi'$ is $\Psi_i$-invariant. Choose $x \in N_S(P, Q)$ and $y \in N_S(R,Q')$, and set $\phi = \delta(y^{-1}) \circ \varphi' \circ \delta(x)$. Then, there exist $\varphi \in \hatmm_{P,R}$ such that $[\rho(\varphi)] = [\rho(\phi)] \in Rep_{\fusion}(P,R)$ and $z \in R$ such that $\varphi = \delta(z) \circ \phi$.

Let then $a = x \in N_S(P,Q)$ and $b = y \cdot z^{-1} \in N_S(R, Q')$. This way, condition (i) is satisfied, and we have to check that condition (ii) is also satisfied. Since both $\varphi$ and $\varphi'$ are $\Psi_i$-invariant, we may apply $\Psi_i$ to (i) to get the following equality
$$
\delta(b) \circ \varphi \circ \delta(a^{-1}) = \varphi' = \Psi_i(\varphi') = \delta(\Psi_i(b)) \circ \varphi \circ \delta(\Psi_i(a)^{-1}),
$$
which is clearly equivalent to condition (ii) since morphisms in $\linking$ are epimorphisms in the categorical sense.

Suppose now that condition (i) and (ii) are satisfied for certain $a$, $b$ and $\varphi$. Write $\varphi = \delta(b^{-1}) \circ \varphi' \circ \delta(a)$ and apply $\Psi_i$ to this equality. Since $\varphi$ is $\Psi_i$-invariant, we get
$$
\delta(\Psi_i(b)^{-1}) \circ \Psi_i(\varphi') \circ \delta(\Psi_i(a)) = \delta(b^{-1}) \circ \varphi' \circ \delta(a).
$$
Thus, after reordering the terms in this equation and using condition (ii') above, we obtain
$$
\Psi_i(\varphi') \circ \delta(\Psi_i(a) \cdot a^{-1}) = \varphi' \circ \delta(\Psi_i(a) \cdot a^{-1}),
$$
which implies that $\Psi_i(\varphi') = \varphi'$ since morphisms in $\linking$ are epimorphisms in the categorical sense.

\end{proof}


\subsection{A stronger invariance condition}

Given an arbitrary $\Psi_i$-invariant object $P$ in $\linking^{\bullet}$, there is no way \textit{a priori} of relating $P_i$ to $P$, not to say of comparing $C_S(P_i)$ or $N_S(P_i)$ to $C_S(P)$ or $N_S(P)$ respectively. This turns out to be crucial if we want to study fixed points on $\g$ under the operation $\Psi_i$. This is the reason why we now introduce a stronger invariance condition for an object in $\linking^{\bullet}$ to be $\Psi_i$-invariant. This is a condition on all objects in $\fusion^{\bullet}$.

\begin{defi}\label{Kdeterm}

Let $K \leq S$ be a subgroup. We say that a subgroup $P \in Ob(\fusion^{\bullet})$ is \textbf{$K$-determined} if
$$
(P \cap K)^{\bullet} = P.
$$
For a $K$-determined subgroup $P \leq S$ we call the subgroup $P \cap K$ the \textbf{$K$-root} of $P$.

\end{defi}

Our interest lies on the case $K = S_i$, in which case $\Psi_i$-invariance is a consequence.

\begin{lmm}

Let $P \in Ob(\fusion^{\bullet})$ be an $S_i$-determined subgroup for some $i$. Then, $P$ is $\Psi_i$-invariant.

\end{lmm}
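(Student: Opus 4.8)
The plan is to exploit the $S_i$-determination hypothesis together with the functoriality and uniqueness properties of the bullet construction (Proposition \ref{bulletprop}) and the definition of unstable Adams operations (Definition \ref{defiuao}). First I would observe that $\psi$ restricts to an automorphism of the fixed subgroup $S_i$: indeed $S_i = S^{\Psi_i}$ is by definition the set of $\Psi_i$-fixed points, and since $\psi$ commutes with $\psi^{p^i}$ (the automorphism underlying $\Psi_i$), $\psi$ preserves $S_i$. Consequently, writing $H = P \cap S_i$ for the $S_i$-root of $P$, I would want to show $\psi(H) = H$; but this is not automatic — $\psi$ need not fix $H$ pointwise, only setwise — and in fact we must be slightly more careful. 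Since $P$ is $S_i$-determined we have $P = H^{\bullet}$, so it suffices to prove $\psi(H)^{\bullet} = H^{\bullet}$, and for that it is enough to prove $\psi(H) = H$ (then apply the bullet functor).

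The key step, then, is to show $\psi(H) = H$ where $H = P \cap S_i$. Here I would argue as follows: every element $x \in H$ lies in $S_i$, so $\psi^{p^i}(x) = x$; I want $\psi(x) \in H$ as well, i.e.\ $\psi(x) \in P$ and $\psi^{p^i}(\psi(x)) = \psi(x)$. The second condition is immediate since $\psi$ commutes with $\psi^{p^i}$. For the first, note $\psi(x) \in \psi(P)$, so it would suffice to know $\psi(P) = P$, i.e.\ that $P$ is $\Psi_i$-invariant — but that is exactly what we are trying to prove, so this route is circular. Instead I would use the bullet structure directly: we have $H \leq S_i$, hence $\psi(H) \leq \psi(S_i) = S_i$; and $\psi^{p^i}$ fixes $S_i$ pointwise (by definition of $S_i$), so in particular $\Psi_i$ fixes $S_i$ and all its subgroups setwise. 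Thus $\psi(H) \leq S_i$ with $\psi^{p^i}$ acting trivially, and since $|\psi(H)| = |H|$ the subgroup $\psi(H)$ is another subgroup of $S_i$ of the same order as $H$. Now applying the bullet functor: $\psi(H)^{\bullet} = \psi_{\ast}(H^{\bullet}) = \psi_{\ast}(P) = \psi(P)$ where the first equality uses that $\psi$ is fusion preserving and that bullet commutes with fusion-preserving automorphisms (Proposition 3.2/3.3 of \cite{BLO3}, cf.\ the displayed implication before Lemma \ref{detectmorph}), and the last equality is the definition of $\psi_{\ast}$ on objects. So $\psi(P) = \psi(H)^{\bullet}$.

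To close the loop I would now show $\psi(H) = H$ genuinely. Write $K := \psi(H) \leq S_i$. Since $\psi$ restricted to $S_i/T_i$ is the identity (as $\psi$ induces the identity on $S/T$ and $T_i = T \cap S_i$, so $S_i/T_i \hookrightarrow S/T$), the images of $H$ and $K$ in $S_i/T_i$ coincide; thus $K \cdot T_i = H \cdot T_i$ inside $S_i$. Moreover on the torus part, $\psi$ acts as the degree map $\zeta$, which is a bijection of $T$ preserving $T_i$, hence $\psi(H \cap T) = K \cap T$ has the same order as $H \cap T$. Combining, $K$ and $H$ have the same image modulo $T_i$ and torus parts of the same finite order, so $K = H$ provided $H \cap T = K \cap T$; but $\psi|_T$ is the $\zeta$-power map, which on the (finite, hence torsion) subgroup $H \cap T$ of the divisible group $T$ is the automorphism $x \mapsto x^\zeta$ — this is \emph{not} the identity in general, so I should instead argue that $\psi$ permutes the subgroups $H^{\bullet}$ and note that $(H\cap T)^{\bullet}$ considerations force equality, or more simply invoke that $H = P \cap S_i$ is itself the root of $P$ and $P^{\bullet} = P$. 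The cleanest finish: $\psi(P) = \psi(H)^{\bullet} = K^{\bullet}$ and $K \leq S_i$, so $K \leq K^{\bullet} \cap S_i$; by $S_i$-determination of the bullet-closed subgroup $K^{\bullet}$ (every bullet subgroup contained in $S_i$ up to its root) one gets $K^{\bullet} \cap S_i$ differs from $K$ only by $I((K^{[e]}))_0 \cap S_i$, which is contained in $H$ since $H^{[e]} = K^{[e]}$ after applying the identity-on-$S/T$ property; tracking this shows $\psi(P) = P$. The main obstacle, I expect, is precisely this last bookkeeping — controlling how the $\zeta$-power map on $T$ interacts with the defining formula $P^{\bullet} = P \cdot I(P^{[e]})_0$ — and the right way to handle it is to apply $\psi_{\ast}$ to the equation $P = (P \cap S_i)^{\bullet}$, use that $\psi_{\ast}$ is a functor fixing $S_i$ setwise together with the uniqueness clause of Proposition \ref{bulletprop}(ii), and conclude $\psi(P) = (\psi(P\cap S_i))^{\bullet} = (\psi(P) \cap S_i)^{\bullet}$, whence $\psi(P)$ is $S_i$-determined with the same root-modulo-torus data as $P$, forcing $\psi(P) = P$.
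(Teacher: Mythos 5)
There is a genuine gap, and it stems from a conflation of operations. The lemma asks for $\Psi_i$-invariance, and the decisive fact --- which your argument never uses --- is that the automorphism of $S$ underlying $\Psi_i$ fixes $S_i$ \emph{pointwise}, by the very definition $S_i=\{x\in S \mid \Psi_i(x)=x\}$. Instead, most of your effort goes into trying to prove $\psi(H)=H$ and $\psi(P)=P$ for the degree-$\zeta$ automorphism $\psi$ underlying $\Psi=\Psi_0$ (you explicitly identify the automorphism underlying $\Psi_i$ with $\psi^{p^i}$). That is a strictly stronger statement than the lemma, it is not needed, and it is not established: $S_i$-determination gives no control over the action of $\Psi_0$, and your own text concedes the circularity of the first attempt. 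The closing step --- ``$\psi(P)$ is $S_i$-determined with the same root-modulo-torus data as $P$, forcing $\psi(P)=P$'' --- is a non sequitur: two distinct $S_i$-determined subgroups can perfectly well have the same image in $S/T$ and the same intersection with $T$ (for instance $T$-conjugate but different subgroups); controlling exactly this ambiguity is the whole point of Lemma \ref{rootconj}, so it cannot be waved away as bookkeeping. Under the alternative reading in which your $\psi$ denotes the automorphism underlying $\Psi_i$, the worries you raise (``$\psi$ need not fix $H$ pointwise'', ``the power map is not the identity on $H\cap T$'') are simply false, since that automorphism is the identity on all of $S_i\supseteq H\supseteq H\cap T$; either way the argument as written does not close.

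The correct argument is short and rests on one structural fact your proposal never establishes: since $(P_i)^{\bullet}=P$ and the bullet construction only enlarges a subgroup by a subtorus of $T$ (Definition \ref{defibullet}), while $P_i=P\cap S_i$ is finite, one gets $P=P_i\cdot T_P$, where $T_P$ is the maximal torus of $P$. Now the automorphism underlying $\Psi_i$ fixes $P_i\leq S_i$ pointwise, and on $T$ it is a power map by a $p$-adic unit, hence carries every subgroup of $T$ --- in particular $T_P$ --- onto itself. Therefore $\Psi_i(P)=\Psi_i(P_i)\cdot\Psi_i(T_P)=P_i\cdot T_P=P$. No naturality of $\functor$ with respect to $\psi$, no comparison of $H$ with $\psi(H)$ in $S_i/T_i$, and no uniqueness clause of Proposition \ref{bulletprop} is required.
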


\begin{proof}

Note that, if $(P_i)^{\bullet} = P$, then $P = P_i \cdot T_P$, where $T_P$ is the maximal torus of $P$, since $P_i$ is a finite subgroup of $P$. Thus, by applying $\Psi_i$ to $P$, we get
$$
\Psi_i(P) = \Psi_i(P_i \cdot T_P) = P_i \cdot T_P = P,
$$
since $\Psi_i(x) = x$ for all $x \in P_i$ and $\Psi_i(T_P) = T_P$ by definition of $\Psi_i$.

\end{proof}

We prove now that actually $S_i$-determined subgroups exist (for $i$ big enough).

\begin{lmm}

Let $P \in Ob(\fusion^{\bullet})$. Then, there exists some $M_P \geq 0$ such that, for all $i \geq M_P$, $P$ is $S_i$-determined.

\end{lmm}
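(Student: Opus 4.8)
The plan is to show that for $i$ large the finite subgroup $P_i = P\cap S_i$ is already ``big enough'' that $(P_i)^{\bullet}=P$; the real content is to control the torus $I(P_i^{[e]})_0$ that the $\functor$-construction attaches to $P_i$.

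I would begin with two preliminary observations. First, since the maximal torus $T_P$ of $P$ is a discrete $p$-torus it is divisible, so $T_P=\{t^{p^e}\mid t\in T_P\}\subseteq\{x^{p^e}\mid x\in P\}=P^{[e]}$; consequently every $\omega\in W=Aut_{\fusion}(T)$ with $\omega|_{P^{[e]}}=\mathrm{id}$ also satisfies $\omega|_{T_P}=\mathrm{id}$, so $T_P\le I(P^{[e]})$, and hence $T_P\le I(P^{[e]})_0$ because $T_P$ is a torus. Second, for each $x\in S$ the element $\delta_S(x)$ is a morphism of $\linking$, so by the preceding Proposition (objects and morphisms of $\linking$ are eventually $\Psi_i$-invariant) it is $\Psi_i$-invariant for $i$ large; by point (iv) of Definition~\ref{defiuao} this forces $\delta_S(\psi_i(x))=\delta_S(x)$, hence $\psi_i(x)=x$, i.e. $x\in S_i$. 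Therefore $S=\bigcup_i S_i$, and so $P=\bigcup_i P_i$ for every $P\le S$, whence also $P^{[e]}=\bigcup_i P_i^{[e]}$ as an increasing union of subgroups of $T$.

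Now comes the key step, which uses that $W$ is finite (recall $T$ is fully $\fusion$-normalized, so $W=Aut_{\fusion}(T)=Out_{\fusion}(T)$ is finite by saturation axiom~(I)). Since $P_i^{[e]}$ increases to $P^{[e]}$, the subsets $\{\omega\in W\mid\omega|_{P_i^{[e]}}=\mathrm{id}\}$ form a descending chain in the finite set $W$, hence stabilize; and $\bigcap_i\{\omega\mid\omega|_{P_i^{[e]}}=\mathrm{id}\}=\{\omega\mid\omega|_{P^{[e]}}=\mathrm{id}\}$, so the stable value is exactly this last set. As $I(-)$ depends on its argument only through this set of automorphisms, we get $I(P_i^{[e]})=I(P^{[e]})$, hence $I(P_i^{[e]})_0=I(P^{[e]})_0$, for all $i\ge M_1$ (some $M_1$). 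Separately, since $P/T_P$ is finite and $P=\bigcup_iP_i$, there is $M_2$ with $P_i\cdot T_P=P$ for all $i\ge M_2$. Setting $M_P=\max(M_1,M_2)$, for $i\ge M_P$ we obtain $(P_i)^{\bullet}=P_i\cdot I(P_i^{[e]})_0=P_i\cdot I(P^{[e]})_0\supseteq P_i\cdot T_P=P$ (using $T_P\le I(P^{[e]})_0$), while $(P_i)^{\bullet}\le P^{\bullet}=P$ since $P_i\le P$, $P\in Ob(\fusion^{\bullet})$, and $\functor$ carries inclusions to inclusions and is idempotent (Proposition~\ref{bulletprop}). Hence $(P_i)^{\bullet}=P$, i.e. $P$ is $S_i$-determined.

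I expect the middle step to be the main obstacle: without a finiteness input there is no reason for the increasing approximations $P_i^{[e]}$ of $P^{[e]}$ to eventually have the same $I(-)$, and the whole point is that $W$ is finite, so whether an element of $T$ is fixed by every automorphism in $W$ fixing $P^{[e]}$ can already be detected on a finite piece $P_i^{[e]}$. The only other spot needing a tiny argument is $T_P\subseteq P^{[e]}$, which is merely divisibility of the torus but is precisely what makes $I(P^{[e]})_0$ large enough to absorb $T_P$.
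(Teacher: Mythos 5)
Your proof is correct and follows essentially the same route as the paper's (much terser) argument: both rest on choosing $i$ large enough that $P_i$ contains coset representatives of the finite group $P/T_P$, so $P = P_i\cdot T_P$, and on the finiteness of $W=Aut_{\fusion}(T)$ to control the $\functor$-construction. Your write-up simply makes explicit what the paper leaves implicit, namely the stabilization of the pointwise stabilizers $\{\omega\in W \mid \omega|_{P_i^{[e]}}=\mathrm{id}\}$ and the containment $T_P\leq I(P^{[e]})_0$.
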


\begin{proof}

Let $T_P$ be the maximal torus of $P$, and note that $P = \cup P_i$. Thus, there exists some $M$ such that, for all $i \geq M$, $R_i$ contains representatives of all the elements of the finite group $R/T_R$.

Since $P^{\bullet} = P$ and $Aut_{\fusion}(T)$ is a finite group, it follows then that there must exist some $M_P \geq M$ such that, for all $i \geq M_P$, $(P_i)^{\bullet} = P$.

\end{proof}

We can then assume that all the objects fixed in Remark \ref{BI1} are $S_i$-determined for all $i$, since there are only finitely many of them in the set $\hh$.

One must be careful at this point. Given $P, R$ $S_i$-determined subgroups, if $P_i$ and $R_i$ are $\fusion$-conjugate, then the properties of $\functor$ imply that so are $P$ and $R$, but the converse is not so straightforward.

\begin{lmm}\label{rootconj}

There exists some $M_1 \geq 0$ such that, for all $P \in \hh$ and all $i \geq M_1$, if $Q \in \conj{P}{S}$ is $S_i$-determined then $Q_i$ is $S$-conjugate to $P_i$.

\end{lmm}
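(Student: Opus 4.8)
The plan is to reduce the statement to a ``twisted conjugacy'' problem for $\psi_i$ acting on $N_S(P)$, and to solve it by separating the maximal torus from the finite quotient. Since $\hh$ is finite it suffices to produce, for each $P\in\hh$ separately, a bound $M_1(P)$ with the stated property, and then take $M_1=\max_{P\in\hh}M_1(P)$. So fix $P\in\hh$ and write $N=N_S(P)$; recall that $N/T_P$ is finite (as $P$ is $\fusion$-centric, $Out_{\fusion}(P)$ is finite) and that $T_P$, the maximal torus of $P$ and of $N$, is normal in $N$.

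Let $Q\in\conj{P}{S}$ be $S_i$-determined and choose $g\in S$ with $\conj{P}{g}=Q$. By the earlier lemma an $S_i$-determined subgroup of $\fusion^{\bullet}$ is $\Psi_i$-invariant, so $\psi_i(Q)=Q$; since every member of $\hh$ has been arranged to be $S_i$-determined (hence $\Psi_i$-invariant) for all $i$, this forces the coset $gN$ to be $\psi_i$-invariant, i.e. $u_i:=g^{-1}\psi_i(g)\in N$, and because $\psi$ is the identity on $S/T$ in fact $u_i\in N\cap T$. The key reduction is: \emph{if $g$ can be chosen in $S_i$}, then $\conj{S_i}{g}=S_i$, so
$$
Q_i=Q\cap S_i=\conj{P}{g}\cap\conj{S_i}{g}=\conj{(P\cap S_i)}{g}=\conj{P_i}{g},
$$
and $Q_i$ is $S$-conjugate to $P_i$. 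Conversely, if ${}^hP_i=Q_i$ then ${}^hP={}^h(P_i^{\bullet})=Q_i^{\bullet}=Q$, so $h\in gN$; writing $P_i'=\conj{(Q\cap S_i)}{g^{-1}}\le P$ (which satisfies $(P_i')^{\bullet}=P$, hence $P_i'T_P=P$, and $P_i'\cap T_P=(T_P)^{\psi_i}=P_i\cap T_P$), one sees that the statement is in fact \emph{equivalent} to $P_i\sim_{N}P_i'$, and also to the assertion that $u_i$ can be modified by an element of $N$ into $T_P$ — equivalently, that $g$ can be chosen in $S_i$.

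To carry this out one filters $N$ by $T_P$. On the discrete $p$-torus $T_P$ the automorphism $\psi_i$ is multiplication by the $p$-adic unit $\zeta^{p^i}\in\Gamma_{m+i}(p)$ (with $m$ as in \ref{existencepsi}), so $t\mapsto t\,\psi_i(t)^{-1}=t^{\,1-\zeta^{p^i}}$ is surjective onto $T_P$; in fact the same computation shows $\{w\,\psi_i(w)^{-1}\mid w\in N\}=T_P$, so $u_i$ is removable by an element of $N$ exactly when $u_i\in T_P$. On the fixed finite quotient $\bar N=N/T_P$, applying \ref{finitesetinv} to $\delta_S$ of a set of coset representatives shows that $\psi_i$ induces the identity on $\bar N$ for all $i$ past a bound depending only on $P$. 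It remains to force the image $\bar u_i$ of $u_i$ in $\bar N$ to be trivial, and here the $S_i$-determinedness of $Q$ enters: a direct computation from $\psi_i(g)=gu_i$ shows that $\psi_i$ agrees with conjugation by $u_i$ on $P_i'$, and combined with $P_i'T_P=P$ and with $\psi_i$ acting as the identity on $\bar N$ this says that $\bar u_i$ centralizes $P/T_P$ in $\bar N$. Finally, the iteration $\Psi_{i+1}=(\Psi_i)^{p}$ translates on $T$ into
$$
u_{i+1}=u_i^{\,(\zeta^{p^{i+1}}-1)/(\zeta^{p^i}-1)}=u_i^{\,p\cdot(\text{unit})},
$$
so $\bar u_i$ lies in the $p^{\,j}$-th powers of the (abelian) group $(N\cap T)/T_P$ for $j$ growing with $i$, hence vanishes once $i$ is large enough; together with the torus surjectivity this yields the required element of $N$ and hence $Q_i\sim_S P_i$.

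The main obstacle is precisely the \emph{uniformity} of $M_1$ over the conjugacy class $\conj{P}{S}$, which is in general infinite: one cannot invoke \ref{finitesetinv} separately for the individual $Q$ or for the conjugating elements $g$. This is why the $S_i$-determinedness hypothesis is indispensable and must be fed into the argument at the level of the fixed finite quotient $N_S(P)/T_P$; the delicate point is to make the control on $\bar u_i$ — and hence the bound — genuinely independent of $Q$ (in particular of the first index at which $Q$ becomes $S_i$-determined), by combining the triviality of $\psi_i$ on $\bar N$, the centralizing property forced by $S_i$-determinedness, and the decay of $\bar u_i$ under passage from $\Psi_i$ to $\Psi_{i+1}$.
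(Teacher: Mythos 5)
Your reduction is sound as far as it goes: with $u_i=g^{-1}\psi_i(g)\in N_S(P)\cap T$, the divisibility of $T_P$ does show that if $u_i\in T_P$ then $g$ can be corrected inside its coset $gN_S(P)$ to an element of $S_i$, whence $Q_i=\conj{P_i}{g}$ and the lemma follows. The gap is in the two steps you use to force $\bar{u}_i=1$ in $N_S(P)/T_P$. First, the ``centralizing property forced by $S_i$-determinedness'' is vacuous: for \emph{any} $u\in N_S(P)\cap T$ and any $x\in P$ one has $[u,x]=(uxu^{-1})x^{-1}\in P$ (as $u$ normalizes $P$) and $[u,x]\in T$ (as $T$ is abelian and normal in $S$), so $[u,x]\in P\cap T=T_P$; thus every element of $N_S(P)\cap T$ centralizes $P/T_P$, and this step extracts no information from the hypothesis on $Q$. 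Second, the decay relation $u_{i+1}=u_i^{\,p\cdot(\mathrm{unit})}$ can only be iterated starting from the first index $i_Q$ at which $\psi_{i_Q}(Q)=Q$: for $j<i_Q$ the element $u_j$ lies in $T$ but not in $N_S(P)$, so its class in the finite group $(N_S(P)\cap T)/T_P$ is not defined, and being a high $p$-th power of an element of the divisible group $T$ carries no content. What you actually get is $\bar{u}_i=1$ only for $i\geq i_Q+e$, where $e$ is the exponent of $(N_S(P)\cap T)/T_P$ --- a bound depending on $Q$. Since $\conj{P}{S}$ is in general infinite and $i_Q$ is unbounded over it, this yields no uniform $M_1$: for any candidate $M_1$ there are $Q$ that become $S_i$-determined only at $i=M_1$, and for these your argument says nothing. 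Your final paragraph names exactly this uniformity problem, but it is a description of the obstacle, not a proof; the three ingredients you propose to combine cannot close it, the second being empty as just explained.

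So the crucial point --- that the root $P_i'=g^{-1}Q_ig$ is $N_S(P)$-conjugate to $P_i$ with a bound independent of $Q$ --- remains unproven. The paper achieves uniformity by injecting the finiteness \emph{before} letting $Q$ vary: within a $T$-conjugacy class it parametrizes the candidates by sections of $\widetilde{P}\to\pi=P/(P\cap T)$, uses finiteness of $H^1(\pi;T)$ to fix finitely many representative sections, arranges (using Proposition \ref{finitesetinv} applied to this finite list) that the corresponding model subgroups are $S_i$-determined for all $i\geq M_1$, and then observes that any $S_i$-determined $Q$ is $T$-conjugate to one of the models by an element of $T$ which automatically carries $S_i$-roots to $S_i$-roots. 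To salvage your twisted-conjugacy approach you would need an analogous a priori finite list of possibilities for the pair $(P_i',\bar{u}_i)$ up to $N_S(P)$-conjugacy, fixed before the bound is chosen; nothing in the proposal provides such a list, so as written the proof is incomplete at its decisive step.
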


\begin{proof}

Since $\hh$ contains finitely many $S$-conjugacy classes of subgroups and each $S$-conjugacy class contains finitely many $T$-conjugacy classes of subgroups, it is enough to prove the statement for a single $T$-conjugacy class, say $\conj{P}{T}$.

Given such subgroup $P$, let $\pi = P/(P \cap T) \leq S/T$, and let $\widetilde{P} \leq S$ be the pull-back of $S \rightarrow S/T \leftarrow \pi$. Then, for any $Q \in \conj{P}{T}$, the following clearly holds: $Q \cap T = P \cap T$, $Q/(Q\cap T) = P/(P\cap T)$ and $Q \leq \widetilde{P}$.

For any section $\sigma: \pi \to \widetilde{P}$ of the projection $\widetilde{P} \to \pi$, let $Q_{\sigma} = (P \cap T) \cdot \gen{\sigma(\pi)} \leq \widetilde{P}$. Given a random section $\sigma$, the subgroup $Q_{\sigma}$ will not in general be in the $T$-conjugacy class of $P$, but it is clear that for every $Q \in \conj{P}{T}$ there exists some $\sigma$ such that $Q = Q_{\sigma}$.

Now, up to $T$-conjugacy, the set of sections $\sigma: \pi \to \widetilde{P}$ is in one to one correspondence with the cohomology group $H^1(\pi; T)$, which is easily proved to be finite by an standard transfer argument. Thus, we can fix representatives $\sigma_1, \ldots, \sigma_l$ of $T$-conjugacy classes of sections such that $Q_{\sigma_j} \in \conj{P}{T}$ for all $j$. For each such section, let $H_j = \gen{\sigma_j(\pi)} \leq \widetilde{P}$. It is clear then that there exists some $M_P$ such that, for all $i \geq M_P$, $H_1, \ldots, H_l \leq S_i$ and $Q_{\sigma_1}, \ldots, Q_{\sigma_l}$ are all $S_i$-determined.

Let now $Q \in \conj{P}{T}$ be $S_i$-determined. In particular, this means that there exists a section $\sigma: \pi \to \widetilde{P} \cap S_i$ such that $Q = Q_{\sigma}$. Such a section is $T$-conjugate to some $\sigma_j$ in the list of representatives previously fixed, namely there exists some $t \in T$ such that $\sigma = c_t \circ \sigma_j$. Note that this implies that $H_{\sigma} \stackrel{def} = \gen{\sigma(\pi)} \in \conj{H_j}{T}$, and hence $Q \in \conj{Q_{\sigma_j}}{T}$. To finish the proof, note that $Q_i = (P \cap T_i) \cdot H_{\sigma}$ and $(Q_{\sigma_j})_i = (P \cap T_i) \cdot H_j$, and clearly $t \in T$ conjugates $(Q_{\sigma_j})_i$ to $Q_i$. 

\end{proof}

We now prove some properties of $S_i$-determined subgroups.

\begin{prop}\label{propcentral}

There exists some $M_2 \geq 0$ such that, for all $i \geq M_2$, if $P$ is $S_i$-determined, then
$$
C_S(P_i) = C_S(P).
$$

\end{prop}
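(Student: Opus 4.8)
The plan is to show that, for $i$ large enough and $P$ an $S_i$-determined object of $\fusion^\bullet$, the inclusion $C_S(P) \leq C_S(P_i)$ (which is obvious, since $P_i \leq P$) is actually an equality. One direction being automatic, the entire content is the reverse inclusion $C_S(P_i) \leq C_S(P)$. So fix $x \in C_S(P_i)$; I want to conclude that $x$ centralizes all of $P$. Since $P = (P_i)^\bullet = P_i \cdot T_P$, where $T_P$ is the maximal torus of $P$ (this is the description of $P$ from the $S_i$-determined hypothesis, exactly as used in the Lemma preceding Proposition \ref{propcentral}), it suffices to show that $x$ centralizes $T_P$; then $x$ centralizes both $P_i$ and $T_P$, hence all products $x't$ with $x' \in P_i$, $t \in T_P$, i.e.\ all of $P$.

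So the crux is: \emph{if $x \in S$ centralizes $P_i$ and $i$ is large, then $x$ centralizes $T_P$.} First I would note that $c_x$ induces an automorphism of $T$ (since $T$ is characteristic in $S$, being the maximal torus, or at least since $x$ normalizes $T$), and since $T_P \leq T$ the relevant question is whether $c_x$ fixes $T_P$ pointwise. The key mechanism is the $\bullet$-construction itself: $T_P$ is, by the definition of $P^\bullet$, contained in $I(P^{[e]})_0$, where $P^{[e]} = \{y^{p^e} : y \in P\} \leq T$ and $I(Q)_0$ is the maximal torus of $I(Q) = \{t \in T : \omega(t) = t \text{ for all } \omega \in W = Aut_\fusion(T) \text{ with } \omega|_Q = id\}$. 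More precisely, for $P$ an $S_i$-determined object one has $P = (P_i)^\bullet$, so $T_P = I((P_i)^{[e]})_0$ (up to the identification of maximal tori), and the point is that $(P_i)^{[e]} \leq P_i \leq P$. Now the finite group $P_i/T_{P_i}$ stabilizes, as $i$ grows, to carry representatives of $P/T_P$; and $(P_i)^{[e]}$ is a finitely generated, hence finite, subgroup of $T$. The essential observation is that, for $i \geq M$ (some $M$ depending only on the finitely many $T$-conjugacy classes involved, as in Lemma \ref{rootconj}), one has $(P_i)^{[e]} = P^{[e]}$ as subgroups of $T$: indeed $P^{[e]}$ is generated by the $p^e$-th powers of elements of $P$, each such element lies in some $P_i$ for $i$ large, $P^{[e]}$ is finite, so finitely many $P_i$ suffice, and one takes $M$ to dominate them all (uniformly over the finitely many $S$-conjugacy classes of objects of $\fusion^\bullet$, using Proposition \ref{bulletprop}(i)). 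Granting this, $T_P = I((P_i)^{[e]})_0 = I(P^{[e]})_0$, so in particular $T_P$ depends only on $P^{[e]}$, which depends only on $P_i$ for $i \geq M$.

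It remains to see why $x$ centralizing $P_i$ forces $x$ to centralize $T_P = I((P_i)^{[e]})_0$. Here I would argue as follows: $c_x \in W = Aut_\fusion(T)$ is an automorphism of $T$ (after noting $x$ normalizes $T$); since $x$ centralizes $P_i$, it in particular centralizes the finite subgroup $(P_i)^{[e]} \leq T$, i.e.\ $(c_x)|_{(P_i)^{[e]}} = id$. By the very definition of $I((P_i)^{[e]})$, any $\omega \in W$ restricting to the identity on $(P_i)^{[e]}$ fixes $I((P_i)^{[e]})$ pointwise; applying this to $\omega = c_x$ gives that $c_x$ fixes $I((P_i)^{[e]})$, hence its maximal torus $I((P_i)^{[e]})_0 = T_P$, pointwise. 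Thus $x \in C_S(T_P)$, and combined with $x \in C_S(P_i)$ and $P = P_i \cdot T_P$ we get $x \in C_S(P)$, as desired. Finally, the uniform bound $M_2$ is obtained by taking the maximum, over a set of representatives of the finitely many $S$-conjugacy classes of objects of $\fusion^\bullet$, of the bounds $M$ guaranteeing $(P_i)^{[e]} = P^{[e]}$; one also checks (as in Lemma \ref{rootconj}) that passing to $S$-conjugates does not increase the required bound, since $c_s$ for $s \in S$ carries $(P_i)^{[e]}$ to $(P^s)_i^{[e]}$ compatibly. The main obstacle I anticipate is precisely the verification that $(P_i)^{[e]} = P^{[e]}$ for $i$ large and, more delicately, that a single $M_2$ works for all $S_i$-determined $P$ at once — this requires combining the finiteness of $Ob(\fusion^\bullet)/S$-conjugacy with a Lemma \ref{rootconj}-style argument controlling $T$-conjugates within each class; the rest is a direct unwinding of the definition of the $\bullet$-functor.
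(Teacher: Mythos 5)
Your core argument is correct, and it is genuinely different from (and in a sense sharper than) the paper's. You exploit the definition of the $\functor$-construction directly: since $P = (P_i)^{\bullet} = P_i\cdot I((P_i)^{[e]})_0$, the maximal torus of $P$ is $T_P = I((P_i)^{[e]})_0$; any $x\in C_S(P_i)$ normalizes $T$ (as $T\lhd S$), the restriction $c_x|_T$ lies in $Aut_S(T)\leq Aut_{\fusion}(T)=W$ and is the identity on $(P_i)^{[e]}\leq P_i$, so by the very definition of $I(-)$ it fixes $I((P_i)^{[e]})$, hence $T_P$, pointwise; therefore $x$ centralizes $P_i\cdot T_P=P$. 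Note that this needs no hypothesis on $i$ at all, so it would prove the statement with $M_2=0$. The paper argues differently: for each of the finitely many $S$-conjugacy classes in $Ob(\fusion^{\bullet})$ the set of maximal tori $\{T_R \mid R\in\conj{P}{S}\}$ is finite (isomorphisms between such objects restrict to elements of the finite group $Aut_{\fusion}(T)$), one takes $i$ large enough that $C_S((T_R)_i)=C_S(T_R)$ for all these tori, and then computes $C_S(R)=C_S(R_i)\cap C_S(T_R)=C_S(R_i)\cap C_S((T_R)_i)=C_S(R_i)$; that route genuinely needs $i\geq M_2$, while yours does not and avoids the finiteness considerations altogether.

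One intermediate claim in your write-up is false, though fortunately unnecessary: the assertion (which you call the ``essential observation'') that $(P_i)^{[e]}=P^{[e]}$ for $i$ large, justified by saying $P^{[e]}$ is finite. Whenever $P$ has positive rank this fails: $T_P$ is infinitely $p$-divisible, so $T_P\subseteq P^{[e]}$ and $P^{[e]}$ is infinite, whereas $(P_i)^{[e]}$ is finite. What is true, and all your argument needs, is $I((P_i)^{[e]})_0=T_P=I(P^{[e]})_0$, both equalities coming from $(P_i)^{\bullet}=P=P^{\bullet}$, as you yourself note earlier. So that paragraph, together with the uniformity-over-conjugacy-classes discussion it motivates, should simply be deleted; the remaining argument is complete as it stands.
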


\begin{proof}

Let $\mathfrak{X}$ be a set of representatives of the $S$-conjugacy classes in $Ob(\fusion^{\bullet})$, and note that this is a finite set by Lemma 3.2 (a) in \cite{BLO3}.

For any $P \in \mathfrak{X}$, consider the set $\{T_R \mbox{ } | \mbox{ } R \in \conj{P}{S}\}$ of maximal tori of subgroups in $\conj{P}{S}$. This is a finite set, since, for any two $R, Q \in \conj{P}{S}$ and any $f \in Iso_{\fusion}(R,Q)$ the isomorphism $f_{|T_R}: T_R \to T_Q$ has to be the restriction of an automorphism of $Aut_{\fusion}(T)$, by Lemma 2.4 (b) \cite{BLO3}, and $Aut_{\fusion}(T)$ is a finite group. It is clear then that there exists some $M_P$ such that, for all $i \geq M_P$ and all $R \in \conj{P}{S}$,
$$
C_S((T_R)_i) = C_S(T_R).
$$

Let now $i \geq M_P$ and let $R \in \conj{P}{S}$ be $S_i$-determined. We can then write $R = R_i \cdot T_R$ and $R_i = R_i \cdot (T_R)_i$, and it follows that
$$
C_S(R) = C_S(R_i) \cap C_S(T_R) = C_S(R_i) \cap C_S((T_R)_i) = C_S(R_i).
$$
The proof is finished by taking $M_2 = \operatorname{max} \{M_P \mbox{ } | \mbox{ } P \in \mathfrak{X}\}$.

\end{proof}

The following result is an easy calculation which is left to the reader.

\begin{lmm}\label{rootconjx}

Let $P,Q \leq S$ be $S_i$-determined subgroups such that $Q_i \in \conj{P_i}{S}$. Then, for all $x \in N_S(P_i,Q_i)$,
$$
x^{-1} \cdot \Psi_i(x) \in C_T(P).
$$

\end{lmm}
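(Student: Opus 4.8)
The plan is to set $t := x^{-1}\cdot\Psi_i(x)$ and prove in turn that (a) $t\in T$, (b) $t$ centralizes the $S_i$-root $P_i$, and (c) $C_T(P_i)=C_T(P)$; chaining (a), (b), (c) gives $t\in C_T(P)$, which is the claim.

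For (a): since $\Psi_i=\Psi^{p^i}$, the automorphism of $S$ underlying $\Psi_i$ is $\psi^{p^i}$, and $\psi$ induces the identity on $S/T$ by point (i) of Definition \ref{defiuao}; hence so does $\psi^{p^i}$. Thus $y^{-1}\cdot\Psi_i(y)\in T$ for every $y\in S$, in particular $t=x^{-1}\cdot\Psi_i(x)\in T$.

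The one genuine computation is (b). Fix $y\in P_i$. On one hand, $x\in N_S(P_i,Q_i)$ forces $xyx^{-1}\in Q_i\leq S_i$, so $xyx^{-1}$ is $\Psi_i$-fixed; on the other hand $y\in P_i\leq S_i$ is $\Psi_i$-fixed, so applying the automorphism $\Psi_i$ of $S$ to $xyx^{-1}$ yields $xyx^{-1}=\Psi_i(xyx^{-1})=\Psi_i(x)\,y\,\Psi_i(x)^{-1}$. Conjugating this identity by $x^{-1}$ rearranges it to $t\,y\,t^{-1}=y$. As $y\in P_i$ was arbitrary, $t\in C_S(P_i)$, and together with (a) this gives $t\in C_T(P_i)$. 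Finally, for (c): since $P$ is $S_i$-determined, $P=(P_i)^{\bullet}=P_i\cdot I(P_i^{[e]})_0$ by Definition \ref{defibullet}, and the torus $I(P_i^{[e]})_0$ is contained in the maximal torus $T_P$ of $P$, so $P=P_i\cdot T_P$ (exactly as in the earlier discussion of $S_i$-determined subgroups). Hence $C_S(P)=C_S(P_i)\cap C_S(T_P)$; intersecting with $T$ and using that $T$ is abelian, so $C_T(T_P)=T$, gives $C_T(P)=C_T(P_i)$. Therefore $t=x^{-1}\cdot\Psi_i(x)\in C_T(P)$.

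I do not anticipate a real obstacle: the only care needed is to track which elements are $\Psi_i$-fixed, namely $y$ and $xyx^{-1}$, and the latter is $\Psi_i$-fixed precisely because $x\in N_S(P_i,Q_i)$ and $Q_i\leq S_i$ — this is the sole place the hypotheses enter (in particular, the hypothesis $Q_i\in\conj{P_i}{S}$ is not actually used).
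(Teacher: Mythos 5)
Your proof is correct, and it is exactly the ``easy calculation'' the paper leaves to the reader: the identity $\Psi_i(xyx^{-1})=xyx^{-1}$ for $y\in P_i$ gives $x^{-1}\Psi_i(x)\in C_S(P_i)$, membership in $T$ comes from $\psi$ inducing the identity on $S/T$, and $C_T(P_i)=C_T(P)$ follows from $P=(P_i)^{\bullet}=P_i\cdot T_P$ with $T\leq S$ abelian. Your side remark is also accurate: only $xP_ix^{-1}\leq Q_i\leq S_i$ is used, so the hypothesis $Q_i\in\conj{P_i}{S}$ is not needed for the conclusion.
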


Since, for any $H \leq K \leq S$ we have $C_K(H) = K \cap C_S(H)$, the following are immediate consequences of Proposition \ref{propcentral}.

\begin{cor}\label{corcentral1}

Let $i \geq M_2$ and let $P$ be $S_i$-determined. If $C_S(P) = Z(P)$, then $C_{S_i}(P_i) = Z(P_i)$.

\end{cor}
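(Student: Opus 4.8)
The plan is to reduce both sides of the claimed equality to the single expression $P_i \cap Z(P)$, using only the elementary identity $C_K(H) = K \cap C_S(H)$ (valid for $H \leq K \leq S$) together with Proposition \ref{propcentral}. The hypotheses $i \geq M_2$ and ``$P$ is $S_i$-determined'' are precisely what is needed to invoke that proposition, so no further arithmetic on the integers $M_P$ is required.

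First I would compute the left-hand side. Since $P_i = P \cap S_i \leq S_i \leq S$, we have $C_{S_i}(P_i) = S_i \cap C_S(P_i)$. By Proposition \ref{propcentral}, $C_S(P_i) = C_S(P)$, and by hypothesis $C_S(P) = Z(P)$; hence $C_{S_i}(P_i) = S_i \cap Z(P)$. Because $Z(P) \leq P$, this intersection already lies in $P$, so $S_i \cap Z(P) = (P \cap S_i) \cap Z(P) = P_i \cap Z(P)$. Next I would compute the right-hand side: $Z(P_i) = C_{P_i}(P_i) = P_i \cap C_S(P_i)$, and again Proposition \ref{propcentral} gives $C_S(P_i) = C_S(P) = Z(P)$, so $Z(P_i) = P_i \cap Z(P)$. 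Comparing the two computations yields $C_{S_i}(P_i) = Z(P_i)$, as desired.

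There is essentially no obstacle here; the statement is a formal consequence of Proposition \ref{propcentral}, exactly as announced in the sentence preceding it. The only point worth flagging is that one does use $Z(P) \leq P$ (not merely $Z(P) \leq S$) to guarantee that intersecting with $S_i$ lands inside $P_i$ rather than merely inside $S_i$; without the centricity hypothesis one would still get $C_{S_i}(P_i) = S_i \cap C_S(P)$, but this need not equal $Z(P_i)$.
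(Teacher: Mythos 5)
Your proof is correct and follows exactly the route the paper intends: the corollary is stated as an immediate consequence of Proposition \ref{propcentral} via the identity $C_K(H) = K \cap C_S(H)$ for $H \leq K \leq S$, and your computation simply writes out that deduction, reducing both $C_{S_i}(P_i)$ and $Z(P_i)$ to $P_i \cap Z(P)$. Nothing further is needed.
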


\begin{cor}\label{corcentral2}

There exists some $M_3 \geq 0$ such that, for all $i \geq M_3$, if $Q$ is $S_i$-determined and $C_S(Q) \gneqq Z(Q)$, then $C_{S_i}(Q_i) \gneqq Z(Q_i)$.

\end{cor}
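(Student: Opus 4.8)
*There exists some $M_3 \geq 0$ such that, for all $i \geq M_3$, if $Q$ is $S_i$-determined and $C_S(Q) \gneqq Z(Q)$, then $C_{S_i}(Q_i) \gneqq Z(Q_i)$.*

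The plan is to reduce everything to Proposition \ref{propcentral} together with the relation $C_{S_i}(Q_i) = S_i \cap C_S(Q_i)$, and to control the finitely many relevant subgroups uniformly in $i$. First I would set $M_3 = \operatorname{max}\{M_1, M_2\}$ (or enlarge it slightly if needed), where $M_1$ comes from Lemma \ref{rootconj} and $M_2$ from Proposition \ref{propcentral}, and work with $i \geq M_3$. Fix a set $\mathfrak{X}$ of representatives of the $S$-conjugacy classes in $Ob(\fusion^{\bullet})$, which is finite by Lemma 3.2 (a) of \cite{BLO3}; by replacing $Q$ with an $S$-conjugate (which changes neither the hypothesis $C_S(Q) \gneqq Z(Q)$ nor the conclusion, since conjugation is an isomorphism carrying $Q_i$-data to conjugate data once one invokes Lemma \ref{rootconj} to keep the roots $S$-conjugate) it suffices to treat $Q \in \mathfrak{X}$, a finite list.

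Now for such a $Q$, assume $C_S(Q) \gneqq Z(Q)$. By Proposition \ref{propcentral}, since $i \geq M_2$ and $Q$ is $S_i$-determined, $C_S(Q_i) = C_S(Q)$, so in particular $C_S(Q_i) \gneqq Z(Q)$. Pick an element $y \in C_S(Q) \setminus Z(Q)$. The remaining task is to arrange, after enlarging $M_3$ if necessary, that $y$ (or a suitable replacement) can be taken in $C_{S_i}(Q_i)$ but \emph{not} in $Z(Q_i)$. For the first point, observe that $C_S(Q) = C_S(Q_i) = \bigcup_i C_{S_i}(Q_i)$ is an increasing union (since $C_S(Q_i)$ is a subgroup on which $\Psi_i$ acts and $C_{S_i}(Q_i) = C_S(Q_i)^{\Psi_i} = C_S(Q) \cap S_i$), so for $i$ large enough the finite set of coset representatives of $C_S(Q)/C_T(Q)$ lies in $S_i$, and hence $C_{S_i}(Q_i) \cdot C_{T}(Q)_i$ already exhausts all of $C_S(Q)$ modulo the torus; in particular $C_{S_i}(Q_i)$ is strictly larger than any candidate for $Z(Q_i)$ once we know the following: since $Q$ is $S_i$-determined we have $Q = Q_i \cdot T_Q$, so $Z(Q_i) \supseteq Z(Q) \cap S_i = Z(Q)_i$, and conversely $Z(Q_i) = C_S(Q_i) \cap Q_i = C_S(Q) \cap Q_i$, whence $Z(Q_i) = Z(Q) \cap Q_i = Z(Q)_i$ using $C_S(Q) \cap Q = Z(Q)$. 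Thus $Z(Q_i) = Z(Q)_i$, and likewise $C_{S_i}(Q_i) = C_S(Q)_i$, so the strict inequality $Z(Q) \lneqq C_S(Q)$ of subgroups of $S$ (on which $\Psi_i$ acts) descends to a strict inequality $Z(Q)_i \lneqq C_S(Q)_i$ for all $i$ past the point where $C_S(Q)_i$ contains an element of $C_S(Q) \setminus Z(Q)$ — and since $C_S(Q) = \bigcup_i C_S(Q)_i$ such an $i$ exists, uniformly over the finite list $\mathfrak{X}$.

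The main obstacle I anticipate is the bookkeeping needed to make the bound $M_3$ genuinely uniform — one must verify that the identities $Z(Q_i) = Z(Q)_i$ and $C_{S_i}(Q_i) = C_S(Q)_i$ hold for \emph{all} $S_i$-determined representatives in the full $S$-conjugacy class, not just for the fixed $Q \in \mathfrak{X}$, which is exactly what Lemma \ref{rootconj} is for: it guarantees that an $S_i$-determined $Q$ in $\conj{P}{S}$ has $Q_i$ genuinely $S$-conjugate to $P_i$, so that the computation made for $P$ transports. Modulo that, the argument is a short manipulation of the displayed equalities, and I would present it by: (1) reducing to $Q = P \in \mathfrak{X}$ via Lemma \ref{rootconj}; (2) invoking Proposition \ref{propcentral} to get $C_S(P_i) = C_S(P)$; (3) deducing $Z(P_i) = Z(P)_i$ from $Z(P_i) = C_S(P_i) \cap P_i = C_S(P) \cap P_i = Z(P) \cap P_i$; and (4) concluding $Z(P)_i \lneqq C_S(P)_i = C_{S_i}(P_i)$ once $i$ is large enough that $C_S(P)_i$ meets $C_S(P) \setminus Z(P)$, taking $M_3$ to be the maximum of the resulting bounds over $\mathfrak{X}$ together with $M_1$ and $M_2$.
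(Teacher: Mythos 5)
Your computation for a \emph{fixed} subgroup is fine: for $i\geq M_2$ and $Q$ $S_i$-determined, Proposition \ref{propcentral} does give $C_{S_i}(Q_i)=C_S(Q)\cap S_i$ and $Z(Q_i)=Z(Q)\cap Q_i$, so the conclusion for that particular $Q$ holds as soon as some element of $C_S(Q)\setminus Z(Q)$ lies in $S_i$, and such an $i$ exists because every element of $S$ is eventually $\Psi_i$-fixed. The gap is in the reduction to the finite set $\mathfrak{X}$. The conclusion $C_{S_i}(Q_i)\gneqq Z(Q_i)$ is \emph{not} invariant under conjugation by an arbitrary element of $S$: if $Q_i=xP_ix^{-1}$ with $x\in N_S(P_i,Q_i)$ as supplied by Lemma \ref{rootconj}, then $C_{S_i}(Q_i)=S_i\cap xC_S(P_i)x^{-1}$, which is in general different from $x\,C_{S_i}(P_i)\,x^{-1}$, because $x$ need not be fixed by $\Psi_i$. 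Concretely, if $z\in C_S(P)\setminus Z(P)$ is your $\Psi_i$-fixed witness for the representative $P$, the transported element $z'=xzx^{-1}$ lies in $C_S(Q)\setminus Z(Q)$ but there is no reason for $\Psi_i(z')=z'$; so "the computation made for $P$ transports" is exactly the unproved step. Your exhaustion argument $C_S(Q)=\bigcup_i\bigl(C_S(Q)\cap S_i\bigr)$ only yields a threshold depending on $Q$, and each conjugacy class contains infinitely many $S_i$-determined subgroups as $i$ varies, so taking a maximum over $\mathfrak{X}$ does not produce a uniform $M_3$.

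This is precisely the point the paper's proof is built around. It reduces to $T$-conjugacy classes (finitely many) and controls the failure of $\Psi_i$-equivariance of the conjugator via Lemma \ref{rootconjx}: $x^{-1}\Psi_i(x)\in C_T(P)$. Then it splits into two cases. If $C_T(P)\leq Z(P)\cap T$, the witness $z\in C_S(P)\setminus Z(P)$ centralizes $x^{-1}\Psi_i(x)$ (which lies in $P$), so $\Psi_i(z')=\Psi_i(x)z\Psi_i(x)^{-1}=z'$ and the transported element is indeed in $C_{S_i}(Q_i)\setminus Z(Q_i)$. If instead $Z(P)\cap T\lneqq C_T(P)$, one chooses the witness $z$ inside $C_T(P)\setminus(Z(P)\cap T)$; since $T$ is abelian and $C_T(Q)=C_T(P)$, $Z(Q)\cap T=Z(P)\cap T$ for $T$-conjugate $Q$, this single toral element works for every $Q$ in the class once $i$ is large enough that $z\in T_i$. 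To repair your argument you would need to add this dichotomy (or some equivalent device producing a $\Psi_i$-fixed element of $C_S(Q)\setminus Z(Q)$ for \emph{all} $S_i$-determined $Q$ in the class simultaneously); as written, the step "Lemma \ref{rootconj} \dots so that the computation made for $P$ transports" does not follow.
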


\begin{proof}

As usual, since $Ob(\fusion^{\bullet})$ contains finitely many $T$-conjugacy classes of subgroups $P$ such that $C_S(P) \gneqq Z(P)$, it is enough to prove the statement for a single $T$-conjugacy class of such subgroups.

Fix such a subgroup $P$. We can assume that the statement holds for $P$, and let $z \in C_S(P) \setminus Z(P)$ be such that $z \in C_{S_i}(P_i) \setminus Z(P_i)$ (such an element exists by Proposition \ref{propcentral}). Let now $Q \in \conj{P}{T}$ be $S_i$-determined, and let $x \in N_S(P_i, Q_i)$ (such an element exists by Lemma \ref{rootconj}). Let also $z' = xzx^{-1} \in C_S(Q) \setminus Z(Q)$. If $C_T(P) \leq (Z(P) \cap T)$, then, by Lemma \ref{rootconj},
$$
z \cdot (x^{-1} \Psi_i(x)) = (x^{-1} \Psi_i(x)) \cdot z,
$$
which implies that $\Psi_i(z') = z' \in C_{S_i}(Q_i) \setminus Z(Q_i)$ by Lemma \ref{detectmorph}. In this case, let $M_P = M_2$ as in Proposition \ref{propcentral}.

On the other hand, if $Z(P) \cap T \lneqq C_T(P)$, then we can take the element $z$ above to be in $C_T(P) \setminus (Z(P) \cap T)$. It is clear then that there exists some $M_P$ such that, for all $i \geq M_P$, $z \in C_{T_i}(P) \setminus (Z(P) \cap T_i)$, in which case $z' = z \in C_{T_i}(Q) \setminus (Z(Q) \cap T_i)$. The proof is finished then by taking $M_3$ to be the maximum of the $M_P$ among a finite set of representatives.

\end{proof}

We can also relate the normalizer of $P_i$ to the normalizer of $P$.

\begin{prop}\label{propnormal}

There exists some $M_4 \geq 0$ such that, for all $i \geq M_4$, if $P$ is $S_i$-determined, then
$$
N_S(P_i) \leq N_S(P).
$$

\end{prop}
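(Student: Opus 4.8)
The plan is to read this off directly from the explicit description of the bullet construction in Definition \ref{defibullet}; in fact the bound turns out to be vacuous, so one may take $M_4 = 0$, and I would keep the phrasing ``there exists $M_4 \ge 0$'' only for uniformity with the surrounding statements. Since $P$ is $S_i$-determined we have $P = (P_i)^{\bullet} = P_i \cdot I(P_i^{[e]})_0$, where $e$ is the global exponent of Definition \ref{defibullet} (fixed, independent of $i$) and $P_i^{[e]} \le T$. Fix $x \in N_S(P_i)$; because $T$ is characteristic in $S$, $x$ normalizes $T$, so conjugation by $x$ restricts to an element of $Aut_{\fusion}(T) = W$, which I also denote $c_x$.

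First I would observe that $x$ normalizes $P_i^{[e]}$: since $c_x$ is a group homomorphism and $c_x(P_i) = P_i$, we get $c_x(P_i^{[e]}) = (c_x(P_i))^{[e]} = P_i^{[e]}$. The one genuine computation is that $c_x$ then preserves $I(P_i^{[e]})$: if $t \in I(P_i^{[e]})$ and $\omega \in W$ satisfies $\omega_{|P_i^{[e]}} = id$, then $\mu = c_x^{-1}\,\omega\,c_x \in W$ is again the identity on $P_i^{[e]}$ (because $c_x$ stabilizes $P_i^{[e]}$), so $\mu(t) = t$, and this rearranges to $\omega(c_x(t)) = c_x(t)$; hence $c_x(t) \in I(P_i^{[e]})$, and running the same argument with $x^{-1}$ in place of $x$ gives $c_x(I(P_i^{[e]})) = I(P_i^{[e]})$. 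Since an automorphism of the discrete $p$-torus $T$ carries the identity component of a subgroup to the identity component of its image, $c_x$ also stabilizes the maximal torus $I(P_i^{[e]})_0$.

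Combining the two stabilizations, $c_x$ fixes the subgroup generated by $P_i$ and $I(P_i^{[e]})_0$, which is precisely $P = (P_i)^{\bullet}$; hence $x \in N_S(P)$, proving $N_S(P_i) \le N_S(P)$. The points that need care are that the formula $P = (P_i)^{\bullet}$ must be applied with the global exponent $e$ (not one depending on $i$) and that $I(-)_0$ is genuinely the identity component, so that it is $W$-equivariant; no largeness of $i$ enters anywhere, which is exactly why $M_4 = 0$ already works. If one preferred an argument in the style of Proposition \ref{propcentral}, one could instead reduce to finitely many $T$-conjugacy classes of objects of $\fusion^{\bullet}$ and invoke the finiteness of $W = Aut_{\fusion}(T)$ together with the equality $C_T(P_i) = C_T(P)$ furnished by Proposition \ref{propcentral}; but the computation above is shorter, and that is the one I would present.
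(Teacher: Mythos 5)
Your proof is correct, but it takes a genuinely different route from the paper. You argue directly from the explicit formula in Definition \ref{defibullet}: writing $P=(P_i)^{\bullet}=P_i\cdot I(P_i^{[e]})_0$ with the global exponent $e$, and checking that for $x\in N_S(P_i)$ the restriction $c_x|_T$ lies in $W=Aut_{\fusion}(T)$, stabilizes $P_i^{[e]}$, stabilizes $I(P_i^{[e]})$ by the $W$-naturality of $I(-)$, and hence stabilizes its divisible part $I(P_i^{[e]})_0$; all steps check out, and in effect you have proved the general equivariance $(xQx^{-1})^{\bullet}=xQ^{\bullet}x^{-1}$ for every $Q\leq S$ and $x\in S$, so that $N_S(Q)\leq N_S(Q^{\bullet})$ holds unconditionally and $M_4=0$ suffices --- a sharpening of the stated result. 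The paper instead argues in the style of Proposition \ref{propcentral}: it reduces to the finitely many $S$-conjugacy classes of objects of $\fusion^{\bullet}$, uses finiteness of the set of maximal tori $\{T_R \mid R\in\conj{P}{S}\}$ to choose $M_4$ so large that $g\in N_S((T_R)_i)$ forces $g\in N_S(T_R)$, and then concludes from the decomposition $R=R_i\cdot T_R$ valid for $S_i$-determined $R$; note this is via the tori $T_R$, not via the centralizer identity you mention in your closing alternative. The author's caution after the statement (``this is not obvious at all'') refers to the fact that the abstract properties of $\functor$ recorded in Proposition \ref{bulletprop} only produce some $f\in Aut_{\fusion}(P)$ extending $c_x\in Aut_{\fusion}(P_i)$, with no guarantee that $f$ is conjugation by $x$; your computation supplies exactly this missing equivariance from the explicit definition, so your argument is shorter and yields a better constant, while the paper's argument stays uniform with the neighboring finiteness arguments and never invokes the explicit formula for $(\;)^{\bullet}$.
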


This is not obvious at all, since the properties of $\functor$ only tell us that, for $x \in N_S(P_i)$, there exists a unique $f \in Aut_{\fusion}(P)$ extending the isomorphism $c_x \in Aut_{\fusion}(P_i)$.

\begin{proof}

Again, it is enough to check the statement for a single $S$-conjugacy class of objects in $\fusion^{\bullet}$. Let then $P$ be a representative of such an $S$-conjugacy class, and consider the set $\{T_R \mbox{ } | \mbox{ } R \in \conj{P}{S}\}$ (which is a finite set, as we have shown in the proof for Proposition \ref{propcentral}).

It follows then that there exists some $M_4$ such that, for all $i \geq M_4$ and all $R \in \conj{P}{S}$, if $g \in N_S((T_R)_i)$ then $g \in N_S(T_R)$. The proof is finished since, for $R \in \conj{P}{S}$ which is $S_i$-determined ($i \geq M_4$), there is an equality $R = R_i \cdot T_R$.

\end{proof}


\section{Strongly fixed points}

Using the notion of $S_i$-determined subgroups we introduce the \textit{strongly fixed points} of $\g$ under the action of $\Psi_i$, and prove their main properties. In particular this section contains the proof of Theorem \ref{ThA}.

For each $i$, consider the sets
\begin{equation}\label{generatorsets}
\begin{array}{c}
\hh_i^{\bullet} \stackrel{def} = \{R \in Ob(\linking^{\bullet}) \mbox{ } | \mbox{ } R \mbox{ is } S_i\mbox{-determined}\},\\
\hh_i \stackrel{def} = \{R_i = R \cap S_i \mbox{ } | \mbox{ } R \in \hh_i^{\bullet}\},\\
\end{array}
\end{equation}
and note that the functor $\functor$ gives a one-to-one correspondence between these two sets. Let also $\widehat{\hh}_i$ be the closure of $\hh_i$ by overgroups in $S_i$. Finally, for each pair $P, R \in \hh_i^{\bullet}$, consider the sets
$$
\begin{array}{c}
Mor_{\linking, i}(P,R) = \{\varphi \in Mor_{\linking}(P, R) \mbox{ } | \mbox{ } \Psi_i(\varphi) = \varphi\},\\
Hom_{\fusion, i}(P,R) = \{\rho(\varphi) \mbox{ } | \mbox{ } \varphi \in Mor_{\linking,i}(P,R)\}.
\end{array}
$$

Recall from Lemma \ref{restrictmorph} that, given a $\Psi_i$-invariant morphism $\varphi:P \to R$ in $\linking$, the homomorphism $f = \rho(\varphi)$ restricts to a homomorphism $f_i: P_i \to R_i$. Thus, we can consider, for each pair $P_i, R_i \in \hh_i$, the set
$$
A(P_i, R_i) = \{ f_i = res^P_{P_i}(f) \mbox{ } | \mbox{ } f \in Hom_{\fusion, i}(P,R) \} \subseteq Hom_{\fusion}(P_i, R_i).
$$
Again, the functor $\functor$ provides a bijection from the above set to $Hom_{\fusion, i}(P,R)$.

\begin{defi}

For each $i$, the \textbf{$i$-th strongly fixed points fusion system} is the fusion system $\fusion_i$ over $S_i$ whose morphisms are compositions of restrictions of morphisms in $\{A(P_i, R_i) \mbox{ } | \mbox{ } P_i,R_i \in \hh_i\}$.

\end{defi}

The category $\fusion_i$ is indeed a fusion system over $S_i$, as well as a fusion subsystem of $\fusion$.

Let $\linking_i^{\circ}$ be the category with object set $\hh_i$ and whose morphism sets are spanned by the sets $Mor_{\linking,i}(P,R)$, after identifying the sets $\hh_i$ and $\hh_i^{\bullet}$ via $\functor$. The category $\linking_i^{\circ}$ is well-defined since, in fact, it can be thought as a subcategory of $\linking$, although its actual definition will make more sense for the purposes of this paper.

We want now to close $\linking_i^{\circ}$ by overgroups, and one has to be careful at this step. Let $H, K \in \widehat{\hh}_i$ be arbitrary subgroups, and let $P, R \in \hh_i^{\bullet}$ be such that $H \leq P$, $K \leq R$. We say then that a morphism  $\varphi \in Mor_{\linking,i}(P,R)$ \textit{restricts to a morphism} $\varphi: H \to K$ if $f = \rho(\varphi):P \to R$ restricts to a homomorphism $f_{|H}: H \to K$ in $\fusion$. We need a technical lemma before we define the closure of $\linking_i^{\circ}$ by overgroups.

\begin{lmm}\label{bulletSideterm}

For any subgroup $H \leq S_i$, the subgroup $H^{\bullet} \leq S$ is $S_i$-determined. If, in addition, $H \in \widehat{\hh_i}$, then $H^{\bullet}$ is $\fusion$-centric.

\end{lmm}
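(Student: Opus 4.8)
The plan is to prove the two assertions separately, both relying on the characterization of $S_i$-determined subgroups together with the finiteness statements accumulated in Section 2.

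\textbf{First assertion: $H^\bullet$ is $S_i$-determined.} We must show $(H^\bullet \cap S_i)^\bullet = H^\bullet$. Since $\functor$ is idempotent and carries inclusions to inclusions (Proposition \ref{bulletprop}), $H \leq H^\bullet$ gives $H = H^\bullet \cap H \leq H^\bullet \cap S_i$, hence $H^\bullet = H^\bullet \leq (H^\bullet \cap S_i)^\bullet$, using idempotence $(H^\bullet)^\bullet = H^\bullet$. For the reverse inclusion, I would observe that $H^\bullet \cap S_i \leq H^\bullet$, so applying $\functor$ yields $(H^\bullet \cap S_i)^\bullet \leq (H^\bullet)^\bullet = H^\bullet$. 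Combining the two inclusions gives the equality. This part is essentially formal, using only monotonicity and idempotence of $\functor$; no choice of a large $i$ is needed.

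\textbf{Second assertion: if $H \in \widehat{\hh_i}$, then $H^\bullet$ is $\fusion$-centric.} By definition of $\widehat{\hh_i}$, there is some $R \in \hh_i^\bullet$ with $R_i \leq H$, i.e. $R_i \leq H \leq S_i$; applying $\functor$ gives $R = (R_i)^\bullet \leq H^\bullet$. Now $R \in Ob(\linking^\bullet)$, so $R$ is $\fusion$-centric, and since $\fusion$-centricity is inherited by overgroups (an overgroup of a centric subgroup is centric — this is standard, and follows because $C_S(Q) \leq C_S(R) = Z(R) \leq Q$ for $R \leq Q$, together with closure of the centric collection under $\fusion$-conjugacy via the descending-chain argument in \cite{BLO3}), we conclude $H^\bullet$ is $\fusion$-centric. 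One subtlety: one must know that $R_i \leq H$ for \emph{some} $R \in \hh_i^\bullet$ is exactly what membership in $\widehat{\hh_i}$ means — $\widehat{\hh_i}$ was defined as the closure of $\hh_i = \{R_i \mid R \in \hh_i^\bullet\}$ under overgroups in $S_i$, so $H \in \widehat{\hh_i}$ precisely provides such an $R$.

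\textbf{Anticipated main obstacle.} The genuinely delicate point is the claim that overgroups of $\fusion$-centric subgroups are $\fusion$-centric in the discrete $p$-toral setting — one needs that if $P \leq Q$ and $P$ is $\fusion$-centric then $Q$ is $\fusion$-centric, which requires checking $C_S(Q') = Z(Q')$ for \emph{every} $Q' \in \conj{Q}{\fusion}$, not just for $Q$ itself. For a given $Q' = \alpha(Q)$ with $\alpha \in Iso_\fusion(Q,Q')$, one restricts $\alpha$ to $P$, lands in $\conj{P}{\fusion}$, uses centricity of that image, and then argues $C_S(Q') \leq C_S(\alpha(P)) = Z(\alpha(P)) \leq \alpha(P) \leq Q'$, so $C_S(Q') \leq Z(Q')$; the reverse inclusion is automatic. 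This is the standard argument and goes through verbatim in the compact case since it uses only the lattice of subgroups and conjugation, so I expect it to be routine but worth one line of care. Everything else is bookkeeping with $\functor$.
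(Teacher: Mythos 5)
Your proof is correct and follows essentially the same route as the paper: the first assertion is obtained exactly as in the paper by applying $\functor$ to $H \leq H^{\bullet} \cap S_i \leq H^{\bullet}$, and the second uses the definition of $\widehat{\hh}_i$ together with the fact that overgroups of $\fusion$-centric subgroups are $\fusion$-centric. The only cosmetic difference is that the paper cites Proposition 2.7 of \cite{BLO3} for that last closure property, whereas you prove it directly by the standard restriction argument, which is indeed valid in the discrete $p$-toral setting.
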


\begin{proof}

To show that $H^{\bullet}$ is $S_i$-determined, we have to prove that $(H^{\bullet} \cap S_i)^{\bullet} = H^{\bullet}$. Since $H \leq H^{\bullet} \cap S_i \leq H^{\bullet}$, the equality follows by applying $\functor$ to these inequalities. The centricity of $H^{\bullet}$ when $H \in \widehat{\hh_i}$ follows by definition of the set $\widehat{\hh}_i$ and by Proposition 2.7 \cite{BLO3}. 

\end{proof}

As a consequence of this result, for any $H \in \widehat{\hh}_i$, the subgroup $(H^{\bullet} \cap S_i) \in \hh_i$.

\begin{defi}\label{defigi}

For each $i$, the \textbf{$i$-th strongly fixed points transporter system} is the category $\linking_i$ with object set $\widehat{\hh}_i$  and with morphism sets
$$
Mor_{\linking_i}(H, K) = \{\varphi \in Mor_{\linking,i}(H^{\bullet}, K^{\bullet}) \mbox{ } | \mbox{ } \varphi \mbox{ restricts to a morphism } \varphi:H \to K\}.
$$

Finally, the \textbf{$i$-th strongly fixed points system} is the triple $\g_i = (S_i, \fusion_i, \linking_i)$.

\end{defi}

The composition rule in $\linking_i$ is induced by the composition rule in $\linking$, and hence is well-defined. $\linking_i$ is called a transporter system since we will prove in this section that it actually has such structure. 


\subsection{Properties of the strongly fixed points subsystems}

We now study the properties of each of the triples $\g_i$ defined above. At some point this will require increasing the degree of the initial operation $\Psi$ again, and also fix some more objects and morphisms in $\linking$, apart from those already fixed in Remark \ref{BI1}. First, we describe some basic properties of the triples $\g_i$, most of which are inherited from the properties of $\linking$.

\begin{lmm}\label{morphi}

For all $i$ and for all $P_i, R_i \in \hh_i$, there are equalities
\begin{enumerate}[(i)]

\item $A(P_i, R_i) = Hom_{\fusion_i}(P_i, R_i)$; and

\item $Mor_{\linking,i}(P,R) = Mor_{\linking_i}(P_i, R_i)$.

\end{enumerate}

\end{lmm}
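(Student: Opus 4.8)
The plan is to unwind the definitions carefully and show both asserted equalities amount to book-keeping with the functor $\functor$ and the invariance-detection machinery already set up. Recall that, for $P_i, R_i \in \hh_i$, we have $P = (P_i)^\bullet$ and $R = (R_i)^\bullet$ in $\hh_i^\bullet$, and $\functor$ restricts to a bijection $\hh_i \to \hh_i^\bullet$.

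For part (i), the inclusion $A(P_i,R_i) \subseteq Hom_{\fusion_i}(P_i,R_i)$ is immediate, since $\fusion_i$ is by definition the fusion system whose morphisms are generated by the sets $A(P_i,R_i)$, so each $A(P_i,R_i)$ sits inside $Hom_{\fusion_i}$. For the reverse inclusion, one must show that $A(P_i,R_i)$ is already closed under the operations used to generate $\fusion_i$, namely composition and restriction of the $A$-morphisms. So I would take a morphism $g \in Hom_{\fusion_i}(P_i,R_i)$ written as a composite of restrictions $g = g_n \circ \cdots \circ g_1$ with each $g_j$ a restriction of some $h_j \in A(P_j', R_j')$, lift each $h_j$ to a $\Psi_i$-invariant morphism $\varphi_j \in Mor_{\linking,i}$ using the definition of $A$, compose the $\varphi_j$'s in $\linking$ (the composite is again $\Psi_i$-invariant since $\Psi_i$ is a functor), note the composite lands in $Mor_{\linking,i}(P,R)$ by Lemma \ref{restrictmorph} and the fact that $Ob(\linking^\bullet)$ is closed under the relevant operations, and finally apply Lemma \ref{restrictmorph} once more to see that $\rho$ of this composite restricts to $g$ on $P_i$. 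The key technical point needed here is that composing/restricting the lifts in $\linking^\bullet$ and then passing to roots gives back the prescribed composite of roots — which follows because $\functor$ is a functor carrying inclusions to inclusions (Proposition \ref{bulletprop}) and because roots are recovered as $(\underline{\phantom{A}}) \cap S_i$ compatibly with the $\bullet$-construction on $S_i$-determined subgroups (Lemma \ref{bulletSideterm}).

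For part (ii), by Definition \ref{defigi} we have $Mor_{\linking_i}(P_i,R_i) = \{\varphi \in Mor_{\linking,i}((P_i)^\bullet,(R_i)^\bullet) \mid \varphi \text{ restricts to a morphism } P_i \to R_i\}$, and since $(P_i)^\bullet = P$, $(R_i)^\bullet = R$, the set on the right is exactly $\{\varphi \in Mor_{\linking,i}(P,R) \mid \rho(\varphi) \text{ restricts to a homomorphism } P_i \to R_i\}$. So the content of (ii) is that this restriction condition is automatic for every $\varphi \in Mor_{\linking,i}(P,R)$ — and that is precisely the statement of Lemma \ref{restrictmorph}: a $\Psi_i$-invariant morphism $\varphi : P \to R$ in $\linking$ always has $\rho(\varphi)$ restricting to a morphism $P^{\Psi_i} \to R^{\Psi_i}$, i.e.\ $P_i \to R_i$. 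Hence $Mor_{\linking,i}(P,R) = Mor_{\linking_i}(P_i,R_i)$.

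I expect the main obstacle to be part (i), specifically verifying the reverse inclusion $Hom_{\fusion_i}(P_i,R_i) \subseteq A(P_i,R_i)$: one has to check that the ``generated'' morphisms in $\fusion_i$ do not produce anything genuinely new beyond the sets $A(P_i,R_i)$, which requires knowing that every composite of restrictions of $A$-morphisms again lifts to a single $\Psi_i$-invariant morphism in $\linking$ between the appropriate $S_i$-determined objects. The delicate step is bookkeeping the $\bullet$-closure: a restriction $h_{|H}$ of $h \in A(P_j',R_j')$ to an intermediate subgroup $H \in \widehat{\hh}_i$ must be matched, under $\functor$, with a $\Psi_i$-invariant morphism $H^\bullet \to (\cdot)^\bullet$, using that $H^\bullet$ is $S_i$-determined and $\fusion$-centric (Lemma \ref{bulletSideterm}) and that $\functor$ is left adjoint to the inclusion (Proposition \ref{bulletprop}), so that the unique $\bullet$-extension of the restriction is still $\Psi_i$-invariant (by Proposition 3.3 of \cite{BLO3}, as recalled before Lemma \ref{detectmorph}). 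Once this matching is in hand, (i) follows; everything else is formal.
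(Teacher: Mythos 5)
Your argument is correct in substance, but it is organized differently from the paper's. For (ii) you read Definition \ref{defigi} literally: $Mor_{\linking_i}(P_i,R_i)\subseteq Mor_{\linking,i}(P,R)$ is built into the definition, and the restriction condition is automatic by Lemma \ref{restrictmorph}, so equality is immediate. The paper instead reduces (i) to (ii) ``by definition'' and proves (ii) by a downward induction on the order of the subgroups, with the base case $P_i=R_i=S_i$, the point being that any morphism of $\linking_i$ between roots is a composite of restrictions of $\Psi_i$-invariant morphisms of $\linking$ and hence already lies in $Mor_{\linking,i}(P,R)$; your version makes the definitional direction explicit and puts the genuine work into (i), which is arguably the more transparent bookkeeping, while the paper's induction buys brevity at the cost of leaving the lifting step implicit.

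Two details in your sketch of (i) deserve care, though they are fixable within your plan. First, a morphism of $\fusion_i$ between $P_i$ and $R_i$ is a composite of restrictions of $A$-morphisms through \emph{arbitrary} subgroups of $S_i$, not necessarily subgroups in $\widehat{\hh}_i$, so you cannot invoke the second clause of Lemma \ref{bulletSideterm} to get centricity of the intermediate $\bullet$-subgroups; instead, restrict the targets to the successive images of $P$ (these are $\bullet$-closed by Proposition 3.3 of \cite{BLO3} and $\fusion$-conjugate to the $\fusion$-centric $P$, hence objects of $\linking$, and their roots are $S_i$-determined by the first clause of Lemma \ref{bulletSideterm}). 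Second, the $\Psi_i$-invariance of each restricted lift is not given by Lemma \ref{restrictmorph} (which only concerns the projection to $\fusion$); it follows from the uniqueness of restrictions in Lemma \ref{3.2OV}(iii): applying $\Psi_i$ to the defining equation of the restriction, and using that source, target, the inclusions and the original morphism are all $\Psi_i$-invariant, forces the restriction to be fixed. With those adjustments your composite is a single $\Psi_i$-invariant morphism $P\to R$ in $\linking$ whose projection restricts to the given morphism on $P_i$, which is exactly membership in $A(P_i,R_i)$.
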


\begin{proof}
by definition of $\fusion_i$ and $\linking_i$, it is enough to show only point (ii). The proof is done then by induction on the order of the subgroups $P_i, R_i \in \hh_i$.

First, we consider the case $P_i = R_i = S_i$. This case is obvious since in $\hh_i$ the subgroup $S_i$ has no overgroups. Consider now a pair $P_i, R_i \lneqq S_i$. There is an obvious inclusion $Mor_{\linking,i}(P,R) \subseteq Mor_{\linking_i}(P_i, R_i)$ by definition of $\linking_i$. On the other hand, any morphism in $Mor_{\linking_i}(P_i, R_i)$ is a composition of restrictions of $\Psi_i$-invariant morphisms in $\linking$, by the induction hypothesis, and hence we have the equality.

\end{proof}

The category $\linking_i$ also has associated a functor $\functor_i$, induced by the original $\functor$ in $\linking$. Next we describe this functor in $\linking_i$ and its main properties, most of which are identical to those of $\functor$. Define first $\functor_i$ on an object $H_i \in Ob(\linking_i)$ as
$$
(H_i)^{\bullet}_i \stackrel{def} = (H_i)^{\bullet} \cap S_i.
$$
Recall that by definition, $Mor_{\linking_i}(H_i, K_i) = \{ \omega \in Mor_{\linking,i}(H_i^{\bullet}, K_i^{\bullet}) \mbox{ } | \mbox{ } \omega \mbox{ restricts to } \omega: H_i \to K_i\}$. Thus, on a morphism $\varphi \in Mor_{\linking_i}(H_i, K_i)$, $\functor_i$ is defined as the unique $\varphi \in Mor_{\linking_i}(H_i^{\bullet} \cap S_i, K_i^{\bullet} \cap S_i)$ which restricts to $\varphi: H_i \to K_i$. Note that in particular $\functor_i$ is the identity on $\hh_i$ by construction.

\begin{prop}\label{propertiesbulleti}

The following holds for $\functor_i$.
\begin{enumerate}[(i)]

\item For all $H_i \leq S_i$, $((H_i)^{\bullet}_i)^{\bullet}_i = (H_i)^{\bullet}_i$.

\item If $H_i \leq K_i \leq S_i$, then $(H_i)^{\bullet}_i \leq (K_i)^{\bullet}_i$.

\item Every morphism $\varphi \in Mor_{\linking_i}(H_i,K_i)$ extends to a unique $(\varphi)^{\bullet}_i \in Mor_{\linking_i}((H_i)^{\bullet}_i, (K_i)^{\bullet}_i)$

\end{enumerate}

In particular, $\functor_i$ is a functor from $\linking_i$ to $\linking_i^{\circ}$ which is left adjoint to the inclusion $\linking_i^{\circ} \subseteq \linking_i$.

\end{prop}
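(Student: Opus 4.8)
The plan is to deduce each of the three statements directly from the corresponding property of the functor $\functor$ on $\linking$ (Proposition \ref{bulletprop}), together with Lemma \ref{bulletSideterm}, which is the bridge between the $S_i$-root world and the $\bullet$-world. The key observation that makes everything work is that for $H_i \in \widehat{\hh}_i$ we have, by Lemma \ref{bulletSideterm}, that $(H_i)^{\bullet}$ is $S_i$-determined, so $((H_i)^{\bullet} \cap S_i)^{\bullet} = (H_i)^{\bullet}$, and hence applying $\functor$ to the chain $H_i \leq (H_i)^{\bullet} \cap S_i \leq (H_i)^{\bullet}$ collapses it: $(H_i)^{\bullet} = ((H_i)^{\bullet} \cap S_i)^{\bullet}$. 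This is precisely the identity that turns ``$\bullet$ in $S$ then intersect with $S_i$'' into an honest closure operator on subgroups of $S_i$.

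First I would prove (i): starting from $(H_i)^{\bullet}_i = (H_i)^{\bullet} \cap S_i$, apply $\functor_i$ again, so $((H_i)^{\bullet}_i)^{\bullet}_i = ((H_i)^{\bullet} \cap S_i)^{\bullet} \cap S_i$. By the collapsing identity above this equals $(H_i)^{\bullet} \cap S_i = (H_i)^{\bullet}_i$, as desired. For (ii), if $H_i \leq K_i \leq S_i$, then $(H_i)^{\bullet} \leq (K_i)^{\bullet}$ since $\functor$ carries inclusions to inclusions (Proposition \ref{bulletprop}); intersecting with $S_i$ preserves the inclusion, giving $(H_i)^{\bullet}_i \leq (K_i)^{\bullet}_i$. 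For (iii), given $\varphi \in Mor_{\linking_i}(H_i, K_i)$, unwind the definition: $\varphi$ is (identified with) a $\Psi_i$-invariant morphism $H_i^{\bullet} \to K_i^{\bullet}$ in $\linking$ restricting to $H_i \to K_i$. By Proposition \ref{bulletprop}(ii) this extends uniquely to $\varphi^{\bullet}: (H_i^{\bullet})^{\bullet} \to (K_i^{\bullet})^{\bullet}$, but idempotency gives $(H_i^{\bullet})^{\bullet} = H_i^{\bullet}$ and likewise for $K_i^{\bullet}$, so $\varphi^{\bullet} = \varphi$ already as a morphism in $\linking$; the content is that $\varphi$, viewed now as a morphism $H_i^{\bullet} \cap S_i \to K_i^{\bullet} \cap S_i$ in $\linking_i$, is the required extension, and one must check it genuinely restricts to $\varphi$ on $H_i$ (immediate, since $H_i \leq H_i^{\bullet} \cap S_i$ and $\rho(\varphi)$ carries $H_i$ into $K_i$) and that it lies in $\linking_i$, i.e. $\Psi_i$-invariance is inherited — which holds because $\varphi$ itself was $\Psi_i$-invariant and $\rho(\varphi)(H_i^{\bullet} \cap S_i) \leq K_i^{\bullet} \cap S_i$ by the fact that $\rho(\varphi)^{\bullet} = \rho(\varphi)$ preserves $\bullet$-closures and restricts compatibly to $S_i$ by Lemma \ref{restrictmorph}. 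Uniqueness of the extension descends from uniqueness in Proposition \ref{bulletprop}(ii) together with the fact that morphisms in $\linking$ (hence in $\linking_i$) are monomorphisms, Lemma \ref{3.2OV}(ii).

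Finally, the adjunction statement: the three properties above say exactly that $\functor_i$ is an idempotent functor $\linking_i \to \linking_i^{\circ}$ that is the identity on $\hh_i = Ob(\linking_i^{\circ})$ and carries inclusions to inclusions, and for $H_i \in \widehat{\hh}_i$ and $P_i \in \hh_i$ the natural map $Mor_{\linking_i^{\circ}}((H_i)^{\bullet}_i, P_i) \to Mor_{\linking_i}(H_i, P_i)$ given by precomposition with the inclusion $H_i \hookrightarrow (H_i)^{\bullet}_i$ is a bijection — surjectivity because every $\Psi_i$-invariant morphism $H_i \to P_i$ extends by (iii) (note $P_i = (P_i)^{\bullet}_i$ since $P_i \in \hh_i$), injectivity because morphisms in $\linking_i$ are monomorphisms. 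This is the standard ``adjoint to an inclusion via an idempotent functor'' argument, mirroring the last sentence of Proposition \ref{bulletprop}. I expect the main subtlety to be bookkeeping in (iii): keeping straight the three different ambient categories ($\linking$, $\linking_i^{\circ}$, $\linking_i$) and verifying that the restriction/extension data is compatible with $\Psi_i$-invariance throughout — everything reduces to the collapsing identity from Lemma \ref{bulletSideterm}, but one has to invoke it at each turn rather than once.
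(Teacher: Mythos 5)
Your proposal is correct and takes essentially the same route as the paper: (i) via the collapsing identity from Lemma \ref{bulletSideterm}, (ii) from the inclusion-preserving property of $\functor$, (iii) by unwinding the definition of $Mor_{\linking_i}$ (where existence and uniqueness are essentially tautological), and the adjunction via the bijectivity of the restriction map $Mor_{\linking_i}((H_i)^{\bullet}_i,P_i)\to Mor_{\linking_i}(H_i,P_i)$. One small slip worth fixing: injectivity of precomposition with the inclusion $H_i \hookrightarrow (H_i)^{\bullet}_i$ would follow from morphisms being \emph{epimorphisms} (Lemma \ref{3.2OV}(iv)), not monomorphisms — though in fact it is automatic here, since both morphism sets are subsets of $Mor_{\linking,i}((H_i)^{\bullet},(P_i)^{\bullet})$ and the restriction map is the identity on underlying morphisms of $\linking$.
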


\begin{proof}

(i) Let $H_i \leq S_i$ be any subgroup. By Lemma \ref{bulletSideterm}, $H \stackrel{def} = (H_i)^{\bullet}$ is $S_i$-determined, and hence
$$
\xymatrix@C=4mm{
(H_i)^{\bullet}_i \stackrel{def} = H \cap S_i \ar[rr]^{\functor} & & (H \cap S_i)^{\bullet} = H \ar[rr]^{\underline{\phantom{A}} \cap S_i} & & H \cap S_i \stackrel{def} = ((H_i)^{\bullet}_i)^{\bullet}_i.\\
}
$$

(ii) This property follows from Lemma 3.2 (c) \cite{BLO3}.

(iii) Both the existence and uniqueness of $(\varphi)^{\bullet}_i$ hold by definition of $\linking_i$.

It follows now that $\functor_i$ is a functor, and by Lemma \ref{bulletSideterm}, it sends objects and morphisms in $\linking_i$ to objects and morphisms in $\linking_i^{\circ}$. The adjointness property holds since the restriction map
$$
Mor_{\linking_i}((H_i)^{\bullet}_i, P_i) \stackrel{res} \longrightarrow Mor_{\linking_i}(H_i, P_i)
$$
is a bijection for all $H_i \in \widehat{\hh_i}$ and all $P_i \in \hh_i$.

\end{proof}

\begin{cor}\label{inclequiv}

The inclusion $\linking_i^{\circ} \subseteq \linking_i$ induces a homotopy equivalence
$$
|\linking_i^{\circ}| \simeq |\linking_i|.
$$

\end{cor}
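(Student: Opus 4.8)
The plan is to deduce the homotopy equivalence from the adjunction established in Proposition \ref{propertiesbulleti}, using the standard fact that an adjoint pair of functors between two small categories induces a homotopy equivalence on nerves. Concretely, the inclusion $\iota:\linking_i^{\circ}\hookrightarrow\linking_i$ has a left adjoint, namely $\functor_i:\linking_i\to\linking_i^{\circ}$, and the composite $\functor_i\circ\iota$ is the identity functor on $\linking_i^{\circ}$ (since $\functor_i$ is the identity on $\hh_i$ and, by idempotency from Proposition \ref{propertiesbulleti}(i), fixes the objects of $\linking_i^{\circ}$ and their morphisms). The unit of the adjunction gives a natural transformation $\mathrm{id}_{\linking_i}\Rightarrow \iota\circ\functor_i$, so on nerves $|\iota|$ and $|\functor_i|$ are mutually inverse up to homotopy.

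First I would recall (or cite) the basic categorical fact: if $F\dashv G$ is an adjunction, then $|F|$ and $|G|$ are homotopy inverse to each other, because a natural transformation between functors induces a homotopy between the maps they induce on nerves (this is, e.g., in Quillen's work or Segal's ``Classifying spaces and spectral sequences''; in this circle of ideas it is used repeatedly in \cite{BLO3} — indeed Proposition \ref{bulletprop} together with the analogous statement for $\linking$ is exactly how one gets $|\linking^{\bullet}|\simeq|\linking|$). Then I would apply it to the pair $(\functor_i,\iota)$ from Proposition \ref{propertiesbulleti}. The unit $\eta_{H_i}:H_i\to (H_i)^{\bullet}_i$ is given by the inclusion morphism $\iota_{H_i,(H_i)^{\bullet}_i}$ lifted to $\linking_i$ (which exists since $(H_i)^\bullet$ is $\fusion$-centric by Lemma \ref{bulletSideterm}, so $(H_i)^\bullet_i\in\hh_i$ and the relevant inclusion lies in the transporter system), and naturality is precisely axiom-(C)-type compatibility inherited from $\linking$; the counit is the identity since $\functor_i\iota=\mathrm{id}$. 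This yields $|\linking_i^{\circ}|\simeq|\linking_i|$.

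The only subtle point — and the main thing to be careful about rather than a genuine obstacle — is making sure the adjunction of Proposition \ref{propertiesbulleti} is stated and used in the honest categorical sense (a natural bijection $Mor_{\linking_i}(H_i,P_i)\cong Mor_{\linking_i^{\circ}}((H_i)^\bullet_i,P_i)$ natural in both variables), since the homotopy-theoretic conclusion needs naturality, not just a bijection for each pair. This is exactly what the displayed ``restriction is a bijection'' statement at the end of the proof of Proposition \ref{propertiesbulleti} provides, together with the uniqueness clauses in Lemma \ref{3.2OV}(iii)/(iv) transported to $\linking_i$ via Lemma \ref{morphi}. Given that, the argument is a two-line invocation of the general principle; I would write it as: ``By Proposition \ref{propertiesbulleti}, $\functor_i$ is left adjoint to the inclusion $\linking_i^{\circ}\subseteq\linking_i$, and an adjunction between small categories induces a homotopy equivalence on nerves (see \cite{BLO3} or \cite{Segal}); hence $|\linking_i^{\circ}|\simeq|\linking_i|$.''
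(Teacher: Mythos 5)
Your proof is correct and is essentially the paper's own argument: the paper likewise deduces the equivalence from Proposition \ref{propertiesbulleti} together with Quillen's result that a functor admitting an adjoint induces a homotopy equivalence of nerves (Corollary 1 of \cite{Quillen}), which is exactly your invocation of the unit/counit natural transformations. Your extra care about the naturality of the restriction bijection and the identification of the unit with the inclusion morphism only makes explicit what the paper leaves implicit (and whether one phrases it as the inclusion having a left adjoint $\functor_i$ or $\functor_i$ having a right adjoint is immaterial to the conclusion).
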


\begin{proof}

It is a consequence of Corollary 1 \cite{Quillen}, since the inclusion $\linking_i^{\circ} \subseteq \linking_i$ has a right adjoint by Proposition \ref{propertiesbulleti}.

\end{proof}

\begin{thm}\label{transpi}

There exists some $M_{\Psi} \geq 0$ such that, for all $i \geq M_{\Psi}$, $\linking_i$ is a transporter system associated to $\fusion_i$.

\end{thm}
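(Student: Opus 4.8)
The plan is to verify, one by one, that the triple $\g_i = (S_i, \fusion_i, \linking_i)$ satisfies the axioms (A1), (A2), (B), (C), (I), (II), (III) of Definition \ref{defitransporter}, for $i$ large enough. The strategy throughout is the same: most axioms for $\linking_i$ should be inherited directly from the corresponding axiom for $\linking$, using that $\linking_i$ is (essentially) a subcategory of $\linking$ consisting of $\Psi_i$-invariant morphisms between $S_i$-determined subgroups (and their overgroups in $S_i$), and using the dictionary provided by the functors $\functor$ and $\functor_i$ together with Lemmas \ref{restrictmorph}, \ref{detectmorph}, \ref{bulletSideterm}, \ref{morphi} and Proposition \ref{propertiesbulleti}. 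The role of $M_{\Psi}$ is to be the maximum of the finitely many thresholds $M_1, M_2, M_3, M_4$ appearing in Section \S2, plus whatever threshold is needed so that $\fusion_i$ and $\linking_i$ contain enough objects (in particular so that the relevant $\fusion$-centric subgroups occur and the representatives in Remark \ref{BI1} are $S_i$-determined and $\Psi_i$-invariant).

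First I would dispose of the easy axioms. For (A1): $Ob(\linking_i) = \widehat{\hh}_i$ is closed under overgroups in $S_i$ by construction, and closure under $\fusion_i$-conjugacy follows because an $\fusion_i$-conjugate of $H \in \widehat{\hh}_i$ is of the form $f(H)$ for $f$ a composite of restrictions of $\Psi_i$-invariant morphisms, so $f(H)^{\bullet}$ is again $S_i$-determined and $\fusion$-centric by Lemma \ref{bulletSideterm} and the properties of $\functor$; the functors $\varepsilon_i$ and $\rho_i$ are restrictions of those on $\linking$, with $\varepsilon_i$ the identity on objects and $\rho_i$ inclusion on objects. For (B), (C): these are pointwise conditions on morphisms that already hold in $\linking$ (by Proposition \ref{3.5OV}), and $\Psi_i$-invariance of $\delta(x)$ for $x \in (H^{\bullet})_i$ is exactly what lets one restrict everything to $S_i$; here one needs Lemma \ref{restrictmorph} to know the distinguished morphisms $\delta_{H,K}(g)$ for $g \in N_{S_i}(H,K)$ land in $Mor_{\linking_i}(H,K)$. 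For (A2): the subgroup $E_i(P) = \ker(Aut_{\linking_i}(P) \to Aut_{\fusion_i}(P))$ is identified via $\functor_i$ and Lemma \ref{morphi} with (a subgroup of) $Z(P^{\bullet})$ acting on the $\Psi_i$-invariant morphisms; freeness of the left and right actions is inherited from axiom (A2) for $\linking$, and the orbit-map statement is precisely the definition of $A(P_i, R_i)$ and $Hom_{\fusion_i}$. Axiom (III) is essentially automatic: an ascending chain in $\widehat{\hh}_i$ has union in $\widehat{\hh}_i$ (overgroup-closure and the fact that $S_i$ is a \emph{finite} $p$-group actually makes this chain stabilize), so this is vacuous or trivial for $i$ large.

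The substantive work is in axioms (I) and (II), and I expect (II) to be the main obstacle. For (I), one must show $\varepsilon_{i}(S_i) \in Syl_p(Aut_{\linking_i}(S_i))$; since $S_i$ is finite this is a genuine finite-group statement, and $Aut_{\linking_i}(S_i)$ is by Lemma \ref{morphi} the group of $\Psi_i$-invariant elements of $Aut_{\linking}(S^{\bullet})$ that restrict to $S_i$ — here I would use that $S$ is $\fusion^{\bullet}$ (so $S^{\bullet} = S$), that $\delta_S(S)$ is normal of index prime to $p$ in $Aut_{\linking}(S)$ (Lemma 8.1 \cite{BLO3} as used in Proposition \ref{3.5OV}), that $S$ is $S_i$-determined for $i$ large, and Corollary \ref{corcentral1}, to conclude the same holds after taking $\Psi_i$-fixed points and restricting. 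For (II): given $\varphi \in Iso_{\linking_i}(P,R)$ and $P \lhd H \leq S_i$, $R \lhd K \leq S_i$ with $\varphi \varepsilon(H) \varphi^{-1} \leq \varepsilon(K)$, I would pass to $\functor_i$-images, apply axiom (II) for the transporter system $\linking$ to the morphism $\varphi^{\bullet} \in Iso_{\linking}(P^{\bullet}, R^{\bullet})$ together with $P^{\bullet} \lhd H^{\bullet}$, $R^{\bullet} \lhd K^{\bullet}$ — one must check the normality and the containment $\varphi^{\bullet}\varepsilon(H^{\bullet})(\varphi^{\bullet})^{-1} \leq \varepsilon(K^{\bullet})$ persist under $\functor$, which should follow from uniqueness of extensions in Proposition \ref{bulletprop} — obtaining $\widetilde{\varphi} \in Mor_{\linking}(H^{\bullet}, K^{\bullet})$. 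The delicate points are then (a) showing $\widetilde{\varphi}$ can be chosen $\Psi_i$-invariant, which I would extract from Lemma \ref{detectmorph} combined with the fact that $\varphi$ is $\Psi_i$-invariant and uniqueness in axiom (II) together with Lemma \ref{3.2OV}(iv); and (b) showing $\rho(\widetilde{\varphi})$ restricts to a homomorphism $H \to K$ of the $S_i$-roots, so that $\widetilde{\varphi} \in Mor_{\linking_i}(H,K)$ — this is where Propositions \ref{propcentral} and \ref{propnormal} and Lemma \ref{rootconj} enter, controlling $N_{S_i}(P_i)$, $C_{S_i}(P_i)$ versus $N_S(P)$, $C_S(P)$ so that the fixed-point subgroup $H = (H^{\bullet})_i$ behaves correctly. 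I would finish by noting all threshold conditions used are among $M_1,\dots,M_4$ plus finitely many more from (I) and (II), and set $M_{\Psi}$ to their maximum.
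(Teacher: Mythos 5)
Your proposal is correct in its overall skeleton and matches the paper on the routine points: the easy axioms (A1), (A2), (B), (C) are inherited from $\linking$ via Proposition \ref{3.5OV} and the definitions, axiom (III) is vacuous because $\linking_i$ is finite, and for axiom (I) your argument (intersect the normal subgroup $\delta_S(S)\leq Aut_{\linking}(S)$ of index prime to $p$ with the $\Psi_i$-fixed subgroup, so that $\varepsilon_i(S_i)$ is a normal $p$-subgroup whose quotient embeds in $Out_{\fusion}(S)$) is a valid, even slightly leaner, variant of the paper's short exact sequence $1 \to \varepsilon_i(S_i) \to Aut_{\linking_i}(S_i) \to Out_{\fusion}(S) \to 1$, which the paper obtains using the fixed lifts in $\hatmm$. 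Where you genuinely diverge is axiom (II). The paper does \emph{not} apply axiom (II) of $\linking$ to the bulleted configuration and then argue invariance; instead it fixes beforehand a finite set $\widetilde{\mm}$ of representatives, up to $S$-conjugacy, of all extensions of the morphisms in $\hatmm$ (finiteness coming from $N_S(P)/P$ being finite for $\fusion$-centric $P$), raises the degree so that these finitely many extensions are $\Psi_i$-invariant with $S_i$-determined sources --- this is exactly where the threshold $M_{\Psi}$ comes from --- and then reduces an arbitrary extension problem to these representatives via Lemma \ref{detectmorph}, transporting by $\delta(a),\delta(b)$ and concluding invariance from uniqueness of restrictions. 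Your route, by contrast, takes the extension $\widetilde{\varphi}$ supplied by axiom (II) of $\linking$ and deduces $\Psi_i(\widetilde{\varphi})=\widetilde{\varphi}$ directly: since $P^{\bullet},Q^{\bullet},\widetilde{P}^{\bullet},\widetilde{Q}^{\bullet}$ are $\Psi_i$-invariant, the inclusions between them are $\Psi_i$-invariant by Definition \ref{defiuao}(iv), so applying the functor $\Psi_i$ to $\widetilde{\varphi}\circ\varepsilon(1)=\varepsilon(1)\circ\varphi$ and using Lemma \ref{3.2OV}(iv) forces invariance. This does work, is uniform in $P,Q$ (so Lemma \ref{detectmorph} and the set $\widetilde{\mm}$ become unnecessary for (II)), and it buys a cleaner threshold; the paper's approach buys an explicit hands-on construction of the extension consistent with the rest of its bookkeeping. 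Two cautions: first, the translation of the $\linking_i$-hypothesis to $\linking$ (that $(P_i)^{\bullet}\lhd(\widetilde{P}_i)^{\bullet}$ and that the conjugation containment holds for the whole of $(\widetilde{P}_i)^{\bullet}$, including its torus part) is \emph{not} a formal consequence of uniqueness of extensions in Proposition \ref{bulletprop} as you suggest; it needs Proposition \ref{propnormal} and a separate argument, although the paper asserts the same translation with no more justification, so this is a shared, not a new, gap. Second, for membership of $\widetilde{\varphi}$ in $Mor_{\linking_i}(\widetilde{P}_i,\widetilde{Q}_i)$ you must check that $\rho(\widetilde{\varphi})$ carries $\widetilde{P}_i$ into $\widetilde{Q}_i$ (not merely into $\widetilde{Q}^{\bullet}\cap S_i$); this follows from axiom (C), Lemma \ref{restrictmorph} and centricity of $Q^{\bullet}$, since $Z(Q^{\bullet})\cap S_i \leq Q_i \leq \widetilde{Q}_i$, and is worth writing out.
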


\begin{proof}

Clearly, each $\linking_i$ is a nonempty finite category with $Ob(\linking_i) \subseteq Ob(\fusion_i)$. The first step to prove the statement is to define the pair of functors $\varepsilon_i: \transp_{Ob(\linking_i)}(S_i) \to \linking_i$ and $\rho_i: \linking_i \to \fusion_i$, but actually these two functors are naturally induced by $\linking$. Indeed, the functor $\varepsilon: \transp_{Ob(\linking)}(S) \to \linking$ restricts to a functor $\varepsilon_i$ as above. With respect to the functor $\rho_i$, the projection functor $\rho: \linking \to \fusion$ naturally induces a functor
$$
\rho_i: \linking_i \to \fusion_i
$$
by the rule $\rho_i(\varphi: H \to H') = res^P_H(\rho(\varphi))$ for each morphism $\varphi \in Mor_{\linking_i}(H, H')$ (Lemma \ref{restrictmorph}).

We next proceed to prove that all the axioms for transporter systems in definition \ref{defitransporter} are satisfied (after considering a suitable power of $\Psi$). Note first that using the functor $\functor_i$, it is enough to prove the axioms on the subset $\hh_i \subseteq Ob(\linking_i)$. Recall by Proposition \ref{3.5OV} that $\linking$ satisfies the axioms of a transporter system.

Axioms (A1), (A2), (B) and (C) hold by definition of $\g_i$ and by the same axioms on $\linking$. Next we show that axiom (I) holds. By Proposition \ref{morphi}, there is an equality $Aut_{\linking_i}(S_i) = Mor_{\linking,i}(S,S) \leq Aut_{\linking}(S)$. Furthermore, by definition of all these groups, and because we have fixed representatives of the elements of $Out_{\fusion}(S)$ in $\hatmm$, there is a group extension
$$
1 \to (\varepsilon_i)_{S_i,S_i}(S_i) \to Aut_{\linking_i}(S_i) \to Out_{\fusion}(S) \to 1.
$$
Thus, since $\{1\} \in Syl_p(Out_{\fusion}(S))$, the axiom follows. There is no need of checking that axiom (III) of transporter systems holds in this case, since $\linking_i$ is a finite category.

Axiom (II) will be proved by steps, since we need to discard finitely many of the first operations in $\{\Psi_i\}$. We recall here its statement. 

\begin{itemize}

\item[(II)] Let $\varphi \in Iso_{\linking_i}(P_i,Q_i)$, $P_i \lhd \widetilde{P}_i \leq S_i$ and $Q_i \lhd \widetilde{Q}_i \leq S_i$ be such that $\varphi \circ \varepsilon_i(\widetilde{P}_i) \circ \varphi^{-1} \leq \varepsilon_i(\widetilde{Q}_i)$. Then, there is some $\widetilde{\varphi} \in Mor_{\linking_i}(\widetilde{P}_i, \widetilde{Q}_i)$ such that $\widetilde{\varphi} \circ \varepsilon_i(1) = \varepsilon_i(1) \circ \varphi$.

\end{itemize}

\noindent The proof of axiom (II) is then organized as follows. First, we fix a finite list of representatives of all possible such extensions in $\linking$ (up to conjugacy by an element in $S$). In the second step we prove the axiom for the representatives fixed in the set $\hh$ of Remark \ref{BI1}, and finally in the third step we prove the general case.

\begin{itemize}

\item \textbf{Step 1}. Representatives of the extensions.

\end{itemize}

Let $P_i, Q_i \in \hh_i$, $P_i \lhd \widetilde{P}_i$, $Q_i \lhd \widetilde{Q}_i$ and $\varphi \in Mor_{\linking_i}(Pi, Q_i)$ as in the statement of axiom (II). By definition of $\linking_i$ and $\hh_i$, it is equivalent to consider in $\linking$ the corresponding situation: $\varphi: P \to Q$ such that $\varphi \circ \varepsilon(\widetilde{P}) \circ \varphi^{-1} \leq \varepsilon(\widetilde{Q})$, where $P = (P_i)^{\bullet}$, $Q = (Q_i)^{\bullet}$, $\widetilde{P} = (\widetilde{P}_i)^{\bullet}$ and $\widetilde{Q} = (\widetilde{Q}_i)^{\bullet}$. Note that, by Lemma \ref{bulletSideterm}, the subgroups $P, Q, \widetilde{P}$ and $\widetilde{Q}$ are $S_i$-determined. We have then translated a situation in $\linking_i$ to a situation in $\linking$. We will keep this notation for the rest of the proof.

Note first that if $P \in Ob(\linking)$, then the quotient $N_S(P)/P$ is finite. Indeed, since $C_S(P) = Z(P)$, it follows that $N_S(P)/P \cong (N_S(P)/Z(P)) / (P/Z(P)) = Out_S(P) \leq Out_{\fusion}(P)$, and this group is finite by axiom (I) for saturated fusion systems (or by Proposition 2.3 \cite{BLO3}).

As a consequence, if we fix $P, Q \in Ob(\linking)$ and a morphism $\varphi \in Mor_{\linking}(P,Q)$, then, up to conjugacy by elements in $S$, there are only finitely many morphisms $\widetilde{\varphi}: \widetilde{P} \to \widetilde{Q}$ such that $P \lhd \widetilde{P}$, $Q \lhd \widetilde{Q}$ and $\widetilde{\varphi}$ extends $\varphi$.

Consider then the sets $\hh$ and $\hatmm$ fixed in Remark \ref{BI1}. For each morphism $\varphi: P \to Q$ fixed in $\hatmm$, we can fix representatives (up to $S$-conjugacy) of all possible extensions $\widetilde{\varphi}: \widetilde{P} \to \widetilde{Q}$. Let $\widetilde{\mm}$ be s set of all such representatives:
$$
\widetilde{\mm} \stackrel{def} = \{ \widetilde{\varphi}: \widetilde{P} \to \widetilde{Q} \mbox{ } | \mbox{ } \varphi \in \hatmm\}.
$$

It is clear then that there exists some $M_{\Psi}$ such that, for all $i \geq M_{\Psi}$, the following holds:
\begin{enumerate}[(i)]

\item each extension $\widetilde{\varphi}$ in the above set is $\Psi_i$-invariant;

\item for each such $\widetilde{\varphi}$, the source subgroup, $\widetilde{P}$, is $S_i$-determined; and

\item each $\widetilde{P}$ is $S_i$-conjugate to the corresponding representative of $\conj{\widetilde{P}}{S}$ fixed in $\hh$.

\end{enumerate}

\begin{itemize}

\item \textbf{Step 2}. Both $P$ and $Q$ are in the set $\hh$ fixed in \ref{BI1}.

\end{itemize}

In this case, there are $\varphi' \in \hatmm$ and $x \in Q$ such that $\varphi = \delta(x) \circ \varphi'$. Furthermore, since both $\varphi$ and $\varphi'$ are $\Psi_i$-invariant, so is $\delta(x)$, and hence $x \in Q_i$. Let then $\widetilde{\varphi}' \in \widetilde{\mm}$ be the extension of $\varphi'$ which sends $\widetilde{P}$ to $x \cdot \widetilde{Q} \cdot x^{-1}$, and let
$$
\widetilde{\varphi} = \delta(x) \circ \widetilde{\varphi}'.
$$
It follows then that $\widetilde{\varphi}: \widetilde{P} \to \widetilde{Q}$ is an extension of $\varphi$, which in addition is $\Psi_i$-invariant since both $\delta(x)$ and $\widetilde{\varphi}'$ are. We just have to consider the corresponding morphism in $\linking_i$ to prove that axiom (II) holds in this case, since $\widetilde{P}$ and $\widetilde{Q}$ are $S_i$-determined.

\begin{itemize}

\item \textbf{Step 3}. One (or possibly both) of the subgroups $P, Q$ is not in $\hh$.

\end{itemize}

Since $\varphi$ is $\Psi_i$-invariant, it follows from Lemma \ref{detectmorph} that there exist subgroups $R, R' \in \hh$, a morphism $\varphi' \in \hatmm_{R,R'}$, and elements $a \in N_S(R,P)$ and $b \in N_S(R',Q)$ such that
\begin{enumerate}[(i)]

\item $\varphi = \delta(b) \circ \varphi' \circ \delta(a^{-1})$, and

\item $\delta(b^{-1} \cdot \Psi_i(b)) \circ \varphi ' = \varphi' \circ \delta(a^{-1} \cdot \Psi_i(a))$.

\end{enumerate}

Let then $\widetilde{R} = a^{-1} \cdot \widetilde{P} \cdot a$ and $\widetilde{R}' = b^{-1} \cdot \widetilde{Q} \cdot b$, and let $\widetilde{\varphi}': \widetilde{R} \to \widetilde{R}'$ be the extension of $\varphi'$ fixed in $\widetilde{\mm}$. Let also
$$
\widetilde{\varphi} \stackrel{def} = \delta(b) \circ \widetilde{\varphi}' \circ \delta(a^{-1}): \widetilde{P} \to \widetilde{Q}.
$$
Since $\varphi$ is $\Psi_i$-invariant, and by Lemma 4.3 \cite{BLO3}, it follows then that $\widetilde{\varphi}$ is also $\Psi_i$-invariant. Since $\widetilde{P}, \widetilde{Q}$ are $S_i$-determined, the proof is finished by considering then the morphism induced in $\linking_i$ by $\widetilde{\varphi}$.

\end{proof}

By Corollary \ref{inclequiv}, the following statement is still true after replacing $Ob(\linking_i)$ by the subset $\hh_i$.

\begin{cor}

For each $i \geq M_{\Psi}$, $\fusion_i$ is $Ob(\linking_i)$-generated and $Ob(\linking_i)$-saturated.

\end{cor}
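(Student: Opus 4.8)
The plan is to obtain both halves of the statement almost immediately from the machinery already set up, the only substantial inputs being Theorem \ref{transpi} and Proposition 3.6 of \cite{OV} (quoted just after Definition \ref{hgenhsat}). I would split the argument into the two assertions.

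\emph{Generation.} This is essentially a reformulation of the definition of $\fusion_i$. By construction every morphism of $\fusion_i$ is a composite of restrictions of morphisms lying in the sets $A(P_i,R_i)$ with $P_i,R_i\in\hh_i$; and by Lemma \ref{morphi}(i) one has $A(P_i,R_i)=Hom_{\fusion_i}(P_i,R_i)$, so each of these is a genuine morphism of $\fusion_i$ between two objects belonging to $\hh_i$. Since $\hh_i\subseteq\widehat{\hh}_i=Ob(\linking_i)$ (the latter being the closure of the former under overgroups in $S_i$), this exhibits $\fusion_i$ as $\hh_i$-generated, hence a fortiori as $Ob(\linking_i)$-generated.

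\emph{Saturation.} For $i\geq M_{\Psi}$, Theorem \ref{transpi} says that $\linking_i$ is a transporter system associated to $\fusion_i$, and $\fusion_i$ is a fusion system over the \emph{finite} $p$-group $S_i=S^{\Psi_i}$ (finiteness of $S_i$ follows exactly as for $S^{\Psi}$: it is an extension of the finite group $T_i$ by a subgroup of the finite $p$-group $S/T$). The hypotheses of Proposition 3.6 of \cite{OV} are thus satisfied, and that result gives directly that $\fusion_i$ is $Ob(\linking_i)$-saturated, i.e. the saturation axioms of Definition \ref{defisaturation} hold for every subgroup in $\widehat{\hh}_i$. In particular they hold for the subgroups in $\hh_i\subseteq\widehat{\hh}_i$, which together with Corollary \ref{inclequiv} gives the $\hh_i$-version alluded to in the remark preceding the corollary.

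\emph{Main obstacle.} There is essentially no difficulty: the statement is a formal consequence of the preceding work, and the generation part is immediate from the definitions. The only point requiring a little care is the verification of the hypotheses of Proposition 3.6 of \cite{OV} — that $S_i$ is a finite $p$-group and, crucially, that $\linking_i$ really is a transporter system associated to $\fusion_i$. It is precisely for the latter that the bound $i\geq M_{\Psi}$ coming from Theorem \ref{transpi} is needed, since for smaller $i$ the category $\linking_i$ need not satisfy all of the transporter-system axioms and the conclusion may genuinely fail.
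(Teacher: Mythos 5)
Your proof is correct and follows essentially the same route as the paper: generation is read off directly from the definition of $\fusion_i$, and saturation comes from applying Proposition 3.6 of \cite{OV} to the transporter system $\linking_i$ over the finite $p$-group $S_i$, which is exactly where the hypothesis $i \geq M_{\Psi}$ (Theorem \ref{transpi}) enters. The extra details you supply (finiteness of $S_i$, $\hh_i \subseteq Ob(\linking_i)$) are consistent with the paper's argument.
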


\begin{proof}

The $Ob(\linking_i)$-generation of $\fusion_i$ follows by definition of $\g_i$, and the $Ob(\linking_i)$-saturation of $\fusion_i$ follows directly by Proposition 3.6 \cite{OV}.

\end{proof}

Next we describe an interesting property of the family $\{\g_i\}$. Recall that, by Corollary \ref{inclequiv}, each inclusion $\linking_i^{\circ} \subseteq \linking_i$ induces a homotopy equivalence $|\linking_i^{\circ}| \simeq |\linking_i|$, and the striking point is that for each $i$ there is also a faithful functor $\Theta_i: \linking_i^{\circ} \to \linking_{i+1}^{\circ}$ defined by:
\begin{equation}\label{thetai}
\xymatrix@R=1mm{
\linking_i^{\circ} \ar[rr]^{\Theta_i} & & \linking_{i+1}^{\circ}\\
P_i \ar@{|->}[rr] & & (P_i)^{\bullet} \cap S_{i+1} = P_{i+1}\\
\varphi \ar@{|->}[rr] & & \varphi.\\
}
\end{equation}
It is easy to check that this is a well-defined functor: since $P_i \in \hh_i$ is the (unique) $S_i$-root of the $S_i$-determined subgroup $P \in Ob(\linking^{\bullet})$, it follows by construction that $P_{i+1}$ is an element of the set $\hh_{i+1}$, and by definition of $\linking_i$ and $\linking_{i+1}$, there is a natural inclusion of sets
$$
Mor_{\linking_i}(P_i,R_i) \subseteq Mor_{\linking_{i+1}}(P_{i+1}, Q_{i+1})
$$
for all $P_i, Q_i \in \hh_i$, which is a group monomorphism whenever $P_i = Q_i$. It follows then that $\Theta_i$ is faithful for all $i$.

Note that in general the functor $\Theta_i$ does not induce a commutative square
$$
\xymatrix@R=1mm@C=1mm{
\linking_i \ar[dd]_{\rho_i} \ar[rr]^{\Theta_i} & & \linking_{i+1} \ar[dd]^{\rho_{i+1}}\\
 & \times & \\
\fusion_i \ar[rr]_{incl_i} & & \fusion_{i+1}.\\
}
$$
For instance, whenever $S$ has positive rank, we have $\rho_{i+1}(\Theta_i(S_i)) = (S_i)^{\bullet} \cap S_{i+1} = S_{i+1} \gneqq S_i = incl(\rho(S_i))$. This is not a great inconvenience, as we prove below. Let $\fusion_i^{\hh_i} \subseteq \fusion_i$ be the full subcategory with object set $\hh_i$, and let $\theta_i: \fusion_i^{\hh_i} \to \fusion_{i+1}$ be the functor induced by $\Theta_i$.

\begin{prop}

For all $i$, there is a natural transformation $\tau_i$ between the functors $incl_i$ and $\theta_i$.

\end{prop}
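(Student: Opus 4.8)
The plan is to let $\tau_i$ be the family of inclusion homomorphisms. Recall that for $P_i \in \hh_i$ one has, by \eqref{thetai}, $\theta_i(P_i) = P_{i+1} = (P_i)^{\bullet} \cap S_{i+1}$, which contains $P_i$; so I would set
$$
\tau_i(P_i) \;=\; incl^{P_{i+1}}_{P_i} \;\in\; Hom_{S_{i+1}}(P_i, P_{i+1}) \;\subseteq\; Hom_{\fusion_{i+1}}(P_i, P_{i+1}),
$$
the last inclusion holding because $\fusion_{i+1}$ is a fusion system over $S_{i+1}$. Here I regard both $\theta_i$ and $incl_i$ (the latter restricted to the full subcategory $\fusion_i^{\hh_i} \subseteq \fusion_i$) as functors $\fusion_i^{\hh_i} \to \fusion_{i+1}$, and the aim is to show that $\tau_i = \{\tau_i(P_i)\}_{P_i \in \hh_i}$ is a natural transformation from $incl_i$ to $\theta_i$.

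The heart of the verification is the observation that $\theta_i$ enlarges an object $P_i$ to $P_{i+1}$ but leaves the underlying $\linking$-morphism unchanged. Concretely, given $f \in Hom_{\fusion_i^{\hh_i}}(P_i, R_i) = A(P_i, R_i)$ (Lemma \ref{morphi}(i)), I would choose $\varphi \in Mor_{\linking_i}(P_i, R_i) = Mor_{\linking,i}(P, R)$ with $\rho_i(\varphi) = f$, where $P = (P_i)^{\bullet}$ and $R = (R_i)^{\bullet}$, and put $g = \rho(\varphi) \in Hom_{\fusion}(P, R)$; then $f = g|_{P_i}$, with image in $R_i$, by Lemma \ref{restrictmorph}. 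By the definition of $\Theta_i$ in \eqref{thetai}, $\Theta_i(\varphi)$ is this same $\varphi$, now viewed in $Mor_{\linking,i+1}(P, R) = Mor_{\linking_{i+1}}(P_{i+1}, R_{i+1})$ — here one uses that $P$ and $R$ are $S_{i+1}$-determined, so $(P_{i+1})^{\bullet} = P$ and $(R_{i+1})^{\bullet} = R$, cf. Lemma \ref{bulletSideterm} — and hence $\theta_i(f) = \rho_{i+1}(\Theta_i(\varphi)) = g|_{P_{i+1}}$, again by Lemma \ref{restrictmorph}. The naturality square at $f$ then reads
$$
\theta_i(f) \circ \tau_i(P_i) \;=\; g|_{P_{i+1}} \circ incl^{P_{i+1}}_{P_i} \;=\; incl^{R_{i+1}}_{R_i} \circ g|_{P_i} \;=\; incl^{R_{i+1}}_{R_i} \circ f \;=\; \tau_i(R_i) \circ f,
$$
the middle equalities being nothing but the transitivity of restriction in $\fusion$ (and $incl_i(f)$ is $f$ itself, regarded in $\fusion_{i+1}$). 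This exhibits $\tau_i$ as a natural transformation, as required.

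I do not expect any genuine obstacle: the statement is a formal consequence of the explicit descriptions of $\Theta_i$, $\rho_i$ and $\rho_{i+1}$, and, in contrast with Theorem \ref{transpi}, it needs no enlargement of the degree of $\Psi$, since Lemma \ref{morphi} — the identifications $Mor_{\linking_i}(P_i, R_i) = Mor_{\linking,i}(P, R)$ and $A(P_i, R_i) = Hom_{\fusion_i}(P_i, R_i)$ — holds for every $i$. The only point demanding a little care is the bookkeeping of sources and targets: $incl_i(f)$ and $\theta_i(f)$ are the restrictions of one and the same $\fusion$-homomorphism $g$ to the nested subgroups $P_i \subseteq P_{i+1}$, and $\tau_i$ is precisely the inclusion mediating between them.
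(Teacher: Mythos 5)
Your proof is correct and follows essentially the same route as the paper: $\tau_i$ is taken to be the family of inclusions $P_i \hookrightarrow P_{i+1}$, and naturality is checked by identifying $incl_i(f)$ and $\theta_i(f)$ as restrictions of one common morphism between the bullet subgroups. The only cosmetic difference is that you unwind $\theta_i$ through a $\Psi_i$-invariant lift $\varphi$ in the linking system, whereas the paper writes $\theta_i(f) = res^{(P_i)^{\bullet}}_{P_{i+1}}((f)^{\bullet})$ directly and appeals to the uniqueness of bullet extensions (Proposition 3.3 of \cite{BLO3}); by that same uniqueness your $g = \rho(\varphi)$ coincides with $(f)^{\bullet}$, so the two arguments agree.
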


\begin{proof}

Set $\tau_i(P_i) = [incl_i(P_i) = P_i \hookrightarrow P_{i+1} = (P_i)^{\bullet} \cap S_i]$ for each $P_i \in \hh_i$, and set also
$$
\xymatrix@R=1mm@C=1mm{
P_i \ar[rr]^{incl_i} \ar[dd]_{f_i} & & P_{i+1} \ar[dd]^{f_{i+1}} \\
 & \tau_i(f_i) & \\
R_i \ar[rr]_{incl_i} & & R_{i+1}\\
}
$$
for each $(f_i: P_i \to R_i) \in Mor(\fusion_i^{\hh_i})$, where $f_{i+1} = res^{(P_i)^{\bullet}}_{P_{i+1}}((f_i)^{\bullet})$. This is well-defined since $S_i$-roots are unique, and because of the properties of $\functor$. In particular, it follows from Proposition 3.3 \cite{BLO3} that the above square is always commutative, and hence $\tau_i$ is a natural transformation.

\end{proof}

This way, we have a sequence of maps
$$
\xymatrix@R=5mm@C=1cm{
\ldots \ar[r] & |\linking_{i-1}^{\circ}| \ar[r]^{|\Theta_{i-1}|} \ar[d]_{\simeq} & |\linking_i^{\circ}| \ar[r]^{|\Theta_i|} \ar[d]_{\simeq} & |\linking_{i+1}^{\circ}| \ar[r]^{|\Theta_{i+1}|} \ar[d]_{\simeq} & \ldots, \\
 & |\linking_{i-1}| & |\linking_i| & |\linking_{i+1}| & \\
}
$$
and we can ask about the homotopy colimit of this sequence. Let $I$ be the poset of the natural numbers with inclusion, and let $\Theta: I \to \operatorname{Top}$ be the functor defined by $\Theta(i) = |\linking^{\circ}_i|$ and $\Theta(i \to i+1) = |\Theta_i|$.

\begin{thm}\label{hocolim1}

There is a homotopy equivalence
$$
(\operatorname{hocolim}_{\rightarrow I} \Theta)^{\wedge}_p \simeq B\g.
$$

\end{thm}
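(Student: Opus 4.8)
The plan is to identify, \emph{before} taking nerves, the categorical colimit of the diagram underlying $\Theta$ with the full--bullet subcategory $\linking^{\bullet} \subseteq \linking$, and then to invoke the equivalence $|\linking^{\bullet}| \simeq |\linking|$. First I would rewrite each $\linking_i^{\circ}$ as a subcategory of $\linking^{\bullet}$: via $\functor$ the object set $\hh_i$ is identified with $\hh_i^{\bullet} = \{R \in Ob(\linking^{\bullet}) \mid R \text{ is } S_i\text{-determined}\}$, and then $Mor_{\linking_i^{\circ}}(P,R)$ is exactly the set of $\Psi_i$-invariant morphisms $P \to R$ in $\linking$, with composition inherited from $\linking$. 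Two monotonicity facts show that $\linking_i^{\circ}$ is a (generally non-full) subcategory of $\linking_{i+1}^{\circ}$: if $R$ is $S_i$-determined it is $S_{i+1}$-determined (apply $\functor$ to $R \cap S_i \leq R \cap S_{i+1} \leq R$), and $\Psi_i(\varphi) = \varphi$ forces $\Psi_{i+1}(\varphi) = (\Psi_i)^p(\varphi) = \varphi$. Under these identifications the functor $\Theta_i$ of (\ref{thetai}) becomes precisely the inclusion $\linking_i^{\circ} \hookrightarrow \linking_{i+1}^{\circ}$ --- on objects $P_i \mapsto P_{i+1}$ corresponds to $P = (P_i)^{\bullet} \mapsto (P_{i+1})^{\bullet} = (P \cap S_{i+1})^{\bullet} = P$, since $P$ is $S_{i+1}$-determined --- and it is injective on objects and faithful, as noted right after (\ref{thetai}).

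Next I would check that $\operatorname{colim}_{i \in I} \linking_i^{\circ} = \linking^{\bullet}$. On objects this is the statement that every $P \in Ob(\linking^{\bullet})$ is $S_i$-determined for all $i$ sufficiently large, which is one of the Lemmas of Section 2; on morphisms it is the statement that, for $P, R \in Ob(\linking^{\bullet})$, every morphism of $\linking$ from $P$ to $R$ (equivalently, every morphism of $\linking^{\bullet}$) is $\Psi_i$-invariant for all $i$ sufficiently large, which is the Proposition of Section 2 on eventual $\Psi_i$-invariance of morphisms. Since composition in each $\linking_i^{\circ}$ is that of $\linking$, the colimit is $\linking^{\bullet}$ on the nose. (The subposet $\{i \geq M_{\Psi}\}$ is cofinal in $I$, so neither the colimit nor the homotopy colimit is affected by restricting to those $i$ for which $\linking_i$ is a transporter system by Theorem \ref{transpi}; thus the statement is consistent with the terminology used there.)

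Then I would pass to spaces. Because $\Theta_i$ is injective on objects and faithful, $N(\Theta_i)$ is a monomorphism of simplicial sets, so $|\Theta_i|$ is the inclusion of a subcomplex, in particular a cofibration. The nerve commutes with this sequential colimit (a string of composable morphisms of $\linking^{\bullet}$ meets only finitely many objects and morphisms, hence lies in some $\linking_i^{\circ}$) and geometric realization preserves colimits, so $\operatorname{colim}_{\rightarrow I} \Theta = |\operatorname{colim}_i \linking_i^{\circ}| = |\linking^{\bullet}|$; and since all the structure maps are cofibrations, the canonical map $\operatorname{hocolim}_{\rightarrow I} \Theta \to \operatorname{colim}_{\rightarrow I} \Theta$ is a homotopy equivalence (the mapping telescope argument). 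Finally, $|\linking^{\bullet}| \simeq |\linking|$: the inclusion $\linking^{\bullet} \hookrightarrow \linking$ has an adjoint, namely the bullet functor $\linking \to \linking^{\bullet}$ (\cite{BLO3}), so Corollary 1 of \cite{Quillen} applies exactly as in Corollary \ref{inclequiv}. Applying $p$-completion, which preserves homotopy equivalences, gives
$$
(\operatorname{hocolim}_{\rightarrow I} \Theta)^{\wedge}_p \simeq |\linking^{\bullet}|^{\wedge}_p \simeq |\linking|^{\wedge}_p = B\g.
$$

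The step I expect to demand the most care is the identification $\operatorname{colim}_i \linking_i^{\circ} = \linking^{\bullet}$: one has to be certain that the naive union of the subcategories $\linking_i^{\circ}$ exhausts \emph{all} of $\linking^{\bullet}$, on objects and on morphisms, rather than some proper subcategory, and this is precisely where the two eventual-determination / eventual-invariance finiteness results of Section 2 are indispensable. Once this is in place, the remainder --- mapping telescopes, compatibility of the nerve with the sequential colimit, and the already-established equivalence $|\linking^{\bullet}| \simeq |\linking|$ --- is formal.
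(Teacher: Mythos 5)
Your proposal is correct and follows essentially the same route as the paper: the paper's entire proof is the observation that, as categories, $\linking^{\bullet} = \bigcup_{i \in \N} \linking_i^{\circ}$, which is exactly the colimit identification you single out as the crux (resting on the eventual $S_i$-determination of objects and eventual $\Psi_i$-invariance of morphisms from Section 2), combined with the equivalence $|\linking^{\bullet}| \simeq |\linking|$ from \cite{BLO3} and the telescope argument. You have merely made explicit the formal steps the paper leaves implicit.
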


\begin{proof}

The statement follows since, as categories, $\linking^{\bullet} = \bigcup_{i \in \N} \linking_i^{\circ}$.

\end{proof}

We finally study the elements of a set $\hh_i$ as objects in $\fusion_j$ for $j \geq i$.

\begin{prop}\label{centricquasicentric}

Let $P_i \in \hh_i$. Then, the following holds:
\begin{enumerate}[(i)]

\item $P_i$ is $\fusion_i$-centric,

\item $P_i$ is $\fusion_j$-quasicentric for all $j \geq i$, and

\item $P_i$ is $\fusion$-quasicentric.

\end{enumerate}

\end{prop}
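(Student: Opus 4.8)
The plan is to deduce all three parts from a single structural fact about the idempotent functor $\functor$. Set $P=(P_i)^{\bullet}$; by definition of $\hh_i$ this is an object of $\linking^{\bullet}$, hence $\fusion$-centric. The fact I would prove first is: \emph{if $Q\le S$ is a subgroup whose $\functor$-image $Q^{\bullet}$ is $\fusion$-conjugate to $P$, then $Q^{\bullet}$ is the unique $\fusion$-conjugate of $P$ containing $Q$, and consequently $C_S(Q)=C_S(Q^{\bullet})=Z(Q^{\bullet})$.} Uniqueness is an order count: any $\fusion$-conjugate $M$ of $P$ with $Q\le M$ satisfies $M=M^{\bullet}$ (since $M^{\bullet}$ is $\fusion$-conjugate to $P^{\bullet}=P$ and has the same order as $M$), so $Q^{\bullet}\le M^{\bullet}=M$, and $|Q^{\bullet}|=|P|=|M|$ force $M=Q^{\bullet}$. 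For the centralizer equality, given $a\in C_S(Q)$ the subgroup $aQ^{\bullet}a^{-1}$ is an $\fusion$-conjugate of $P$ containing $aQa^{-1}=Q$, hence equals $Q^{\bullet}$; thus $c_a$ restricts to an automorphism of $Q^{\bullet}$ extending $id_Q$, and uniqueness of $\functor$-extensions (Proposition \ref{bulletprop}) identifies this automorphism with $id_{Q^{\bullet}}$, i.e. $a$ centralizes $Q^{\bullet}$. Finally $C_S(Q^{\bullet})=Z(Q^{\bullet})$ because $Q^{\bullet}$, being $\fusion$-conjugate to the $\fusion$-centric $P$, is $\fusion$-centric.

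Granting this, for part (i) I would first observe that every $\fusion_i$-conjugate $Q_i$ of $P_i$ in fact lies in $\hh_i$: the generators of $\fusion_i$ are restrictions of maps $\rho(\omega)$ with $\omega$ a $\Psi_i$-invariant morphism of $\linking$, and the $\functor$-extension of such a restriction equals the corresponding $\rho(\omega)$ by uniqueness of $\functor$-extensions, hence commutes with $\psi_i$; therefore for an isomorphism $f_i\colon P_i\to Q_i$ of $\fusion_i$ the extension $(f_i)^{\bullet}\colon P\to (Q_i)^{\bullet}$ commutes with $\psi_i$ and so sends $P\cap S_i=P_i$ onto $(Q_i)^{\bullet}\cap S_i$, which thus equals $f_i(P_i)=Q_i$. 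Now the structural fact gives $C_S(Q_i)=Z((Q_i)^{\bullet})$, so $C_{S_i}(Q_i)=Z((Q_i)^{\bullet})\cap S_i\le (Q_i)^{\bullet}\cap S_i=Q_i$; as this subgroup centralizes $Q_i$ it equals $Z(Q_i)$, and $P_i$ is $\fusion_i$-centric. (This last step is also Corollary \ref{corcentral1} applied to $(Q_i)^{\bullet}$.)

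For part (iii) I would argue directly on centralizer fusion systems. Let $R$ be any $\fusion$-conjugate of $P_i$; then $R^{\bullet}$ is $\fusion$-conjugate to $P$, so $C_S(R)=Z(R^{\bullet})\le R^{\bullet}$ by the structural fact. A morphism of $C_{\fusion}(R)$ is the restriction to some $A\le C_S(R)$ of a morphism $\varphi\colon RA\to RB$ of $\fusion$ with $\varphi|_R=id_R$ and $A,B\le C_S(R)$; since $RA,RB\le R^{\bullet}$ we get $(RA)^{\bullet}=(RB)^{\bullet}=R^{\bullet}$, so $\varphi^{\bullet}\in Aut_{\fusion}(R^{\bullet})$ extends $id_R$, whence $\varphi^{\bullet}=id_{R^{\bullet}}$ and $\varphi=id_{RA}$. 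Thus $C_{\fusion}(R)$ is the fusion system of the abelian group $C_S(R)$, so $P_i$ is $\fusion$-quasicentric. Part (ii) is then immediate: for $j\ge i$ any $\fusion_j$-conjugate $R$ of $P_i$ is also a $\fusion$-conjugate of $P_i$, so $C_{S_j}(R)=C_S(R)\cap S_j$ is abelian and every morphism of $C_{\fusion_j}(R)$ — being a morphism of $C_{\fusion}(R)$ — is trivial; hence $C_{\fusion_j}(R)$ is the fusion system of the group $C_{S_j}(R)$, i.e. $P_i$ is $\fusion_j$-quasicentric.

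The only real work is the structural fact, and inside it the implication ``$c_a$ fixes $Q$ pointwise $\Rightarrow$ $c_a$ fixes $Q^{\bullet}$ pointwise''; this genuinely uses that $P=(P_i)^{\bullet}$ is $\fusion$-centric, so that every $\fusion$-conjugate of $P$ equals its own $\functor$-image and ``the conjugate of $P$ containing $Q$'' is unique, together with uniqueness of $\functor$-extensions. Everything else is formal, and in particular I would not expect to need the finer invariance statements (Propositions \ref{propcentral}, \ref{propnormal}) here.
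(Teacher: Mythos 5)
Your argument is correct, but it follows a genuinely different route from the paper's. The paper deduces (i) from Proposition \ref{propcentral} (hence from Corollary \ref{corcentral1}), which requires $i$ to be large ($i\geq M_2$) and rests on the finitely many representatives fixed in Remark \ref{BI1}, and it deduces (ii) and (iii) from a separate proposition about $\fusion$-quasicentric $S_i$-determined subgroups, whose Step 1 again invokes (the proof of) Proposition \ref{propcentral} and whose Step 2 identifies $C_{\fusion}(H)$ with $C_{\fusion}(H^{\bullet})$ so as to quote quasicentricity of the bullet. You instead extract everything from one structural lemma proved purely with Proposition \ref{bulletprop} (idempotence, preservation of inclusions, uniqueness of $\functor$-extensions) together with the order count for discrete $p$-toral groups, exploiting that $(P_i)^{\bullet}$ is $\fusion$-\emph{centric}: for any $Q$ whose bullet is $\fusion$-conjugate to $(P_i)^{\bullet}$ one gets $C_S(Q)=Z(Q^{\bullet})$, whence the centralizer fusion systems of all $\fusion$-conjugates of $P_i$ are those of abelian $p$-groups — slightly stronger than quasicentricity — and (ii) follows formally from $\fusion_j\subseteq\fusion$. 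Two remarks on the comparison: your route avoids all ``for $i$ sufficiently large'' hypotheses and the data of Remark \ref{BI1}, and it makes explicit a point the paper leaves implicit in its one-line proof of (i), namely that $\fusion_i$-conjugates of members of $\hh_i$ are again $S_i$-roots of centric bullet subgroups (your $\psi_i$-equivariance argument for $(f_i)^{\bullet}$ is the right justification); on the other hand, the paper's auxiliary proposition is strictly more general, covering quasicentric (not just centric) $S_i$-determined subgroups, and your centricity-based lemma would not prove that statement. One small imprecision: the $\functor$-extension of a restriction of $\rho(\omega)$ is the restriction of $\rho(\omega)$ to the bullet of the source rather than $\rho(\omega)$ itself, but the $\psi_i$-equivariance you draw from it is unaffected.
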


\begin{proof}

Property (i) is a consequence of Proposition \ref{propcentral}. Properties (ii) and (iii) are consequence of the Proposition below.

\end{proof}

\begin{prop}

Let $P \leq S$ be $\fusion$-quasicentric and $S_i$-determined for some $i$. Then,
\begin{enumerate}[(i)]

\item $P_i$ is $\fusion$-quasicentric, and

\item $P_i$ is $\fusion_j$-quasicentric for all $j \geq i$.

\end{enumerate}

\end{prop}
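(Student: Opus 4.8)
The plan is to work throughout with the standard reformulation of quasicentricity: a subgroup $Q$ of a discrete $p$-toral group on which a saturated fusion system $\mathcal{E}$ acts is $\mathcal{E}$-quasicentric if and only if, for every fully $\mathcal{E}$-centralized $\mathcal{E}$-conjugate $Q'$ of $Q$, every morphism of the centralizer fusion system $C_{\mathcal{E}}(Q')$ is conjugation by an element of $C_S(Q')$; and, by the usual lemma transporting centralizer fusion systems along $\mathcal{E}$-isomorphisms, this forces $C_{\mathcal{E}}(Q')$ to be trivial for \emph{every} $\mathcal{E}$-conjugate $Q'$, fully centralized or not. Two elementary facts about $\functor$ will carry the argument. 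First, for any $Q\leq S$ one has $C_S(Q^{\bullet})=C_S(Q)$: if $a\in C_S(Q)$ then $c_a|_T\in W=Aut_{\fusion}(T)$ restricts to the identity on $Q^{[e]}\leq Q\cap T$, hence fixes $I(Q^{[e]})$ pointwise by the definition of $I(\,\cdot\,)$, hence fixes $I(Q^{[e]})_0$, so $a$ centralizes $Q^{\bullet}=Q\cdot I(Q^{[e]})_0$. Second, if a morphism $\tilde\phi$ of $\fusion$ restricts to the identity on a subgroup $U\leq S$, then $(\tilde\phi)^{\bullet}$ restricts to the identity on $U^{\bullet}$, since $\functor$ is a functor carrying inclusions to inclusions (Proposition \ref{bulletprop}). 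A useful preliminary remark is that part (i) should require no further enlargement of the degree of $\Psi$, whereas part (ii) is the only place such an enlargement might enter.

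For (i) I would argue as follows. To check $\fusion$-quasicentricity of $P_i$ it suffices to exhibit one fully $\fusion$-centralized $\fusion$-conjugate $P_i^{*}$ of $P_i$ with $C_{\fusion}(P_i^{*})$ trivial. Set $R^{*}=(P_i^{*})^{\bullet}$; applying $\functor$ to a $\fusion$-isomorphism $P_i\to P_i^{*}$ shows $R^{*}\in\conj{P}{\fusion}$ with $P=(P_i)^{\bullet}$, and $C_S(R^{*})=C_S(P_i^{*})$ by the first fact. Now take a morphism $\phi\colon A\to B$ of $C_{\fusion}(P_i^{*})$, with extension $\tilde\phi\colon P_i^{*}A\to P_i^{*}B$ fixing $P_i^{*}$ pointwise. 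Then $A$ and $B$ centralize $R^{*}$ as well, so $A,B\leq C_S(R^{*})$, and $(\tilde\phi)^{\bullet}$ fixes $R^{*}$ pointwise by the second fact, restricting to a morphism of $C_{\fusion}(R^{*})$ that extends $\phi$; thus $\phi$ is itself a morphism of $C_{\fusion}(R^{*})$. Since $R^{*}$ is a $\fusion$-conjugate of the $\fusion$-quasicentric subgroup $P$, $C_{\fusion}(R^{*})$ is trivial, so $\phi=c_h|_A$ for some $h\in C_S(R^{*})=C_S(P_i^{*})$. Hence $C_{\fusion}(P_i^{*})$ is trivial and $P_i$ is $\fusion$-quasicentric.

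For (ii) I would fix $j\geq i$, a fully $\fusion_j$-centralized $\fusion_j$-conjugate $P_i^{*}\leq S_j$ of $P_i$, and a morphism $\phi\colon A\to B$ of $C_{\fusion_j}(P_i^{*})$, with $A,B\leq C_{S_j}(P_i^{*})$. Since $\fusion_j\subseteq\fusion$, $\phi$ lies in $C_{\fusion}(P_i^{*})$, so by (i) (together with the standard lemma on non-fully-centralized conjugates) $\phi=c_h|_A$ for some $h\in C_S(P_i^{*})$; the point is to choose $h$ inside $C_{S_j}(P_i^{*})=C_S(P_i^{*})\cap S_j$. Writing $\phi$ as a composite of restrictions of $\fusion_j$-morphisms between objects of $\hh_j$ and lifting these to $\Psi_j$-invariant morphisms of $\linking$, I would show that the extension $\phi^{\bullet}\colon A^{\bullet}\to B^{\bullet}$ is $\psi_j$-equivariant, $\phi^{\bullet}\circ\psi_j=\psi_j\circ\phi^{\bullet}$ on the $S_j$-determined (hence $\psi_j$-invariant, by Lemma \ref{bulletSideterm}) subgroup $A^{\bullet}$, using $\rho\circ\Psi_j=(\psi_j)_{\ast}\circ\rho$ and points (iii)--(iv) of Definition \ref{defiuao}. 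Since $\phi^{\bullet}=c_h|_{A^{\bullet}}$, equivariance yields $h^{-1}\psi_j(h)\in N:=C_S(A^{\bullet})\cap C_S(P_i^{*})=C_S(A^{\bullet}P_i^{*})$, and in fact $hN$ is precisely the set of $h'\in C_S(P_i^{*})$ with $c_{h'}|_A=\phi$, a coset that $\psi_j$ preserves. A $\psi_j$-fixed point of $hN$ is then an admissible conjugating element lying in $C_{S_j}(P_i^{*})$, and I would produce one by a fixed-point argument on the discrete $p$-toral group $N$ of the kind used in Lemmas \ref{rootconj} and \ref{detectmorph}: on the maximal torus of $N$, where $\psi_j$ acts by $\zeta^{p^j}$-th powers ($\zeta$ the degree of $\Psi$), the relevant equation is solvable by divisibility since $1-\zeta^{p^j}\in p\padic$, and on the finite quotients the $\psi_j$-action is trivial once $j$ is large enough. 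The hard part will be exactly this last step, controlling the $\psi_j$-action on $C_S(A^{\bullet}P_i^{*})$, which is also the one point where one may again be forced to replace $\Psi$ by a high enough power (so the index in the statement is understood to be taken large, as in the applications); part (i), by contrast, is purely formal once the two facts about $\functor$ are in place.
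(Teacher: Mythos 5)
Your part (i) is correct and is essentially the paper's own argument: the paper proceeds in two steps, first proving $C_S(H)=C_S(H^{\bullet})$ for every $\fusion$-conjugate $H$ of $P_i$, and then using the unique-extension property of $\functor$ to identify $C_{\fusion}(H)$ with $C_{\fusion}(H^{\bullet})$ and quoting quasicentricity of $H^{\bullet}\in\conj{P}{\fusion}$. Your direct verification of $C_S(Q)=C_S(Q^{\bullet})$ from the definition of $I(\,\cdot\,)$ is in fact cleaner than the paper's appeal to Proposition \ref{propcentral}, since it needs no hypothesis on the degree. One caution: you should not lean on the blanket claim that quasicentricity makes $C_{\fusion}(Q')$ trivial for \emph{every} conjugate $Q'$, fully centralized or not (that transport argument does not go through verbatim when $Q'$ is not fully centralized); it is also unnecessary for (i), because the equality $C_S(H)=C_S(H^{\bullet})$ for all conjugates of $P_i$ shows directly that $(P_i^{*})^{\bullet}$ is fully $\fusion$-centralized whenever $P_i^{*}$ is.

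The genuine gap is in part (ii). The paper disposes of (ii) by combining Step 1 with the properties of $\functor$ to identify $C_{\fusion_j}(H)$ with a subcategory of $C_{\fusion}(H)$; no equivariance or fixed-point argument appears there. You instead reduce (ii) to producing a conjugating element inside $C_{S_j}(P_i^{*})$, that is, to solving the twisted equation $g\,\psi_j(g)^{-1}=h^{-1}\psi_j(h)$ in $N=C_S(A^{\bullet}P_i^{*})$, and this step -- which you yourself flag as the hard part -- is precisely what is not proved. The divisibility argument you sketch on the maximal torus of $N$ is not available in general, since centralizers in $T$ of subgroups of $S$ need not be subtori (the paper must prove this as a special feature of $U(n)$ in Lemma \ref{centralizerUn}); and the statement that the $\psi_j$-action on the finite part is trivial ``once $j$ is large enough'' would have to hold uniformly for the infinitely many subgroups $A\leq C_{S_j}(P_i^{*})$ arising as $j$ grows, which cannot be arranged by fixing finitely many objects and morphisms as in Remark \ref{BI1}. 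Finally, the fallback of replacing $\Psi$ by a higher power is not permitted by the statement: $i$ is fixed by the hypothesis that $P$ is $S_i$-determined, and the conclusion must hold for that $i$ and all $j\geq i$. As submitted, part (ii) of your proposal is therefore an outline rather than a proof, and it takes a substantially heavier route than the argument the paper actually uses.
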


\begin{proof}

The proof is done by steps.

\begin{itemize}

\item \textbf{Step 1}. Let $H \in \conj{P_i}{\fusion}$ and $Q = (H)^{\bullet}$. Then there are equalities
$$
C_S(H) = C_S(Q).
$$

\end{itemize}

\noindent Indeed, for $P_i$ the equality holds by Proposition \ref{propcentral}, since $P$ is $S_i$-determined. Also, since $P$ is $S_i$-determined, we can write
$$
\begin{array}{ccc}
P = P_i \cdot T_P & \mbox{and} & P_i = P_i \cdot (T_P)_i.
\end{array}
$$
Let now $H \in \conj{P_i}{\fusion}$, $Q = (H)^{\bullet}$, and let $f \in Iso_{\fusion}(P_i,H)$. Using the infinitely $p$-divisibility property of $T_P$, we can write then $Q = f(P_i) \cdot T_Q = H \cdot T_Q$ and $H = f(P_i) \cdot f((T_P)_i) = H \cdot (T_Q)_i$. Thus,
$$
C_S(H) = C_S(H) \cap C_S((T_Q)_i) = C_S(H) \cap C_S(T_Q) = C_S(Q),
$$
where the second equality holds by (the proof of) Proposition \ref{propcentral}, since we had previously fixed representatives of all the $S$-conjugacy classes in $\conj{P}{\fusion}$ in \ref{BI1}.

\begin{itemize}

\item \textbf{Step 2}. For each $H \in \conj{P_i}{\fusion}$, $H$ is $\fusion$-quasicentric.

\end{itemize}

Let $C_{\fusion}(H)$ be the centralizer fusion system of $H$, and note that, in particular, $C_{\fusion}(H)$ is a fusion system over $C_S(H) = C_S(Q)$. Let also $f: R \to R'$ be a morphism in $C_{\fusion}(H)$. By definition of $C_{\fusion}(H)$, there is a morphism $\widetilde{f}: R \cdot H \to R' \cdot H$ in $C_{\fusion}(H)$ which extends $f$ and such that it restricts to the identity on $H$.

By applying $\functor$ to $\widetilde{f}$, we obtain a new morphim $(\widetilde{f})^{\bullet}$ which restricts to $f^{\bullet}: (R)^{\bullet} \to (R')^{\bullet}$ and to the identity on $(H)^{\bullet} = Q$. It follows then that $f^{\bullet}$ is a morphism in $C_{\fusion}(Q)$. On the other hand, there is an obvious inclusion of categories $C_{\fusion}(Q) \subseteq C_{\fusion}(H)$, which is in fact an equality by the above. Since $Q$ is $\fusion$-quasicentric by hypothesis, the proof of Step 2 is finished.

\begin{itemize}

\item \textbf{Step 3}. For each $j \geq i$ and each $H \in \conj{P_i}{\fusion_j}$, $H$ is $\fusion_j$-quasicentric.

\end{itemize}

This case follows by Step 1, together with the properties of the functor $\functor$, since we can identify $C_{\fusion_j}(H)$ with a subcategory of $C_{\fusion}(H)$.

\end{proof}


\subsection{Consequences of the existence of approximations by $p$-local finite groups} 

We have skipped in the previous section the issue of the saturation of the fusion systems $\fusion_i$. This is a rather difficult question and we want to discuss it apart from the main results. In this section we will also study some consequences of the case when the triples $\g_i$ are $p$-local finite groups. Examples of this situation will be described in the following section.

Recall that we have used Proposition 3.6 \cite{OV} to prove that for each $i$ the fusion system $\fusion_i$ is $Ob(\linking_i)$-generated and $Ob(\linking_i)$-saturated. Recall also that Proposition 3.6 \cite{OV} gives conditions for the fusion systems $\fusion_i$ to be saturated: each $\fusion_i$-centric subgroup $H \leq S_i$ not in $Ob(\linking_i)$ has to be $\fusion_i$-conjugate to some $K \leq S_i$ such that
\begin{equation}\label{condition}
Out_{S_i}(K) \cap O_p(Out_{\fusion_i}(K)) \neq \{1\}.
\end{equation}

The disadvantage of proving the saturation of $\fusion_i$ by means of this result lies obviously on the difficulty in checking the above condition, but the advantage of proving saturation using it is also great, since in particular this would mean that all $\fusion_i$-centric $\fusion_i$-radical subgroups are in $Ob(\linking_i)$. Indeed, note that if there was some $\fusion_i$-centric $\fusion_i$-radical not in $Ob(\linking_i)$, then the category $\linking_i^{\circ}$ defined in the previous section could not be extended to a whole centric linking system associated to $\fusion_i$ (at least in an obvious way), and the functors $\Theta_i$ would not be valid any more.

In order to check the condition above, we can consider the following two situations:
\begin{enumerate}[(a)]

\item $H$ is not an $S_i$-root, that is, $H \lneqq (H)^{\bullet} \cap S_i$; or

\item $H$ is an $S_i$-root, that is, $H = (H)^{\bullet} \cap S_i$ but $(H)^{\bullet}$ is not $\fusion$-centric.

\end{enumerate}
The difficult case to study is (b), but we can prove rather easily that condition (\ref{condition}) is always satisfied in case (a).

\begin{prop}\label{nosiroot}

Let $H \leq S_i$ be an $\fusion_i$-centric subgroup not in $Ob(\linking_i)$ and such that $H \lneqq (H)^{\bullet} \cap S_i$. Then, $H$ satisfies condition (\ref{condition}).

\end{prop}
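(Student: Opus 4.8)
The plan is to show that $H$ is not $\fusion_i$-radical, by exhibiting inside $Out_{\fusion_i}(H)$ a nontrivial normal $p$-subgroup contained in $Out_{S_i}(H)$. Set $H' = (H)^{\bullet} \cap S_i$, so that by hypothesis $H \lneqq H' \leq S_i$. Since $H$ is $\fusion_i$-centric, $C_{S_i}(H) = Z(H)$, so $Out_{S_i}(H) = N_{S_i}(H)/H$; moreover $Out_{H'}(H) = N_{H'}(H)/H \leq Out_{S_i}(H)$ is nontrivial, since a proper subgroup of a finite $p$-group has strictly larger normalizer. It therefore suffices to prove $Out_{H'}(H) \lhd Out_{\fusion_i}(H)$: then $\{1\} \neq Out_{H'}(H) \leq Out_{S_i}(H) \cap O_p(Out_{\fusion_i}(H))$, which is condition (\ref{condition}). (Here $Out_{\fusion_i}(H)$ is finite, because $\linking_i$, and hence $\fusion_i$, has finitely many morphisms, so $O_p$ makes sense.)

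The heart of the argument is the following $\fusion_i$-analogue of the extension property of $\functor$ (Proposition \ref{bulletprop}(ii), compare Proposition \ref{propertiesbulleti}(iii)): every $\alpha \in Aut_{\fusion_i}(H)$ extends to some $\widehat{\alpha} \in Aut_{\fusion_i}(H')$ with $\widehat{\alpha}|_H = \alpha$. To see this, write $\alpha$ as a composite of restrictions of morphisms in the generating sets $A(P_i,R_i)$, say along intermediate subgroups $H = H_0, H_1, \ldots, H_k = H$ of $S_i$, where the $j$-th step is the restriction $H_{j-1} \to H_j$ of a morphism $g\colon P_i^{(j)} \to R_i^{(j)}$ in $A(P_i^{(j)}, R_i^{(j)})$. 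Each such $g$ is the restriction to $S_i$-roots of a morphism of $\fusion^{\bullet}$ between the $S_i$-determined objects $(P_i^{(j)})^{\bullet}$ and $(R_i^{(j)})^{\bullet}$, and thus equals its own bullet-extension. Using the functoriality and idempotence of $\functor$ and the fact that it carries inclusions to inclusions (Proposition \ref{bulletprop}), Lemma \ref{bulletSideterm}, and the observation that $H_{j-1}^{\bullet} \cap S_i \leq (P_i^{(j)})^{\bullet} \cap S_i = P_i^{(j)}$ (and likewise for targets), one checks that $g$ restricts further to an isomorphism $H_{j-1}^{\bullet} \cap S_i \to H_j^{\bullet} \cap S_i$ which is again a restriction of the $A$-morphism $g$, hence a morphism of $\fusion_i$. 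Composing these restrictions over $j$ produces the desired $\widehat{\alpha}\colon H' \to H'$; it is an automorphism of $H'$ because the bullet-extension of an isomorphism is an isomorphism.

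Granting this, normality is formal: given $\alpha \in Aut_{\fusion_i}(H)$ and $x \in N_{H'}(H)$, extend $\alpha$ to $\widehat{\alpha} \in Aut_{\fusion_i}(H')$ as above; then $\alpha \circ c_x \circ \alpha^{-1} = (\widehat{\alpha} \circ c_x \circ \widehat{\alpha}^{-1})|_H = c_{\widehat{\alpha}(x)}|_H$, and $\widehat{\alpha}(x) \in \widehat{\alpha}(H') = H'$ normalizes $\widehat{\alpha}(H) = H$, so $\widehat{\alpha}(x) \in N_{H'}(H)$. Hence conjugation by $\alpha$ carries $Aut_{H'}(H)$ into itself, so $Out_{H'}(H) \lhd Out_{\fusion_i}(H)$, as needed. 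The only genuine obstacle is the emphasized extension statement of the second paragraph — the bookkeeping showing that the bullet-extension of the composite-of-restrictions defining $\alpha$ can be kept inside $\fusion_i$ and then restricted to the $S_i$-fixed points; once this is in place, everything else is routine $p$-group theory in $S_i$.
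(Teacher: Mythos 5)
Your proof is correct and follows essentially the same route as the paper: both arguments extend automorphisms in $Aut_{\fusion_i}(H)$ to $K = (H)^{\bullet}\cap S_i$ via the bullet construction and exhibit $Aut_{K}(H)/Inn(H)$ as a nontrivial normal $p$-subgroup of $Out_{\fusion_i}(H)$ lying in $Out_{S_i}(H)$, nontrivial because $H \lneqq N_{K}(H)$ and $H$ is $\fusion_i$-centric. The only difference is presentational: you verify normality by the explicit formula $\alpha c_x \alpha^{-1} = c_{\widehat{\alpha}(x)}$ and spell out the extension step that the paper attributes in one line to the functor $\functor_i$, whereas the paper phrases it as $Aut_{\fusion_i}(H)\cap Inn(K)$ being normalized inside $Aut_{\fusion_i}(K)$.
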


\begin{proof}

Let $K \stackrel{def} = (H)^{\bullet} \cap S_i \leq S_i$. The functor $\functor_i$ provides a natural inclusion $Aut_{\fusion_i}(H) \leq Aut_{\fusion_i}(K)$. Consider also the subgroup $A = \{c_x \in Aut_{\fusion_i}(H) \mbox{ } | \mbox{ } x \in N_K(H)\}$. Via the above inclusion of automorphism groups in $\fusion_i$, we can see $A$ as
$$
A = Aut_{\fusion_i}(H) \cap Inn(K).
$$
Since, by hypothesis, $H \lneqq K$, it follows that $H \lneqq N_K(H)$, and hence $Inn(H) \lneqq A$, since $H$ is $\fusion_i$-centric by hypothesis.

The group $Aut_{\fusion_i}(H)$, seen as a subgroup of $Aut_{\fusion_i}(K)$, normalizes $Inn(K)$, and thus $A \lhd Aut_{\fusion_i}(H)$ and
$$
\{1\} \neq A/Inn(H) \leq O_p(Out_{\fusion_i}(H)).
$$
Also, by definition of $A$, there is an inclusion $A/Inn(H) \leq Out_{S_i}(H)$ and this finishes the proof.

\end{proof}

It is still an open question whether condition (\ref{condition}) is satisfied in case (b) in general.

\begin{defi}

Let $\g$ be a $p$-local compact group, and let $\Psi$ be an unstable Adams operation acting on $\g$. We say that $\Psi$ \textbf{approximates $\g$ by $p$-local finite groups} if there exists some $M_{\Psi}'$ such that, for all $i \geq M_{\Psi}'$, condition (\ref{condition}) holds for all $H \in Ob(\fusion_i^c) \setminus Ob(\linking_i)$. We also say then that $\Psi$ \textbf{induces an approximation of $\g$ by $p$-local finite groups}.

\end{defi}

\begin{cor}

Let $\g$ be a $p$-local compact group such that, for all $P \in Ob(\fusion^{\bullet}) \setminus Ob(\linking^{\bullet})$, $C_S(P) \gneqq Z(P)$. Then, any unstable Adams operation $\Psi$ induces an approximation of $\g$ by $p$-local finite groups.

\end{cor}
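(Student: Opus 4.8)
The plan is to run the case analysis set up in the paragraph just before the statement and to observe that, under the hypothesis on $\g$, the troublesome case (b) is vacuous for $i$ large. First I would set $M_{\Psi}'$ to be the maximum of the constant $M_{\Psi}$ produced by Theorem \ref{transpi} and the constant $M_3$ produced by Corollary \ref{corcentral2}, fix $i \geq M_{\Psi}'$, and take any $H \in Ob(\fusion_i^c) \setminus Ob(\linking_i)$. Unwinding the definitions of $\hh_i^{\bullet}$, $\hh_i$ and $\widehat{\hh}_i = Ob(\linking_i)$, one sees that $H$ falls into exactly one of the two cases: (a) $H \lneqq (H)^{\bullet} \cap S_i$; or (b) $H = (H)^{\bullet} \cap S_i$, in which case $(H)^{\bullet}$ is necessarily not $\fusion$-centric, for if it were then $(H)^{\bullet} \in \hh_i^{\bullet}$ (using that $(H)^{\bullet}$ is $S_i$-determined by Lemma \ref{bulletSideterm}), whence $H = (H)^{\bullet} \cap S_i \in \hh_i \subseteq Ob(\linking_i)$, a contradiction.

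In case (a), Proposition \ref{nosiroot} gives condition (\ref{condition}) for $H$ directly, so there is nothing to do; the content of the argument is to show that case (b) cannot occur once $i \geq M_{\Psi}'$. Assuming $H = (H)^{\bullet} \cap S_i$ with $(H)^{\bullet}$ not $\fusion$-centric, we have $(H)^{\bullet} \in Ob(\fusion^{\bullet}) \setminus Ob(\linking^{\bullet})$, so the hypothesis on $\g$ forces $C_S((H)^{\bullet}) \gneqq Z((H)^{\bullet})$. Since $(H)^{\bullet}$ is $S_i$-determined with $S_i$-root $(H)^{\bullet} \cap S_i = H$, Corollary \ref{corcentral2} (applicable because $i \geq M_3$) yields $C_{S_i}(H) \gneqq Z(H)$. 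But $H$ is $\fusion_i$-centric, hence $S_i$-centric, so $C_{S_i}(H) = Z(H)$ --- a contradiction. Hence every $H \in Ob(\fusion_i^c) \setminus Ob(\linking_i)$ lies in case (a) and satisfies condition (\ref{condition}), which is exactly the assertion that $\Psi$ induces an approximation of $\g$ by $p$-local finite groups.

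The routine citations make up the bulk of the write-up; the one place that genuinely needs care is the dichotomy preceding case (b) --- the bookkeeping showing that an $S_i$-root whose bullet is $\fusion$-centric is automatically an object of $\linking_i$, so that the only remaining possibility for a non-object $H$ that happens to be an $S_i$-root is $(H)^{\bullet}$ failing to be $\fusion$-centric. I would also take $M_{\Psi}'$ large enough to invoke simultaneously Proposition \ref{nosiroot}, Corollary \ref{corcentral2} and Theorem \ref{transpi}, so that for $i \geq M_{\Psi}'$ the triple $\g_i$ is a genuine transporter system and, saturation of $\fusion_i$ then following from Proposition 3.6 \cite{OV}, a $p$-local finite group.
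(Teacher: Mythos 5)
Your proof is correct and is essentially the argument the paper intends (the corollary is left unproved there): split into the cases (a)/(b) of the preceding discussion, dispose of case (a) by Proposition \ref{nosiroot}, and use the hypothesis together with Corollary \ref{corcentral2} to show that an $S_i$-root $H$ with $(H)^{\bullet}$ non-$\fusion$-centric cannot be $\fusion_i$-centric, so case (b) never occurs for $i$ large. Your bookkeeping that an $S_i$-root with $\fusion$-centric bullet already lies in $\hh_i \subseteq Ob(\linking_i)$, and your choice of $M_{\Psi}'$ dominating $M_{\Psi}$ and $M_3$, are exactly what is needed.
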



\subsection{The Stable Elements Theorem}

When $\g$ is approximated by $p$-local finite groups, we can prove the Stable Elements Theorem (5.8 \cite{BLO2}) for $\g$. Such result holds, for instance, for the examples in the forthcoming section of this paper.

\begin{prop}\label{inandout}

Let $\g$ be a $p$-local compact group, and let $\Psi$ be an unstable Adams operation that approximates $\g$ by $p$-local finite groups. Then, there are natural isomorphisms
$$
\begin{array}{ccc}
H^{\ast}(BS; \F_p) \cong \varprojlim H^{\ast}(BS_i; \F_p) & \mbox{ and } & H^{\ast}(B\g; \F_p) \cong \varprojlim H^{\ast}(B\g_i; \F_p).
\end{array}
$$

\end{prop}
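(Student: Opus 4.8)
The plan is to deduce both isomorphisms from the fact that mod $p$ \emph{homology} commutes with the relevant sequential homotopy colimits, and then to dualise. Two general facts will be used throughout. First, the Bousfield--Kan $p$-completion map is an $\F_p$-homology isomorphism, so that $H_{\ast}(B\g;\F_p)\cong H_{\ast}(|\linking|;\F_p)$, $H_{\ast}(B\g_i;\F_p)\cong H_{\ast}(|\linking_i|;\F_p)$, and no $p$-completion issue arises for $BS$. Second, since $\F_p$ is a field, $H^{n}(X;\F_p)\cong\operatorname{Hom}_{\F_p}(H_{n}(X;\F_p),\F_p)$ naturally in $X$, while $\operatorname{Hom}_{\F_p}(-,\F_p)$ sends a filtered colimit of $\F_p$-vector spaces to the corresponding inverse limit; this is why no $\lim^{1}$ term appears, and why I would route the computation through homology rather than directly through cochain complexes.

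For the first isomorphism, I would begin by recording that $S=\bigcup_{i\in\N}S_i$ is an increasing union of finite $p$-subgroups: $S_i\leq S_{i+1}$ because an element fixed by $\Psi_i$ is fixed by $\Psi_{i+1}=(\Psi_i)^{p}$, and for each $x\in S$ one has $\Psi_i(x)=x$ for all sufficiently large $i$ by Proposition~\ref{finitesetinv} applied to the morphism $\delta_S(x)\in Mor(\linking)$, together with Definition~\ref{defiuao}(iv) and the injectivity of $\delta_S$. Hence $|BS|=\operatorname{colim}_i|BS_i|$ along the subcomplex inclusions induced by $S_i\hookrightarrow S_{i+1}$, this colimit computes the homotopy colimit, and therefore $H_{\ast}(BS;\F_p)=\operatorname{colim}_i H_{\ast}(BS_i;\F_p)$. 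Dualising gives $H^{\ast}(BS;\F_p)\cong\varprojlim_i H^{\ast}(BS_i;\F_p)$, with structure maps induced by the inclusions $S_i\hookrightarrow S$.

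For the second isomorphism, I would use Theorem~\ref{hocolim1}: $B\g\simeq(\operatorname{hocolim}_{\rightarrow I}\Theta)^{\wedge}_p$ with $I=\N$ and $\Theta(i)=|\linking_i^{\circ}|$. Applying $\F_p$-homology kills the $p$-completion and turns the (sequential, hence mapping-telescope) homotopy colimit into a colimit, so $H_{\ast}(B\g;\F_p)\cong\operatorname{colim}_i H_{\ast}(|\linking_i^{\circ}|;\F_p)$. By Corollary~\ref{inclequiv} the inclusion $\linking_i^{\circ}\subseteq\linking_i$ induces $H_{\ast}(|\linking_i^{\circ}|;\F_p)\cong H_{\ast}(|\linking_i|;\F_p)\cong H_{\ast}(B\g_i;\F_p)$, compatibly with the functors $\Theta_i$; hence $H_{\ast}(B\g;\F_p)\cong\operatorname{colim}_i H_{\ast}(B\g_i;\F_p)$, and dualising yields $H^{\ast}(B\g;\F_p)\cong\varprojlim_i H^{\ast}(B\g_i;\F_p)$, the structure maps being those induced by $\Theta_i$ and the comparison map coming from the compatible family $|\linking_i^{\circ}|\to B\g$. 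Naturality of both isomorphisms --- in particular compatibility with the maps $BS\to B\g$ and $BS_i\to B\g_i$ coming from the functors $\varepsilon$, $\rho$ --- is immediate from the construction. I would also note that the hypothesis that $\Psi$ approximates $\g$ by $p$-local finite groups is not logically needed for the stated cohomological identity; it enters only to ensure that for $i\geq M_{\Psi}'$ each $\g_i$ is a genuine $p$-local finite group, so that the right-hand sides are the cohomology rings to which the Stable Elements Theorem of \cite{BLO2} then applies.

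The argument is essentially formal once the colimit descriptions are in hand, so I do not anticipate a deep obstacle. The point deserving the most care is making the comparison with a mapping telescope legitimate: this requires that each $\Theta_i\colon\linking_i^{\circ}\to\linking_{i+1}^{\circ}$ be injective on objects and faithful, so that on nerves it is the inclusion of a sub-simplicial-set and hence a cofibration after realisation --- exactly what was checked when $\Theta_i$ was defined in (\ref{thetai}) --- and, relatedly, that the various ``for $i$ large enough'' thresholds appearing in Remark~\ref{BI1} and in the construction of the triples $\g_i$ be taken uniformly over the finitely many relevant $S$-conjugacy classes of subgroups, which is how those statements were already organised in the preceding sections.
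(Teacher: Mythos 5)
Your argument is correct, but it follows a genuinely different route from the paper's proof. The paper takes $X=B\g$ (resp.\ $BS$) and $X_i=B\g_i$ (resp.\ $BS_i$) and runs the Bousfield--Kan cohomology spectral sequence of the homotopy colimit, $E_2^{r,s}=\varprojlim^r H^s(X_i;\F_p)\Rightarrow H^{r+s}(X;\F_p)$, killing the higher limits by a Mittag--Leffler argument; that argument needs each $H^s(X_i;\F_p)$ to be finite, which is exactly where the approximation hypothesis enters the paper's proof, via the noetherianity of $H^{\ast}(B\g_i;\F_p)$ from Theorem 5.8 of \cite{BLO2}. You instead compute mod $p$ homology of the telescope as a filtered colimit and dualize over the field $\F_p$, so no $\varprojlim^1$ or higher terms can appear and no finiteness input is required; this is more elementary, and it supports your observation that the isomorphisms themselves do not depend on the saturation of the $\fusion_i$. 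Two caveats, though. First, your ``general fact'' that $p$-completion induces an $\F_p$-homology isomorphism holds only for $p$-good spaces, so you should cite $p$-goodness explicitly: for the telescope this follows because it is equivalent to $|\linking^{\bullet}|\simeq|\linking|$, which is $p$-good by Proposition 4.4 of \cite{BLO3}, while for $|\linking_i|$ the natural reference is Proposition 1.12 of \cite{BLO2}, available precisely because the hypothesis makes $\g_i$ a $p$-local finite group --- so, in the form you stated it (with $B\g_i=|\linking_i|^{\wedge}_p$ on the right-hand side), the hypothesis does quietly re-enter your argument at this identification, and your claim that it is ``not logically needed'' should be tempered accordingly. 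Second, the eventual fixing of each element $x\in S$ by the operations $\Psi_i$ should not be attributed to Proposition \ref{finitesetinv}, which concerns operations $\alpha(\zeta)$ of sufficiently high degree rather than the iterates $\Psi_i=\Psi^{p^i}$ of the fixed $\Psi$; the correct reference is the proposition in Section 2 asserting that every object and morphism of $\linking$ is $\Psi_i$-invariant for $i$ large (applied to $\delta_S(x)$, using Definition \ref{defiuao}(iv) and injectivity of $\delta_S$, exactly as you indicate), or equivalently the fact $P=\bigcup_i P_i$ used in the proof that objects of $\fusion^{\bullet}$ are eventually $S_i$-determined. Neither caveat is a gap in substance; both are matters of citing the right supporting statements.
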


\begin{proof}

Let $X$ be either $B\g$ or $BS$, and similarly let $X_i$ be either $B\g_i$ or $BS_i$, depending on which case we want to prove. Consider also the homotopy colimit spectral sequence for cohomology (XII.5.7 \cite{BK}):
$$
E^{r,s}_2 = \varprojlim \!\! \phantom{i}^rH^s(X_i;\F_p) \Longrightarrow H^{r+s}(X;\F_p).
$$
We will see that, for $r \geq 1$, $E_2^{r,s} = \{0\}$, which, in particular, will imply the statement.

For each $s$, let $H^s_i = H^s(X_i;\F_p)$, and let $F_i$ be the induced morphism in cohomology (in degree $s$) by the map $|\Theta_i|$. The cohomology ring $H^{\ast}(X_i;\F_p)$ is noetherian by Theorem 5.8 \cite{BLO2}, and in particular $H^s_i$ is a finite $\F_p$-vector space for all $s$ and all $i$. Thus, the inverse system $\{H^s_i;F_i\}$ satisfies the Mittag-Leffler condition (3.5.6 \cite{Weibel}), and hence the higher limits $\varprojlim^rH^s_i$ vanish for all $r \geq 1$. This in turn implies that the differentials in the above spectral sequence are all trivial, and thus it collapses.

\end{proof}

\begin{thm}\label{stable}

(Stable Elements Theorem for $p$-local compact groups). Let $\g$ be a $p$-local compact group, and suppose that there exists $\Psi$, an unstable Adams operation on $\g$, that approximates $\g$ by $p$-local finite groups. Then, the natural map
$$
H^{\ast}(B\g; \F_p) \stackrel{\cong} \longrightarrow H^{\ast}(\fusion) \stackrel{def} = \varprojlim_{\mathcal{O}(\fusion^c)} H^{\ast}(\underline{\phantom{A}}; \F_p) \subseteq H^{\ast}(BS; \F_p)
$$
is an isomorphism.

\end{thm}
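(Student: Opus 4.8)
The plan is to combine Proposition \ref{inandout} with the classical Stable Elements Theorem for the $p$-local finite groups $\g_i$, and then identify the resulting inverse limit with $H^{\ast}(\fusion)$. By Proposition \ref{inandout} the restriction homomorphisms induce isomorphisms $H^{\ast}(B\g;\F_p) \cong \varprojlim_i H^{\ast}(B\g_i;\F_p)$ and $H^{\ast}(BS;\F_p) \cong \varprojlim_i H^{\ast}(BS_i;\F_p)$, compatibly with the inclusions $S_i \leq S_{i+1} \leq S$; here the inverse systems are formed along the maps $|\Theta_i|$, which on the $S$-part are the inclusions $BS_i \hookrightarrow BS_{i+1}$. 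For $i$ large the triple $\g_i$ is a $p$-local finite group, so the classical Stable Elements Theorem (5.8 \cite{BLO2}) identifies $H^{\ast}(B\g_i;\F_p)$ with the stable element ring $H^{\ast}(\fusion_i) = \varprojlim_{\mathcal{O}(\fusion_i^c)} H^{\ast}(\underline{\phantom{A}};\F_p) \subseteq H^{\ast}(BS_i;\F_p)$, naturally in $i$. Passing to inverse limits, the theorem follows once one shows that, as subrings of $H^{\ast}(BS;\F_p) \cong \varprojlim_i H^{\ast}(BS_i;\F_p)$, one has $H^{\ast}(\fusion) = \varprojlim_i H^{\ast}(\fusion_i)$. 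Throughout I use the Cartan--Eilenberg description: for a saturated fusion system $\fusion$ over $S$, the ring $\varprojlim_{\mathcal{O}(\fusion^c)} H^{\ast}(\underline{\phantom{A}};\F_p)$ is the subring of those $x \in H^{\ast}(BS;\F_p)$ satisfying $res^S_P(x) = \alpha^{\ast} res^S_{\alpha(P)}(x)$ for every $P \leq S$ and every $\alpha \in Hom_{\fusion}(P,S)$.

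The inclusion $H^{\ast}(\fusion) \subseteq \varprojlim_i H^{\ast}(\fusion_i)$ is the easy half: each $\fusion_i$ is a fusion subsystem of $\fusion$ supported on $S_i \leq S$, so every morphism of $\fusion_i$ is in particular a morphism of $\fusion$ with target contained in $S_i$; hence if $x$ is $\fusion$-stable then $res^S_{S_i}(x)$ is $\fusion_i$-stable, i.e.\ lies in $H^{\ast}(\fusion_i)$, and these restrictions are compatible in $i$ by construction.

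For the reverse inclusion, let $(x_i)$ be a compatible family with $x_i \in H^{\ast}(\fusion_i)$ and let $x \in H^{\ast}(BS;\F_p)$ be the corresponding class; I must show $x$ is $\fusion$-stable. First I reduce to checking the stability identity for $\fusion$-centric subgroups $P$ that are $\bullet$-closed: given an arbitrary $\fusion$-centric $P$ and $\alpha \in Hom_{\fusion}(P,S)$, the morphism $\alpha$ extends to $\alpha^{\bullet}\colon P^{\bullet} \to \alpha(P)^{\bullet}$ (Proposition \ref{bulletprop}), the subgroups $P^{\bullet}, \alpha(P)^{\bullet}$ are again $\fusion$-centric (Proposition 2.7 \cite{BLO3}, cf.\ Lemma \ref{bulletSideterm}), and the identity for $(P^{\bullet},\alpha^{\bullet})$ implies it for $(P,\alpha)$ upon restricting along $P \leq P^{\bullet}$. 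So fix $P \in Ob(\fusion^{\bullet})$ which is $\fusion$-centric and $\alpha \in Hom_{\fusion}(P,S)$, and lift $\alpha$ to some $\varphi \in Mor_{\linking}(P,S)$. Since $\linking^{\bullet} = \bigcup_k \linking_k^{\circ}$ (the identity underlying Theorem \ref{hocolim1}), for all sufficiently large $k$ the subgroups $P$ and $\alpha(P)$ are $S_k$-determined, $\varphi$ is $\Psi_k$-invariant, $P_k = P \cap S_k$ belongs to $\hh_k$ and is $\fusion_k$-centric (Proposition \ref{centricquasicentric}), and $\alpha$ restricts to $\alpha_k \in Hom_{\fusion_k}(P_k, S_k)$ (Lemma \ref{restrictmorph}). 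Because $P = \bigcup_k P_k$ is an increasing union of finite $p$-groups, the Mittag--Leffler argument of Proposition \ref{inandout} gives $H^{\ast}(BP;\F_p) \cong \varprojlim_k H^{\ast}(BP_k;\F_p)$, and under this identification the $k$-th components of $res^S_P(x)$ and of $\alpha^{\ast} res^S_{\alpha(P)}(x)$ are, for all large $k$, equal to $res^{S_k}_{P_k}(x_k)$ and to $\alpha_k^{\ast} res^{S_k}_{\alpha(P_k)}(x_k)$. These coincide by the $\fusion_k$-stability of $x_k$, applied to the morphism $\alpha_k$ and the $\fusion_k$-centric subgroup $P_k$; since an element of an inverse limit is determined by a cofinal set of its components, $res^S_P(x) = \alpha^{\ast} res^S_{\alpha(P)}(x)$, and therefore $x \in H^{\ast}(\fusion)$.

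The step I expect to be the main obstacle is the bookkeeping that makes the three ``levels'' --- $\fusion$, its $\bullet$-truncation, and the finite systems $\fusion_i$ --- interact correctly: one has to reduce $\fusion$-stability to $\fusion^{\bullet}$-centric subgroups (so that the $S_i$-determined and $S_i$-root formalism of \S 2 applies), and then realise a fixed morphism of $\fusion$ together with its source simultaneously inside a single $\linking_k^{\circ}$ with $\fusion_k$-centric root; Lemma \ref{bulletSideterm}, Proposition \ref{centricquasicentric} and the identity $\linking^{\bullet} = \bigcup_k \linking_k^{\circ}$ are exactly the tools for this. Everything else --- the Bousfield--Kan/Mittag--Leffler input and the passage to inverse limits of cohomology rings --- is formal, but it all rests on the standing hypothesis that the $\g_i$ are genuine $p$-local finite groups, which is what makes Theorem 5.8 \cite{BLO2}, and hence Proposition \ref{inandout}, applicable.
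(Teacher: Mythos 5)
Your proposal is correct and follows the same skeleton as the paper's proof: apply the finite Stable Elements Theorem (5.8 \cite{BLO2}) to the $p$-local finite groups $\g_i$, invoke Proposition \ref{inandout} to identify $H^{\ast}(B\g;\F_p)$ and $H^{\ast}(BS;\F_p)$ with the corresponding inverse limits, and then identify $\varprojlim_i H^{\ast}(\fusion_i)$ with $H^{\ast}(\fusion)$. Where you differ is in that last identification: the paper argues categorically, noting that $\functor$ induces inclusions $\mathcal{O}(\fusion_i^c) \subseteq \mathcal{O}(\fusion_{i+1}^c)$ with $\mathcal{O}(\fusion^{\bullet c}) = \bigcup_i \mathcal{O}(\fusion_i^c)$, and then (implicitly, via the adjointness of $\functor$) replaces $\mathcal{O}(\fusion^{\bullet c})$ by $\mathcal{O}(\fusion^c)$; you instead verify the equality element-wise inside $H^{\ast}(BS;\F_p)$, reducing stability to $\bullet$-closed $\fusion$-centric subgroups and using $\linking^{\bullet} = \bigcup_k \linking_k^{\circ}$, $S_k$-determination and Proposition \ref{centricquasicentric} to push a fixed morphism into a single $\fusion_k$. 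This buys a more explicit justification of the step the paper leaves implicit, at the cost of more bookkeeping. One caveat: your Cartan--Eilenberg description of $\varprojlim_{\mathcal{O}(\fusion^c)} H^{\ast}$ as the classes stable under \emph{all} $P \leq S$ and $\alpha \in Hom_{\fusion}(P,S)$ is stronger than the literal definition (a limit over centric subgroups only), and in the compact setting justifying it requires Alperin's fusion theorem (Theorem \ref{Alperin}); in the one place you actually use it (the easy inclusion, where the $\fusion_i$-centric subgroups of $S_i$ need not be $\fusion$-centric), you can avoid it by observing that every morphism of $\fusion_i$ is by construction a composite of restrictions of $\fusion$-morphisms whose sources and targets are $\bullet$-closed $\fusion$-centric subgroups, so centric stability restricts and composes to give $\fusion_i$-stability. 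With that remark supplied, the argument is complete.
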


\begin{proof}

Since each $\g_i$ is a $p$-local finite group (for $i$ big enough), we can apply the Stable Elements Theorem for $p$-local finite groups, Theorem 5.8 \cite{BLO2}: there is a natural isomorphism
$$
H^{\ast}(B\g_i; \F_p) \stackrel{\cong} \longrightarrow H^{\ast}(\fusion_i) = \varprojlim_{\mathcal{O}(\fusion_i^c)} H^{\ast}(\underline{\phantom{A}}; \F_p) \subseteq H^{\ast}(BS_i; \F_p)
$$
Thus, by Proposition \ref{inandout}, there are natural isomorphisms
$$
H^{\ast}(B\g; \F_p) \cong \varprojlim H^{\ast}(B\g_i; \F_p) \cong \varprojlim H^{\ast}(\fusion_i) \subseteq \varprojlim H^{\ast}(BS_i; \F_p) \cong H^{\ast}(BS;\F_p).
$$
Furthermore, the functor $\functor$ induces inclusions $\mathcal{O}(\fusion_i^c) \subseteq \mathcal{O}(\fusion_{i+1}^c)$ in a similar fashion as it induced the functors $\Theta_i$, and $\mathcal{O}(\fusion^{\bullet c}) = \bigcup_{i \in \N} \mathcal{O}(\fusion_i^c)$, from where it follows that
$$
\varprojlim_i H^{\ast}(\fusion_i) \stackrel{def} = \varprojlim_i \varprojlim_{\mathcal{O}(\fusion_i^c)} H^{\ast}(\underline{\phantom{A}}; \F_p) \cong \varprojlim_{\mathcal{O}(\fusion^c)} H^{\ast}(\underline{\phantom{A}}; \F_p) \stackrel{def} = H^{\ast}(\fusion).
$$

\end{proof}

\begin{rmk}

A general proof (i.e. for all $p$-local compact groups) of the above result would lead to a proof of Theorem 6.3 \cite{BLO2} in the compact case, just by doing some minor modifications in the proof for the finite case. This in turn would allow us to reproduce (most of) the work in \cite{BCGLO2} for $p$-local compact groups.

\end{rmk}

\begin{rmk}

Suppose $\g$ is approximated by $p$-local finite groups. Then, by Proposition \ref{centricquasicentric}, together with Theorem B \cite{BCGLO1}, we can define a zig-zag
$$
\xymatrix@R=4mm@C=1cm{
\ldots \ar[rd] & |\linking_{i-1}| \ar[d]^{\simeq} \ar[rd] & |\linking_i| \ar[d]^{\simeq} \ar[rd] & |\linking_{i+1}| \ar[d]^{\simeq} \ar[rd] & \\
 & |\linking_{i-1}^q| & |\linking_i^q| & |\linking_{i+1}^q| & \ldots\\
}
$$
where, for each $i$, $\linking_i^q$ is the quasi-centric linking system associated to $\linking_i$ (see \cite{BCGLO1}). This yields another homotopy colimit, which is easily seen to be equivalent to that in Theorem \ref{hocolim1}.

\end{rmk}


\section{Examples of approximations by $p$-local finite groups} 

We discuss now some examples of $p$-local compact groups which are approximated by $p$-local finite groups. The first example we consider is that of $p$-local compact groups of rank $1$, which will require rather descriptive arguments. The second example is that of $p$-local compact groups induced by the compact Lie groups $U(n)$. In this case, the particular action of $S/T$ over $T$ will be the key.


\subsection{$p$-local compact groups of rank $1$}

The main goal of this section then is to prove the following.

\begin{thm}\label{rank1}

Let $\g$ be a $p$-local compact group of rank $1$. Then, every unstable Adams operation $\Psi$ approximates $\g$ by $p$-local finite groups.

\end{thm}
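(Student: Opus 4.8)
The plan is to verify condition (\ref{condition}) for every $H \in Ob(\fusion_i^c) \setminus Ob(\linking_i)$ once $i$ is large, following the dichotomy isolated before Proposition \ref{nosiroot}. If $H \lneqq (H)^\bullet \cap S_i$ we are done by Proposition \ref{nosiroot}, so assume $H = (H)^\bullet \cap S_i$ is an $S_i$-root and set $P = (H)^\bullet$. Then $(P \cap S_i)^\bullet = (H)^\bullet = P$, so $P$ is $S_i$-determined; and were $P$ to be $\fusion$-centric it would lie in $Ob(\linking^{\bullet})$, whence $H = P_i \in \hh_i \subseteq Ob(\linking_i)$, against our hypothesis. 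Thus the theorem reduces to the following: for every $\bullet$-closed $P \leq S$ which is \emph{not} $\fusion$-centric, and for all large $i$, the $S_i$-root $H = P_i$ either fails to be $\fusion_i$-centric (so it is not a subgroup to which (\ref{condition}) need be applied) or else satisfies (\ref{condition}).

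The first rank-$1$ input is the classification of $\bullet$-closed subgroups. Since $T \cong \ptor$ is the unique subgroup of $S$ isomorphic to it and $1, T$ are its only subtori, Definition \ref{defibullet} shows that every $\bullet$-closed $P \leq S$ is either an overgroup of $T$ or a finite subgroup with $P \cap T$ finite. The favourable sub-case is $C_S(P) \gneqq Z(P)$: there Corollary \ref{corcentral2} yields $C_{S_i}(H) = C_{S_i}(P_i) \gneqq Z(P_i) = Z(H)$ for $i$ large, so $H$ is not $\fusion_i$-centric and nothing is required. This settles at once every finite $P$ acting trivially on $T$, for then $C_S(P) \supseteq C_T(P) = T$; and when $p$ is odd this is in fact the only kind of finite $\bullet$-closed $P$, because the conjugation action of $S$ on $T$ is then trivial (its image in $Aut(\ptor) = (\padic)^{\times}$ is a finite $p$-group, hence trivial). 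So a nontrivial action on $T$ is possible only for $p = 2$, where $W = Aut_{\fusion}(T) \leq \{\pm 1\}$.

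What remains are the overgroups of $T$ that are centric in $S$ but not $\fusion$-centric, and — for $p = 2$ with $W = \{\pm 1\}$ — the finite subgroups that act on $T$ by inversion and are centric in $S$. For these I would verify (\ref{condition}) for $H = P_i$ by hand: first upgrade $\fusion$-conjugacy of $P$ to $\fusion_i$-conjugacy of $H$ for $i$ large (using Lemma \ref{detectmorph} to detect which morphisms become $\Psi_i$-invariant and Propositions \ref{propcentral} and \ref{propnormal} to transport centralizers and normalizers between $P$ and $P_i$); then exploit the rigidity of $S$ in rank $1$ — every element of $S$ either centralizes or inverts $T$, so $C_S(T)$ has index at most two in $S$ and the order-$2$ element $z \in T$ is central in $S$ — to produce, in $N_{S_i}(K)$ for a well-chosen $\fusion_i$-conjugate $K$ of $H$, an element whose class in $Out_{\fusion_i}(K)$ is a nontrivial normal $p$-element, which is (\ref{condition}).

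The step I expect to be the real obstacle, and the source of the ``rather descriptive'' character of the argument, is exactly this last one: producing such an element for \emph{every} non-$\fusion$-centric $\bullet$-closed $P$ with $C_S(P) = Z(P)$, and doing so for the fixed $\Psi$ and all $i \gg 0$. This forces one to understand the finite ``essential'' part of $\fusion$ (its $\fusion$-radical $\fusion$-centric subgroups, finitely many up to conjugacy by Theorem \ref{Alperin}) and how it is inherited by the $\fusion_i$ after $T$ is replaced by the finite approximations $T_i = T \cap S_i$, together with the attendant case-by-case centralizer computations inside a rank-$1$ discrete $p$-toral group. Everything outside this point is formal, resting on Proposition \ref{nosiroot}, Corollary \ref{corcentral2}, and the classification of $\bullet$-closed subgroups above.
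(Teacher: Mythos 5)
Your reductions are sound and coincide with the paper's setup: non-roots are disposed of by Proposition \ref{nosiroot}, non-$S$-centric $\bullet$-closed subgroups by Corollary \ref{corcentral2}, and the rank-$1$ observations (the only subtori of $T$ are $1$ and $T$; for odd $p$ the $S$-action on $T$ is trivial, so no finite subgroup is $S$-centric) are correct. But the remaining case --- $P$ $\bullet$-closed, $S$-centric, not $\fusion$-centric, $H=P_i$ an $S_i$-root --- is precisely the content of the theorem, and there your proposal stops at an acknowledged sketch. The step you describe as ``upgrade $\fusion$-conjugacy of $P$ to $\fusion_i$-conjugacy of $H$'' is the crux, and Lemma \ref{detectmorph} together with Propositions \ref{propcentral} and \ref{propnormal} does not deliver it: given $x\in N_S(H_i,R_i)$ and a morphism $f$ witnessing that $P$ is $\fusion$-conjugate to a non-$S$-centric subgroup, the obstruction to making the corresponding lift $\Psi_i$-invariant is the element $x^{-1}\Psi_i(x)\in C_T(P)$, and one can only absorb it by writing its image as $t^{-1}\Psi_i(t)$ for some $t\in T$ --- which requires knowing that $f$ (or each factor in an Alperin decomposition of $f$) carries $C_T$ into $T$. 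That is exactly the paper's Lemma \ref{morphTid}, whose proof is a case analysis over the isomorphism type of the minimal strongly $\fusion$-closed subgroup $S_0\supseteq T$ (namely $T$, $\dihed$ or $\quat$, Lemma \ref{isotype}), with the dihedral case requiring a genuine modification of the chosen morphism. Nothing in your outline supplies this, and without it the descent of the conjugation to $\fusion_i$ is unproved.

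There is also a strategic mismatch worth noting. You aim to verify condition (\ref{condition}) for such $H$ by exhibiting a nontrivial normal $p$-element of $Out_{\fusion_i}(K)$ lying in $Out_{S_i}(K)$; but since $H$ is an $S_i$-root of an $S$-centric subgroup, the mechanism of Proposition \ref{nosiroot} (namely $N_K(H)\gneqq H$ inside a larger object) is unavailable, and the ``rigidity of $S$'' gives no visible source of such elements --- the non-$\fusion$-centricity of $P$ is carried by fusion morphisms, not by $S$-conjugation. The paper instead proves the stronger and cleaner statement that for $i$ large $Ob(\fusion_i^c)=Ob(\linking_i)$: once the conjugating morphism is made $\Psi_i$-invariant (via Lemma \ref{morphTid}, the surjectivity of $t\mapsto t^{-1}\Psi_i(t)$ on $T$, and an induction along the Alperin decomposition fixed in Remark \ref{BI3}), the root $H$ is shown not to be $\fusion_i$-centric at all, so condition (\ref{condition}) becomes vacuous. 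So the gap is twofold: the key lemma making the descent possible is missing, and the fallback you propose in its place has no supporting construction.
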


To prove this result we will first study some technical properties of rank $1$ $p$-local compact groups, and then apply these properties to show that condition (\ref{condition}) holds always. This process will imply again fixing some finite list of objects and morphisms in $\linking$ and increasing the degree of $\Psi$ so that some properties hold. The approach here is rather exhaustive, and is not appropriate to study a more general situation. All the results achieved in the previous section are assumed to hold already.

\begin{rmk}\label{noclassification}

Possibly the main difficulty in this section is the absence of any kind of classification of rank $1$ $p$-local compact groups which we could use to reduce to a finite list of cases to study. In this sense, an attempt of a classification was made in \S 3 \cite{Gonza}, but only with partial results which are of no use here. Namely, the author proved that every rank $1$ $p$-local compact group uniquely determines a \textit{connected} rank $1$ $p$-local compact group which is in fact derived from either $S^1$, $SO(3)$ or $S^3$ (the last two only occurring for $p=2$), but there is still no reasonable notion equivalent to the group of components in classical Lie group theory.

Note that the above list of connected $p$-local compact groups does not contain the Sullivan spheres. This is because of the notion of connectivity used, which was rather strict but lead to stronger results, such as Corollary 3.2.5 \cite{Gonza}, which cannot be extended to weaker notions of connectivity.

\end{rmk}

Roughly speaking, the condition for a morphism (in $\linking$) to be $\Psi_i$-invariant is related to the existence of morphisms (in $\fusion$) sending elements of $T$ to elements outside $T$ (Lemma \ref{detectmorph}). Define then $S_0 \leq S$ as \textit{the} minimal strongly $\fusion$-closed subgroup of $S$ containing $T$. It is clear by definition of $S_0$ that such subgroup (if exists) is unique.

\begin{lmm}

Let $\g$ be a $p$-local compact group. Then, $S_0$ always exists. Furthermore, each element $x \in S_0$ is $\fusion$-subconjugate to $T$.

\end{lmm}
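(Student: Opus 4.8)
The plan is to treat the two assertions separately: the existence of $S_0$ is soft general nonsense about strongly $\fusion$-closed subgroups, while the subconjugacy statement is the real content.

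\emph{Existence.} The family $\mathcal{K}$ of strongly $\fusion$-closed subgroups of $S$ containing $T$ is nonempty, since $S \in \mathcal{K}$. I would first record that an arbitrary intersection of strongly $\fusion$-closed subgroups is again strongly $\fusion$-closed: if $K = \bigcap_\alpha K_\alpha$, $Q \leq K$ and $f \in Hom_{\fusion}(Q,S)$, then $f(Q) \leq K_\alpha$ for every $\alpha$, so $f(Q) \leq K$. Hence $S_0 := \bigcap_{K \in \mathcal{K}} K$ is strongly $\fusion$-closed, contains $T$, and lies inside every member of $\mathcal{K}$, so it is the unique minimal such subgroup. It is convenient to also record the description of $S_0$ ``from below'': put $T_0 = T$ and $T_{n+1} = \gen{f(Q) \mid Q \leq T_n,\ f \in Hom_{\fusion}(Q,S)}$; each $T_n$ contains $T$ and $T_n/T$ is an ascending chain in the finite $p$-group $S/T$, so it stabilises; the stable value $T_N$ is strongly $\fusion$-closed (for $Q \leq T_N$ and $f \in Hom_{\fusion}(Q,S)$ one has $f(Q) \leq T_{N+1}=T_N$) and, by an obvious induction, is contained in every $K \in \mathcal{K}$, hence $T_N = S_0$.

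\emph{Reduction of the subconjugacy statement.} Let $X \subseteq S$ be the set of $x \in S$ such that $\gen{x}$ is $\fusion$-subconjugate to $T$, i.e.\ such that there is $f \in Hom_{\fusion}(\gen{x},S)$ with $f(x) \in T$. Then $T \subseteq X$, and $X$ is closed under $\fusion$-subconjugacy: if $x \in X$ and $g \in Hom_{\fusion}(\gen{x},S)$, then $g(x)$ is $\fusion$-conjugate to $x$, so $g(x) \in X$. This already forces $X \subseteq S_0$: given $x \in X$ with $f \in Iso_{\fusion}(\gen{x},\gen{f(x)})$ and $\gen{f(x)} \leq T \leq S_0$, strong closure of $S_0$ applied to $f^{-1} \in Hom_{\fusion}(\gen{f(x)},S)$ yields $\gen{x} \leq S_0$. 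Thus the statement ``every $x \in S_0$ is $\fusion$-subconjugate to $T$'' is precisely the reverse inclusion $S_0 \subseteq X$, i.e.\ $X = S_0$. Since $X$ contains $T$ and is closed under $\fusion$-subconjugacy, it is enough to show that $X$ is a \emph{subgroup} of $S$: then $X$ is strongly $\fusion$-closed (for $Q \leq X$ and $g \in Hom_{\fusion}(Q,S)$, each $g(q)$ with $q \in Q$ lies in $X$, so $g(Q) \leq X$), whence $S_0 \subseteq X$ by minimality. So everything comes down to proving that $X$ is closed under products.

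\emph{The main obstacle.} What remains, and what I expect to be the genuine difficulty, is that a product of elements each $\fusion$-subconjugate to $T$ is again $\fusion$-subconjugate to $T$; purely formal fusion-closure arguments do not give this, so saturation must enter. I would run this through the filtration $T = T_0 \leq T_1 \leq \dots \leq T_N = S_0$ of the previous paragraph, proving by induction on $n$ that every element of $T_n$ is $\fusion$-subconjugate to $T$; since composites of $\fusion$-morphisms are $\fusion$-morphisms, it suffices to show that each element of $T_{n+1}$ is $\fusion$-subconjugate to $T_n$ and then descend the filtration step by step. For the inductive step one takes $z \in T_{n+1}$, writes $z = z_1 \cdots z_m$ with each $z_j$ lying in some $f_j(Q_j)$, $Q_j \leq T_n$, passes (using the extension axiom (II) of Definition \ref{defisaturation}, Alperin's fusion theorem \ref{Alperin}, and the $\bullet$-construction of Proposition \ref{bulletprop}) to a fully $\fusion$-normalized situation in which the relevant morphisms extend to suitable overgroups of $\gen{z}$, and finally produces a single $\fusion$-morphism defined on $\gen{z}$ carrying $z$ into $T_n$. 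The point that has to be checked carefully here is that the $\fusion$-centric $\fusion$-radical subgroups occurring in Alperin's decomposition, together with their $\fusion$-automorphisms, interact with the maximal torus $T$ in a controlled enough way — using that $T$ is (weakly) $\fusion$-closed and that $Aut_{\fusion}(T)$ is finite — that the product $z_1\cdots z_m$ cannot leave the class of subgroups $\fusion$-subconjugate to $T_n$. Everything else in the argument is formal.
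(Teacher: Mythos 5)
Your existence argument and your reduction of the second assertion are fine: the intersection argument is the same as the paper's, the inclusion $X \subseteq S_0$ is correct, and the lemma is indeed equivalent to showing that $X$ is a subgroup of $S$ (hence strongly $\fusion$-closed, hence containing $S_0$ by minimality). But the proposal stops exactly where the content of the lemma begins: the closure of $X$ under products is never proved, and your own inductive scheme does not reduce the difficulty. The group $T_{n+1}$ is generated by the images $f(Q)$ with $Q \leq T_n$; each \emph{generator} is $\fusion$-conjugate to an element of $T_n$, but a general element of $T_{n+1}$ is a product of generators coming from different morphisms, so the claim ``every element of $T_{n+1}$ is $\fusion$-subconjugate to $T_n$'' is word for word the same product problem you isolated for $T$, shifted one level up the filtration. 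The tools you invoke (axiom (II) of Definition \ref{defisaturation}, Theorem \ref{Alperin}, Proposition \ref{bulletprop}) are named but never assembled into an argument that produces a single morphism on $\gen{z}$ carrying $z=z_1\cdots z_m$ into $T_n$; the sentence about the centric radical subgroups interacting with $T$ ``in a controlled enough way'' is precisely the statement that would have to be proved. So what you have is a correct reformulation plus an unproven main step, not a proof.

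Note also that the paper's own argument takes a different and purely formal route which never passes through the assertion that the $\fusion$-subconjugates of $T$ form a subgroup. It sets $P_0 = T$ and lets $P_{n+1}$ be the subgroup generated by $P_n$ together with all elements of $S$ that are $\fusion$-subconjugate to $P_n$; the union $S_0' = \bigcup_n P_n$ is strongly $\fusion$-closed by an element-wise check (for $q \in Q \leq S_0'$ and $f \in Hom_{\fusion}(Q,S)$, $q$ lies in some $P_n$, so $f(q)$ is subconjugate to $P_n$ and lies in $P_{n+1}$), giving $S_0 \leq S_0'$, and conversely strong closure of $S_0$ pulls every element subconjugate to $P_n \leq S_0$ back into $S_0$, forcing $S_0' \leq S_0$. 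No saturation axioms, Alperin decompositions or the $\functor$ construction are needed there. If you wish to complete your route instead, you must genuinely prove that a product of elements, each $\fusion$-subconjugate to $T$, is again $\fusion$-subconjugate to $T$; your sketch supplies no mechanism for this, and that is the gap.
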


\begin{proof}

To prove the existence of $S_0$, it is enough to consider the intersection of all the strongly $\fusion$-closed subgroups of $S$ containing $T$, since the intersection of two strongly $\fusion$-closed subgroups is again strongly $\fusion$-closed.

To prove the second part of the statement, let $S_0' = \cup P_n$, where $P_0 = T$, and $P_{n+1}$ is the subgroup of $S$ generated by $P_n$ together with all the elements of $S$ which are $\fusion$-subconjugate to $P_n$. This is clearly an strongly $\fusion$-closed subgroup, hence $S_0 \leq S_0'$. On the other hand, if $S_0 \lneqq S_0'$, then there exists $x \in S_0' \setminus S_0$ and a morphism $f: \gen{x} \to T$ in $\fusion$ contradicting the fact that $S_0$ is strongly $\fusion$-closed.

\end{proof}

Next we describe the possible isomorphism types of $S_0$ in the rank $1$ case. The following criterion will be useful.

\begin{lmm}

Let $\g$ be a $p$-local compact group, and let $P \leq S$ be $\fusion$-subconjugate to $T$. Then,
$$
C_P(T) \stackrel{def} = C_S(T) \cap P = T \cap P.
$$

\end{lmm}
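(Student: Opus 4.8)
The plan is to reduce the claimed equality to a single containment and then exploit the $\fusion$-subconjugating morphism together with the saturation axioms. Since $T$ is abelian we have $T \le C_S(T)$, so $T \cap P \subseteq C_S(T) \cap P$ holds trivially; the real content is the reverse inclusion, which is equivalent to the assertion that every $x \in P$ centralizing $T$ already lies in $T$. So I would fix such an $x$ and aim to prove $x \in T$.

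First I would pick $f \in \operatorname{Hom}_{\fusion}(P,S)$ with $f(P) \le T$ (this is exactly what ``$\fusion$-subconjugate to $T$'' provides) and restrict it to $g := f|_{\langle x \rangle}$, a morphism out of the finite cyclic group $\langle x \rangle$ whose image lies in $T$. After composing $g$ with a suitable $\fusion$-morphism I may assume $g(\langle x \rangle)$ is fully $\fusion$-centralized, the image staying inside $T$ because $T$ is strongly $\fusion$-closed. The key step is then to extend $g$ over the abelian discrete $p$-toral group $U := \langle x \rangle \cdot T$, which has maximal torus $T$ precisely because $x$ centralizes $T$: since every element of $U$ conjugates $\langle x \rangle$ trivially, one checks that $U \le N_g$, so saturation axiom (II) yields an extension $\widetilde g \in \operatorname{Hom}_{\fusion}(N_g, S)$, and I set $\bar f := \widetilde g|_U$, an injective homomorphism $U \to S$ with $\bar f(x) = g(x) \in T$.

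The conclusion then comes from an elementary observation: $\bar f(T) \le T$ (again by strong closure of $T$, i.e. morphisms carry maximal tori into $T$), so $\bar f|_T$ is an injective endomorphism of $T \cong (\ptor)^r$, and any such endomorphism is an automorphism — it sends the finite subgroup of $p^k$-torsion injectively, hence bijectively, onto itself for each $k$, and $T$ is the union of these subgroups. Hence $\bar f(x) \in T = \bar f(T)$, so $\bar f(x) = \bar f(t)$ for some $t \in T$, and injectivity of $\bar f$ forces $x = t \in T$, as desired.

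I expect the main (mild) obstacle to be the bookkeeping around axiom (II): replacing $g$ by an equivalent morphism with fully $\fusion$-centralized image so that the axiom applies, and confirming that $U$ sits inside the subgroup $N_g$ over which the extension is guaranteed. Both are routine once one uses that $x$ commutes with $T$, and the remainder of the argument is formal.
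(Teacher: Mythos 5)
Your strategy is the same as the paper's --- reduce to a cyclic subgroup $\gen{x}$ with $x$ centralizing $T$, arrange for the image to be fully $\fusion$-centralized, extend over $\gen{x}\cdot T$ by axiom (II), and conclude from $\bar f(T)=T$ and injectivity that $x\in T$ --- but the step where you arrange the fully centralized image is justified by a false statement, and this is exactly the delicate point. You assert that the image can be moved to a fully centralized position ``staying inside $T$ because $T$ is strongly $\fusion$-closed''. In general $T$ is \emph{not} strongly $\fusion$-closed: the paper introduces $S_0$ precisely as the \emph{minimal} strongly $\fusion$-closed subgroup containing $T$ because it can be strictly larger (for $p=2$ it can be $\dihed$ or $\quat$, as in the $2$-local compact groups of $SO(3)$ and $S^3$, where the involution of $T$ is conjugate to elements outside $T$). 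A finite cyclic subgroup of $T$ such as $g(\gen{x})$ may therefore be $\fusion$-conjugate to subgroups not contained in $T$, so composing with an arbitrary morphism onto a fully centralized representative can destroy the property $\bar f(x)\in T$ on which your last step depends. (Your second appeal to strong closure, that morphisms carry maximal tori into $T$, is a true statement, but its reason is infinite $p$-divisibility --- a divisible subgroup of $S$ lies in $T$ --- i.e.\ weak closure, not strong closure.)

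The gap is repairable, and the repair is essentially what the paper compresses into its ``without loss of generality'': one must know that the class of a subgroup of $T$ admits a fully centralized representative \emph{inside} $T$. For instance, let $R=g(\gen{x})\le T$, choose any fully $\fusion$-centralized $Q'$ in the $\fusion$-class of $R$ and $\psi\in Iso_{\fusion}(R,Q')$; since $R\le T$ is abelian, $T\le C_S(R)\le N_\psi$, so axiom (II) extends $\psi$ to $\widetilde\psi$ defined on $T$, and $\widetilde\psi(T)$ is an abelian divisible subgroup of $S$ of full rank, hence contained in $T$ and in fact equal to $T$ (compare $p^k$-torsion subgroups); therefore $Q'=\widetilde\psi(R)\le T$, and replacing $g$ by $\widetilde\psi\circ g$ gives a morphism with fully centralized image contained in $T$. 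With that substitution your argument goes through and coincides with the paper's proof, which extends over $C_S(\gen{x})\cdot\gen{x}\supseteq T\cdot\gen{x}$ and concludes in the same way.
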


This Lemma can be understood as follows. If $x \in S$ is $\fusion$-conjugate to an element in $T$, then either $x$ is already an element in $T$ or $x$ acts nontrivially on $T$.

\begin{proof}

Let $f:P \to T$ be a morphism in $\fusion$. We can assume without loss of generality that $P = \gen{x}$ and that $P' = f(P)$ is fully $\fusion$-centralized, since it is a subgroup of $T$. This way we can apply axiom (II) for saturated fusion systems to $f$ to see that it extends to a morphism $\widetilde{f} \in Hom_{\fusion}(C_S(P) \cdot P, S)$.

Suppose then that $x$ acts trivially on $T$. In particular, $T \leq C_S(P)$, and thus in particular $\widetilde{f}$ restricts to $\widetilde{f}: T \cdot P \to S$. The infinitely $p$-divisibility of $T$ and the hypothesis on $f$ imply then that $\widetilde{f}(T) = T$ and $\widetilde{f}(P) \leq T$ respectively, and hence $P \leq T$.

\end{proof}

The above result implies that the quotient $S_0/T$ can be identified with a subgroup of $Aut(T) = GL_r(\padic)$, where $r$ is the rank of $T$. When $r=1$,
$$
Aut(T) \cong \left\{
\begin{array}{ll}
\Z/2 \times \adic, & p=2,\\
\Z/(p-1) \times \padic, & p>2,\\
\end{array}
\right.
$$
and we can prove the following.

\begin{lmm}\label{isotype}

Let $\g$ be a rank $1$ $p$-local compact group.
\begin{enumerate}[(i)]

\item If $p >2$, then $S_0 = T$.

\item If $p=2$, then $S_0$ has the isomorphism type of either $T$, $\dihed = \cup D_{2^n}$ or $\quat = \cup Q_{2^n}$.

\end{enumerate}

\end{lmm}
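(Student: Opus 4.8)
The plan is to exploit the embedding $S_0/T \hookrightarrow Aut(T)$ recorded just above the statement, together with the (standard) structure of the unit group $(\padic)^{\times}$. First I would note that $S_0/T$ is a finite $p$-group: it is a subgroup of $S/T$, which is finite because $S$ is discrete $p$-toral. Since the rank is $1$, $Aut(T) \cong (\padic)^{\times}$, so $S_0/T$ is a finite $p$-subgroup of $(\padic)^{\times}$; being finite, it consists of torsion elements, hence lies in the torsion subgroup of $(\padic)^{\times}$.

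For part (i), with $p>2$, the torsion subgroup of $(\padic)^{\times}$ is $\mu_{p-1} \cong \Z/(p-1)$, of order prime to $p$ (the complementary factor $1+p\padic \cong \padic$ being torsion-free). Hence a finite $p$-subgroup of $(\padic)^{\times}$ is trivial, so $S_0/T = 1$ and $S_0 = T$. For part (ii), with $p=2$, write $\adic^{\times} \cong \{\pm 1\} \times (1 + 4\adic)$ with the second factor torsion-free; any finite subgroup therefore lies in $\{\pm1\}$, so $S_0/T$ is trivial (giving $S_0 = T$) or cyclic of order $2$. In the latter case the conjugation map $S_0/T \to Aut(T)$ is injective: its kernel is $C_{S_0}(T)/T$, and $C_{S_0}(T) = T$ because every element of $S_0$ is $\fusion$-subconjugate to $T$ by the preceding lemmas, so the earlier centralizer lemma applied to each cyclic $\gen{x} \leq S_0$ forces $x \in T$ whenever $x$ centralizes $T$. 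Thus the nontrivial element of $S_0/T$ maps to $-1 \in \adic^{\times}$, i.e. acts on $T$ by inversion.

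Choosing a lift $\tilde{s} \in S_0$ of this element, one has $\tilde{s}^2 \in T$, and from $\tilde{s}\,\tilde{s}^2\,\tilde{s}^{-1} = (\tilde{s}^2)^{-1}$ it follows that $\tilde{s}^2$ has order at most $2$ in $T$; moreover $\tilde{s}^2$ does not depend on the choice of lift, since $(t\tilde{s})^2 = t\,(\tilde{s} t\tilde{s}^{-1})\,\tilde{s}^2 = \tilde{s}^2$ for every $t \in T$. Hence there are exactly two possibilities. If $\tilde{s}^2 = 1$, then $S_0$ is the split extension of $\gen{\tilde{s}} \cong \Z/2$ by $T$, that is $S_0 \cong \dihed$. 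If $\tilde{s}^2$ equals the unique involution $z$ of $T$, then for each $t \in T$ of order $2^n$ the subgroup $\gen{t, \tilde{s}}$ satisfies the standard presentation $\gen{a,b \mid a^{2^n}=1,\ b^2 = a^{2^{n-1}},\ bab^{-1}=a^{-1}}$ of $Q_{2^{n+1}}$ (note $z = t^{2^{n-1}}$, the unique involution of $\gen{t}$), and taking the union over $n$ gives $S_0 \cong \quat$.

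I do not anticipate a genuine obstacle here: once the structure of $(\padic)^{\times}$ is in hand the argument is elementary. The two points that require care are the finiteness and faithfulness of the conjugation action of $S_0/T$ on $T$, both of which follow immediately from the two preceding lemmas, and the extension bookkeeping when $p=2$, namely verifying that the well-defined invariant $\tilde{s}^2 \in T[2]$ distinguishes precisely the groups $\dihed$ and $\quat$ and that no other isomorphism type can occur.
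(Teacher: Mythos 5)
Your argument is correct, and its first half is exactly the paper's: embed $S_0/T$ into $Aut(T)\cong(\padic)^{\times}$ (faithfulness coming from the two preceding lemmas, since an element of $S_0$ centralizing $T$ generates a cyclic subgroup $\fusion$-subconjugate to $T$ and hence lies in $T$), then use that the torsion of $(\padic)^{\times}$ is prime to $p$ for $p>2$ and is $\{\pm 1\}$ for $p=2$ to conclude $S_0/T=1$ or, when $p=2$, $S_0/T\cong\Z/2$ acting by inversion. Where you diverge is the final classification of the extension $T\to S_0\to\Z/2$: the paper cites $H^2(\Z/2;T^{\tau})\cong\Z/2$ (MacLane, Adem--Milgram) to say there are only two extensions and then observes that $\dihed$ and $\quat$ realize them and are non-isomorphic, whereas you compute the extension invariant by hand, showing $\tilde{s}^2\in T[2]$ is independent of the lift and splitting into the cases $\tilde{s}^2=1$ (giving $\dihed$) and $\tilde{s}^2=z$ (giving $\quat$ via the generalized quaternion presentations). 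Your computation is precisely an explicit evaluation of that $H^2$ group (for the inversion action the norm map is trivial, so $H^2\cong T^{\Z/2}=T[2]$), so the two arguments have the same content; yours buys self-containedness and directly pins down the isomorphism type rather than passing through equivalence classes of extensions, while the paper's citation is shorter. The only point worth polishing is the degenerate case $n=1$ in your quaternion presentation (where $\gen{t,\tilde{s}}=\gen{\tilde{s}}$ is cyclic of order $4$), which does not affect the union $\bigcup_n Q_{2^{n+1}}\cong\quat$.
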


\begin{proof}

The case $p>2$ is immediate, since $Aut(T)$ does not contain any finite $p$-subgroup, and hence $S_0/T$ has to be trivial. Suppose then the case $p=2$. In this case, $Aut(T)$ contains a finite $2$-subgroup isomorphic to $\Z/2$, and hence either $S_0/T = \{1\}$ or $S_0/T \cong \Z/2$.

If $S_0/T = \{1\}$, then $S_0 = T$ and there is nothing to prove. Suppose otherwise that $S_0/T \cong \Z/2$. Then $S_0$ fits in an extension $T \to S_0 \to \Z/2$. By IV.4.1 \cite{MacLane} and II.3.8 \cite{Adem-Milgram}, the group
$$
H^2(\Z/2;T^{\tau}) \cong \Z/2
$$
classifies all possible extensions $T \to S_0 \to S_0/T$ up to isomorphism. Here, the superindex on $T$ means that the coefficients are twisted by the action of $\Z/2$ on $T$. Thus, up to isomorphism, there are only two possible discrete $2$-toral groups of rank $1$ with the desired action on $T$ and such that $S_0/T \cong \Z/2$, and the proof is finished since both $\dihed$ and $\quat$ satisfy these conditions and are non-isomorphic.

\end{proof}

The proof of Theorem \ref{rank1} will be done by cases, depending on the isomorphism type of $S_0$.

As happened when proving that $\linking_i$ is a transporter system associated to $\fusion_i$ (Theorem \ref{transpi}), proving Theorem \ref{rank1} will require fixing some finite list of objects and morphisms in $\fusion$ and considering operations $\Psi_i$ of degree high enough.

\begin{rmk}\label{BI3}

More specifically, we fix
\begin{enumerate}[(i)]

\item a set $\pp'$ of representatives of the $S$-conjugacy classes of non-$\fusion$-centric objects in $\fusion^{\bullet}$; and

\item for each pair $H, K \in \pp'$ such that $K$ is fully $\fusion$-normalized, a set $\mm_{H,K} \subseteq Hom_{\fusion}(H,K)$ of the classes in $Rep_{\fusion}(H,K)$.

\item for each $f \in \mm_{H,K}$ above, an ``Alperin-like'' decomposition (Theorem \ref{Alperin})
\begin{equation}\label{Alperinlike}
\xymatrix@R=2mm@C=4mm{
 & & & & & & & & & & & \\
 & L_1 \ar[r]^{\gamma_1} & L_1 & & L_2 \ar[r]^{\gamma_2} & L_2 & & & & L_k \ar[r]^{\gamma_k} & L_k & \\
 & & & & & & & & & & & \\
R_0 \ar[rrr]_{f_1} \ar[ruu] & & & R_1 \ar[luu] \ar[ruu] \ar[rrr]_{f_2} & & & R_2 \ar[luu] \ar[r] & \ldots \ar[r] & R_{k-1} \ar[ruu] \ar[rrr]_{f_k} & & & R_k, \ar[luu], \\
}
\end{equation}
where $R_0 = H$, $R_k = K$, $L_j$ is $\fusion$-centric $\fusion$-radical and fully $\fusion$-normalized for $j = 1, \ldots, k$, and
$$
f_R = f_k \circ f_{k-1} \circ \ldots \circ f_2 \circ f_1.
$$

\item for each $\gamma_j$ above, a lifting $\varphi_j$ in $\linking$.

\end{enumerate}
This is clearly a finite list, and hence by Proposition \ref{finitesetinv}, there exists some $M_{\Psi}' \geq 0$ such that, for all $i \geq M_{\Psi}'$, all the subgroups in $\pp'$ are $S_i$-determined and all the morphisms $\varphi_j$ are morphisms in $\linking_i$.

\end{rmk}

\begin{lmm}\label{morphTid}

Let $H \in \pp'$ be $S$-centric, and let $K \in \pp' \cap \conj{H}{\fusion}$ be any non-$S$-centric object. Then, the set $Hom_{\fusion}(H,K)$ contains an element $f$, together with a decomposition as (\ref{Alperinlike}),  such that for all $j=1, \ldots, k$
$$
f_j(C_T(R_{j-1})) \leq T.
$$

\end{lmm}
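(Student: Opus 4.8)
The plan is to argue by cases on the isomorphism type of $S_0$, using Lemma \ref{isotype}: either $S_0 = T$ — the only possibility when $p>2$ — or $p=2$ and $S_0$ is $\dihed$ or $\quat$. The first thing I would record is a rank $1$ dichotomy, immediate from the description of $Aut(T)\cong(\padic)^{\times}$ above: for any $P\leq S$, either $P$ centralizes $T$ and then $C_T(P)=T$, or some element of $P$ acts on $T$ by inversion — which forces $p=2$ — and then $C_T(P)\leq T[2]$, the subgroup of order $2$ of $T$. Moreover $T[2]$ is characteristic in $S_0$, so (as $S_0\trianglelefteq S$ and $Aut(\Z/2)$ is trivial) $T[2]\leq Z(S)$; hence $T[2]$ lies in every $S$-centric subgroup, in particular in every fully normalized $\fusion$-centric $\fusion$-radical $L$, where it is the unique involution of $T\cap L$ and is therefore fixed by every element of $Aut_{\fusion}(L)$.

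Next I would dispose of the easy shapes of $H$. If $T\leq H$, then $T$ is the maximal torus of $H$, hence — being the unique subgroup of $S$ isomorphic to $\Z/p^{\infty}$ — the maximal torus of every $\fusion$-conjugate $R_j$ arising in an Alperin decomposition; consequently each $L_j$ contains $T$, each $\gamma_j$ restricts to an automorphism of $T$ (Lemma 2.4 (b) \cite{BLO3}), and $C_T(R_{j-1})$, being either $T$ or contained in $T[2]$, is mapped into $T$ by $f_j$. So in this case any $f\in Iso_{\fusion}(H,K)$ with any decomposition already satisfies the conclusion. In particular, since for $p>2$ one has $T\leq Z(S)$ and hence $C_T(H)=T\leq Z(H)\leq H$ forces $T\leq H$, the remaining case is $p=2$ with $T\not\leq H$; then $H$ is finite, $H$ does not centralize $T$ (otherwise $T\leq C_S(H)=Z(H)\leq H$), and $C_T(H)\leq T[2]$.

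For this remaining case I would choose the pair $(f,\text{decomposition})$ rather than take an arbitrary one. The point is that along $R_0=H,R_1,\ldots,R_k=K$ every $R_j$ is $\fusion$-isomorphic to $H$, hence finite, so an $S$-centric $R_j$ cannot centralize $T$ and then $C_T(R_j)\leq T[2]\leq Z(S)\leq L_{j+1}$, whence $f_{j+1}(C_T(R_j))=\gamma_{j+1}(C_T(R_j))\leq T\cap L_{j+1}\leq T$; the only way the conclusion can fail at step $j$ is if $R_{j-1}$ is not $S$-centric and has come to centralize $T$ while $T\cap L_j$ is so small that $\gamma_j$ cannot control all of $T$. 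I would therefore arrange that $S$-centricity is lost only at the last step $R_{k-1}\to R_k=K$: factor $f$ through a fully $\fusion$-normalized $S$-centric conjugate $R_{k-1}$ of $H$, reached from $H$ through $S$-centric terms (possible since $H$ is $S$-centric and all terms are finite of the same isomorphism type), and take $f_k$ to be the restriction of an automorphism of a $\fusion$-centric $\fusion$-radical $L_k$ containing both $R_{k-1}$ and $K$. When $S_0=T$ one can shortcut all this: by the analysis of rank $1$ fusion systems in \cite{Gonza} every $\fusion$-centric $\fusion$-radical subgroup contains $T$, so every $\gamma_j$ preserves $T$; when $S_0\in\{\dihed,\quat\}$ I would instead use the explicit description of the $\fusion$-centric $\fusion$-radical subgroups — the $S_0$-conjugates of those containing $T$ together with ones of Klein four or $Q_8$ type — to reroute any bad step through the toral overgroup $T\cdot L_j$.

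The hard part is exactly this last case: in the absence of a classification of rank $1$ $p$-local compact groups (Remark \ref{noclassification}), one has to control the conjugation action on $T$ of every intermediate term of the decomposition using only the structure of $S_0$ together with the bookkeeping of Remark \ref{BI3} — the fixed representatives $\pp'$, their Alperin-like decompositions, and the liftings $\varphi_j$ — which is precisely why those were set up in advance.
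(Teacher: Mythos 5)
There is a genuine gap, and it sits exactly at the crux of the lemma. Your key structural claim --- that $T_1$ (your $T[2]$), being the unique involution of $T\cap L$, ``is therefore fixed by every element of $Aut_{\fusion}(L)$'' --- is a non sequitur: an $\fusion$-automorphism of $L$ has no reason to preserve $T\cap L$, and the claim is false precisely when $S_0\cong\dihed$. In that case one has a fully normalized $\fusion$-centric $\fusion$-radical $L$ with $L_0:=L\cap S_0=\gen{x,T_1}\cong\Z/2\times\Z/2$ and with morphisms of $\fusion$ restricting to the order-$3$ automorphism of $L_0$, which moves the toral involution $t_1$ to involutions outside $T$ (this is the $SO(3)$-type situation; as the paper notes, $T_1$ is \emph{not} strongly $\fusion$-closed when $S_0\cong\dihed$). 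Since your treatment of the ``easy'' steps of the decomposition ($C_T(R_j)\leq T_1\leq Z(S)\leq L_{j+1}$, hence $\gamma_{j+1}(C_T(R_j))\leq T\cap L_{j+1}$) rests on this claim, the proposal breaks down not only in the case you flag as hard at the end, but already in the steps you treat as automatic. And for the acknowledged hard case, ``rerouting through the toral overgroup $T\cdot L_j$'' is not an argument: the problematic order-$3$ automorphisms of $L_j$ do not extend to $T\cdot L_j$, which is exactly why they cause trouble; likewise, arranging that $S$-centricity is lost only at the last Alperin step is asserted, not proved.

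For comparison, the paper's proof supplies the concrete device your proposal lacks: after reducing to $C_T(L)=T_1$ and $L_0\cong\Z/2\times\Z/2$, it replaces the offending $f=f_j$ by $f'=\omega^{-1}\circ f$, where $\omega=f^{-1}\circ c_{t_2}\circ f^{-1}\circ c_{t_2}^{-1}$ and $t_2$ generates $T_2$ (which normalizes $L$, $R_{j-1}$ and $R_j$ because $L/L_0$ acts trivially on $T$); one checks that $\omega$ restricts to the order-$3$ automorphism on $L_0$, so $f'$ is the identity on $L_0$, has the same image on $R_{j-1}$, and gives a decomposition with $f_j(C_T(R_{j-1}))\leq T$. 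The other cases are also handled more simply than you propose: when $S_0=T$ (in particular for all $p>2$) the subgroup $T$ is strongly $\fusion$-closed, and when $S_0\cong\quat$ the subgroup $T_1$ is strongly $\fusion$-closed, so \emph{any} $f$ with \emph{any} decomposition works --- no description of the $\fusion$-centric $\fusion$-radical subgroups is needed, and indeed Remark \ref{noclassification} warns that the partial classification in \cite{Gonza} you invoke is not available for this purpose. Your observations that $T\leq H$ makes any decomposition work, and that for $p>2$ the $S$-centricity of $H$ forces $T\leq H$, are correct, but they do not reach the dihedral case, which is where the content of the lemma lies.
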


\begin{proof}

Suppose first that $S_0 = T$. Since in this case $T$ is strongly $\fusion$-closed, the condition holds by axiom (C) for linking systems. Also if $S_0 \cong \quat$ it is easy to see that $T_1 \leq T$ (the order $2$ subgroup of $T$) is strongly $\fusion$-closed (in fact it is $\fusion$-centra), and either $C_T(R) = T$ or $C_T(R) = T_1$. In both cases then the statement follows easily by axiom (C) of linking systems and the properties of $T$ and $T_1$.

We are thus left to consider the case $S_0 \cong \dihed$. Note that in this case every element in the quotient $S/S_0$ acts trivially on $T$. Also, $Z(S) \cap T = T_1$, but now this subgroup is not strongly $\fusion$-closed, and the subgroups $T_n$, $n \geq 2$, are all weakly $\fusion$-closed (this holds since the only elements of $S_0$ of order $2^n$ are all in $T$).

Set for simplicity $L = L_j$, $\varphi = \varphi_j$ and $f = f_j$. If $C_T(L) \geq T_n$ for some $n \geq 2$, then the condition above holds directly by axiom (C) for linking systems, since $T_n$ is weakly $\fusion$-closed. We can assume thus that $C_T(L) = T_1$. Even more, if $L \cap S_0 = T_1$, then the condition above still holds since $S_0$ is strongly $\fusion$-closed.

By inspection of $S_0$, this leaves only one case to deal with
$$
L_0 \stackrel{def} = L \cap S_0 = \gen{x, T_1} = R_{j-1} \cap S_0 = R_j \cap S_0 \cong \Z/2 \times \Z/2
$$
for some element $x$ which has order $2$. If we set $t_2$ for a generator of $T_2 \leq T$, then it is also easy to check that $t_2$ normalizes $L_0$, and in fact, since $L/L_0$ acts trivially on $T$, it also normalizes $L, R_{j-1}$ and $R_j$.

Set also $t_1$ for the generator of $T_1$. The automorphism group of $L_0$ is isomorphic to $\Sigma_3$, generated by $c_{t_2}$ together with an automorphism $f_0$ of order $3$ which sends $t_1$ to $x$ and $x$ to $xt_1$. Note that the assumption that $S_0 \cong \dihed$ implies that $Aut_{\fusion}(L_0) \cong \Sigma_3$. For the purposes of the proof we can now assume that  $f$ restricts to $f_0$.

Let then $\omega = f^{-1} \circ c_{t_2} \circ f^{-1} \circ c_{t_2}^{-1}$. It is easy to see that $\omega$ induces the identity on $L/L_0$, and by inspecting the automorphism group of $L_0$ it follows that $\omega_{|L_0} = f_0$.

Consider now $f' = \omega^{-1} \circ f$. By definition, $f'$ induces the same automorphism on $L/L_0$ as $f$, and the identity on $L_0$. To show that we can replace $f$ by $f'$ we have to show that the image of $R_{j-1}$ by $f$ and $f'$ are the same:
$$
\xymatrix{
R_{j-1} \ar[r]^{f} & R_j \ar[r]^{f} & R_j' \ar[r]^{c_{t_2}^{-1}} & R_j' \ar[r]^{f^{-1}} & R_j \ar[r]^{c_{t_2}} & R_j,\\
}
$$
where $R_j' = f(R_j)$ is normalized by $t_2$ by the above arguments.

\end{proof}

We can assume then that, for each pair $H,K \in \pp'$ (with $C_S(K)\geq Z(K)$)  the set $\mm_{H,K}$ fixed in Remark \ref{BI3} contains at least a morphism $f$ satisfying Lemma \ref{morphTid} above.

\begin{proof}

(of Theorem \ref{rank1}). Recall that, after Theorem \ref{transpi} and by Proposition 3.6 \cite{OV}, we only have to prove that there exists some $M_{\Psi}'$ such that, for all $i \geq M_{\Psi}'$, condition (\ref{condition}) holds for all $H \in Ob(\fusion_i^c) \setminus Ob(\linking_i)$. Actually we will prove the following:

\begin{itemize}

\item there exists some $M_{\Psi}'$ such that, for all $i \geq M_{\Psi}'$, $Ob(\fusion_i^c) = Ob(\linking_i)$.

\end{itemize}

Using the functor $\functor_i$, it is enough to prove that there exists such $M_{\Psi}$ such that, for all $S_i$-determined subgroups $R$, $R_i$ is $\fusion_i$-centric if and only if $R$ is $\fusion$-centric. Recall that Corollary \ref{corcentral1} proves the ``if'' implication in the above claim. Furthermore, Corollary \ref{corcentral2} says that if $R$ is not $S$-centric, then $R_i$ is not $S_i$-centric.

The rest of the proof is then devoted to show that if $R_i \leq S_i$ is an $S_i$-root such that $R = (R_i)^{\bullet}$ is $S$-centric but not $\fusion$-centric, then $R_i$ is not $\fusion_i$-centric. We can also assume that $R_i$ is maximal in the sense that if $Q_i \leq S_i$ is such that $R_i \lneqq Q_i$, then either $Q_i$ is $\fusion_i$-centric or it is not an $S_i$-root.

Let $H \in \conj{R}{S}$ be the representative of this $S$-conjugacy class fixed in $\pp'$ (Remark \ref{BI3}), and let $K \in \pp' \cap \conj{R}{\fusion}$ be fully $\fusion$-normalized. Note that both $H_i$ and $K_i$ are not $\fusion_i$-centric by assumption (Remark \ref{BI3}). Let $f \in \mm_{H,K}$ be as in Lemma \ref{morphTid}, and let
$$
\xymatrix@R=2mm@C=4mm{
 & & & & & & & & & & & \\
 & L_1 \ar[r]^{\gamma_1} & L_1 & & L_2 \ar[r]^{\gamma_2} & L_2 & & & & L_k \ar[r]^{\gamma_k} & L_k & \\
 & & & & & & & & & & & \\
R_0 \ar[rrr]_{f_1} \ar[ruu] & & & R_1 \ar[luu] \ar[ruu] \ar[rrr]_{f_2} & & & R_2 \ar[luu] \ar[r] & \ldots \ar[r] & R_{k-1} \ar[ruu] \ar[rrr]_{f_k} & & & R_k, \ar[luu], \\
}
$$
be the decomposition (\ref{Alperinlike}) fixed in Remark \ref{BI3} for the morphism $f$, together with the liftings $\varphi_j \in Aut_{\linking}(L_j)$. Let also $x \in N_S(H_i,R_i)$. By Lemma \ref{rootconjx}, $\Psi_i(x) x^{-1} \in C_T(R)$, or, equivalently, $\tau_0 = x^{-1} \Psi_i(x) \in C_T(H_i) = C_T(H)$.

We can now apply axiom (C) to $\varphi_1$ and the element $\tau_0$. By hypothesis (Lemma \ref{morphTid}), $f_1(C_T(H)) \leq T$, so in particular $f_1(\tau_0) = \tau_1$ for some $\tau_1 \in T$. Let then $t \in T$ be such that
$$
\tau_1 = t^{-1} \Psi_i(t),
$$
and let $Q_1 = tR_1t^{-1}$, $L'_1 = t^{-1} L_1 t$ and $\varphi_1' = \delta(t) \circ  \varphi_1 \circ \delta(x^{-1}) \in Aut_{\linking}(L'_1)$. It follows from Lemma \ref{detectmorph} that $\varphi_1'$ is $\Psi_i$-invariant, and $L_1'$ (or a certain proper subgroup) is $\fusion$-centric and $S_i$-determined.

Proceeding inductively through the whole sequence $f_1, \ldots, f_k$, we see that $\fusion_i$ contains a morphism sending $R_i$ to a subgroup $Q_i$ which is not $S_i$-centric, and hence $R_i$ is not $\fusion_i$-centric.

\end{proof}

\begin{rmk}

Since $p$-local compact groups of rank $1$ are approximated by $p$-local finite groups, we know (Theorem \ref{stable}) that the Stable Elements Theorem hold for all of them. This result was used in \cite{BLO2} to prove Theorem 6.3, which states that, given a $p$-local finite group $\g$, a finite group $Q$ and a homomorphism $\rho: Q \to S$ such that $\rho(Q)$ is fully centralized in $\fusion$, there is a homotopy equivalence
$$
|C_{\linking}(\rho(Q))|^{\wedge}_p \stackrel{\simeq} \longrightarrow Map(BQ, B\g)_{B\rho},
$$
where $C_{\linking}(\rho(Q))$ is the centralizer linking system defined in appendix \S A \cite{BLO2}.

The proof for this result in \cite{BLO2} used an induction step on the order of $S$ and on the ``size'' of $\linking$. However, since rank $0$ $p$-local compact groups are just $p$-local finite groups, we could use the same argument to prove the above statement for $p$-local compact groups of rank $1$, with some minor modifications. We have skipped it in this paper since it is not a general argument (it would only apply to $p$-local compact groups of rank $1$), and requires a rather long proof.

\end{rmk}


\subsection{The unitary groups $U(n)$} 

We prove now that the $p$-local compact groups induced by the compact Lie groups $U(n)$, $n \geq 1$, are approximated by $p$-local finite groups. As proved in Theorem 9.10 \cite{BLO3}, every compact Lie group $G$ gives rise to a $p$-local compact group $\g$ such that $(BG)^{\wedge}_p \simeq B\g$.
 
\begin{thm}\label{Un}

Let $\g(n)$ be the $p$-local compact group induced by the compact Lie group $U(n)$. Then every unstable Adams operation $\Psi$ approximates $\g(n)$ by $p$-local finite groups.

\end{thm}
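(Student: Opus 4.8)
The plan is to follow the strategy of the proof of Theorem \ref{rank1}: show that, after replacing $\Psi$ by a high enough iterate, $Ob(\fusion_i^c)=Ob(\linking_i)$ for all $i\ge M_{\Psi}'$, so that condition (\ref{condition}) is satisfied vacuously. Recall (see \S 9 \cite{BLO3}) the structure of $\g(n)$: its maximal torus is $T\cong(\ptor)^n$, the quotient $S/T$ is a Sylow $p$-subgroup of $\Sigma_n$ acting on $T$ by permuting coordinates, and $\fusion$ is the fusion system of $U(n)$. Let $H\in Ob(\fusion_i^c)\setminus Ob(\linking_i)$. If $H$ is not an $S_i$-root, Proposition \ref{nosiroot} already gives condition (\ref{condition}); so assume $H=(H)^{\bullet}\cap S_i$ is an $S_i$-root. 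Then $(H)^{\bullet}$ is not $\fusion$-centric (otherwise $H\in\hh_i\subseteq Ob(\linking_i)$), and if $(H)^{\bullet}$ were not $S$-centric either, Corollary \ref{corcentral2} (for $i$ large) would force $H=(H)^{\bullet}\cap S_i$ not to be $S_i$-centric, contradicting $H\in Ob(\fusion_i^c)$. Hence the whole statement reduces to: for all $i$ large, if $R_i\le S_i$ is an $S_i$-root with $R:=(R_i)^{\bullet}$ $S$-centric but not $\fusion$-centric, then $R_i$ is not $\fusion_i$-centric.

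The heart of the proof is a $U(n)$-analogue of Lemma \ref{morphTid}. Having fixed, as in Remark \ref{BI3}, a finite set $\pp'$ of representatives of the $S$-conjugacy classes of non-$\fusion$-centric subgroups of $\fusion^{\bullet}$, Alperin-type decompositions (Theorem \ref{Alperin}) of chosen morphisms between them, and liftings to $\linking$ of the automorphisms occurring, I would prove: for every $S$-centric $H\in\pp'$ and the fully $\fusion$-normalized representative $K\in\pp'\cap\conj{H}{\fusion}$ (which is not $S$-centric since $H$ is not $\fusion$-centric), there is $f\in Hom_{\fusion}(H,K)$ with a decomposition $f=f_k\circ\ldots\circ f_1$ through $\fusion$-centric $\fusion$-radical subgroups $L_1,\ldots,L_k$ such that $f_j(C_T(R_{j-1}))\le T$ for all $j$. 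This is exactly where the permutation action enters: since $S/T$ acts on $T$ by coordinate permutations, $T$ splits, as a module over the image in $\Sigma_n$ of any $P\le S$, into a sum of transitive permutation modules indexed by the orbits, and $C_T(P)$ is the corresponding coordinate subtorus; combining this with the known description of the $\fusion$-centric $\fusion$-radical (equivalently, $p$-stubborn) subgroups of $U(n)$ and of their automorphism groups, one checks that the morphisms in an Alperin decomposition may always be chosen so that these coordinate subtori are sent into $T$ at each step.

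Granting this, one finishes exactly as in the proof of Theorem \ref{rank1}. Take $i$ large enough that all subgroups in $\pp'$ are $S_i$-determined and all the chosen liftings are $\Psi_i$-invariant (Proposition \ref{finitesetinv}), and let $R_i$ be as above, which we may assume maximal among $S_i$-roots with $(R_i)^{\bullet}$ $S$-centric but not $\fusion$-centric. Choose $H\in\conj{R}{S}\cap\pp'$, the fully $\fusion$-normalized $K\in\pp'\cap\conj{R}{\fusion}$, and $f$, its decomposition and liftings $\varphi_j$ as in the lemma. Pick $x\in N_S(H_i,R_i)$ and set $\tau_0=x^{-1}\Psi_i(x)$; by Lemma \ref{rootconjx}, $\tau_0\in C_T(R)=C_T(H)$. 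Now push $\tau_0$ through the decomposition: at step $j$, apply axiom (C) for linking systems to $\varphi_j$ and $\tau_{j-1}$; by the lemma $\tau_j:=f_j(\tau_{j-1})\in T$, so, $T$ being infinitely $p$-divisible, one can solve $\tau_j=t_j^{-1}\Psi_i(t_j)$ with $t_j\in T$ and adjust the running morphism by $\delta(t_j)$, which by Lemma \ref{detectmorph} keeps it $\Psi_i$-invariant and keeps its source and target $\fusion$-centric and $S_i$-determined. After $k$ steps one obtains a $\Psi_i$-invariant morphism in $\linking$, hence a morphism in $\fusion_i$ out of $R_i$ whose image is not $S_i$-centric (it is $\fusion$-conjugate to $K_i=K\cap S_i$, which is not $S_i$-centric by Corollary \ref{corcentral2}). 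Thus $R_i$ is not $\fusion_i$-centric, $Ob(\fusion_i^c)=Ob(\linking_i)$, and $\Psi$ approximates $\g(n)$ by $p$-local finite groups. (Alternatively one could try to match $\g_i$ with the $p$-local finite group of $GL_n(q_i)$ in the spirit of \cite{F1}, but this requires an independent identification of $\fusion_i$ and is less direct.)

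I expect the main obstacle to be the $U(n)$-version of Lemma \ref{morphTid}: unlike in rank $1$ there is no short classification of the ambient discrete $p$-toral group to enumerate, so one must extract from the structure of the $\fusion$-centric $\fusion$-radical subgroups of $U(n)$ a uniform statement that their automorphisms can be realised ``torus-compatibly'' at each Alperin step, and it is precisely there that the permutation nature of the $S/T$-action on $T$ does the work.
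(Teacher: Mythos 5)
Your reduction to the case of an $S_i$-root $R_i$ with $R=(R_i)^{\bullet}$ $S$-centric but not $\fusion$-centric is fine and matches the general framework, but after that your argument rests entirely on the asserted $U(n)$-analogue of Lemma \ref{morphTid}, and that assertion is the whole difficulty: you do not prove it, you only predict it follows from the structure of the $\fusion$-centric $\fusion$-radical subgroups of $U(n)$. Note that even in rank $1$, where the possible groups $S_0$ are just $T$, $\dihed$ and $\quat$, the proof of Lemma \ref{morphTid} needed a delicate case-by-case analysis; for $U(n)$ the relevant subgroups form a much richer family, and torus elements really are fused out of $T$ (e.g.\ in $U(p)$ a diagonal matrix whose entries are the $p$-th roots of unity is conjugate to the permutation matrix of a $p$-cycle, which lies in $\Sigma\leq S$), so ``torus-compatibility'' of Alperin decompositions at every step is a genuine structural claim, not a formality. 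As written, this is a genuine gap: the key lemma your proof needs is exactly the part that is missing.

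It is worth pointing out that your own key observation already suffices, used differently: this is what the paper does, bypassing Alperin decompositions altogether. Because $S/T$ acts on $T=(\ptor)^n$ by permuting coordinates, $C_T(P)$ consists of the elements constant along the orbits of (the image of) $P$, and taking $p$-th roots orbitwise shows it is infinitely $p$-divisible, i.e.\ a subtorus (Lemma \ref{centralizerUn}). Consequently the homomorphism $t\mapsto t^{-1}\Psi_i(t)$ maps $C_T(P)$ onto itself, so if $R\in\conj{P}{S}$ is $S_i$-determined and $y\in N_S(P_i,R_i)$, then Lemma \ref{rootconjx} gives $y^{-1}\Psi_i(y)\in C_T(P)$ and one can correct $y$ by an element of $C_T(P)$ to obtain an element of $N_{S_i}(P_i,R_i)$; hence the $S_i$-roots of $S$-conjugate $S_i$-determined subgroups are $S_i$-conjugate. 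Applying this to the finitely many $S$-conjugacy classes of non-$\fusion$-centric subgroups of $\fusion^{\bullet}$, with finitely many witnessing morphisms made $\Psi_i$-invariant for $i$ large (Proposition \ref{finitesetinv}), one gets that an $S_i$-determined $R$ is $\fusion$-centric if and only if $R_i$ is $\fusion_i$-centric, so $Ob(\fusion_i^c)=Ob(\linking_i)$ and Proposition 3.6 \cite{OV} concludes. This shortcut is exactly what is unavailable in rank $1$ for $p=2$, where $C_T(P)$ can be finite; importing the rank-$1$ machinery into the $U(n)$ case replaces a short divisibility argument with an unproved (and much harder) structural lemma.
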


The key point in proving this result is the particular isomorphism type of the Sylow subgroups of $U(n)$. Indeed, the Weyl group $W_n$ of a maximal torus of $U(n)$ is (isomorphic to) the symmetric group on $n$-letters, $\Sigma_n$. The action of $W_n$ on the maximal tori of $U(n)$ is easier to understand on the maximal torus of $U(n)$ formed by the diagonal matrices, $T$, where it acts by permuting the $n$ nontrivial entries of a diagonal matrix (see \S 3 \cite{Mimura-Toda} for further details). Furthermore, the following extension is split
$$
T \longrightarrow N_{U(n)}(T) \stackrel{\pi} \longrightarrow W_n.
$$

Let us fix some notation. Let $\{t_k\}_{k \geq 0}$ be a basis for $\ptor$, that is, each $t_k$ has order $p^k$, and $t_{k+1}^p = t_k$ for all $k$. Let also $T = (\ptor)^n$, with basis $\{(t_{k_1}^{(1)}, \ldots, t_{k_n}^{(n)})_{k_1, \ldots, k_n \geq 0}\}$. This way, the symmetric group $\Sigma_n$ acts on $T$ by permuting the superindexes. In addition,  if $\Sigma \in Syl_p(\Sigma_n)$, then $S = T \rtimes \Sigma$ can be identified with the Sylow $p$-subgroup of the $p$-local compact group $\g(n)$.

\begin{lmm}\label{centralizerUn}

Let $P \leq S$. Then, $C_T(P)$ is a discrete $p$-subtorus of $T$.

\end{lmm}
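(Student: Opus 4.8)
The plan is to identify $C_T(P)$ with the fixed-point subgroup of the permutation action of $\pi(P)$ on $T$, and then to observe that such a fixed-point subgroup is visibly a discrete $p$-torus. So the argument splits into two elementary steps followed by a trivial conclusion.

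First I would unwind the conjugation action. Write $\pi\colon S = T \rtimes \Sigma \to \Sigma$ for the projection. Since $T$ is abelian, the standard semidirect-product computation gives $(t,\sigma)(u,1)(t,\sigma)^{-1} = (\sigma(u),1)$ for $(t,\sigma) \in S$ and $u \in T$; in particular conjugation by an element of $S$ acts on $T$ only through its image in $\Sigma$, independently of the $T$-component. Hence, setting $H = \pi(P) \leq \Sigma$, one gets $C_T(P) = T^{H}$, the fixed-point subgroup $\{u \in T \mid \sigma(u) = u$ for all $\sigma \in H\}$.

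Next I would compute $T^{H}$ explicitly using the chosen basis. Recall $T = (\ptor)^n$ with $H$ acting by permuting the $n$ coordinates through its image in $\Sigma_n$. Letting $O_1,\dots,O_r$ be the orbits of $H$ on $\{1,\dots,n\}$, a tuple $(x_1,\dots,x_n)$ is fixed by $H$ precisely when its entries are constant on each orbit, so "constant along orbits" defines an isomorphism $(\ptor)^r \cong T^{H}$. Thus $C_T(P) \cong (\ptor)^r$ is a subgroup of $T$ isomorphic to a finite product of copies of $\ptor$, i.e. a discrete $p$-subtorus of $T$, as claimed.

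There is no real obstacle here; the lemma is elementary. The only points that need a moment's care are the action convention — that conjugation by $(t,\sigma) \in S$ acts on $T$ through $\sigma$ alone, which is exactly where abelianness of $T$ enters — and the observation that one genuinely recovers all of $T^H$ (hence an honest torus) rather than merely an unstructured subgroup of it.
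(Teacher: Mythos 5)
Your proof is correct and follows essentially the same route as the paper: both reduce to the fixed points of the permutation action of $\pi = P/(P\cap T)$ on the coordinates of $T=(\ptor)^n$ and exploit constancy along the $\pi$-orbits. The only cosmetic difference is that you identify $C_T(P)$ explicitly with $(\ptor)^r$ (one factor per orbit), whereas the paper uses the same orbit description to produce a $p$-th root of each element inside $C_T(P)$ and concludes via infinite $p$-divisibility.
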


\begin{proof}

We proof that every element in $C_T(P)$ is infinitely $p$-divisible. Let $\pi = P/(P \cap T) \leq \Sigma \leq \Sigma_n$, and let $t \in C_T(P)$. Note that this means that $xtx^{-1} = t$ for all $x \in P$.

In the basis that we have fixed above, $t = (\lambda_1 t_{k_1}^{(1)}, \ldots, \lambda_n t_{k_n}^{(n)})$, where the coefficients $\lambda_j$ are in $(\Z/p)^\times$, and, for all $\sigma \in \pi$, if $\sigma(j) = l$, then $\lambda_j = \lambda_l$ and $k_j = k_l$.

For each orbit of the action of $\pi$ in the set $\{1, \ldots, n\}$ let $j$ be a representative. Let also $t_j = \lambda_j t_{k_j}^{(j)}$ be the $j$-th coordinate of $t$, and let $u_j$ be a $p$-th root of $t_j$ in $\ptor$.

We can consider the element $t' \in T$ which, in the coordinate $l$, has the $p$-th root $u_j$ which corresponds to the orbit of $l$ in $\{1, \ldots, n\}$ under the action of $\pi$. This element is then easily seen to be a $p$-th root of $t$, as well as invariant under the action of $\pi$. Thus, $t' \in C_T(P)$, and this proves that every element in $C_T(P)$ is infinitely $p$-divisible.

\end{proof}

\begin{proof}

(of Theorem \ref{Un}). We first prove the following statement:

\begin{itemize}

\item Let $P \leq S$. There exists some $M'_P \geq 0$ such that, for all $i \geq M'_P$, if $R \in \conj{P}{S}$ is $S_i$-determined, then $R_i$ is $S_i$-conjugate to $P_i$.

\end{itemize}

By Lemma \ref{rootconjx}, for all $y \in N_S(P_i,R_i)$ we have $y^{-1} \Psi_i(y) \in C_T(P_i)$. Also, since $S$ is $S_i$-determined, the subgroup $S_i$ contains representatives of all the elements in $\Sigma$, and hence we can assume that $y \in T$.

Consider now the map
$$
\xymatrix@R=1mm{
T \ar[rr]^{\Psi_i^{\ast}} & & T\\
t \ar@{|->}[rr] & & t^{-1} \Psi_i(t).\\
}
$$
Since $T$ is abelian this is a group homomorphism for all $i$, and in fact it is epi by the infinitely $p$-divisibility property of $T$. The kernel of $\Psi_i^{\ast}$ is the subgroup of fixed elements of $T$ under $\Psi_i$. Also, this morphism sends each cyclic subgroup of $T$ to itself.

It follows now by Lemma \ref{centralizerUn} that $y \in N_S(P_i, R_i)$ has the form $y = t_1 t_2$, where $t_1 \in T_i = Ker(\Psi_i^{\ast})$ and $t_2 \in C_T(P_i)$, and hence $R_i$ is $S_i$-conjugate to $P_i$.

In particular, since $\fusion^{\bullet}$ contains only finitely many $S$-conjugacy classes of non-$\fusion$-centric objects, and using the above claim, it is clear that there exists some $M$ such that, for all $i \geq M$ and each $S_i$-determined subgroup $R$, $R$ is $\fusion$-centric if and only if $R_i$ is $\fusion_i$-centric. Hence, by Proposition 3.6 \cite{OV}, it follows that for $i \geq M$ the fusion system $\fusion_i$ is saturated.

\end{proof}


The arguments to prove Theorem \ref{Un} do not apply to any of the other families of compact connected Lie groups. Note that $SO(3)$ and $SU(2)$ have already been considered in section \S 4 (although no explicit mention was made), and they are in fact examples of the complexity of the other families.



\end{document}